\renewcommand\tableofcontents{%
    \section*{\huge{Table of Contents}
        \@mkboth{%
           \MakeUppercase\contentsname}{\MakeUppercase\contentsname}}
    \@starttoc{toc}%
    } 
\newsavebox\MBox
\newtheorem{theorem}{Theorem}[section]
\newtheorem{proposition}[theorem]{Proposition}
\newtheorem{lemma}[theorem]{Lemma}
\theoremstyle{definition} \newtheorem{remark}[theorem]{Remark}
\numberwithin{figure}{section}
\numberwithin{equation}{section}
\newcommand{\im}{\mbox{Im}}
\newcommand{\re}{\mbox{Re}}
\newcommand{\bR}{{\mathbb R}}
\newcommand{\bN}{{\mathbb N}}
\newcommand{\bC}{{\mathbb C}}
\newcommand{\bZ}{{\mathbb Z}}
\newcommand{\bP}{{\mathbb P}}
\newcommand{\cF}{{\mathcal{F}}}
\newcommand{\cI}{{\mathcal{I}}}
\newcommand{\be}{{\mathbf e}}
\newcommand{\ve}{{\varepsilon}}
\title[Almost sure local well-posedness and scattering for the 4D cubic NLS]{Almost sure local well-posedness and scattering for the 4D cubic nonlinear Schr\"odinger equation}
\author[B. Dodson]{Benjamin Dodson}
\address{Department of Mathematics \\ Johns Hopkins University \\ Krieger Hall 214 \\ 3400 N. Charles Street \\ Baltimore, MD 21218, USA}
\email{bdodson4@jhu.edu}
\author[J. L\"uhrmann]{Jonas L\"uhrmann}
\address{Department of Mathematics \\ Johns Hopkins University \\ Krieger Hall 219 \\ 3400 N. Charles Street \\ Baltimore, MD 21218, USA}
\email{luehrmann@math.jhu.edu}
\author[D. Mendelson]{Dana Mendelson}
\address{Department of Mathematics \\ University of Chicago \\ Eckhart 220 \\ 5734 S. University Ave \\ Chicago, IL 60637, USA}
\email{dana@math.uchicago.edu}
\thanks{\textit{2010 Mathematics Subject Classification.} 35L05, 35R60, 35Q55}
\thanks{\textit{Key words and phrases.} nonlinear Schr\"odinger equation; almost sure well-posedness; almost sure scattering; random initial data}
\thanks{The first author was supported in part by National Science Foundation grant DMS-1500424.}
\begin{document}

\begin{abstract}
 We consider the Cauchy problem for the defocusing cubic nonlinear Schr\"odinger equation in four space dimensions and establish almost sure local well-posedness and conditional almost sure scattering for random initial data in $H^s_x(\bR^4)$ with $\frac{1}{3} < s < 1$. The main ingredient in the proofs is the introduction of a functional framework for the study of the associated forced cubic nonlinear Schr\"odinger equation, which is inspired by certain function spaces used in the study of the Schr\"odinger maps problem, and is based on Strichartz spaces as well as variants of local smoothing, inhomogeneous local smoothing, and maximal function spaces. Additionally, we prove an almost sure scattering result for randomized radially symmetric initial data in $H^s_x(\bR^4)$ with $\frac{1}{2} < s < 1$.
\end{abstract}

\maketitle

\section{Introduction} \label{sec:introduction}
\setcounter{equation}{0}

We consider the Cauchy problem for the defocusing cubic nonlinear Schr\"odinger equation (NLS) in four space dimensions
\begin{equation} \label{equ:ivp_nls}
 \left\{ \begin{aligned} 
  (i \partial_t + \Delta) u &= |u|^2 u \text{ on } \bR \times \bR^4, \\
  u(0) &= f \in H^s_x(\bR^4).
 \end{aligned} \right.
\end{equation}
The equation \eqref{equ:ivp_nls} is invariant under the scaling
\begin{equation} \label{equ:scaling_cubic}
 u(t,x) \mapsto \lambda u(\lambda^2 t, \lambda x) \quad \text{for } \lambda > 0,
\end{equation}
and the scaling critical regularity $s_c = 1$ is, by definition, such that the corresponding homogeneous Sobolev norms of the initial data are left invariant by the scaling transformation~\eqref{equ:scaling_cubic}. Sufficiently smooth solutions to~\eqref{equ:ivp_nls} conserve the energy 
\begin{equation*} 
 E(u) =  \int_{\bR^4} \frac{1}{2} |\nabla u|^2 + \frac{1}{4} |u|^{4} \, dx.
\end{equation*}
Since this energy functional is also invariant under the scaling~\eqref{equ:scaling_cubic}, the Cauchy problem for~\eqref{equ:ivp_nls} is referred to as energy-critical.

\medskip 

The goal of this work is to investigate the local-in-time as well as the asymptotic behavior of solutions to~\eqref{equ:ivp_nls} for random initial data below the scaling critical regularity. Our main results establish almost sure local well-posedness and conditional almost sure scattering of solutions to~\eqref{equ:ivp_nls} with respect to a randomization of initial data in $H^s_x(\bR^4)$ with $\frac{1}{3} < s < 1$ that is based on a unit-scale decomposition of frequency space. Moreover, we prove that the unit-scale randomization of radially symmetric initial data in $H^s_x(\bR^4)$ with $\frac{1}{2} < s < 1$ almost surely leads to global-in-time scattering solutions to~\eqref{equ:ivp_nls}.

\medskip

The energy-critical defocusing nonlinear Schr\"odinger equation has been studied extensively over the past decades. Correspondingly, in what follows we only mention the most relevant results for this paper. For initial data above or at the scaling critical regularity, local solutions may be constructed using fixed point arguments based on Strichartz estimates, see for instance \cite{Ginibre_Velo_92_CMP, Cazenave_Weissler, Cazenave_Weissler2, Cazenave_book}. In particular, these results imply that any finite energy initial datum leads to a unique local solution to~\eqref{equ:ivp_nls}, and they also yield small data global well-posedness and scattering. Finite energy global well-posedness and scattering for the energy-critical defocusing NLS on $\mathbb{R}^3$ was established by Colliander-Keel-Staffilani-Takaoka-Tao~\cite{CKSTT08}, building upon the work of Bourgain~\cite{B99} in the radial case, while the analogous result for the defocusing cubic NLS~\eqref{equ:ivp_nls} on $\bR^4$ was obtained by Ryckman-Visan~\cite{Ryckman_Visan} and Visan~\cite{Visan}.

\medskip

Even though the nonlinear Schr\"odinger equation \eqref{equ:ivp_nls} is ill-posed below the scaling critical regularity $s_c = 1$, see for instance Christ-Colliander-Tao~\cite{CCT}, it is sometimes possible to construct unique local and even global solutions for suitably randomized initial data, and thereby conclude that large sets of initial data of scaling super-critical regularity do indeed lead to global solutions. This approach was initiated by Bourgain~\cite{B94, B96} for the periodic nonlinear Schr\"odinger equation in one and two space dimensions, building upon the constructions of invariant measures by Glimm-Jaffe~\cite{Glimm_Jaffe} and Lebowitz-Rose-Speer~\cite{LRS}, and by Burq-Tzvetkov~\cite{BT1, BT2} in the context of the cubic nonlinear wave equation on a three-dimensional compact Riemannian manifold. There has since been a vast and fascinating body of research, using probabilistic tools to study many nonlinear dispersive or hyperbolic equations in scaling super-critical regimes, see for example~\cite{Tz08, CO, NORS, D1, BT4, Suzzoni1, NPS, Nahmod_Staffilani, LM, Bourgain_Bulut14, BOP1, BOP2, Pocovnicu, DTV} and references therein. 

\medskip

In the following we restrict our overview to prior related probabilistic well-posedness results for the nonlinear Schr\"odinger equation for initial data randomized according to a unit-scale decomposition of frequency space as in~\eqref{equ:randomization} below. We emphasize though that the nonlinear Schr\"odinger equation on Euclidean space has also been considered in many other works relying on different randomizations, see for instance~\cite{BTT, D1, Thomann, Poiret1, Poiret2, Poiret_Robert_Thomann, Murphy}. In~\cite{BOP2, BOP1}, B\'enyi-Oh-Pocovnicu studied the probabilistic local well-posedness and conditional global well-posedness for the cubic NLS on $\bR^d$, $d \geq 3$, below the scaling critical regularity. They established almost sure local well-posedness and conditional almost sure global well-posedness results, where the latter in particular rely on an a priori hypothesis that a scaling critical Sobolev norm of the nonlinear component of the solutions does not blow up in finite time. We also refer to Brereton~\cite{Brereton} for analogous results for the defocusing quintic NLS. In the context of the cubic NLS on $\bR^3$, B\'enyi-Oh-Pocovnicu~\cite{BOP3} recently introduced an iterative procedure based on a partial power series expansion in terms of the free evolution of the random data, which allows to lower the regularity threshold for almost sure local well-posedness obtained in their previous work~\cite{BOP2}.

\medskip

Although in some of the aforementioned results on the nonlinear Schr\"odinger equation on Euclidean space, one obtains scattering with positive probability as a consequence of the probabilistic local theory, establishing almost sure scattering requires a more delicate argument, especially for energy-critical equations. The first almost sure scattering result for an energy-critical dispersive or hyperbolic equation with scaling super-critical random initial data was obtained recently by the authors in \cite{DLuM} for the defocusing cubic nonlinear wave equation on $\bR^4$ for randomized radially symmetric data. The proof in~\cite{DLuM} is based on the introduction of an approximate Morawetz estimate to the random data setting and new almost sure bounds for the free wave evolution of randomized radially symmetric data. The methods from \cite{DLuM} as well as from \cite{BOP2, Pocovnicu, OP} were subsequently further developed by Killip-Murphy-Visan~\cite{KMV} to obtain an analogous almost sure scattering result for the defocusing cubic NLS on $\bR^4$ for randomized radially symmetric initial data. Finally, we mention the recent work of Oh-Okamoto-Pocovnicu~\cite{OOP} establishing almost sure global well-posedness (without scattering) for the energy-critical defocusing NLS on $\bR^d$, $d = 5, 6$.

\subsection{Randomization procedure}

Before providing the precise statements of our main results, we introduce our randomization procedure for the initial data, which is based on a unit-scale decomposition of frequency space~\cite{ZF, LM, BOP2, BOP1}.

Let $\psi \in C_c^{\infty}(\bR^4)$ be an even, non-negative bump function with $\text{supp} \, (\psi) \subseteq B(0,1)$ and such that 
\[
 \sum_{k \in \bZ^4} \psi(\xi - k) = 1 \quad \text{for all } \xi \in \bR^4.
\]
Let $s \in \bR$ and let $f \in H^s_x(\bR^4)$. For every $k \in \bZ^4$, we define the function $P_k f \colon \bR^4 \rightarrow \bC$ by
\begin{equation} \label{equ:unit_scale_projection}
 (P_k f)(x) = \cF^{-1} \bigl( \psi(\xi - k) \hat{f}(\xi) \bigr)(x) \quad \text{for } x \in \bR^4.
\end{equation} 
We exploit that these Fourier projections satisfy a unit-scale Bernstein inequality, namely for all $1 \leq r_1 \leq r_2 \leq \infty$ and for all $k \in \bZ^4$ we have that 
\begin{align} \label{equ:unit_scale_bernstein}
  \|P_k f\|_{L^{r_2}_x(\bR^4)} \leq C(r_1, r_2) \|P_k f\|_{L^{r_1}_x(\bR^4)}
\end{align}
with a constant that is independent of $k \in \bZ^4$. 

\medskip

We let $\{ g_k \}_{k \in \bZ^4}$ be a sequence of zero-mean, complex-valued Gaussian random variables on a probability space $(\Omega, {\mathcal A}, \bP)$. Given a complex-valued function $f \in H^s_x(\bR^4)$ for some $s \in \bR$, we define its randomization by
\begin{equation} \label{equ:randomization}
 f^\omega :=  \sum_{k \in \bZ^4} g_k(\omega) P_k f.
\end{equation}
This quantity is understood as a Cauchy limit in $L^2_\omega\bigl(\Omega; H^s_x(\bR^4)\bigr)$, and in the sequel, we will restrict ourselves to a subset $\Sigma \subset \Omega$ with $\bP(\Sigma) = 1$ such that $f^\omega \in H^s_x(\bR^4)$ for every $\omega \in \Sigma$. 

Importantly, the randomization~\eqref{equ:randomization} almost surely does not regularize at the level of Sobolev spaces, see for instance \cite[Lemma B.1]{BT1}. However, the free Schr\"odinger evolution $e^{it\Delta} f^\omega$ of the random data does enjoy various types of significantly improved space-time integrability properties, see Section~\ref{sec:as_bounds_free_evolution}, which crucially enter the proofs of our main results. This phenomenon is akin to the classical results of Paley and Zygmund~\cite{Paley_Zygmund1} on the improved integrability of random Fourier series.

\begin{remark}
 One could also randomize with respect to a more general sequence of random variables $\{ g_k \}_{k\in\bZ^4}$ satisfying the following condition: there exists $c > 0$ so that the joint distributions $\{ \mu_{k} \}_{k\in\bZ^4}$ of the real and imaginary parts of the random variables $\{ g_k \}_{k\in\bZ^4}$ fulfill
 \begin{equation} \label{equ:rvassumption}
  \left| \int_{-\infty}^{+\infty} e^{\gamma x} \, d\mu_{k}(x) \right| \leq e^{c \gamma^2} \quad \text{for all } \gamma \in \bR \text{ and for all } k \in \bZ^4.
 \end{equation}
 The assumption~\eqref{equ:rvassumption} is satisfied, for example, by standard Gaussian random variables, standard Bernoulli random variables, or any random variables with compactly supported distributions. 
\end{remark}

\begin{remark} 
 In Theorem~\ref{thm:scattering_radial} we randomize radially symmetric functions. However, it should be noted that the unit-scale randomization~\eqref{equ:randomization} of a radially symmetric function is not radially symmetric.
\end{remark}

\subsection{Main results}

We are now ready to state our first main theorem on the almost sure local well-posedness of the cubic NLS in four space dimensions for scaling super-critical random data. 

\begin{theorem} \label{thm:as_local_wellposedness}
 Let $\frac{1}{3} < s < 1$. Let $f \in H^s_x(\bR^4)$ and denote by $f^\omega$ the randomization of $f$ as defined in~\eqref{equ:randomization}. Then for almost every $\omega \in \Omega$ there exists an open interval $I \ni 0$ and a unique solution 
 \[
  u(t) \in e^{it\Delta} f^\omega + C \bigl( I; \dot{H}^1_x(\bR^4) \bigr)
 \]
 to the cubic nonlinear Schr\"odinger equation 
 \begin{equation} \label{equ:cubic_nls_as_lwp_theorem}
  \left\{ \begin{aligned}
   (i\partial_t + \Delta) u &= \pm |u|^2 u \text{ on } I \times \bR^4, \\
   u(0) &= f^\omega.
  \end{aligned} \right.
 \end{equation}
\end{theorem}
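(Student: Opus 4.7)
Following the Bourgain decomposition, write the solution as $u = F + v$, where $F := e^{it\Delta} f^\omega$ is the free evolution of the random data and the nonlinear remainder $v$ is required to lie in $C(I; \dot{H}^1_x(\bR^4))$. Then $v$ satisfies the forced equation
\[
  (i\partial_t + \Delta) v = \pm \bigl|F+v\bigr|^2 (F+v), \quad v(0) = 0,
\]
and we seek $v$ as a fixed point of the Duhamel map
\[
  \Phi(v)(t) := \mp i \int_0^t e^{i(t-\tau)\Delta} \bigl(|F+v|^2(F+v)\bigr)(\tau)\, d\tau
\]
on a small interval $I \ni 0$. Expanding the cubic nonlinearity yields eight trilinear terms indexed by the number of copies of $F$ versus $v$: the purely deterministic term $|v|^2 v$ is controlled by the subcritical 4D cubic NLS theory in $\dot{H}^1_x$; the purely stochastic term $|F|^2 F$ is estimated directly by exploiting the improved space-time integrability of $F$; the remaining six mixed terms require a careful interplay between the two.

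The central step is to design a resolution space $X(I) \hookrightarrow C(I;\dot{H}^1_x)$ for $v$ and a nonlinearity space $N(I)$ such that the inhomogeneous evolution maps $N(I) \to X(I)$ boundedly. As suggested by the abstract, we take $X(I)$ to be the intersection of $\dot{H}^1$-level Strichartz norms with a weighted local smoothing norm, an inhomogeneous local smoothing norm, and a maximal function norm, in the spirit of the functional framework used for the Schr\"odinger maps problem. Dually, for $F$ we record almost sure bounds on a family of quantities including Strichartz norms of $\langle\nabla\rangle^s F$, the maximal function norm $\|F\|_{L^p_x L^\infty_t(I\times\bR^4)}$, and an inhomogeneous local smoothing norm. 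These bounds are obtained by combining the unit-scale Bernstein inequality \eqref{equ:unit_scale_bernstein} with the large-deviation estimate for sub-Gaussian coefficients implied by~\eqref{equ:rvassumption}, and each of them tends to $0$ as $|I| \to 0$ on a full-measure set $\Sigma \subset \Omega$, after an $\omega$-dependent choice of parameters.

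The contraction estimates then reduce to distributing each trilinear term among the Strichartz, local smoothing, and maximal function pieces via H\"older in space-time. The most delicate terms are those with two factors of $F$ at the low regularity $s > \frac{1}{3}$ and a single factor of $v$ carrying the gradient: one factor of $F$ is placed in a maximal-function-type space, the other in a local-smoothing-type space, and $v$ in the dual pairing, which is precisely the interaction that motivates the function space $X(I)$. For terms with two factors of $v$, one copy of $v$ takes the local smoothing norm (so that a derivative can be absorbed) while the lone $F$ occupies the dual maximal function space. The resulting estimate has the schematic form
\[
  \|\Phi(v_1) - \Phi(v_2)\|_{X(I)} \leq C \bigl(A(\omega, I) + \|v_1\|_{X(I)} + \|v_2\|_{X(I)}\bigr)^2 \|v_1 - v_2\|_{X(I)},
\]
with $A(\omega, I) \to 0$ as $|I| \to 0$ for each $\omega \in \Sigma$, so a standard Banach fixed point argument on a small ball in $X(I)$ produces the unique local solution.

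The principal obstacle is the construction of $X(I)$ and the verification of the full family of trilinear bounds down to $s > \frac{1}{3}$: one must show that the spaces close under the Duhamel operator, that they accommodate H\"older-type multilinear estimates at this low regularity, and that the free evolution $F$ almost surely belongs to the precise dual spaces required, with quantitative vanishing of the relevant norms as $|I| \to 0$. Once this functional framework is in place, the fixed point argument itself is routine.
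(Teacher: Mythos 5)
Your high-level architecture matches the paper: decompose $u = e^{it\Delta}f^\omega + v$, solve the forced cubic NLS for $v$ by contraction, build a resolution space from Strichartz, local smoothing, and maximal function norms, and show the $Y$-type norm of $F$ is almost surely finite and vanishes as $|I| \to 0$. However, there are two concrete gaps that would prevent your plan from reaching the claimed threshold $s > \frac{1}{3}$.

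\textbf{Missing ingredient: the unit-scale maximal function improvement.} You propose to control the maximal-function-type norm of $F$ via ``unit-scale Bernstein + large deviation.'' But unit-scale Bernstein only improves the spatial integrability of $P_k f$ \emph{at fixed time}; it does nothing for the maximal function estimate $L^{2,\infty}_{\be_\ell}$, which is a genuinely different space-time bound. The Ionescu--Kenig maximal function estimate for a dyadic block $P_N$ costs $N^{3/2}$ derivatives. The paper's key technical lemma (Lemma~\ref{lem:maximal_function_unit_scale}) establishes, by a separate stationary-phase argument, that for a \emph{unit-scale} block $P_k$ the cost drops to $|k|^{1/2}$, a full derivative better. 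This saving is exactly what makes the maximal function component of $Y_N$ close at $\frac{1}{3}+$ of a derivative per dyadic block; using the $N^{3/2}$ dyadic estimate instead would require $s$ substantially above $1$. Since your sketch never mentions this refinement, the stated estimates for $F$ would not close anywhere near $s > \frac{1}{3}$.

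\textbf{Role reversal in the trilinear scheme.} You place one copy of $v$ in a local smoothing norm ``so that a derivative can be absorbed,'' and put $v$ ``in the dual pairing.'' The paper never uses local smoothing on $v$: the $X(I)$ space for $v$ has only Strichartz and maximal function components, because $v$ is already at the critical $\dot H^1_x$ regularity and needs no derivative gain. The derivative gain is arranged entirely on the $F$ side: the trilinear product is put in the dual (inhomogeneous) local smoothing component $L^{\frac{4}{4-\ve},\frac{4}{2+\ve}}_{\be_\ell}$ of $G(I)$ (gaining $\sim\frac12$ derivative over $L^1_tL^2_x$ via Proposition~\ref{prop:main_linear_estimate}), and the highest-frequency factor $F$ is put in the local smoothing component $L^{\frac{4}{\ve},\frac{4}{2-\ve}}_{\be_\ell}$ of $Y(I)$ (gaining another $\sim\frac12$). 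The remaining lower-frequency factors --- whether $v$ or $F$ --- go into maximal function and Strichartz spaces. Relatedly, the worst trilinear interaction is $|F|^2F$, not ``two $F$, one $v$''; that case is precisely what pins $s > \frac{1}{3}$.

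Both points go to the substance of the argument: the contraction would not close with your arrangement of the local smoothing norm, and without the unit-scale maximal function estimate the almost-sure bounds on $F$ are too weak at $s > \frac{1}{3}$.
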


\begin{remark} \label{rem:uniqueness}
 In the statement of Theorem~\ref{thm:as_local_wellposedness}, uniqueness holds in the sense that upon writing 
 \[
  u(t) = e^{it \Delta} f^\omega + v(t),
 \]
 there exists a unique local solution 
 \[
  v \in C \bigl( I; \dot{H}^1_x(\bR^4) \bigr) \cap X(I)
 \]
 to the forced cubic nonlinear Schr\"odinger equation
 \begin{equation*} 
  \left\{ \begin{aligned}
   (i \partial_t + \Delta) v &= \pm | e^{it\Delta} f^\omega + v |^2 (e^{it\Delta} f^\omega + v) \text{ on } I \times \bR^4, \\
   v(0) &= 0,
  \end{aligned} \right.
 \end{equation*}
 where the function space $X(I)$ is defined in Section~\ref{sec:functional_framework}.
\end{remark}

\begin{remark}
The length of the time interval $I$ in the statement of Theorem~\ref{thm:as_local_wellposedness} depends on the profile of the free evolution $e^{it\Delta} f^\omega$ of the random data in the sense that the time interval $I$ has to satisfy $\| e^{it\Delta} f^\omega\|_{Y(I)} \leq \delta$ for some small absolute constant $0 < \delta \ll 1$, where the function space $Y(I)$ is defined in Section \ref{sec:functional_framework}.
\end{remark}

We emphasize that prior to this work, almost sure local well-posedness for the cubic NLS~\eqref{equ:cubic_nls_as_lwp_theorem} in four space dimensions had been established by B\'enyi-Oh-Pocovnicu~\cite{BOP2, BOP1} for random initial data in $H^s_x(\bR^4)$ for the more restrictive range of regularities $\frac{3}{5} < s < 1$. The main tools in~\cite{BOP2, BOP1} are the improved almost sure space-time integrability of the free evolution of the random data and a bilinear refinement of the Strichartz estimate by Bourgain~\cite{B98} and Ozawa-Tsutsumi~\cite{Ozawa_Tsutsumi}.

\medskip 

The proof of Theorem~\ref{thm:as_local_wellposedness} proceeds by writing the solution to~\eqref{equ:cubic_nls_as_lwp_theorem} as a superposition of the free evolution of the random initial data and a nonlinear component
\[
 u(t) = e^{it\Delta} f^\omega + v(t).
\]
Then the nonlinear component $v(t)$ has to satisfy the following forced cubic NLS, where the forcing term inside the cubic nonlinearity is given by the free evolution of the (low-regularity) random initial data,
\begin{equation*} 
 (i \partial_t + \Delta) v = \pm | e^{it\Delta} f^\omega + v |^2 (e^{it\Delta} f^\omega + v)
\end{equation*}
with zero initial data $v(0) = 0$. Establishing local existence of solutions to this forced cubic NLS at energy regularity then has certain features in common with proving local well-posedness for a derivative nonlinear Schr\"odinger equation. In this spirit the main idea of our almost sure local well-posedness result in Theorem~\ref{thm:as_local_wellposedness} is to set up a suitable functional framework, whose precise definition is given in Section~\ref{sec:functional_framework}, based on Strichartz estimates and variants of the strong local smoothing $L^{\infty,2}_{\be}$, inhomogeneous local smoothing $L^{1,2}_{\be}$, and maximal function estimates $L^{2,\infty}_\be$ for the free Schr\"odinger evolution that have for instance played a key role in the study of the Schr\"odinger maps problem in~\cite{Ionescu-KenigI, Ionescu-KenigII, BIK, BIKT}. The main benefit of this new functional framework is that whenever the (low-regularity) free evolution $e^{it\Delta} f^\omega$ of the random data appears at highest frequency in the forced cubic nonlinearity, the local smoothing space component along with the inhomogeneous local smoothing and maximal function space components enable us to gain some derivatives. Instead, if the deterministic solution $v$ appears at highest frequency, the Strichartz components suffice for the nonlinear estimates. See the discussion before Proposition~\ref{prop:main_trilinear_estimates} for more details. Beyond the improved almost sure space-time integrability of the free evolution of the random data, the key ingredient for this scheme to work is an improved maximal function estimate for the free evolution of unit-scale frequency localized data, see Lemma~\ref{lem:maximal_function_unit_scale}, which implies an improved almost sure maximal function type estimate for the free evolution of the random initial data. We also refer to \cite{KPV93, Linares_Ponce93, KPV98, KPV00, KPV04, Ionescu_Kenig05, Ionescu_Kenig07, Bejenaru08, Bejenaru_Tataru08, CIKS08} and references therein for the many other uses of local smoothing estimates in the study of local and global well-posedness of derivative nonlinear Schr\"odinger and related equations.

\begin{remark}
 From the proof of Theorem~\ref{thm:as_local_wellposedness}, it is clear that our methods easily generalize to other space dimensions $d \geq 3$ and to other power-type nonlinearities. We also expect that our functional framework is compatible with the iterative procedure put forth in~\cite{BOP3} and that these ideas can be combined to further lower the regularity threshold. 
\end{remark}

\begin{remark}
We note that the main idea in our proof of Theorem~\ref{thm:as_local_wellposedness} does not apply to the periodic setting since there is no local smoothing effect for the Schr\"odinger equation on a compact domain such as the torus. The methods used to prove analogous almost sure local well-posedness results on the torus usually rely on random initial data with a specific form inspired by a typical element in the support of a certain Gibbs measure, and multilinear estimates which exploit properties of products of Gaussian random variables. For probabilistic well-posedness results for power-type NLS on the torus, we refer to \cite{B94, B96, CO, Nahmod_Staffilani, Yue} and references therein.
\end{remark}

\medskip 

Next we turn to the study of the long-time dynamics of solutions to the defocusing cubic NLS~\eqref{equ:ivp_nls} for scaling super-critical random initial data and establish a conditional scattering result for the associated forced defocusing cubic NLS
\begin{equation} \label{equ:forced_cubic_nls_scattering}
 \left\{ \begin{aligned}
          (i \partial_t + \Delta) v &= |F+v|^2 (F+v) \text{ on } \bR \times \bR^4, \\
          v(0) &= v_0 \in \dot{H}^1_x(\bR^4)
         \end{aligned} 
 \right.
\end{equation}
for forcing terms $F \colon \bR \times \bR^4 \to \bC$ satisfying $\|F\|_{Y(\bR)} < \infty$, where the precise definition of the function space $Y(\bR)$ is postponed to Section~\ref{sec:functional_framework}. Note that we will establish in Proposition~\ref{prop:as_bound_Y_norm} that $\| e^{it\Delta} f^\omega \|_{Y(\bR)} < \infty$ almost surely for any $f \in H^s_x(\bR^4)$ with $\frac{1}{3} < s < 1$. The next theorem asserts that if the maximal lifespan solution to \eqref{equ:forced_cubic_nls_scattering} satisfies a uniform-in-time a priori energy bound, then it must exist globally in time and scatter. 

\begin{theorem} \label{thm:scattering_conditional}
 Let $v_0 \in \dot{H}^1_x(\bR^4)$ and let $F \in Y(\bR)$. Let $v(t)$ be a solution to \eqref{equ:forced_cubic_nls_scattering} defined on its maximal time interval of existence $I_\ast$. Suppose in addition that
 \begin{equation} \label{equ:energy_hypothesis}
  M := \sup_{t \in I_\ast} \, E(v(t)) < \infty,
 \end{equation}
 where
 \[
  E(v(t)) = \int_{\bR^4} \frac{1}{2} |\nabla v(t)|^2 + \frac{1}{4} |v(t)|^4 \, dx.
 \]
 Then $I_\ast = \bR$, that is $v(t)$ is globally defined, and it holds that
 \begin{equation} \label{equ:conditional_spacetime_bound}
  \|v\|_{X(\bR)} \leq C\bigl( M, \|F\|_{Y(\bR)} \bigr),
 \end{equation}
 where the function spaces $X(\bR)$ and $Y(\bR)$ are defined in Section~\ref{sec:functional_framework}. In particular, the solution $v(t)$ scatters in the sense that there exist states $v^{\pm} \in \dot{H}^1_x(\bR^4)$ such that
 \[
  \lim_{t \to \pm \infty} \, \bigl\|  v(t) - e^{it\Delta} v^{\pm} \bigr\|_{\dot{H}^1_x(\bR^4)} = 0.
 \]
\end{theorem}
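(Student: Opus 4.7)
The plan is to reduce the theorem to establishing an a priori global space-time bound $\|v\|_{X(\bR)} \leq C(M, \|F\|_{Y(\bR)})$ on the maximal-lifespan solution, from which global existence, the estimate \eqref{equ:conditional_spacetime_bound}, and scattering all follow. The overall scheme is modeled on the energy-critical scattering theory of Ryckman--Visan~\cite{Ryckman_Visan} and Visan~\cite{Visan} for the unforced cubic NLS on $\bR^4$, and on its forced analogue by Killip--Murphy--Visan~\cite{KMV}. Building on the local well-posedness theory that supports Theorem~\ref{thm:as_local_wellposedness} and a corresponding stability result in the $X$--$Y$ functional framework of Section~\ref{sec:functional_framework}, the desired $X(\bR)$-bound will yield the scattering claim in the usual way via the Duhamel formula, so the real task is to produce this global bound.

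The central new input is a Morawetz-type estimate adapted to the forced equation, yielding a scaling-critical a priori bound of the form
\[
 \iint_{I_\ast \times \bR^4} |v|^4 \, dx\, dt \leq C\bigl(M, \|F\|_{Y(\bR)}\bigr).
\]
Writing
\[
 |F+v|^2 (F+v) = |v|^2 v + \cE(F,v),
\]
with $\cE(F,v)$ collecting the trilinear terms containing at least one factor of $F$, I would differentiate either the Lin--Strauss Morawetz functional or an appropriate (possibly frequency-localized) interaction Morawetz functional along the flow of $v$. The defocusing structure of $|v|^2 v$ produces the familiar nonnegative bulk term, the energy hypothesis \eqref{equ:energy_hypothesis} bounds the boundary and coercive contributions by $C(M)$, and the forcing contribution from $\cE(F,v)$ generates additional error terms that must be absorbed using the Strichartz, $L^{\infty,2}_{\be}$, $L^{1,2}_{\be}$, and $L^{2,\infty}_{\be}$ ingredients of $\|F\|_{Y(\bR)}$ together with the energy bound on $v$.

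Armed with this scaling-critical a priori bound, I would partition $I_\ast$ into a finite number of subintervals $I_j$, whose cardinality depends only on $M$ and $\|F\|_{Y(\bR)}$, on each of which $\|v\|_{L^4_{t,x}(I_j \times \bR^4)} + \|F\|_{Y(I_j)}$ is small. On each $I_j$ the trilinear $X$--$Y$ estimates of Section~\ref{sec:functional_framework} close a bootstrap bounding $\|v\|_{X(I_j)}$ by a constant depending only on $M$ and $\|F\|_{Y(\bR)}$; summing across the subintervals gives \eqref{equ:conditional_spacetime_bound}, and the scattering statement then follows from the Duhamel formula.

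The main obstacle is the forced Morawetz step. The error $\cE(F,v)$ couples the low-regularity forcing $F$ to the energy-level function $v$, and after integration against a Morawetz weight it produces a variety of terms in which derivatives fall on $v$, on $F$, or on the kernel. The technical heart of the argument is to match each such term to the correct component of the $Y$-norm---in particular using the local smoothing and maximal function pieces to absorb those contributions where no gradient lands on $F$---all without any scaling loss. Since the assumptions provide only energy control on $v$ (no $L^2$-bound), handling non-radial data will likely require a frequency-localized interaction Morawetz estimate in the spirit of Visan's treatment of the unforced equation on $\bR^4$.
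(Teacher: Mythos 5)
Your approach diverges from the paper in a way that leaves a genuine gap. The paper's proof of Theorem~\ref{thm:scattering_conditional} contains no Morawetz estimate at all. Instead, it treats the Ryckman--Visan/Visan scattering theorem for the \emph{unforced} defocusing cubic NLS on $\bR^4$ as a black box: Lemma~\ref{lem:standard_cubic} packages their result as a non-decreasing function $K$ with $\|u\|_{X(\bR)} \leq K(E(u_0))$ for any solution $u$ of the unforced equation. Then Lemmas~\ref{lem:short_time_perturbations} and~\ref{lem:long_time_perturbations} build a short-time and a long-time perturbation theory in the $X$--$Y$ framework, whose point is to compare the forced solution $v$ against the unforced solution $u$ launched from the same initial data, exponentiating the number $J(K)$ of time-divisibility subintervals produced by~\eqref{equ:time_divisibility}. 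The proof of Theorem~\ref{thm:scattering_conditional} itself then partitions the maximal interval into $J(M,\|F\|_{Y(\bR)})$ subintervals on which $\|F\|_{Y(I_j)} = \eta_1(K(M))$, applies Lemma~\ref{lem:long_time_perturbations} on each, and sums. This is the same scheme as \cite[Theorem 1.3]{DLuM} and \cite{KMV}, which you cite, but those references use precisely this perturbation-off-the-known-deterministic-result structure, not a direct forced Morawetz bound.

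The gap in your proposal is the claimed a priori bound $\iint_{I_\ast \times \bR^4} |v|^4 \, dx\, dt \leq C(M, \|F\|_{Y(\bR)})$. In four dimensions, no Morawetz-type identity produces the \emph{scaling-critical} quantity $\|v\|_{L^4_{t,x}}^4$ from the energy alone. The Lin--Strauss Morawetz gives the spatially weighted, sub-critical quantity $\iint |v|^4/|x|\,dx\,dt$, and the interaction Morawetz in $\bR^4$ bounds something like $\iint\,|\nabla|^{-1/2}(|v|^2)|^2\,dx\,dt$, again sub-critical, and moreover requires control of the mass of $v$, which is not among the hypotheses of Theorem~\ref{thm:scattering_conditional} (only $v_0 \in \dot{H}^1_x$ is assumed). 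Producing a critical $L^4_{t,x}$ bound out of a finite-energy hypothesis \emph{is} the Ryckman--Visan/Visan theorem, which requires the full induction-on-energy or concentration-compactness machinery (including the frequency-localized interaction Morawetz) rather than a single monotonicity formula. You allude to needing this in your final sentence, but rebuilding that entire apparatus in the forced setting is a far larger task than, and is superseded by, the perturbation argument the paper actually uses. Where a Morawetz estimate does enter the paper is in Proposition~\ref{prop:energy_growth_bound} for the separate \emph{radial} scattering result of Theorem~\ref{thm:scattering_radial}; there it is used to verify the energy hypothesis~\eqref{equ:energy_hypothesis}, it yields only the weighted Lin--Strauss bulk term, and it requires the stronger assumption $v_0 \in H^1_x(\bR^4)$ to control mass. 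Conflating these two steps is where your proposal goes astray.
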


The proof of Theorem~\ref{thm:scattering_conditional} follows the idea of the proof of an analogous conditional scattering result by the authors~\cite[Theorem 1.3]{DLuM} for the forced defocusing cubic nonlinear wave equation. The main ingredients are the a priori bounds for the ``usual'' defocusing cubic NLS on $\bR^4$ from the work of Ryckman-Visan~\cite{Ryckman_Visan} and Visan~\cite{Visan} as well as the development of a suitable perturbation theory within our functional framework for the forced defocusing cubic NLS~\eqref{equ:forced_cubic_nls_scattering}, see Lemma~\ref{lem:long_time_perturbations}.

\medskip 

Furthermore, we establish the following almost sure scattering result for the defocusing cubic NLS~\eqref{equ:ivp_nls} for randomized radially symmetric initial data. 

\begin{theorem} \label{thm:scattering_radial}
Let $\frac{1}{2} < s < 1$ and let $f \in H^s_x(\bR^4)$ be radially symmetric. Let $f^\omega$ be the randomized initial data defined in~\eqref{equ:randomization}. Then for almost every $\omega \in \Omega$, there exists a unique global solution 
 \begin{equation} 
  u(t) \in e^{i t \Delta} f^\omega + C\bigl(\bR; \dot{H}^1_x(\bR^4) \bigr)
 \end{equation}
 to the defocusing cubic nonlinear Schr\"odinger equation
 \begin{equation} \label{equ:cubic_nls_radial_scattering_thm}
  \left\{ \begin{aligned}
   (i \partial_t + \Delta) u &= |u|^2 u \text{ on } \bR \times \bR^4, \\
   u(0) &= f^\omega,
  \end{aligned} \right.
 \end{equation}
 which scatters as $t \to \pm \infty$ in the sense that there exist states $v^{\pm} \in \dot{H}^1_x(\bR^4)$ such that 
 \[
  \lim_{t \to \pm \infty} \, \bigl\| u(t) - e^{i t \Delta}(f^\omega + v^{\pm}) \bigr\|_{\dot{H}^1_x(\bR^4)} = 0.
 \]
\end{theorem}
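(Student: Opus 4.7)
The plan is to reduce this theorem to the conditional scattering result Theorem~\ref{thm:scattering_conditional}. Writing $u(t) = e^{it\Delta} f^\omega + v(t)$, the nonlinear component $v$ must satisfy the forced defocusing cubic NLS \eqref{equ:forced_cubic_nls_scattering} with forcing term $F(t) = e^{it\Delta} f^\omega$ and zero initial data. By the almost sure bound $\|e^{it\Delta} f^\omega\|_{Y(\bR)} < \infty$ from Proposition~\ref{prop:as_bound_Y_norm}, together with Theorem~\ref{thm:as_local_wellposedness}, the forced equation admits a unique local-in-time solution $v \in C(I_\ast;\dot H^1_x(\bR^4))$ on its maximal interval of existence $I_\ast \ni 0$. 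The whole problem therefore reduces to verifying the uniform energy hypothesis \eqref{equ:energy_hypothesis}: once this is secured, Theorem~\ref{thm:scattering_conditional} produces a global solution with finite $X(\bR)$ norm that scatters, and the scattering statement of the theorem then follows directly for $u$.

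The central task is thus to establish $\sup_{t \in I_\ast} E(v(t)) \leq M(\omega) < \infty$ almost surely. My approach is to adapt the double bootstrap strategy of Killip-Murphy-Visan~\cite{KMV} and of the authors' work~\cite{DLuM} to the present functional setting. The first ingredient is an energy identity: differentiating $E(v(t))$ and using the equation for $v$, all terms involving only $v$ cancel by the defocusing cubic structure, leaving a remainder that is a polynomial expression in $v$, $\nabla v$, $F$, and $\nabla F$ (integrated in space-time). These remainder terms are estimated by interpolating Strichartz control of $v$ against the almost sure space-time integrability of $e^{it\Delta} f^\omega$ and its derivatives of appropriate order, where the $\frac{1}{2} < s < 1$ threshold enters through the requisite amount of regularity in the random linear evolution.

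The second ingredient is the Lin-Strauss Morawetz inequality with radial weight $a(x) = |x|$, applied to the full solution $u = e^{it\Delta}f^\omega + v$. This produces a bound on $\int_I \int_{\bR^4} |u(t,x)|^4 / |x| \, dx \, dt$ in terms of the kinetic energy at the endpoints plus error terms coming from the fact that $u$ does not solve the homogeneous cubic NLS but rather carries the rough linear piece. The radial symmetry of the underlying $f$ enters crucially here: it yields improved weighted almost sure bounds on $e^{it\Delta}f^\omega$ with $|x|$-weights (of the type developed for the NLW in~\cite{DLuM} and adapted to the NLS in~\cite{KMV}), which are exactly what is needed to absorb the Morawetz error terms generated by the forcing. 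Note that although $u$ itself is not radial, the radial structure of $f$ is inherited at the level of the unit-scale magnitudes $|P_k f|$, which is all the improved estimates require.

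The main obstacle is synchronizing the energy identity and the Morawetz estimate in a self-consistent bootstrap on $I_\ast$: the energy bound is needed to run the Morawetz argument, while the Morawetz bound is in turn used to control the forcing-type terms appearing on the right-hand side of the energy identity. Closing this circular dependency — while ensuring that every error contribution involving $e^{it\Delta} f^\omega$ is absorbed by the almost sure radial estimates, and that all space-time norms remain inside the functional framework of Section~\ref{sec:functional_framework} — constitutes the core technical work. A standard continuity argument in $t \in I_\ast$ then promotes the bootstrap to an unconditional bound, which delivers \eqref{equ:energy_hypothesis} and completes the reduction to Theorem~\ref{thm:scattering_conditional}.
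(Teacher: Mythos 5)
Your reduction to the conditional scattering result (Theorem~\ref{thm:scattering_conditional}) via Proposition~\ref{prop:as_bound_Y_norm} is exactly what the paper does, and your identification of the central task — securing the uniform energy bound \eqref{equ:energy_hypothesis} via an energy identity coupled to a Morawetz estimate — is also the right strategy.

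However, there is a genuine gap in the Morawetz step. You propose applying the Lin–Strauss Morawetz inequality with weight $a(x) = |x|$ \emph{to the full solution} $u = e^{it\Delta} f^\omega + v$, claiming this bounds $\int_I \int_{\bR^4} |u|^4/|x|\,dx\,dt$ with error terms coming ``from the fact that $u$ does not solve the homogeneous cubic NLS.'' This is wrong on two counts. First, $u$ \emph{does} solve the homogeneous cubic NLS $(i\partial_t+\Delta)u = |u|^2 u$ — that is precisely equation~\eqref{equ:cubic_nls_radial_scattering_thm}. Second, and fatally, the Morawetz action $m(t) = 2\,\im \int_{\bR^4} \partial_k a\, \partial^k u\, \bar u\,dx$ is not well defined for $u$: since $f^\omega \in H^s_x$ with $s < 1$, the gradient $\nabla u = \nabla e^{it\Delta}f^\omega + \nabla v$ is not in $L^2_x$ almost surely, so the boundary bound $|m(t)| \lesssim \|\nabla u\|_{L^2}\|u\|_{L^2}$ that closes the Morawetz argument fails. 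The paper avoids this by computing the Morawetz action for the finite-energy component $v$ alone, viewing the forced cubic NLS as $(i\partial_t+\Delta)v = |v|^2 v + H$ with source $H := |F+v|^2(F+v) - |v|^2 v$; this yields the bound~\eqref{equ:nls_morawetz} on $\int_I\int_{\bR^4}|v|^4/|x|\,dx\,dt$ with error terms generated by $H$, which are then absorbed via the almost sure weighted bounds on $\langle x\rangle^{1/2} e^{it\Delta}f^\omega$ from Proposition~\ref{prop:weighted_nabla_L2Linfty_schroedinger}. You would also want to note the paper's use of the linearity of $F$ (inserting $\partial_t F = i\Delta F$ and integrating by parts) to trade $\partial_t F$ for $\nabla F$ in the energy-flux identity, and of mass conservation for $F+v$ to get the uniform $L^2_x$ bound~\eqref{equ:mass_bound} on $v$ — both are load-bearing but missing from your sketch.
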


\begin{remark}
Analogously to Remark \ref{rem:uniqueness}, uniqueness in Theorem~\ref{thm:scattering_radial} holds in the sense that upon writing 
 \[
  u(t) = e^{it \Delta} f^\omega + v(t),
 \]
 there exists a unique global solution 
 \[
  v \in C \bigl( \bR; \dot{H}^1_x(\bR^4) \bigr) \cap X(\bR)
 \]
 to the forced defocusing cubic nonlinear Schr\"odinger equation
 \begin{equation*} 
  \left\{ \begin{aligned}
   (i \partial_t + \Delta) v &= | e^{it\Delta} f^\omega + v |^2 (e^{it\Delta} f^\omega + v) \text{ on } \bR \times \bR^4, \\
   v(0) &= 0,
  \end{aligned} \right.
 \end{equation*}
 where the function space $X(I)$ is defined in Section~\ref{sec:functional_framework}.
\end{remark}

We emphasize that prior to this work almost sure scattering for the defocusing cubic NLS~\eqref{equ:cubic_nls_radial_scattering_thm} in four space dimensions had been established by Killip-Murphy-Visan~\cite{KMV} for randomized radially symmetric initial data in $H^s_x(\bR^4)$ for the more restrictive range of regularities $\frac{5}{6} < s < 1$. 

\medskip 

In view of Theorem~\ref{thm:as_local_wellposedness} and Theorem~\ref{thm:scattering_conditional}, the proof of Theorem~\ref{thm:scattering_radial} reduces to proving the uniform-in-time energy bound~\eqref{equ:energy_hypothesis} for the nonlinear component of the solution. To this end we follow quite closely the scheme introduced by the authors~\cite{DLuM} of combining energy growth estimates with suitable approximate Morawetz estimates for the forced cubic equation, as well as incorporating further developments by Killip-Murphy-Visan~\cite{KMV}. The main novelty of our proof in comparison with~\cite{KMV} is the introduction of new almost sure bounds for weighted $L^2_t L^\infty_x(\bR\times\bR^4)$ norms of the derivative of the free evolution of the randomized radially symmetric initial data in Proposition~\ref{prop:weighted_nabla_L2Linfty_schroedinger}. The proof of these almost sure bounds hinges on a delicate combination of local smoothing estimates for the Schr\"odinger evolution and a ``radialish'' Sobolev type estimate for the square-function associated with the unit-scale frequency projections of a radially symmetric function from Lemma~\ref{lem:radialish_sobolev}. These improved almost sure bounds ultimately allow us to reach lower regularities for the random initial data and enable us to use the standard Lin-Strauss Morawetz weight $a(x) := |x|$ for our approximate Morawetz estimate in contrast to the weight $a(x) := \langle x \rangle$ used in~\cite{KMV}.

\medskip 

Finally, as a byproduct of our proof of Theorem~\ref{thm:scattering_radial}, we obtain an improvement of our almost sure scattering result for the defocusing cubic nonlinear wave equation in four space dimensions~\cite[Theorem 1.9]{DLuM}. In a very similar manner to the proof of Proposition~\ref{prop:weighted_nabla_L2Linfty_schroedinger}, we can combine local energy decay estimates for the wave equation and the aforementioned ``radialish'' Sobolev type estimate to establish almost sure bounds for weighted $L^2_t L^\infty_x(\bR\times\bR^4)$ norms of the free wave evolution of randomized radially symmetric data. These almost sure bounds for the free wave evolution are an important improvement over the authors' related almost sure bounds~\cite[Proposition 5.4]{DLuM}, and lead to a significant strengthening, in the form of a lower regularity threshold, of the almost sure scattering result for the defocusing energy-critical nonlinear wave equation in four space dimensions for randomized radially symmetric data from~\cite[Theorem 1.9]{DLuM}.

To state the precise result, we first have to recall the randomization of a pair of real-valued functions $(f_0, f_1) \in  H^s_x(\bR^4) \times H^{s-1}_x(\bR^4)$ as in \cite{DLuM}. Specifically,  we let $\{ (g_k, h_k) \}_{k \in \bZ^4}$  be a sequence of zero-mean, complex-valued Gaussian random variables on a probability space $(\Omega, {\mathcal A}, \bP)$ with the symmetry condition $g_{-k} = \overline {g_k}$ and $h_{-k} = \overline {h_k}$  for all $k \in \bZ^4$. We assume that $\{g_0, \textup{Re}(g_k), \textup{Im}(g_k)\}_{k \in \cI}$ are independent, zero-mean, real-valued random variables, where $\cI \subset \bZ^4$ is such that we have a \textit{disjoint} union $\bZ^4 = \cI \cup (-\cI) \cup \{0\}$, and similarly for the $h_k$. Then we set
\begin{equation}\label{equ:wave_randomization}
 (f_0^{\omega}, f_1^{\omega}) := \biggl( \sum_{k \in \bZ^4} g_k(\omega) P_k f_0, \sum_{k \in \bZ^4} h_k(\omega) P_k f_1 \biggr),
\end{equation}
which we note is real-valued due to the imposed symmetry conditions on the Gaussian random variables. We denote the free wave evolution of a random initial data pair $(f_0^\omega, f_1^\omega)$ by
\[
 S(t)(f^\omega_0, f^\omega_1) = \cos (t |\nabla|) f^\omega_0 + \frac{\sin(t |\nabla|)}{|\nabla|} f^\omega_1.
\]
Then we obtain the following almost sure scattering result for the defocusing energy-critical nonlinear wave equation in four space dimensions for randomized radially symmetric data whose proof is sketched in the appendix. The (much lower) regularity threshold should be compared with the regularity restriction $\frac{1}{2} < s < 1$ from the authors' previous work~\cite[Theorem 1.9]{DLuM}.
\begin{theorem} \label{thm:scattering_nlw_radial}
Let $0 < s < 1$. For real-valued radially symmetric $(f_0, f_1) \in H^s_x(\bR^4) \times H^{s-1}_x(\bR^4)$, let $(f_0^\omega, f_1^\omega)$ be the randomized initial data defined in~\eqref{equ:wave_randomization}. Then for almost every $\omega \in \Omega$, there exists a unique global solution 
 \begin{equation} 
  (u, \partial_t u) \in \bigl( S(t)(f_0^\omega, f_1^\omega), \partial_t S(t)(f_0^\omega, f_1^\omega) \bigr) + C\bigl(\bR; \dot{H}^1_x(\bR^4) \times L^2_x(\bR^4)\bigr)
 \end{equation}
 to the energy-critical defocusing nonlinear wave equation
 \begin{equation} 
  \left\{ \begin{aligned}
   -\partial_t^2 u + \Delta u &= u^3 \text{ on } \bR \times \bR^4, \\
   (u, \partial_t u)|_{t=0} &= (f_0^\omega, f_1^\omega),
  \end{aligned} \right.
 \end{equation}
 which scatters to free waves as $t \to \pm \infty$ in the sense that there exist states $(v_0^{\pm}, v_1^{\pm}) \in \dot{H}^1_x(\bR^4) \times L^2_x(\bR^4)$ such that 
 \[
  \lim_{t \to \pm \infty} \, \bigl\| \nabla_{t,x} \bigl( u(t) - S(t)(f_0^\omega + v_0^\pm, f_1^\omega + v_1^\pm) \bigr) \bigr\|_{L^2_x(\bR^4)} = 0.
 \]
\end{theorem}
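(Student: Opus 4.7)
The plan is to adapt the scheme used by the authors in \cite{DLuM} for the cubic NLW and push it down to the lower regularity range $0 < s < 1$ by means of improved almost sure bounds on the free wave evolution $F(t) := S(t)(f_0^\omega, f_1^\omega)$. Writing $u(t) = F(t) + v(t)$, the new unknown $v$ satisfies the forced defocusing cubic nonlinear wave equation
\[
 -\partial_t^2 v + \Delta v = (F + v)^3
\]
with zero initial data. A first step is almost sure local well-posedness for this forced equation at the energy regularity, which follows along the lines of \cite{DLuM} once sufficient almost sure space-time integrability of $F$ is available. Global existence and scattering then reduce, via the wave-equation analog of Theorem~\ref{thm:scattering_conditional} from \cite{DLuM}, to proving a uniform-in-time energy bound
\[
 \sup_{t \in I_\ast} E(v(t)) < \infty
\]
on the maximal lifespan $I_\ast$ of $v$.

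To derive this energy bound, I would combine an energy growth identity for $v$ with an approximate Morawetz estimate for the forced equation, following \cite{DLuM} together with refinements from \cite{KMV}. The multilinear error terms generated by the forcing are to be estimated almost surely in terms of various space-time norms of $F$ and $\nabla F$, using standard large deviation bounds for the Gaussian series defining $(f_0^\omega, f_1^\omega)$. The key simplification afforded by the improved bounds is that one can use the standard Lin-Strauss weight $a(x) = |x|$ in the Morawetz inequality, rather than the regularized weight $a(x) = \langle x \rangle$ used in \cite{KMV}; this yields a cleaner monotonicity formula and removes the main source of the previous regularity restriction.

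The heart of the argument is the construction of an almost sure weighted $L^2_t L^\infty_x$ bound for $\nabla F$, the wave-equation counterpart of Proposition~\ref{prop:weighted_nabla_L2Linfty_schroedinger}. My approach combines two ingredients: first, local energy decay for the free wave equation, which yields $L^2_t L^2_x$ control of $\nabla F$ on dyadic shells with a bound depending favorably on the shell radius; second, the radialish Sobolev estimate of Lemma~\ref{lem:radialish_sobolev}, which controls the $L^\infty_x$ norm of the square function of the unit-scale frequency projections of a radial function much more efficiently than a generic Bernstein-type inequality would allow. After summing dyadically over annuli and applying standard large deviation estimates for the Gaussian coefficients, the desired almost sure bound should follow for all $s > 0$.

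The main obstacle lies in the careful execution of this last step: the interplay between local energy decay, the radialish Sobolev inequality, and the choice of dyadic weights must be calibrated so that no derivative is lost relative to the $H^s_x \times H^{s-1}_x$ regularity of the datum, even as $s$ approaches $0$. Once this weighted bound is secured, the remainder of the argument, namely local well-posedness, the combined energy and Morawetz estimates, the perturbation theory, and the passage to scattering states, proceeds in close parallel with \cite{DLuM, KMV}, and the full regularity range $0 < s < 1$ asserted in Theorem~\ref{thm:scattering_nlw_radial} follows.
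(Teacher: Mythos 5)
Your overall strategy matches the paper: write $u = F + v$ with $F(t) = S(t)(f_0^\omega, f_1^\omega)$, reduce to the forced cubic NLW for $v$, and then invoke the conditional scattering and energy-bound theorems from \cite{DLuM}, with the entire improvement coming from new almost sure space-time bounds on $F$ obtained by combining local energy decay with the ``radialish'' Sobolev estimate of Lemma~\ref{lem:radialish_sobolev} and large deviation estimates. That much is exactly right, and you correctly single out Proposition~\ref{prop:weighted_nabla_L2Linfty_schroedinger} as the model for the proof.

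However, there is a genuine gap: you claim the heart of the argument is an almost sure weighted $L^2_t L^\infty_x$ bound for $\nabla F$, mirroring the Schr\"odinger case. For the wave equation this is the wrong target, and it would wreck the claimed range $0 < s < 1$. Local energy decay (Lemma~\ref{prop:local_energy_decay}) gives $R^{-1/2}\| e^{\pm it|\nabla|} g \|_{L^2_t L^2_x(\bR\times\{|x|\le R\})} \lesssim \|g\|_{L^2_x}$ with no derivative gain, so applying it to $g = \nabla f_N$ costs a full derivative and would require $f \in \dot H^1_x$, i.e.\ $s \geq 1$. The paper's Proposition~\ref{prop:weighted_L2Linfty_wave} bounds $\langle x\rangle^\alpha F$ itself, not $\nabla F$, precisely so that local energy decay applies at the $L^2_x$ level and any $s > 0$ suffices. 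The reason this suffices for NLW is structural: the energy-growth input (Theorem 1.2 of \cite{DLuM}, restated as Theorem~\ref{thm:energy_bound_nlw}) needs only $F \in L^3_t L^6_x$ and $|x|^{1/2} F \in L^2_t L^\infty_x$, with no condition on $\nabla F$. This contrasts sharply with the NLS case (Proposition~\ref{prop:energy_growth_bound}), where substituting $\partial_t F = i\Delta F$ and integrating by parts in the energy identity unavoidably produces $\nabla F$ terms, which is exactly why the NLS version (Proposition~\ref{prop:weighted_nabla_L2Linfty_schroedinger}) is for $\nabla F$ and is restricted to $s > 1/2$ by the half-derivative gain of Schr\"odinger local smoothing. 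Your remark about $a(x)=|x|$ versus $\langle x\rangle$ is likewise misplaced for NLW, since \cite{DLuM} already uses the Lin--Strauss weight there; that comparison pertains to the NLS analysis relative to \cite{KMV}.
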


\begin{remark}
 In the statement of Theorem~\ref{thm:scattering_nlw_radial} uniqueness holds in the following sense: Writing
 \[
  (u, \partial_t u) = \bigl( S(t)(P_{>4} f_0^\omega, P_{>4} f_1^\omega), \partial_t S(t)(P_{>4} f_0^\omega, P_{>4} f_1^\omega) \bigr) + (v, \partial_t v),
 \]
 there exists a unique global solution 
 \[
  (v, \partial_t v) \in C \bigl(\bR; \dot{H}^1_x(\bR^3)\bigr) \cap L^{3}_{t, loc} L^{6}_x(\bR\times\bR^4) \times C\bigl(\bR; L^2_x(\bR^4)\bigr)
 \]
 to the forced cubic nonlinear wave equation
 \begin{equation} 
  \left\{ \begin{aligned}
   -\partial_t^2 v + \Delta v &= \bigl( S(t)(P_{>4}  f_0^\omega, P_{>4} f_1^\omega) + v \bigr)^{3} \text{ on } \bR \times \bR^4, \\
   (v, \partial_t v)|_{t=0} &=  (P_{\leq 4}  f_0^\omega, P_{\leq 4} f_1^\omega),
  \end{aligned} \right.
 \end{equation}
 where $P_{\leq 4}$ and $P_{> 4}$ are the usual dyadic Littlewood-Paley projections defined in Section \ref{sec:preliminaries}.
\end{remark}

\medskip 

\noindent {\it Organization of the paper.} In Section~\ref{sec:preliminaries} we set up some notation used throughout this paper. In Section~\ref{sec:functional_framework} we introduce the functional framework for the proofs of the almost sure local well-posedness result of Theorem~\ref{thm:as_local_wellposedness} and of the conditional scattering result of Theorem~\ref{thm:scattering_conditional}. In Section~\ref{sec:trilinear_estimates} we develop the key trilinear estimates to handle all possible interactions in the forced cubic nonlinearity within this functional framework. In Section~\ref{sec:as_bounds_free_evolution} we establish various almost sure bounds on the free evolution of the random data. Finally, we provide the proofs of Theorem~\ref{thm:as_local_wellposedness} in Section~\ref{sec:as_lwp}, of Theorem~\ref{thm:scattering_conditional} in Section~\ref{sec:conditional_scattering}, and of Theorem~\ref{thm:scattering_radial} in Section~\ref{sec:radial_scattering}.

\section{Notation and preliminaries} \label{sec:preliminaries}

We denote by $C > 0$ an absolute constant which only depends on fixed parameters and whose value may change from line to line. We write $X \lesssim Y$ to indicate that $X \leq C Y$ and we use the notation $X \sim Y$ if $X \lesssim Y \lesssim X$. Moreover, we write $X \lesssim_\nu Y$ to indicate that the implicit constant depends on a parameter $\nu$ and we write $X \ll Y$ if the implicit constant should be regarded as small. We also use the notation $\langle \nabla \rangle := (1-\Delta)^{\frac{1}{2}}$, $\langle x \rangle := (1 + |x|^2)^{\frac{1}{2}}$ as well as $\langle N \rangle := (1+N^2)^{\frac{1}{2}}$.

Apart from the unit-scale frequency projections $P_k$, $k \in \bZ^4$, defined in~\eqref{equ:unit_scale_projection}, we will also make frequent use of the usual dyadic Littlewood-Paley projections $P_N$, $N \in 2^{\bZ}$, which we introduce next. Let $\varphi \in C_c^\infty(\bR^4)$ be a smooth bump function such that $\varphi(\xi) = 1$ for $|\xi| \leq 1$ and $\varphi(\xi) = 0$ for $|\xi| > 2$. Then we define for every dyadic integer $N \in 2^{\bZ}$,
\[
 \widehat{P_N f}(\xi) := \bigl( \varphi(\xi/N) - \varphi(2 \xi/N) \bigr) \hat{f}(\xi).
\]
In addition, for each dyadic integer $N \in 2^{\bZ}$ we set 
\[
 \widehat{P_{\leq N} f}(\xi) := \varphi(\xi/N) \hat{f}(\xi), \quad \widehat{P_{>N} f}(\xi) := \bigl(1-\varphi(\xi/N)\bigr) \hat{f}(\xi).
\]
We denote by $\widetilde{P}_N := P_{\leq 8N} - P_{\leq N/8}$ fattened Littlewood-Paley projections with the property that $P_N = P_N \widetilde{P}_N$. Moreover, we recall the following Bernstein estimates for the dyadic Littlewood-Paley projections.
\begin{lemma}
 Let $N \in 2^{\bZ}$. For any $1 \leq r_1 \leq r_2 \leq \infty$ and any $s \geq 0$, it holds that
 \begin{align*}
  \bigl\| P_N f \bigr\|_{L^{r_2}_x(\bR^4)} &\lesssim N^{\frac{4}{r_1}-\frac{4}{r_2}} \bigl\| P_N f \bigr\|_{L^{r_1}_x(\bR^4)}, \\
  \bigl\| P_{\leq N} f \bigr\|_{L^{r_2}_x(\bR^4)} &\lesssim N^{\frac{4}{r_1}-\frac{4}{r_2}} \bigl\| P_{\leq N} f \bigr\|_{L^{r_1}_x(\bR^4)}, \\
  \bigl\| |\nabla|^{\pm s} P_N f \bigr\|_{L^{r_1}_x(\bR^4)} &\sim N^{\pm s} \| P_N f \|_{L^{r_1}_x(\bR^4)}.
 \end{align*}
\end{lemma}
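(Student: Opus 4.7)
The plan is to reduce each of the three inequalities to an application of Young's convolution inequality combined with a scaling argument for Schwartz kernels. All three statements are standard and the proof is almost entirely mechanical; I would not expect any genuinely difficult step.

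First I would represent $P_N$ as a convolution operator. Set $m(\xi) := \varphi(\xi) - \varphi(2\xi)$, which is a smooth, compactly supported bump adapted to the annulus $\{|\xi| \sim 1\}$, and let $K := \cF^{-1}(m)$, which is a Schwartz function. Then $(P_N f)(x) = (K_N \ast f)(x)$ with $K_N(x) = N^4 K(Nx)$, and by the change of variables $y = Nx$ one immediately gets $\|K_N\|_{L^q_x(\bR^4)} = N^{4 - 4/q}\|K\|_{L^q_x(\bR^4)}$ for every $1 \leq q \leq \infty$. To obtain the Bernstein inequality, I would pick a second smooth bump $\widetilde{m} \in C_c^\infty(\bR^4)$ that equals $1$ on the support of $m$, so that setting $\widetilde{K}_N := \cF^{-1}\bigl(\widetilde{m}(\cdot/N)\bigr)$ one has $P_N f = \widetilde{K}_N \ast P_N f$. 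Young's inequality with $1 + \tfrac{1}{r_2} = \tfrac{1}{q} + \tfrac{1}{r_1}$ then gives
\begin{equation*}
 \|P_N f\|_{L^{r_2}_x(\bR^4)} \leq \|\widetilde{K}_N\|_{L^q_x(\bR^4)}\, \|P_N f\|_{L^{r_1}_x(\bR^4)} \lesssim N^{4(1/r_1 - 1/r_2)}\, \|P_N f\|_{L^{r_1}_x(\bR^4)},
\end{equation*}
since $4 - 4/q = 4(1/r_1 - 1/r_2)$.

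For the second inequality involving $P_{\leq N}$, the argument is identical: $P_{\leq N}$ is convolution with $\cF^{-1}(\varphi(\cdot/N))$, and choosing a fattened bump $\widetilde\varphi$ that equals $1$ on $\text{supp}(\varphi)$ allows us to write $P_{\leq N} = \widetilde{Q}_N P_{\leq N}$ where $\widetilde{Q}_N$ has convolution kernel with the same scaling $N^{4-4/q}$ in $L^q$. Young's inequality finishes the bound.

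For the third statement I would argue analogously with the multiplier $|\xi|^{\pm s} \widetilde{m}(\xi/N)$. Writing this as $N^{\pm s} \cdot |\xi/N|^{\pm s} \widetilde{m}(\xi/N)$ and noting that $|\xi|^{\pm s}\widetilde{m}(\xi)$ is a Schwartz function (smooth because $\widetilde{m}$ vanishes near the origin for $s > 0$; when $s = 0$ there is nothing to prove), its inverse Fourier transform is again Schwartz, so after the rescaling the convolution kernel has $L^1$ norm of order $N^{\pm s}$. Young's inequality with $L^1 \ast L^{r_1} \to L^{r_1}$ gives $\||\nabla|^{\pm s} P_N f\|_{L^{r_1}_x} \lesssim N^{\pm s}\|P_N f\|_{L^{r_1}_x}$. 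The reverse inequality follows from applying the same bound with $\mp s$ in place of $\pm s$ to the function $|\nabla|^{\pm s} P_N f$, using that $P_N = P_N \widetilde{P}_N$ reinserts the frequency localization. The only mild subtlety, and in a sense the one point worth care, is verifying that $|\xi|^{\pm s}\widetilde{m}(\xi)$ has a Schwartz inverse Fourier transform uniformly in the bump choice; this is handled by taking $\widetilde m$ supported in an annulus bounded away from the origin.
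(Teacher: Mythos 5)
Your proof is correct and is the standard rescaling-plus-Young argument for Bernstein inequalities; the paper states this lemma without proof (it is a well-known background fact), so there is nothing in the text to compare against. You have correctly identified the only small point of care, namely that for the derivative bound the fattened multiplier must be supported in an annulus away from the origin so that $|\xi|^{\pm s}\widetilde{m}(\xi)$ is smooth, and your reduction of the reverse inequality to the forward one by replacing $\pm s$ with $\mp s$ is exactly the standard trick.
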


We let $\{ \be_1, \be_2, \be_3, \be_4 \}$ be an orthonormal basis of $\bR^4$ and henceforth fix our coordinate system accordingly. To formulate certain local smoothing estimates for the Schr\"odinger evolution, we will use smooth frequency projections that localize the frequency variable in the direction of an element of the orthonormal basis $\{ \be_1, \be_2, \be_3, \be_4 \}$. To this end let $\phi \in C_c^\infty(\bR)$ be a smooth bump function supported around $\sim 1$. For every dyadic integer $N \in 2^{\bZ}$ and for every $\ell = 1, \ldots, 4$, we define
\[
 \widehat{ P_{N, \be_\ell} f}(\xi) := \phi\bigl( |\xi \cdot \be_\ell| / N \bigr) \hat{f}(\xi).
\]
We may assume that the bump function $\phi$ is chosen so that for all dyadic integers $N \in 2^{\bZ}$, the frequency projections satisfy
\begin{equation} \label{equ:choice_directional_projections}
 (1 - P_{N, \be_1}) (1 - P_{N, \be_2}) (1 - P_{N, \be_3}) (1 - P_{N, \be_4}) \, P_N = 0.
\end{equation}

In the proof of a weighted almost sure bound in Proposition~\ref{prop:weighted_nabla_L2Linfty_schroedinger} we will have to decompose physical space dyadically. To this end we introduce the spatial cut-off functions
\[
 \chi_0(x) := \varphi(x)
\]
and for every integer $j \geq 1$,
\[
 \chi_j(x) := \varphi(2^{-j} x) - \varphi(2^{-(j-1)} x),
\]
where $\varphi \in C_c^\infty(\bR^4)$ is the smooth bump function introduced further above. Moreover, for any integer $j \geq 0$ we define
\[
 \chi_{\leq j}(x) := \varphi(2^{-j} x), \quad \chi_{>j}(x) := 1 - \varphi(2^{-j}x).
\]
We denote by $\widetilde{\chi}_j(x)$ slightly fattened cut-offs satisfying $\chi_j(x) = \chi_j(x) \widetilde{\chi}_j(x)$ for any integer $j \geq 0$.

\section{Functional framework} \label{sec:functional_framework}

In this section we introduce the precise functional framework that we will use in the proofs of the almost sure local well-posedness result of Theorem~\ref{thm:as_local_wellposedness} and the conditional scattering result of Theorem~\ref{thm:scattering_conditional}. 

We begin by recalling the usual Strichartz estimates for the Schr\"odinger propagator in four space dimensions. An exponent pair $(q,r)$ is called \emph{admissible} if $2 \leq q, r \leq \infty$ and the following scaling condition is satisfied
\[
 \frac{2}{q} + \frac{4}{r} = 2.
\]
\begin{proposition}(Strichartz estimates; \cite{Strichartz, Ginibre_Velo_92_CMP, KeelTao}) \label{prop:strichartz}
 Let $I \subset \bR$ be a time interval and let $(q,r), (\tilde{q}, \tilde{r})$ be admissible pairs. Then we have
 \begin{align} 
  \bigl\| e^{it\Delta} f \bigr\|_{L^q_t L^r_x(I\times\bR^4)} &\lesssim \|f\|_{L^2_x(\bR^4)}, \label{equ:strichartz_estimate}\\
  \biggl\| \int_I e^{-is\Delta} h(s,\cdot) \, ds \biggr\|_{L^2_x(\bR^4)} &\lesssim \|h\|_{L^{q'}_t L^{r'}_x(I\times\bR^4)}.
 \end{align}
 Assuming that $0 \in I$ we also have 
 \begin{align}
  \biggl\| \int_0^t e^{i(t-s)\Delta} h(s, \cdot) \, ds \biggr\|_{L^q_t L^r_x(I\times\bR^4)} &\lesssim \|h\|_{L^{\tilde{q}'}_t L^{\tilde{r}'}_x(I\times\bR^4)}.
 \end{align}
\end{proposition}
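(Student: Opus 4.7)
The Strichartz estimates recalled here are classical and follow a well-trodden route: dispersive decay combined with $L^2$ conservation, an abstract $TT^\ast$ duality argument, and Hardy--Littlewood--Sobolev (plus the Keel--Tao bilinear interpolation for the endpoint). My plan is to execute this standard pipeline, emphasizing only what is specific to dimension four.

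\emph{Step 1: the dispersive estimate and interpolation.} I would start from the explicit Schrödinger kernel
\[
 \bigl(e^{it\Delta} f\bigr)(x) = \frac{1}{(4\pi i t)^{2}} \int_{\bR^4} e^{i|x-y|^2/(4t)} f(y) \, dy,
\]
which gives $\bigl\| e^{it\Delta} f \bigr\|_{L^\infty_x(\bR^4)} \lesssim |t|^{-2} \|f\|_{L^1_x(\bR^4)}$, while $L^2$-unitarity provides $\bigl\| e^{it\Delta} f \bigr\|_{L^2_x(\bR^4)} = \|f\|_{L^2_x(\bR^4)}$. Complex interpolation between these two bounds yields
\[
 \bigl\| e^{it\Delta} f \bigr\|_{L^r_x(\bR^4)} \lesssim |t|^{-2(1 - 2/r)} \|f\|_{L^{r'}_x(\bR^4)} \quad \text{for all } 2 \leq r \leq \infty.
\]

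\emph{Step 2: the $TT^\ast$ argument and the non-endpoint case.} The three inequalities in the statement are mutually equivalent via duality and $TT^\ast$, since $\bigl(e^{it\Delta}\bigr)^\ast = e^{-it\Delta}$. It therefore suffices to prove the homogeneous estimate \eqref{equ:strichartz_estimate}; equivalently, by $TT^\ast$, to prove
\[
 \biggl\| \int_{\bR} e^{i(t-s)\Delta} h(s,\cdot) \, ds \biggr\|_{L^q_t L^r_x} \lesssim \|h\|_{L^{q'}_t L^{r'}_x}.
\]
For the non-endpoint case $q > 2$, I would insert the $L^{r'} \to L^r$ bound from Step~1 into the inner integral, apply Minkowski's inequality, and then invoke the one-dimensional Hardy--Littlewood--Sobolev inequality in the time variable with kernel $|t-s|^{-2(1-2/r)}$. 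The admissibility relation $2/q + 4/r = 2$ is exactly what makes the exponents in Hardy--Littlewood--Sobolev fit, giving the desired bound. The retarded estimate then follows from the Christ--Kiselev lemma when the pairs $(q,r)$ and $(\tilde q, \tilde r)$ are non-endpoint.

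\emph{Step 3: the endpoint.} The main obstacle is the endpoint case, which in dimension four corresponds to $(q,r) = (2,4)$; there Hardy--Littlewood--Sobolev fails, and Christ--Kiselev is not available. Here I would invoke the Keel--Tao bilinear interpolation argument: decompose the bilinear form
\[
 T(h, k) := \int\!\!\int_{s < t} \bigl\langle e^{i(t-s)\Delta} h(s), k(t) \bigr\rangle_{L^2_x} \, ds \, dt
\]
dyadically according to $|t-s| \sim 2^j$, use the dispersive bound on each piece at two interpolation exponents straddling the target, and sum the resulting geometric series. This yields the endpoint Strichartz estimate, and then mixed non-endpoint/endpoint inhomogeneous bounds follow by combining the two regimes. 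Beyond the endpoint itself, the argument is purely mechanical; the interpolation bookkeeping in Keel--Tao is the step I expect to require the most care.
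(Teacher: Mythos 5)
The paper does not prove Proposition~\ref{prop:strichartz}: it is stated as a known result with citations to Strichartz, Ginibre--Velo, and Keel--Tao, and no proof follows. Your sketch is a correct reconstruction of the classical argument. The dispersive bound in $d=4$ from the explicit kernel, the reduction by duality and $TT^\ast$, the Hardy--Littlewood--Sobolev step in the time variable for $q>2$ (the admissibility relation $2/q + 4/r = 2$ exactly produces the HLS exponent $2/q = 2(1-2/r)$), and the Keel--Tao dyadic bilinear interpolation for the endpoint $(q,r) = (2,4)$ are all in order.

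One small point of care: the Christ--Kiselev lemma requires $\tilde q' < q$, so it cannot be used when both pairs are the endpoint $(2,4)$, and it is also not the cleanest tool for the mixed case (one pair endpoint, one not). The Keel--Tao framework in fact gives the retarded estimates directly, endpoint and all, without any appeal to Christ--Kiselev; if you prefer the Christ--Kiselev route you should restrict it explicitly to the strictly non-endpoint retarded cases $\tilde q' < q$, and defer the rest to Keel--Tao. As a sketch your proposal already signals awareness of this, but it is worth being precise since that is where the endpoint theory genuinely departs from the classical interpolation pipeline.
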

Next we introduce the lateral spaces $L^{p,q}_{\be_\ell}$ where we recall that $\{ \be_1, \be_2, \be_3, \be_4 \}$ is a fixed orthonormal basis of $\bR^4$. Given a time interval $I \subset \bR$, we define the lateral spaces $L^{p,q}_{\be_\ell}(I\times\bR^4)$ for $\ell = 1$ with norms
\begin{equation*}
 \|h\|_{L^{p,q}_{\be_1}(I\times\bR^4)} := \biggl( \int_{\bR_{x_1}} \biggl( \int_I \int_{\bR^3_{x'}} |h(t, x_1, x')|^q \, dx' \, dt \biggr)^{\frac{p}{q}} \, dx_1 \biggr)^{\frac{1}{p}}
\end{equation*}
with analogous definitions for $\ell = 2, 3, 4$ and the usual modifications when $p=\infty$ or $q=\infty$. The most important members of this family of spaces are the local smoothing space $L^{\infty,2}_{\be_\ell}$ and the inhomogeneous local smoothing space $L^{1,2}_{\be_\ell}$, which allow us to gain derivatives. In nonlinear estimates these are used along with the maximal function space~$L^{2,\infty}_{\be_\ell}$. The next proposition summarizes the estimates satisfied by the Schr\"odinger propagator in four space dimensions in the lateral spaces. These estimates follow from the local smoothing and maximal function estimates that were established by Ionescu-Kenig~\cite{Ionescu-KenigI, Ionescu-KenigII}.
\begin{proposition} \label{prop:lateral_spaces}
 Let $I \subset \bR$ be a time interval. Let $2 \leq p, q \leq \infty$ with $\frac{1}{p} + \frac{1}{q} = \frac{1}{2}$, $N \in 2^{\bZ}$ any dyadic integer and $\ell \in \{1, 2, 3, 4\}$. Then it holds that
 \begin{align}
  \bigl\| e^{i t \Delta} P_N f \bigr\|_{L^{p,q}_{\be_\ell}(I\times\bR^4)} &\lesssim N^{\frac{4}{p}-\frac{1}{2}} \|f\|_{L^2_x(\bR^4)}, \quad p \leq q, \label{equ:lateral_spaces_pleqq} \\
  \bigl\| e^{i t \Delta} P_{N, \be_\ell} P_N f \bigr\|_{L^{p,q}_{\be_\ell}(I\times\bR^4)} &\lesssim N^{\frac{4}{p}-\frac{1}{2}} \|f\|_{L^2_x(\bR^4)}, \quad p \geq q. \label{equ:lateral_spaces_pgeqq}
 \end{align}
 By duality we also have that
 \begin{align}
  \biggl\| \int_I e^{-is\Delta} P_N h(s, \cdot) \, ds \biggr\|_{L^2_x(\bR^4)} &\lesssim N^{\frac{4}{p}-\frac{1}{2}} \|h\|_{L^{p',q'}_{\be_\ell}(I\times\bR^4)}, \quad p \leq q, \\
  \biggl\| \int_I e^{-is\Delta} P_{N, \be_\ell} P_N h(s, \cdot) \, ds \biggr\|_{L^2_x(\bR^4)} &\lesssim N^{\frac{4}{p}-\frac{1}{2}} \|h\|_{L^{p',q'}_{\be_\ell}(I\times\bR^4)}, \quad p \geq q. \label{equ:lateral_spaces_pgeqq_dual}
 \end{align}
\end{proposition}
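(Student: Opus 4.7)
\medskip

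\noindent \emph{Proof plan for Proposition~\ref{prop:lateral_spaces}.} The plan is to reduce the full two-parameter family of estimates to three endpoint cases and then obtain everything else by complex interpolation of mixed-norm Lebesgue spaces. The three endpoints, which correspond to the pairs $(p,q) \in \{(2,\infty), (\infty, 2), (4,4)\}$ on the line $\frac{1}{p}+\frac{1}{q} = \frac{1}{2}$, are:
\begin{itemize}
\item[(i)] The \emph{maximal function} endpoint $(p,q) = (2,\infty)$:
\[
 \bigl\| e^{it\Delta} P_N f \bigr\|_{L^{2,\infty}_{\be_\ell}(I\times\bR^4)} \lesssim N^{3/2} \|f\|_{L^2_x(\bR^4)},
\]
which does \emph{not} require the directional projection $P_{N,\be_\ell}$ and is an Ionescu--Kenig type maximal function estimate in the lateral space formulation (i.e.\ the four-dimensional analogue of the estimates in~\cite{Ionescu-KenigI, Ionescu-KenigII}, with Sobolev loss $(d-1)/2 = 3/2$ at frequency $N$).
\item[(ii)] The \emph{local smoothing} endpoint $(p,q) = (\infty, 2)$:
\[
 \bigl\| e^{it\Delta} P_{N,\be_\ell} P_N f \bigr\|_{L^{\infty,2}_{\be_\ell}(I\times\bR^4)} \lesssim N^{-1/2} \|f\|_{L^2_x(\bR^4)},
\]
which is the Kato--KPV type local smoothing estimate, requires the directional projection $P_{N,\be_\ell}$ so that one can integrate by parts in $x_\ell$ after inserting $\partial_{x_\ell}$, and is again taken from the work of Ionescu--Kenig.
\item[(iii)] The \emph{Strichartz} endpoint $(p,q) = (4,4)$, where $L^{4,4}_{\be_\ell}(I\times\bR^4) = L^4_{t,x}(I \times \bR^4)$ by Fubini:
\[
 \bigl\| e^{it\Delta} P_N f \bigr\|_{L^4_{t,x}(I\times\bR^4)} \lesssim N^{1/2} \|f\|_{L^2_x(\bR^4)}.
\]
This follows from the admissible Strichartz pair $(q,r) = (4, 8/3)$, namely $\|e^{it\Delta} f\|_{L^4_t L^{8/3}_x} \lesssim \|f\|_{L^2}$, combined with the Bernstein estimate $\|P_N g\|_{L^4_x} \lesssim N^{1/2} \|P_N g\|_{L^{8/3}_x}$.
\end{itemize}

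With these three endpoints in hand, I would obtain \eqref{equ:lateral_spaces_pleqq} (i.e.\ the range $p \leq q$, which corresponds to $p \in [2,4]$ and $q \in [4, \infty]$) by complex interpolation between (i) and (iii); neither of these involves the directional projection, which explains why $P_{N,\be_\ell}$ is absent from the statement for $p \leq q$. A direct computation of the interpolation exponents, with weight $\theta \in [0,1]$ placed on the $(2,\infty)$ endpoint, gives $\frac{1}{p} = \frac{1+\theta}{4}$, $\frac{1}{q} = \frac{1-\theta}{4}$ (so the scaling $\frac{1}{p}+\frac{1}{q} = \frac{1}{2}$ is preserved), and a prefactor of $N^{\theta \cdot \frac{3}{2} + (1-\theta)\cdot \frac{1}{2}} = N^{\frac{1}{2}+\theta} = N^{\frac{4}{p}-\frac{1}{2}}$, matching the claim. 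For \eqref{equ:lateral_spaces_pgeqq} (the range $p \geq q$), I would first observe that $P_{N,\be_\ell}$ is a bounded Fourier multiplier on $L^4_{t,x}$, so (iii) also yields $\|e^{it\Delta} P_{N,\be_\ell} P_N f\|_{L^4_{t,x}} \lesssim N^{1/2} \|f\|_{L^2}$, and then interpolate this with (ii) to get the full range $p \in [4, \infty]$, $q \in [2, 4]$, with the directional projection preserved throughout and the exponent $N^{(1-\theta)\cdot \frac{1}{2} + \theta \cdot (-\frac{1}{2})} = N^{\frac{1}{2} - \theta} = N^{\frac{4}{p}-\frac{1}{2}}$ again matching.

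Finally, the dual estimates follow by the standard duality pairing $(L^{p,q}_{\be_\ell})^* = L^{p',q'}_{\be_\ell}$: for any $g \in L^2_x$,
\[
 \Bigl| \Bigl\langle \int_I e^{-is\Delta} P_N h(s) \, ds, g \Bigr\rangle_{L^2_x} \Bigr| = \Bigl| \int_I \bigl\langle h(s), e^{is\Delta} P_N g \bigr\rangle \, ds \Bigr| \leq \|h\|_{L^{p',q'}_{\be_\ell}} \bigl\| e^{is\Delta} P_N g \bigr\|_{L^{p,q}_{\be_\ell}},
\]
and then applying the direct estimate on $\|e^{is\Delta} P_N g\|_{L^{p,q}_{\be_\ell}}$ (with $P_{N,\be_\ell}$ inserted when $p \geq q$, using self-adjointness of $P_{N,\be_\ell}$ to move it onto $h$). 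The main obstacle is really at the level of the two nontrivial endpoint inputs (i) and (ii), which are delicate oscillatory integral estimates; however, these are not proved here but cited from \cite{Ionescu-KenigI, Ionescu-KenigII}, so the work in the proposition reduces to setting up the interpolation correctly and tracking the $N$-powers through the lateral scaling, both of which are routine.
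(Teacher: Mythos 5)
Your proposal is correct and takes essentially the same route as the paper: the paper also derives \eqref{equ:lateral_spaces_pleqq} by interpolating the Ionescu--Kenig maximal function endpoint $L^{2,\infty}_{\be_\ell}$ (loss $N^{3/2}$) against the $L^{4,4}_{\be_\ell}=L^4_{t,x}$ Strichartz/Bernstein endpoint (loss $N^{1/2}$), and \eqref{equ:lateral_spaces_pgeqq} by interpolating the Ionescu--Kenig local smoothing endpoint $L^{\infty,2}_{\be_\ell}$ (gain $N^{-1/2}$) against the same $L^{4,4}_{\be_\ell}$ endpoint, with the dual estimates following by duality. Your explicit tracking of the interpolation exponents and the observation that $P_{N,\be_\ell}$ is a bounded multiplier on $L^4_{t,x}$ are correct and spell out steps the paper leaves implicit.
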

\begin{proof}
 We begin with the proof of \eqref{equ:lateral_spaces_pleqq}. The maximal function estimate from Ionescu-Kenig~\cite{Ionescu-KenigI, Ionescu-KenigII} asserts that 
 \[
  \bigl\| e^{i t \Delta} P_N f \bigr\|_{L^{2, \infty}_{\be_\ell}(I\times\bR^4)} \lesssim N^{\frac{3}{2}} \|f\|_{L^2_x(\bR^4)},
 \]
 while an application of Fubini's theorem, Bernstein estimates and the Strichartz estimate~\eqref{equ:strichartz_estimate} yields
 \begin{align*}
  \bigl\| e^{i t \Delta} P_N f \bigr\|_{L^{4, 4}_{\be_\ell}(I\times\bR^4)} &= \bigl\| e^{i t \Delta} P_N f \bigr\|_{L^4_t L^4_x(I\times\bR^4)} \lesssim N^{\frac{1}{2}} \bigl\| e^{i t \Delta} P_N f \bigr\|_{L^4_t L^{\frac{8}{3}}_x(I\times\bR^4)} \lesssim N^{\frac{1}{2}} \|f\|_{L^2_x(\bR^4)}.
 \end{align*}
 The estimate \eqref{equ:lateral_spaces_pleqq} then follows by interpolation. Analogously, \eqref{equ:lateral_spaces_pgeqq} is a consequence of the following local smoothing estimate from Ionescu-Kenig~\cite{Ionescu-KenigI, Ionescu-KenigII}
 \[
  \bigl\| e^{i t \Delta} P_N P_{N, \be_\ell} f \bigr\|_{L^{\infty, 2}_{\be_\ell}(I\times\bR^4)} \lesssim N^{-\frac{1}{2}} \|f\|_{L^2_x(\bR^4)}
 \] 
 and interpolation.
\end{proof}

To ensure that our function spaces have certain time-divisibility properties, and in order to carry out certain large deviation estimates, we rely in our work on slight variants of the local smoothing, inhomogeneous local smoothing, and maximal function spaces, namely $L^{\infty-,2+}_{\be_\ell}$, $L^{1+,2-}_{\be_\ell}$, and $L^{2+, \infty-}_{\be_\ell}$ respectively. We emphasize that the lateral space norms $\|h\|_{L^{p,q}_{\be_\ell}(I\times\bR^4)}$ are continuous as functions of the endpoints of the time interval $I$ and for $p, q < \infty$ have the following time-divisibility property
\begin{equation} \label{equ:divisibility_lateral_spaces}
 \Bigl\| \bigl\{ \|h\|_{L^{p,q}_{\be_\ell}(I_j\times\bR^4)} \bigr\}_{j=1}^{J} \Bigr\|_{\ell^{\max\{p,q\}}_j} \leq \|h\|_{L^{p,q}_{\be_\ell}(I\times\bR^4)}
\end{equation}
for any partition of a time interval $I$ into consecutive intervals $I_j$, $j = 1, \ldots, J$, with disjoint interiors. The estimate~\eqref{equ:divisibility_lateral_spaces} is a consequence of Minkowski's inequality and the embedding properties of the sequence spaces $\ell^r$. For $p, q < \infty$, \eqref{equ:divisibility_lateral_spaces} allows to partition the time interval $I$ into a controlled number of subintervals on each of which the restricted norm is arbitrarily small.

\medskip 

Finally, we are ready to give the precise definition of the space $X(I)$ to hold the solutions to the forced cubic NLS on a given time interval $I \subset \bR$. It is built from dyadic pieces in the sense that 
\[
 \|v\|_{X(I)} := \biggl( \sum_{N \in 2^{\bZ}} \|P_N v\|_{X_N(I)}^2 \biggr)^{\frac{1}{2}}.
\]
The dyadic subspace $X_N(I)$ scales at $\dot{H}^1_x(\bR^4)$-regularity and consists of several Strichartz components and a maximal function type component. To provide its precise definition we introduce a fixed, sufficiently small, absolute constant $0 < \varepsilon \ll 1$. Throughout this work it will always be implicitly understood that $\ve$ is chosen sufficiently small so that $\frac{1}{3} + 3 \varepsilon \leq s$, where $\frac{1}{3} < s < 1$ refers to the Sobolev regularity assumption for the random data in the statement of Theorem~\ref{thm:as_local_wellposedness}. For every dyadic integer $N \in 2^{\bZ}$ we then set 
\begin{align*}
 \| P_N v \|_{X_N(I)} &:= N \| P_N v \|_{L^2_t L^{4}_x(I\times\bR^4)} + N \| P_N v \|_{L^3_t L^3_x(I\times\bR^4)} + N \| P_N v \|_{L^6_t L^{\frac{12}{5}}_x(I\times\bR^4)} \\
 &\quad \quad + \sum_{\ell=1}^4 N^{-\frac{1}{2}+\varepsilon} \| P_N v \|_{L^{\frac{4}{2-\varepsilon}, \frac{4}{\varepsilon}}_{\be_\ell}(I\times\bR^4)}.
\end{align*}
We will estimate the forced cubic nonlinearity in the space $G(I)$ which is also built from dyadic pieces
\begin{align*}
 \| h \|_{G(I)} := \biggl( \sum_{N \in 2^{\bZ}} \, \| P_N h \|_{G_N(I)}^2 \biggr)^{\frac{1}{2}}
\end{align*}
and whose dyadic subspaces are defined as
\begin{align*}
 \| P_N h \|_{G_N(I)} := \inf_{P_N h = h_N^{(1)} + h_N^{(2)}} \biggl\{ N \| h_N^{(1)} \|_{L^1_t L^2_x(I\times\bR^4)} + \sum_{\ell=1}^4 N^{\frac{1}{2} + \varepsilon} \| h_N^{(2)} \|_{L^{\frac{4}{4-\varepsilon}, \frac{4}{2+\varepsilon}}_{\be_\ell}(I\times\bR^4)} \biggr\}.
\end{align*}
It will be convenient to introduce a space $Y(I)$, in which we will place the forcing term $F$. As usual, it is built from dyadic pieces in the sense that
\begin{align*}
 \| F \|_{Y(I)} := \biggl( \sum_{N \in 2^{\bZ}} \, \| P_N F \|_{Y_N(I)}^2 \biggr)^{\frac{1}{2}},
\end{align*}
where we set
\begin{align*}
 \| P_N F \|_{Y_N(I)} &:=  \langle N \rangle^{\frac{1}{3}+3\ve} \| P_N F \|_{L^3_t L^6_x(I\times\bR^4)} + \langle N \rangle^{\frac{1}{3}+3\ve} \| P_N F \|_{L^6_t L^6_x(I\times\bR^4)} \\
 &\quad \quad + \sum_{\ell=1}^4 \langle N \rangle^{\frac{1}{3}+3\varepsilon} N^{\frac{1}{2}-\varepsilon} \| P_{N, \be_\ell} P_N F \|_{L^{\frac{4}{\varepsilon}, \frac{4}{2-\varepsilon}}_{\be_\ell}(I\times\bR^4)} + \sum_{\ell=1}^4 N^{-\frac{1}{6}} \| P_N F \|_{L^{\frac{4}{2-\varepsilon}, \frac{4}{\varepsilon}}_{\be_\ell}(I\times\bR^4)}.  
\end{align*}
We will later establish in Proposition~\ref{prop:as_bound_Y_norm} that for $\frac{1}{3} < s < 1$ and $0 < \varepsilon < \frac{1}{3} (s-\frac{1}{3})$, we have $\| e^{it\Delta} f^\omega \|_{Y(\bR)} < \infty$ almost surely for the free evolution of the random data $f^\omega$ as defined in~\eqref{equ:randomization} for any $f \in H^s_x(\bR^4)$.

\medskip 

In the next lemma we collect continuity and time-divisibility properties of the $X(I)$ and $Y(I)$ norms that we will repeatedly make use of.
\begin{lemma} \label{lem:properties_XY}
 \begin{itemize}
  \item[(i)] Let $I \subset \bR$ be a closed interval. Assume that $\|v\|_{X(I)} < \infty$ and $\|F\|_{Y(I)} < \infty$. Then the mappings
   \begin{equation*}
    I \ni t \mapsto \|v\|_{X([\inf I, t])}, \qquad I \ni t \mapsto \|F\|_{Y([\inf I, t])}
   \end{equation*}
   and 
   \begin{equation*}
    I \ni t \mapsto \|v\|_{X([t, \sup I])}, \qquad I \ni t \mapsto \|F\|_{Y([t, \sup I])}
   \end{equation*}   
   are continuous with analogous statements for half-open and open intervals.
  \item[(ii)] Let $I \subset \bR$ be an interval and let $v \in X(I), F \in Y(I)$. For any partition of the interval $I$ into consecutive intervals $I_j$, $j = 1, \ldots, J$, with disjoint interiors it holds that
  \begin{equation} \label{equ:time_divisibility}
   \Bigl\| \bigl\{ \|v\|_{X(I_j)} \bigr\}_{j=1}^J \Bigr\|_{\ell^{\frac{4}{\varepsilon}}_j} \leq \|v\|_{X(I)}, \qquad \Bigl\| \bigl\{ \|F\|_{Y(I_j)} \bigr\}_{j=1}^J \Bigr\|_{\ell^{\frac{4}{\varepsilon}}_j} \leq \|F\|_{Y(I)}. 
  \end{equation}
 \end{itemize}
\end{lemma}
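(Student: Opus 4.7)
Both assertions follow by unpacking the layered definitions of $X(I)$ and $Y(I)$ and reducing to the corresponding properties of their constituent Lebesgue and lateral norms. The essential point is that every component norm appearing in $X_N(I)$ and $Y_N(I)$ has both (a) a finite outer time exponent, and (b) a time-divisibility exponent no larger than $4/\varepsilon$.

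For part (i), I would fix $N \in 2^{\bZ}$ and observe that each individual component $\|P_N v\|_{L^q_t L^r_x(I' \times \bR^4)}$ and $\|P_N v\|_{L^{p,q}_{\be_\ell}(I' \times \bR^4)}$ appearing in $X_N(I')$, as well as the analogous components of $Y_N(I')$, has a finite outer time integration exponent (namely $2, 3, 6, \tfrac{4}{2-\ve}, \tfrac{4}{\ve}, \tfrac{4}{4-\ve}$ as appropriate). For any such norm, continuity in the endpoint $t$ of $I' = [\inf I, t]$ is immediate from the dominated convergence theorem applied to the characteristic function $\mathbf 1_{[\inf I, t]}$, since the integrand is dominated by its value on $I$. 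Summing finitely many such components yields continuity of $t \mapsto \|P_N v\|_{X_N([\inf I,t])}$ for every fixed $N$, and a second application of dominated convergence in the $\ell^2_N$ sum, with majorant $\|P_N v\|_{X_N(I)}^2$, produces continuity of $t \mapsto \|v\|_{X([\inf I, t])}$. The endpoint $[t, \sup I]$ case, half-open and open intervals, and the $Y(I)$ norm are handled identically.

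For part (ii), I would first record that each component norm appearing in the definitions of $X_N$ and $Y_N$ is time-divisible with exponent at most $4/\ve$. Indeed, for the pure Strichartz pieces $L^p_t L^r_x$ with $p \in \{2,3,6\}$ one has exact $\ell^p_j$ divisibility, while for the lateral pieces $L^{p,q}_{\be_\ell}$ appearing in the norms, the exponents $p, q$ are among $\{\tfrac{4}{2-\ve},\tfrac{4}{\ve},\tfrac{4}{4-\ve},\tfrac{4}{2+\ve}\}$, so $\max\{p,q\} = \tfrac{4}{\ve}$ and \eqref{equ:divisibility_lateral_spaces} gives $\ell^{4/\ve}_j$ divisibility. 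Since $\ell^p \hookrightarrow \ell^{4/\ve}$ with constant $\leq 1$ whenever $p \leq 4/\ve$, every component enjoys $\ell^{4/\ve}_j$ time-divisibility. Because $\|P_N v\|_{X_N(I_j)}$ is a finite weighted sum of such components, the triangle inequality in $\ell^{4/\ve}_j$ produces
\begin{equation*}
 \Bigl\| \bigl\{ \|P_N v\|_{X_N(I_j)} \bigr\}_j \Bigr\|_{\ell^{4/\ve}_j} \leq \|P_N v\|_{X_N(I)}
\end{equation*}
for each dyadic $N$. Finally, since $\tfrac{4}{\ve} \geq 2$, Minkowski's inequality allows us to interchange the $\ell^{4/\ve}_j$ and $\ell^2_N$ norms in the correct direction, yielding
\begin{equation*}
 \Bigl\| \bigl\{ \|v\|_{X(I_j)} \bigr\}_j \Bigr\|_{\ell^{4/\ve}_j} = \Bigl\| \Bigl( \sum_N \|P_N v\|_{X_N(I_j)}^2 \Bigr)^{1/2} \Bigr\|_{\ell^{4/\ve}_j} \leq \Bigl( \sum_N \|P_N v\|_{X_N(I)}^2 \Bigr)^{1/2} = \|v\|_{X(I)},
\end{equation*}
which is the desired estimate \eqref{equ:time_divisibility} for $X(I)$. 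The bound for $Y(I)$ follows by the same argument.

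The only point requiring care is simple bookkeeping: verifying that $4/\ve$ dominates the divisibility exponent of every component, and that each Minkowski interchange is in the correct direction (which is why we need $4/\ve \geq 2$ so that the $\ell^2_N$-over-$\ell^{4/\ve}_j$ exchange goes the right way). No genuine analytic obstacle is present; the lemma is essentially a consequence of the definitions combined with Minkowski's inequality and \eqref{equ:divisibility_lateral_spaces}.
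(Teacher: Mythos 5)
Your proof is correct and follows the same approach as the paper, which simply states that (i) follows from dominated convergence and (ii) from Minkowski's inequality, the $\ell^{r_1}\hookrightarrow\ell^{r_2}$ embedding, and the divisibility property~\eqref{equ:divisibility_lateral_spaces}; you have supplied the bookkeeping the paper omits. One small imprecision: the exponents $\tfrac{4}{4-\ve}$ and $\tfrac{4}{2+\ve}$ that you list are from $G_N(I)$, not $X_N(I)$ or $Y_N(I)$, and for the lateral pieces the time exponent is the inner index $q$ rather than an ``outer'' one — but neither affects the argument since all relevant exponents remain finite and bounded by $4/\ve$.
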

\begin{proof}
 The first part (i) follows from the dominated convergence theorem and the definition of the spaces $X(I)$ and $Y(I)$. For the second part (ii) we note that $\frac{4}{\ve}$ is the largest exponent in the definition of the Strichartz $L^q_t L^r_x$ and $L^{p,q}_{\be_\ell}$ components of the spaces $X(I)$ and $Y(I)$. Then the time divisibility properties~\eqref{equ:time_divisibility} follow from Minkowski's inequality, the embedding $\ell^{r_1} \hookrightarrow \ell^{r_2}$ for any $1 \leq r_1 \leq r_2\leq \infty$ for the $\ell^r$ sequence spaces and the definitions of $X(I)$ and $Y(I)$.
\end{proof}

The spaces $X(I)$ and $G(I)$ are connected by the following key linear estimate.
\begin{proposition}(Main linear estimate) \label{prop:main_linear_estimate}
 Let $I \subset \bR$ be a time interval with $t_0 \in I$ and let $v_0 \in \dot{H}^1_x(\bR^4)$. Assume that $v \colon I \times \bR^4 \to \bC$ is a solution to 
 \begin{equation} \label{equ:inhomogeneous_schroedinger}
  \left\{ \begin{aligned}
   (i \partial_t + \Delta) v &= h \text{ on } I \times \bR^4, \\
   v(t_0) &= v_0.
  \end{aligned} \right.
 \end{equation}
 Then we have for any dyadic integer $N \in 2^{\bZ}$ that 
 \begin{equation} \label{equ:main_linear_estimate_freq_localized}
  N \| P_N v \|_{L^\infty_t L^2_x(I\times\bR^4)} + \| P_N v \|_{X_N(I)} \lesssim N \| P_N v_0 \|_{L^2_x(\bR^4)} + \| P_N h \|_{G_N(I)}.
 \end{equation}
 Consequently, it holds that
 \begin{equation} \label{equ:main_linear_estimate}
  \| v \|_{L^\infty_t \dot{H}^1_x(I\times\bR^4)} + \| v \|_{X(I)} \lesssim \| v_0 \|_{\dot{H}^1_x(\bR^4)} + \| h \|_{G(I)}.
 \end{equation}
\end{proposition}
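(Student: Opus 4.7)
The plan is to apply Duhamel's formula
\begin{equation*}
v(t) = e^{i(t-t_0)\Delta} v_0 - i \int_{t_0}^t e^{i(t-s)\Delta} h(s) \, ds
\end{equation*}
and to estimate each of the resulting pieces in each of the norms making up $N\|P_N v\|_{L^\infty_t L^2_x} + \|P_N v\|_{X_N(I)}$. Since both sides of \eqref{equ:main_linear_estimate} are the $\ell^2_N$ sums of the corresponding sides of \eqref{equ:main_linear_estimate_freq_localized}, the global bound follows from the frequency-localized bound by square-summing, so it suffices to establish the latter for each fixed dyadic $N \in 2^\bZ$.

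For the homogeneous contribution $e^{i(t-t_0)\Delta} P_N v_0$ the $L^\infty_t L^2_x$ bound is mass conservation, the three Strichartz components $L^2_t L^4_x$, $L^3_t L^3_x$, $L^6_t L^{12/5}_x$ of $X_N$ follow from \eqref{equ:strichartz_estimate}, and the lateral component $L^{\frac{4}{2-\varepsilon}, \frac{4}{\varepsilon}}_{\be_\ell}$ follows from \eqref{equ:lateral_spaces_pleqq}; indeed, for small $\varepsilon$ one has $p = \frac{4}{2-\varepsilon} \leq q = \frac{4}{\varepsilon}$, so \eqref{equ:lateral_spaces_pleqq} applies and produces the factor $N^{4/p - 1/2} = N^{3/2-\varepsilon}$, which combines exactly with the $N^{-1/2+\varepsilon}$ prefactor in $X_N$ to yield $N\|P_N v_0\|_{L^2_x}$. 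For the Duhamel contribution I would decompose $P_N h = h_N^{(1)} + h_N^{(2)}$ and treat the two pieces separately. For the piece $h_N^{(1)}$ placed in $L^1_t L^2_x$, the $L^\infty_t L^2_x$ bound and the three Strichartz components follow from the retarded inhomogeneous Strichartz estimate in Proposition~\ref{prop:strichartz}, and the lateral component follows by pulling the time integral outside the lateral norm via Minkowski's inequality and then invoking \eqref{equ:lateral_spaces_pleqq} pointwise in $s$.

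The heart of the argument is the piece $h_N^{(2)}$ placed in the dual lateral space $L^{\frac{4}{4-\varepsilon}, \frac{4}{2+\varepsilon}}_{\be_\ell}$. The key input is \eqref{equ:lateral_spaces_pgeqq_dual}, but it requires a directional projection $P_{N, \be_m}$ on the input which the $G_N$ norm does not supply. To produce one, I would expand the identity \eqref{equ:choice_directional_projections}, i.e., $\prod_{m=1}^4 (1 - P_{N, \be_m}) P_N = 0$, as a telescoping sum and thereby write $P_N h_N^{(2)}$ as a finite linear combination of pieces each carrying at least one factor $P_{N, \be_m}$. Since each $P_{N, \be_m}$ is a smooth one-dimensional Fourier multiplier in a single spatial variable, it is bounded on every lateral space $L^{p, q}_{\be_\ell}$ by Fubini combined with standard Mikhlin-type theory, so these preliminary manipulations are harmless. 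For each such piece I would compose \eqref{equ:lateral_spaces_pgeqq_dual} (which produces a gain of $N^{\varepsilon - 1/2}$) with the homogeneous bounds $L^2_x \to L^\infty_t L^2_x$ (trivial), $L^2_x \to L^q_t L^r_x$ (Strichartz), and $L^2_x \to L^{\frac{4}{2-\varepsilon}, \frac{4}{\varepsilon}}_{\be_\ell}$ (from \eqref{equ:lateral_spaces_pleqq}) to obtain non-retarded Duhamel bounds in each of the output norms; the exponents combine to match the $N^{1/2 + \varepsilon}$ prefactor of $G_N$ in every case. Since all the space--time Lebesgue exponents appearing here are strictly between $1$ and $\infty$ and the time exponent of the input is strictly smaller than that of every output, the Christ--Kiselev lemma then converts the non-retarded bounds into the retarded Duhamel bounds that are actually needed.

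The main obstacle is not conceptual but rather the careful exponent bookkeeping: one must verify systematically that the various $N^{\pm \varepsilon}$ factors arising from the lateral and dual lateral estimates combine correctly with the $N^{\pm 1/2}$ gains from local smoothing, that the directional projection expansion is tracked consistently through every piece, and that all implicit constants are uniform in $N$, $I$ and $t_0$. Summing the resulting bounds over the pieces of the decomposition yields \eqref{equ:main_linear_estimate_freq_localized}, and square-summing in $N$ produces \eqref{equ:main_linear_estimate}.
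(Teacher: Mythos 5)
Your proposal matches the paper's proof in its overall structure: Duhamel's formula, the homogeneous estimates from Propositions~\ref{prop:strichartz} and~\ref{prop:lateral_spaces}, the $L^1_t L^2_x$ inhomogeneous piece via retarded Strichartz and Minkowski, and for the dual lateral piece the telescoping directional-projection identity combined with \eqref{equ:lateral_spaces_pgeqq_dual} and a Christ--Kiselev argument. This is precisely the content of Lemma~\ref{lem:auxiliary_christ_kiselev} (the non-retarded version) and Lemma~\ref{lem:christ_kiselev} (the retarded version) in the paper, so your route is the same.

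The one place where your sketch is too casual is the invocation of ``the Christ--Kiselev lemma.'' The textbook lemma concerns maps $L^a_t(X) \to L^b_t(Y)$ with $a<b$, so that after a Whitney decomposition in time one can sum the pieces using the $\ell^b_j$-divisibility of the outer time norm. The lateral space $L^{p,q}_{\be_\ell}$ is \emph{not} of this form: the outer integral is over the spatial variable $x_\ell$, with time nested inside. When you sum the Whitney pieces (which have disjoint time supports but not disjoint $x_\ell$ supports), Minkowski's inequality with $p\le q$ gives $\ell^p_j$-summability, not $\ell^{q}_j$. So the exponent comparison is not between time exponents of input and output, as you state, but between the input divisibility exponent $\max(p_{\mathrm{in}},q_{\mathrm{in}})=\tfrac{4}{2+\ve}$ and the \emph{outer spatial} exponent $p_{\mathrm{out}}=\tfrac{4}{2-\ve}$ of the lateral output; the Whitney sum converges because $\tfrac{1}{2}+\tfrac{\ve}{4} > \tfrac{1}{2}-\tfrac{\ve}{4}$. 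This is exactly what Lemma~\ref{lem:christ_kiselev} proves, using the divisibility property \eqref{equ:divisibility_lateral_spaces}. Also, your restriction to exponents ``strictly between $1$ and $\infty$'' excludes the $L^\infty_t L^2_x$ output (which appears in \eqref{equ:main_linear_estimate_freq_localized}), but that case is in fact the easiest: there is no loss $2^{n/q_{\mathrm{out}}}$ in the Whitney sum, or alternatively one can argue directly by inserting a sharp time cut-off into \eqref{equ:lateral_spaces_pgeqq_dual}. With the Christ--Kiselev step properly adapted, your argument is complete.
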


Before we can turn to the proof of Proposition~\ref{prop:main_linear_estimate}, we first need two auxiliary lemmas. 

\begin{lemma} \label{lem:auxiliary_christ_kiselev}
 Let $J, I \subset \bR$ be time intervals with $\sup J \leq \inf I$ and let $N \in 2^{\bZ}$ be any dyadic integer. Then we have for any admissible Strichartz pair $(q,r)$ that
 \begin{equation} \label{equ:auxiliary_christ_kiselev_est1}
  N \biggl\| \int_J e^{i(t-s)\Delta} P_N h(s) \, ds \biggr\|_{L^q_t L^r_x(I\times\bR^4)} \lesssim \sum_{\ell=1}^4 N^{\frac{1}{2}+\ve} \|P_N h\|_{L^{\frac{4}{4-\ve}, \frac{4}{2+\ve}}_{\be_\ell}(J\times\bR^4)}.
 \end{equation}
 Furthermore, it holds that 
 \begin{equation} \label{equ:auxiliary_christ_kiselev_est2}
  \sum_{\ell=1}^4 N^{-\frac{1}{2}+\ve} \biggl\| \int_J e^{i(t-s)\Delta} P_N h(s) \, ds \biggr\|_{L^{\frac{4}{2-\ve},\frac{4}{\ve}}_{\be_\ell}(I\times\bR^4)} \lesssim \sum_{\ell=1}^4 N^{\frac{1}{2}+\ve} \|P_N h\|_{L^{\frac{4}{4-\ve}, \frac{4}{2+\ve}}_{\be_\ell}(J\times\bR^4)}.
 \end{equation}
\end{lemma}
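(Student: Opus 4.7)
Since $\sup J \leq \inf I$, for every $t \in I$ and $s \in J$ we have $t > s$, so the integral decouples as
\[
 \int_J e^{i(t-s)\Delta} P_N h(s)\, ds = e^{it\Delta} g, \qquad g := \int_J e^{-is\Delta} P_N h(s)\, ds \in L^2_x(\bR^4).
\]
The plan is thus twofold: (i) bound $\|g\|_{L^2_x}$ by the common right-hand side of \eqref{equ:auxiliary_christ_kiselev_est1}--\eqref{equ:auxiliary_christ_kiselev_est2}, and (ii) propagate this $L^2$ bound by applying a suitable propagator estimate from Proposition \ref{prop:strichartz} or Proposition \ref{prop:lateral_spaces} to $e^{it\Delta} g$ on the time interval $I$.

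For step (i), I invoke the dual lateral estimate \eqref{equ:lateral_spaces_pgeqq_dual} with $(p, q) = (\tfrac{4}{\ve}, \tfrac{4}{2-\ve})$, which satisfies $\tfrac{1}{p} + \tfrac{1}{q} = \tfrac{1}{2}$, $p \geq q$, and produces the derivative factor $N^{\frac{4}{p}-\frac{1}{2}} = N^{\ve - \frac{1}{2}}$. The one subtlety is that \eqref{equ:lateral_spaces_pgeqq_dual} requires a directional projection $P_{N, \be_\ell}$, whereas the integrand contains only $P_N$. To bridge this gap I expand $1 = \prod_{\ell=1}^4 \bigl(P_{N, \be_\ell} + (1 - P_{N, \be_\ell})\bigr)$ and use \eqref{equ:choice_directional_projections} to discard the purely non-directional term, obtaining
\[
 P_N = \sum_{\emptyset \neq S \subseteq \{1,2,3,4\}} \Bigl( \prod_{\ell \in S} P_{N, \be_\ell} \Bigr) \Bigl( \prod_{\ell \notin S} (1 - P_{N, \be_\ell}) \Bigr) P_N.
\]
For each nonempty $S$ I single out some $\ell(S) \in S$, pull the factor $P_{N, \be_{\ell(S)}}$ to the front, and apply \eqref{equ:lateral_spaces_pgeqq_dual} in direction $\be_{\ell(S)}$ to the corresponding summand. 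The remaining multipliers, together with a fattened projection $\widetilde{P}_N$ (available since $P_N = P_N \widetilde{P}_N$), form a Fourier multiplier with a $C_c^\infty$ symbol supported in $\{|\xi| \sim N\}$, hence with a Schwartz (in particular $L^1$) convolution kernel; by Minkowski's inequality such a convolution is bounded on $L^{p', q'}_{\be_{\ell(S)}}$. Summing over the finitely many $S$ yields
\[
 \|g\|_{L^2_x(\bR^4)} \lesssim N^{\ve - \frac{1}{2}} \sum_{\ell=1}^4 \|P_N h\|_{L^{\frac{4}{4-\ve}, \frac{4}{2+\ve}}_{\be_\ell}(J \times \bR^4)}.
\]

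For step (ii), estimate \eqref{equ:auxiliary_christ_kiselev_est1} now follows immediately by applying the Strichartz estimate \eqref{equ:strichartz_estimate} to $e^{it\Delta} g$ on $I$ for the admissible pair $(q,r)$ and multiplying through by $N$, which produces precisely the prefactor $N^{\frac{1}{2}+\ve}$ on the right. For estimate \eqref{equ:auxiliary_christ_kiselev_est2}, I instead use the maximal-function side \eqref{equ:lateral_spaces_pleqq} of the lateral estimates with $(p, q) = (\tfrac{4}{2-\ve}, \tfrac{4}{\ve})$, which satisfies $p \leq q$ and incurs the loss $N^{\frac{4}{p}-\frac{1}{2}} = N^{\frac{3}{2} - \ve}$; combined with the $L^2$ bound on $g$ this gives an overall factor $N^{\frac{3}{2}-\ve} \cdot N^{\ve - \frac{1}{2}} = N$, and balancing against the prefactor $N^{-\frac{1}{2}+\ve}$ on the left reproduces exactly $N^{\frac{1}{2}+\ve}$ on the right.

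The only substantive step is step (i), and within it the only nontrivial point is the bookkeeping needed to insert a directional projection via \eqref{equ:choice_directional_projections}; everything else reduces to direct application of previously established propagator estimates. Notably, no genuine Christ--Kiselev argument is required here, since the hypothesis $\sup J \leq \inf I$ already decouples the $t$ and $s$ variables; the lemma is named ``auxiliary Christ--Kiselev'' because these estimates serve as the input for a subsequent Christ--Kiselev argument in the treatment of genuine Duhamel integrals elsewhere.
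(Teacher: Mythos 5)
Your proof is correct and takes essentially the same route as the paper's: reduce both estimates via the Strichartz and lateral propagator estimates to $N\bigl\|\int_J e^{-is\Delta}P_N h\,ds\bigr\|_{L^2_x}$, insert a directional projection using \eqref{equ:choice_directional_projections}, and apply the dual lateral smoothing estimate \eqref{equ:lateral_spaces_pgeqq_dual}. The only minor divergence is in the algebra: the paper uses the four-term telescoping decomposition $P_N = \sum_{\ell=1}^4 P_{N,\be_\ell}\prod_{j<\ell}(1-P_{N,\be_j})\,P_N$ and disposes of the $(1-P_{N,\be_j})$ factors by commuting them past $e^{-is\Delta}$ and the $\int_J$ and invoking their $L^2_x$-boundedness, which bypasses your $L^{p',q'}_{\be_\ell}$-boundedness/Schwartz-kernel step, but your fifteen-term inclusion--exclusion variant is equally valid.
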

\begin{proof}
 The left-hand sides of \eqref{equ:auxiliary_christ_kiselev_est1} and \eqref{equ:auxiliary_christ_kiselev_est2} are both bounded by 
 \begin{equation} \label{equ:auxiliary_christ_kiselev_est_intermediate}
  N \biggl\| \int_J e^{-is\Delta} P_N h(s) \, ds \biggr\|_{L^2_x(\bR^4)}
 \end{equation}
 by the Strichartz estimate~\eqref{equ:strichartz_estimate} and by the estimate~\eqref{equ:lateral_spaces_pleqq} for the lateral spaces, respectively. Relying on the identity~\eqref{equ:choice_directional_projections}, we now further frequency decompose $P_N h$ into
 \begin{align*}
  P_N h &= P_{N, \be_1} P_{N} h  + P_{N, \be_2} (1-P_{N,\be_1}) P_{N} h + P_{N,\be_3} (1-P_{N, \be_2}) (1-P_{N, \be_1}) P_{N} h \\
  &\quad\quad  + P_{N, \be_4} (1-P_{N, \be_3}) (1-P_{N, \be_2}) (1-P_{N, \be_1}) P_{N} h.
 \end{align*}
 Using the boundedness of the projections $(1-P_{N, \be_\ell})$ on $L^2_x(\bR^4)$, we can then estimate~\eqref{equ:auxiliary_christ_kiselev_est_intermediate} by
 \begin{equation*}
  \sum_{\ell=1}^4 N \biggl\| \int_J e^{-is\Delta} P_{N, \be_\ell} P_N h(s) \, ds \biggr\|_{L^2_x(\bR^4)}.
 \end{equation*}
 Finally, by the dual estimate~\eqref{equ:lateral_spaces_pgeqq_dual} for the lateral spaces we conclude the desired bound 
 \begin{align*}
  \sum_{\ell=1}^4 N \biggl\| \int_J e^{-is\Delta} P_{N, \be_\ell} P_N h(s) \, ds \biggr\|_{L^2_x(\bR^4)} \lesssim \sum_{\ell=1}^4 N^{\frac{1}{2}+\ve} \|P_N h\|_{L^{\frac{4}{4-\ve}, \frac{4}{2+\ve}}_{\be_\ell}(J\times\bR^4)}.
 \end{align*}
\end{proof}

\begin{lemma} \label{lem:christ_kiselev}
 Let $I \subset \bR$ be a time interval with $0 = \inf I$ and let $N \in 2^{\bZ}$ be any dyadic integer. Then we have for any admissible Strichartz pair $(q,r)$ that 
 \begin{equation} \label{equ:christ_kiselev_strichartz_part}
  N \biggl\| \int_0^t e^{i(t-s)\Delta} P_N h(s) \, ds \biggr\|_{L^q_t L^r_x(I\times\bR^4)} \lesssim \sum_{\ell=1}^4 N^{\frac{1}{2}+\ve} \|P_N h\|_{L^{\frac{4}{4-\ve}, \frac{4}{2+\ve}}_{\be_\ell}(I\times\bR^4)}.
 \end{equation}
 Furthermore, it holds that 
 \begin{equation} \label{equ:christ_kiselev_maximal_function_part}
  \sum_{\ell=1}^4 N^{-\frac{1}{2}+\ve} \biggl\| \int_0^t e^{i(t-s)\Delta} P_N h(s) \, ds \biggr\|_{L^{\frac{4}{2-\ve},\frac{4}{\ve}}_{\be_\ell}(I\times\bR^4)} \lesssim \sum_{\ell=1}^4 N^{\frac{1}{2}+\ve} \|P_N h\|_{L^{\frac{4}{4-\ve}, \frac{4}{2+\ve}}_{\be_\ell}(I\times\bR^4)}.
 \end{equation}
\end{lemma}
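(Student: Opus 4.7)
The plan is to deduce both inequalities from the corresponding non-retarded bounds of Lemma~\ref{lem:auxiliary_christ_kiselev} by an application of the Christ-Kiselev time-splitting lemma. Specifically, introducing the full-interval operator $Th(t) := \int_I e^{i(t-s)\Delta} P_N h(s) \, ds$, Lemma~\ref{lem:auxiliary_christ_kiselev} (applied on pairs of disjoint sub-intervals $J, I'$ of $I$ with $\sup J \le \inf I'$) supplies exactly the boundedness of $T$ required on triangular pieces, and the Christ-Kiselev machinery then upgrades this to boundedness of the retarded operator $\widetilde{T} h(t) := \int_0^t e^{i(t-s)\Delta} P_N h(s) \, ds$ on the full interval $I$, with essentially the same constant.

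To implement this, I would split $I$ dyadically according to the level sets of the input norm of $P_N h \cdot \mathbf{1}_{[0, t]}$, so that the retarded integral decomposes into a geometric sum of contributions on each of which the input and output time variables lie in \emph{disjoint} sub-intervals. On each such piece the non-retarded estimate from Lemma~\ref{lem:auxiliary_christ_kiselev} applies directly. Summing the contributions then requires the output effective time exponent to be \emph{strictly larger} than the input effective time exponent, a condition that I would verify separately for each of the two estimates.

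For \eqref{equ:christ_kiselev_strichartz_part}, the output $L^q_t L^r_x$ has time exponent $q \ge 2$ by admissibility of $(q, r)$, while the input lateral space $L^{\frac{4}{4-\varepsilon}, \frac{4}{2+\varepsilon}}_{\be_\ell}$ has effective time exponent $\frac{4}{2+\varepsilon} < 2$. For \eqref{equ:christ_kiselev_maximal_function_part}, the output lateral space $L^{\frac{4}{2-\varepsilon}, \frac{4}{\varepsilon}}_{\be_\ell}$ has effective time exponent $\frac{4}{\varepsilon}$, which strictly exceeds $\frac{4}{2+\varepsilon}$ for any $0 < \varepsilon < 1$. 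Thus the strict separation of time exponents holds in both cases.

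The main technical point, and what I expect to be the only non-routine step, is that the input and output norms are lateral mixed-norm spaces in which the time variable is \emph{not} the outermost integration, so the standard formulation of the Christ-Kiselev lemma does not apply verbatim. This is overcome by appealing to the time-divisibility property recorded in~\eqref{equ:divisibility_lateral_spaces}: on functions supported on disjoint time intervals these lateral norms concatenate in $\ell^{\max\{p, q\}}$-fashion, which is precisely the structural property of an $L^q_t$-type space that the Christ-Kiselev argument requires, and which allows the standard dyadic decomposition of the retarded kernel to be carried through unchanged.
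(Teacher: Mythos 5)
Your overall strategy is the same as the paper's: a modified Christ–Kiselev argument built on Lemma~\ref{lem:auxiliary_christ_kiselev} and the Whitney-type dyadic decomposition of $\{(t_1,t_2) : t_1 < t_2\}$. The plan is sound and does go through. However, your bookkeeping of the ``effective time exponents'' contains an error that happens to be harmless here but reflects a real conceptual slip.

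The Christ--Kiselev sum requires two structurally \emph{different} facts about the lateral norms, and you have conflated them. For the \emph{input}, one needs the divisibility inequality~\eqref{equ:divisibility_lateral_spaces}, namely $\bigl\|\{\|h\|_{L^{p',q'}_{\be_\ell}(I_j)}\}_j\bigr\|_{\ell^{\max\{p',q'\}}} \leq \|h\|_{L^{p',q'}_{\be_\ell}(I)}$, in order to produce, at each dyadic scale $n$, a partition of $I$ into $2^n$ pieces on each of which the input norm is $\lesssim 2^{-n/\max\{p',q'\}}$. Here $\max\{p',q'\} = \frac{4}{2+\ve}$; this part of your accounting is correct. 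For the \emph{output}, by contrast, one needs the reverse-direction bound: for functions supported on disjoint time intervals, $\|h\|_{L^{p,q}_{\be_\ell}(I)} \leq \bigl\|\{\|h\|_{L^{p,q}_{\be_\ell}(I_j)}\}_j\bigr\|_{\ell^{\min\{p,q\}}}$, which is a consequence of the elementary inequality $(\sum_j a_j)^{\theta} \leq \sum_j a_j^{\theta}$ for $\theta \leq 1$ (applied with $\theta = p/q$ when $p \leq q$). This is \emph{not} what~\eqref{equ:divisibility_lateral_spaces} provides, and the relevant exponent is $\min\{p,q\}$, not $\max\{p,q\}$. For the output space $L^{\frac{4}{2-\ve},\frac{4}{\ve}}_{\be_\ell}$ in~\eqref{equ:christ_kiselev_maximal_function_part} this is $\min\{\tfrac{4}{2-\ve},\tfrac{4}{\ve}\} = \frac{4}{2-\ve}$, not the $\frac{4}{\ve}$ you wrote. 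The paper makes this point explicit (``using also that $\frac{p}{q} \leq 1$'') and its geometric sum is $\sum_n 2^{n/p}\, 2^{-(\frac{1}{2}+\frac{\ve}{4})n} \sim \sum_n 2^{-\ve n/2}$. Thus the convergence margin is only $\ve/2$ per dyadic scale — far tighter than the large gap between $\frac{4}{\ve}$ and $\frac{4}{2+\ve}$ that your comparison suggests. The condition $\frac{4}{2-\ve} > \frac{4}{2+\ve}$ still holds for $\ve>0$, so the argument does close, but had the exponents been arranged slightly differently, your criterion would have given a false positive. I'd encourage you to rework the summation step distinguishing carefully between the ``divide the input'' and ``recombine the output'' directions, as these have distinct $\ell^a$-exponents for mixed $L^{p,q}_{\be_\ell}$ norms.
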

\begin{proof}
 Our argument is a modified version of the Christ-Kiselev lemma~\cite{Christ_Kiselev}. We only prove \eqref{equ:christ_kiselev_maximal_function_part} in detail because the proof of \eqref{equ:christ_kiselev_strichartz_part} is similar. Normalizing we may assume that
 \[
  \sum_{\ell=1}^4 N^{\frac{1}{2}+\ve} \|P_N h\|_{L^{\frac{4}{4-\ve}, \frac{4}{2+\ve}}_{\be_\ell}(I\times\bR^4)} = 1.
 \]
 In order to prove \eqref{equ:christ_kiselev_maximal_function_part} it now suffices to verify that
 \begin{equation} \label{equ:christ_kiselev_maximal_function_part_simplified}
  N^{-\frac{1}{2}+\ve} \biggl\| \int_0^t e^{i(t-s)\Delta} P_N h(s) \, ds \biggr\|_{L^{\frac{4}{2-\ve},\frac{4}{\ve}}_{\be_1}(I\times\bR^4)} \lesssim 1.
 \end{equation}
 Using the time divisibility property \eqref{equ:divisibility_lateral_spaces} of the lateral spaces, we can proceed inductively to construct for every $n \in \bN$ a partition $\{ I_j^n \}_{j=1, \ldots, 2^n}$ of the interval $I$ into consecutive intervals with disjoint interiors such that for $j = 1, \ldots, 2^n$,
 \begin{equation} \label{equ:christ_kiselev_partition_smallness}
  \sum_{\ell=1}^4 N^{\frac{1}{2}+\ve} \|P_N h\|_{L^{\frac{4}{4-\ve}, \frac{4}{2+\ve}}_{\be_\ell}(I^n_j\times\bR^4)} \leq 2^{-(\frac{1}{2}+\frac{\ve}{4}) n}.
 \end{equation}
 We then perform a Whitney type decomposition of the interval $I$ and obtain that for almost every $t_1, t_2 \in I$ with $t_1 < t_2$, there exist unique $n \in \bN$ and $j \in \{1, \ldots, 2^n\}$ such that $t_1 \in I_j^n$ and $t_2 \in I_{j+1}^n$. Correspondingly, we may write 
 \begin{equation*}
  \int_0^t e^{i(t-s)\Delta} P_N h(s) \, ds = \sum_{n\in\bN} \sum_{j=1}^{2^n} \chi_{I_{j+1}^n}(t) \int_{I_j^n} e^{i(t-s)\Delta} P_N h(s) \, ds
 \end{equation*}
 with the understanding that $I_{2^n+1}^n = \emptyset$ and where $\chi_{I_{j+1}^n}(t)$ denotes a sharp cut-off function to the interval $I_{j+1}^n$. To somewhat ease the notation in the following, we shall write $(p,q) = (\frac{4}{2-\ve}, \frac{4}{\ve})$. Note that by Lemma~\ref{lem:auxiliary_christ_kiselev} and by~\eqref{equ:christ_kiselev_partition_smallness}, we have for any $n \in \bN$ and $j \in \{1, \ldots, 2^n\}$ the bound 
 \begin{equation*}
  N^{-\frac{1}{2}+\ve} \biggl\| \int_{I_j^n} e^{i(t-s)\Delta} P_N h(s) \, ds \biggr\|_{L^{p,q}_{\be_1}(I_{j+1}^n\times\bR^4)} \lesssim \sum_{\ell=1}^4 N^{\frac{1}{2}+\varepsilon} \|P_N h\|_{L^{\frac{4}{4-\ve}, \frac{4}{2+\ve}}_{\be_\ell}(I^n_j\times\bR^4)} \lesssim 2^{-(\frac{1}{2}+\frac{\ve}{4}) n}.
 \end{equation*}
 Hence, using also that $\frac{p}{q} \leq 1$, we compute that
 \begin{align*}
  &N^{-\frac{1}{2}+\ve} \biggl\| \int_0^t e^{i(t-s)\Delta} P_N h(s) \, ds \biggr\|_{L^{p,q}_{\be_1}(I\times\bR^4)} \\
  &\leq N^{-\frac{1}{2}+\ve} \sum_{n\in\bN} \, \biggl\| \sum_{j=1}^{2^n} \chi_{I_{j+1}^n} \int_{I^n_j} e^{i(t-s)\Delta} P_N h(s) \, ds \biggr\|_{L^{p,q}_{\be_1}(I\times\bR^4)} \\
  &= N^{-\frac{1}{2}+\ve} \sum_{n\in\bN} \, \Biggl( \int_{\bR_{x_1}} \biggl( \sum_{j=1}^{2^n} \int_{I_{j+1}^n} \int_{\bR^3_{x'}} \biggl| \int_{I^n_j} e^{i(t-s)\Delta} P_N h(s) \, ds \biggr|^q \, dx' \, dt \biggr)^{\frac{p}{q}} \, dx_1 \Biggr)^{\frac{1}{p}} \\
  &\leq \sum_{n\in\bN} \, \Biggl( \sum_{j=1}^{2^n} \, \biggl( N^{-\frac{1}{2}+\ve} \biggl\| \int_{I^n_j} e^{i(t-s)\Delta} P_N h(s) \, ds \biggr\|_{L^{p,q}_{\be_1}(I_{j+1}^n\times\bR^4)} \biggr)^p \Biggr)^{\frac{1}{p}} \\
  &\lesssim \sum_{n\in\bN} 2^{\frac{n}{p}} 2^{-(\frac{1}{2}+\frac{\ve}{4}) n} \\
  &\simeq \sum_{n\in\bN} 2^{-\frac{\ve}{2} n} \\
  &\lesssim 1.
 \end{align*}
 This yields \eqref{equ:christ_kiselev_maximal_function_part_simplified} and therefore finishes the proof of Lemma~\ref{lem:christ_kiselev}.
\end{proof}

\begin{proof}[Proof of Proposition~\ref{prop:main_linear_estimate}]
 Without loss of generality we may assume that $0 = t_0 = \inf I$. By Duhamel's formula for the solution to the inhomogeneous Schr\"odinger equation~\eqref{equ:inhomogeneous_schroedinger}, we have for any dyadic integer $N \in 2^{\bZ}$ that 
 \begin{equation} \label{equ:duhamel_formula_linear_estimate}
  P_N v(t) = e^{it\Delta} P_N v_0 - i \int_0^t e^{i(t-s)\Delta} P_N h(s) \, ds.
 \end{equation}
 By Proposition~\ref{prop:strichartz} and Proposition~\ref{prop:lateral_spaces} it then holds that
 \[
  N \bigl\| e^{it\Delta} P_N v_0 \bigr\|_{L^\infty_t L^2_x(I\times\bR^4)} + \bigl\| e^{it\Delta} P_N v_0 \bigr\|_{X_N(I)} \lesssim N \|P_N v_0\|_{L^2_x(\bR^4)}.
 \]
 In order to complete the proof of~\eqref{equ:main_linear_estimate_freq_localized}, it remains to verify that the Duhamel term in \eqref{equ:duhamel_formula_linear_estimate} satisfies for any admissible Strichartz pair $(q,r)$ that 
 \begin{equation} \label{equ:duhamel_estimate_one_main_linear}
  \begin{aligned}
   &N \biggl\| \int_0^t e^{i(t-s)\Delta} P_N h(s) \, ds \biggr\|_{L^q_t L^r_x(I\times\bR^4)} \\
   &\qquad \qquad \qquad + \sum_{\ell=1}^4 N^{-\frac{1}{2}+\ve} \biggl\| \int_0^t e^{i(t-s)\Delta} P_N h(s) \, ds \biggr\|_{L^{\frac{4}{2-\ve},\frac{4}{\ve}}_{\be_\ell}(I\times\bR^4)} \lesssim N \|P_N h\|_{L^1_t L^2_x(I\times\bR^4)}
  \end{aligned}
 \end{equation}
 as well as 
 \begin{equation} \label{equ:duhamel_estimate_two_main_linear}
  \begin{aligned}
   &N \biggl\| \int_0^t e^{i(t-s)\Delta} P_N h(s) \, ds \biggr\|_{L^q_t L^r_x(I\times\bR^4)} \\
   &\qquad + \sum_{\ell=1}^4 N^{-\frac{1}{2}+\ve} \biggl\| \int_0^t e^{i(t-s)\Delta} P_N h(s) \, ds \biggr\|_{L^{\frac{4}{2-\ve},\frac{4}{\ve}}_{\be_\ell}(I\times\bR^4)} \lesssim \sum_{\ell=1}^4 N^{\frac{1}{2}+\ve} \|P_N h\|_{L^{\frac{4}{4-\ve}, \frac{4}{2+\ve}}_{\be_\ell}(I\times\bR^4)}.
  \end{aligned}
 \end{equation} 
 The proof of \eqref{equ:duhamel_estimate_one_main_linear} is standard and therefore omitted, while the estimate~\eqref{equ:duhamel_estimate_two_main_linear} is provided by Lemma~\ref{lem:christ_kiselev}.
\end{proof}

\section{Trilinear estimates} \label{sec:trilinear_estimates}

In this section we systematically develop the trilinear estimates to handle all possible interaction terms that arise in the forced cubic nonlinearity within the functional framework laid out in the previous section. To this end, we frequency localize all inputs and order the inputs by the size of their frequency supports. There are then two main types of estimates. The first is when a deterministic solution $v$ appears at highest frequency. In this case, we place the associated trilinear term into the $L^1_t L^2_x$ component of the $G(I)$ space and estimate it using just a combination of Bernstein estimates and Strichartz components of the $X(I)$ and $Y(I)$ spaces. Another, more delicate, type of estimate is required if instead the (random, low-regularity) forcing term $F$ appears at highest frequency. Here we place the associated trilinear term into the $L^{1+,2-}_{\be_\ell}$ component of the $G(I)$ space, which results in a gain of $\frac{1}{2}-$ derivatives. The forcing term $F$ appearing at highest frequency is then put into the local smoothing type $L^{\infty-, 2+}_{\be_\ell}$ component of the $Y(I)$ space, which gains another $\frac{1}{2}-$ derivative. For the remaining lower frequency terms we use a mixture of Strichartz components and maximal function type $L^{2+,\infty-}_{\be_\ell}$ components of the $X(I)$ and $Y(I)$ spaces, in particular we crucially rely on our improved maximal function estimate for unit-scaled frequency localized data, see Lemma~\ref{lem:maximal_function_unit_scale} and Proposition~\ref{prop:as_bound_Y_norm}. It is here where we have to pay back some of the gained derivatives. It is important to observe that the most severe trilinear term is the $|F|^2 F$ term, see the proof of~\eqref{equ:FFF_estimate} in Proposition~\ref{prop:main_trilinear_estimates}, which ultimately leads to the regularity restriction $s > \frac{1}{3}$ in the statement of Theorem~\ref{thm:as_local_wellposedness}.

The next proposition establishes the key, frequency localized, trilinear estimates of this work.
\begin{proposition}(Main trilinear estimates) \label{prop:main_trilinear_estimates}
 Let $N_1 \gtrsim N$ and $N_1 \geq N_2 \geq N_3$ be dyadic integers. Let $\be \in \{ \be_1, \ldots, \be_4 \}$ and let $I \subset \bR$ be a time interval. Then the following trilinear estimates hold where all space-time norms are taken over $I \times \bR^4$. 
 \begin{align}
  N \bigl\| P_N \bigl( P_{N_1} v_1 \, P_{N_2} v_2 \, P_{N_3} v_3 \bigr) \bigr\|_{L^1_t L^2_x} &\lesssim \biggl( \frac{N}{N_1} \biggr) \biggl( \frac{N_3}{N_2} \biggr)^{\frac{2}{3}} \| P_{N_1} v_1 \|_{X_{N_1}} \| P_{N_2} v_2 \|_{X_{N_2}} \| P_{N_3} v_3 \|_{X_{N_3}} \label{equ:vvv_estimate}  \\
  N \bigl\| P_N \bigl( P_{N_1} v_1 \, P_{N_2} F_2 \, P_{N_3} v_3 \bigr) \bigr\|_{L^1_t L^2_x} &\lesssim \biggl( \frac{N}{N_1} \biggr) \biggl( \frac{N_3}{N_2} \biggr)^{\frac{1}{3}} \| P_{N_1} v_1 \|_{X_{N_1}} \| P_{N_2} F_2 \|_{Y_{N_2}} \| P_{N_3} v_3 \|_{X_{N_3}} \label{equ:vFv_estimate} \\
  N \bigl\| P_N \bigl( P_{N_1} v_1 \, P_{N_2} v_2 \, P_{N_3} F_3 \bigr) \bigr\|_{L^1_t L^2_x} &\lesssim \biggl( \frac{N}{N_1} \biggr) \biggl( \frac{N_3}{N_2} \biggr)^{\frac{2}{3}} \| P_{N_1} v_1 \|_{X_{N_1}} \| P_{N_2} v_2 \|_{X_{N_2}} \| P_{N_3} F_3 \|_{Y_{N_3}} \label{equ:vvF_estimate} \\
  N \bigl\| P_N \bigl( P_{N_1} v_1 \, P_{N_2} F_2 \, P_{N_3} F_3 \bigr) \bigr\|_{L^1_t L^2_x} &\lesssim \biggl( \frac{N}{N_1} \biggr) \biggl( \frac{N_3}{N_2} \biggr)^{\frac{1}{3}} \| P_{N_1} v_1 \|_{X_{N_1}} \| P_{N_2} F_2 \|_{Y_{N_2}} \| P_{N_3} F_3 \|_{Y_{N_3}} \label{equ:vFF_estimate}
 \end{align} 
 \begin{align}
  N^{\frac{1}{2}+\ve} \bigl\| P_N \bigl( P_{N_1} F_1 \, P_{N_2} F_2 \, P_{N_3} F_3 \bigr) \bigr\|_{L^{\frac{4}{4-\ve}, \frac{4}{2+\ve}}_{\be}} &\lesssim \biggl( \frac{N}{N_1} \biggr)^{\frac{1}{2}+\ve} \biggl( \frac{N_3}{N_1} \biggr)^{\frac{1}{6}} \| P_{N_1} F_1 \|_{Y_{N_1}} \| P_{N_2} F_2 \|_{Y_{N_2}} \| P_{N_3} F_3 \|_{Y_{N_3}} \label{equ:FFF_estimate} \\
  N^{\frac{1}{2}+\ve} \bigl\| P_N \bigl( P_{N_1} F_1 \, P_{N_2} v_2 \, P_{N_3} v_3 \bigr) \bigr\|_{L^{\frac{4}{4-\ve}, \frac{4}{2+\ve}}_{\be}} &\lesssim \biggl( \frac{N}{N_1} \biggr)^{\frac{1}{2}+\ve} \biggl( \frac{N_3}{N_2} \biggr)^{\frac{1}{2}-\ve} \| P_{N_1} F_1 \|_{Y_{N_1}} \| P_{N_2} v_2 \|_{X_{N_2}} \| P_{N_3} v_3 \|_{X_{N_3}} \label{equ:Fvv_estimate} \\
  N^{\frac{1}{2}+\ve} \bigl\| P_N \bigl( P_{N_1} F_1 \, P_{N_2} F_2 \, P_{N_3} v_3 \bigr) \bigr\|_{L^{\frac{4}{4-\ve}, \frac{4}{2+\ve}}_{\be}} &\lesssim \biggl( \frac{N}{N_1} \biggr)^{\frac{1}{2}+\ve} \biggl( \frac{N_3}{N_1} \biggr)^{(\frac{5}{6}-\ve) \ve} \| P_{N_1} F_1 \|_{Y_{N_1}} \| P_{N_2} F_2 \|_{Y_{N_2}} \| P_{N_3} v_3 \|_{X_{N_3}} \label{equ:FFv_estimate} \\
  N^{\frac{1}{2}+\ve} \bigl\| P_N \bigl( P_{N_1} F_1 \, P_{N_2} v_2 \, P_{N_3} F_3 \bigr) \bigr\|_{L^{\frac{4}{4-\ve}, \frac{4}{2+\ve}}_{\be}} &\lesssim \biggl( \frac{N}{N_1} \biggr)^{\frac{1}{2}+\ve} \biggl( \frac{N_3}{N_1} \biggr)^{\frac{1}{6}-\frac{2}{3}\ve} \| P_{N_1} F_1 \|_{Y_{N_1}} \| P_{N_2} F_2 \|_{Y_{N_2}} \| P_{N_3} v_3 \|_{X_{N_3}} \label{equ:FvF_estimate}
 \end{align}
\end{proposition}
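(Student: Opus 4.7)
The proof will treat the eight estimates one at a time, but they fall into two natural families. For the four estimates placing the output in the $L^1_t L^2_x$ slot of $G_N$, namely~\eqref{equ:vvv_estimate}--\eqref{equ:vFF_estimate}, the plan is to apply H\"older's inequality with a Strichartz triple drawn from the $(L^2_t L^4_x, L^3_t L^3_x, L^6_t L^{\frac{12}{5}}_x)$ components of $X_N$ and the analogous Strichartz components of $Y_N$. When the H\"older pairing lands only in $L^1_t L^1_x$, one then upgrades to $L^1_t L^2_x$ via a Bernstein inequality on $P_N$ at cost $N^2$. The factor $(N/N_1)$ in the conclusion arises by redistributing the $N$ in front of the norm onto the highest-frequency input, which carries the weight $N_1$ in its $X_{N_1}$ or $Y_{N_1}$ norm; the $(N_3/N_2)^{\sigma}$ factors come from choosing between the various Strichartz slots for the two lower-frequency factors, with Bernstein trading exponents when necessary. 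Because no single pairing is optimal for every relative size of $(N, N_2, N_3)$, a short case split according to $N \leq N_3$, $N_3 \leq N \leq N_2$, or $N_2 \leq N \leq N_1$ is convenient.

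For the remaining estimates~\eqref{equ:FFF_estimate}--\eqref{equ:FvF_estimate}, which place the output in the lateral slot $L^{\frac{4}{4-\varepsilon}, \frac{4}{2+\varepsilon}}_{\be}$ of $G_N$ and feature a forcing term at highest frequency, the argument is carried out in the lateral spaces $L^{p,q}_{\be_\ell}$. To unlock the half-derivative gain of local smoothing, we first use the identity~\eqref{equ:choice_directional_projections} to decompose
\begin{equation*}
 P_{N_1} F_1 = \sum_{\ell=1}^4 P_{N_1, \be_\ell} \prod_{k=1}^{\ell-1} (1 - P_{N_1, \be_k}) P_{N_1} F_1
\end{equation*}
and treat each of the four summands separately. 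In the $\ell$-th summand we may replace $P_{N_1} F_1$ by $P_{N_1, \be_\ell} P_{N_1} F_1$, which can be placed into the local smoothing slot $L^{\frac{4}{\varepsilon}, \frac{4}{2-\varepsilon}}_{\be_\ell}$ of $Y_{N_1}$ at cost $\langle N_1 \rangle^{\frac{1}{3}+3\varepsilon} N_1^{\frac{1}{2}-\varepsilon}$. A lateral H\"older inequality in the direction $\be_\ell$ then distributes the two lower-frequency factors into maximal-function slots $L^{\frac{4}{2-\varepsilon}, \frac{4}{\varepsilon}}_{\be_\ell}$ (available in both $X_{N_j}$ and $Y_{N_j}$) or into Strichartz slots, with any residual mismatch between time and transverse-space exponents corrected by Bernstein. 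Transferring the resulting bound back to the fixed output direction $\be$ requires only a standard lateral-space exchange.

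The proof then reduces to bookkeeping of the frequency exponents. The $\frac{1}{2}-\varepsilon$ gain from the local smoothing slot of $Y_{N_1}$, the $N^{\frac{1}{2}+\varepsilon}$ weight in front of the output, and the weights $N_j^{-\frac{1}{6}}$, $N_j^{-\frac{1}{2}+\varepsilon}$, and $\langle N_j \rangle^{\frac{1}{3}+3\varepsilon}$ from the definitions of $X_{N_j}$ and $Y_{N_j}$ must combine to produce the claimed $(N/N_1)^{\frac{1}{2}+\varepsilon}$ and $(N_3/N_1)^\sigma$ factors. The main obstacle is the $|F|^2 F$ estimate~\eqref{equ:FFF_estimate}, where all three inputs are low-regularity forcing terms and only the weak weights $\langle N_1 \rangle^{\frac{1}{3}+3\varepsilon}$ on $F_1$ and $N_j^{-\frac{1}{6}}$ on $F_2, F_3$ are available to balance the scaling; this is exactly the estimate that drives the threshold $s > \frac{1}{3}$ in Theorem~\ref{thm:as_local_wellposedness}, and carrying out its exponent arithmetic is the delicate part. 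The remaining estimates are all genuinely easier because they contain at least one deterministic factor carrying the stronger $N_j$ weight from $X_{N_j}$.
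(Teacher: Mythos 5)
Your overall architecture matches the paper's: split into the four $L^1_t L^2_x$ estimates where a deterministic $v$ carries the highest frequency and the four lateral-space estimates where the forcing $F$ does; handle the first family by H\"older and Bernstein, the second by the directional decomposition~\eqref{equ:choice_directional_projections}, placing $P_{N_1,\be_\ell}P_{N_1}F_1$ in the local smoothing slot of $Y_{N_1}$ and pairing it laterally with the lower-frequency factors. Two steps in your plan would not go through, however. In the first family, upgrading from $L^1_t L^1_x$ to $L^1_t L^2_x$ by applying Bernstein to the \emph{output} projection $P_N$ costs a factor $N^2$, which, against the weights $N_j$ carried by the $X_{N_j}$ norms, produces $N^3/(N_1N_2N_3)$; this is not bounded by $(N/N_1)(N_3/N_2)^{2/3}$ when for example $N=N_1=N_2\gg N_3$. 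One must instead Bernstein the lower-frequency \emph{inputs} (e.g.\ $P_{N_2} v_2$ from $L^3_x$ to $L^4_x$ at cost $N_2^{1/3}$ and $P_{N_3} v_3$ from $L^{12/5}_x$ to $L^\infty_x$ at cost $N_3^{5/3}$), so that $L^2_t L^4_x\cdot L^3_t L^4_x\cdot L^6_t L^\infty_x$ lands directly in $L^1_t L^2_x$ and the derivative cost sits on the low frequencies, where it is affordable. You do gesture at this in the phrase ``Bernstein trading exponents,'' but the $N^2$ Bernstein on $P_N$ should be dropped from the plan entirely.

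More seriously, for~\eqref{equ:Fvv_estimate}--\eqref{equ:FvF_estimate} the bookkeeping you describe will not close. Take~\eqref{equ:Fvv_estimate}: after the outer H\"older step $\|P_{N_1}F_1\,P_{N_2}v_2\|_{L^{2,2}_\be}\|P_{N_3}v_3\|_{L^{\frac{4}{2-\ve},\frac{4}{\ve}}_\be}$, estimating the bilinear product solely by the local smoothing slot of $Y_{N_1}$ against the maximal slot of $X_{N_2}$ yields the prefactor $N^{\frac{1}{2}+\ve}N_1^{-\frac{5}{6}-2\ve}N_2^{\frac{1}{2}-\ve}N_3^{\frac{1}{2}-\ve}$, and comparing this to $(N/N_1)^{\frac{1}{2}+\ve}(N_3/N_2)^{\frac{1}{2}-\ve}$ requires $N_2^{1-2\ve}\lesssim N_1^{\frac{1}{3}+\ve}$, which fails for $N_1\sim N_2$ and small $\ve$. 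The alternative pure Strichartz pairing $\|P_{N_1}F_1\|_{L^6_tL^6_x}\|P_{N_2}v_2\|_{L^3_tL^3_x}$ fails in the opposite regime $N_2\ll N_1$. The paper's proof therefore \emph{interpolates} between these two complete pairings of the $L^{2,2}_\be$ norm, with the interpolation exponent (e.g.\ $\frac{1}{3}+\frac{8\ve}{9-6\ve}$ in the $Fvv$ case) tuned so that the resulting $N_2$-exponent essentially vanishes. Your mention of ``choosing'' between maximal-function and Strichartz slots, or of Bernstein corrections, does not supply this mechanism: both factors are already localized at $N_1$ and $N_2$, so no further Bernstein can change the dependence on $N_2/N_1$, and neither single pairing works uniformly in the allowed frequency range. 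Note also that~\eqref{equ:FFF_estimate} closes \emph{without} interpolation, so the $|F|^2F$ case you single out as the delicate one is in fact the cleanest of the second family; the interpolation device is the new ingredient precisely in the mixed $Fvv$, $FFv$, $FvF$ cases.
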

\begin{proof}
 In the following all space-time norms are taken over $I\times\bR^4$. We begin with the derivation of the estimates \eqref{equ:vvv_estimate}--\eqref{equ:vFF_estimate} where a deterministic solution $v$ appears at highest frequency. The proofs are simple applications of H\"older's inequality and Bernstein estimates. 
 
 \medskip 
 
 \noindent {\it \underline{Proof of \eqref{equ:vvv_estimate}: $v_1 v_2 v_3$ case.}} 
 \begin{align*}
  &N \bigl\| P_N \bigl( P_{N_1} v_1 \, P_{N_2} v_2 \, P_{N_3} v_3 \bigr) \bigr\|_{L^1_t L^2_x} \\
  &\lesssim N \| P_{N_1} v_1 \|_{L^2_t L^4_x} \| P_{N_2} v_2 \|_{L^3_t L^4_x} \| P_{N_3} v_3 \|_{L^6_t L^\infty_x} \\
  &\lesssim N \| P_{N_1} v_1 \|_{L^2_t L^4_x} N_2^{\frac{1}{3}} \| P_{N_2} v_2 \|_{L^3_t L^3_x} N_3^{\frac{5}{3}} \| P_{N_3} v_3 \|_{L^6_t L^{\frac{12}{5}}_x} \\
  &\simeq \biggl( \frac{N}{N_1} \biggr) \biggl( \frac{N_3}{N_2} \biggr)^{\frac{2}{3}} N_1 \| P_{N_1} v_1 \|_{L^2_t L^4_x} N_2 \| P_{N_2} v_2 \|_{L^3_t L^3_x} N_3 \| P_{N_3} v_3 \|_{L^6_t L^{\frac{12}{5}}_x} \\
  &\lesssim \biggl( \frac{N}{N_1} \biggr) \biggl( \frac{N_3}{N_2} \biggr)^{\frac{2}{3}} \| P_{N_1} v_1 \|_{X_{N_1}} \| P_{N_2} v_2 \|_{X_{N_2}} \| P_{N_3} v_3 \|_{X_{N_3}}
 \end{align*}

 \medskip 
 
 \noindent {\it \underline{Proof of \eqref{equ:vFv_estimate}: $v_1 F_2 v_3$ case.}}
 \begin{align*}
  &N \bigl\| P_N \bigl( P_{N_1} v_1 \, P_{N_2} F_2 \, P_{N_3} v_3 \bigr) \bigr\|_{L^1_t L^2_x} \\
  &\lesssim N \| P_{N_1} v_1 \|_{L^2_t L^4_x} \| P_{N_2} F_2 \|_{L^3_t L^6_x} \| P_{N_3} v_3 \|_{L^6_t L^{12}_x} \\
  &\lesssim \biggl( \frac{N}{N_1} \biggr) \biggl( \frac{N_3}{N_2} \biggr)^{\frac{1}{3}} N_1 \| P_{N_1} v_1 \|_{L^2_t L^4_x} N_2^{\frac{1}{3}} \| P_{N_2} F_2 \|_{L^3_t L^6_x} N_3 \| P_{N_3} v_3 \|_{L^6_t L^{\frac{12}{5}}_x} \\
  &\lesssim \biggl( \frac{N}{N_1} \biggr) \biggl( \frac{N_3}{N_2} \biggr)^{\frac{1}{3}} \| P_{N_1} v_1 \|_{X_{N_1}} \| P_{N_2} F_2 \|_{Y_{N_2}} \| P_{N_3} v_3 \|_{X_{N_3}} 
 \end{align*}

 \medskip 
 
 \noindent {\it \underline{Proof of \eqref{equ:vvF_estimate}: $v_1 v_2 F_3$ case.}}
 \begin{align*}
  &N \bigl\| P_N \bigl( P_{N_1} v_1 \, P_{N_2} v_2 \, P_{N_3} F_3 \bigr) \bigr\|_{L^1_t L^2_x} \\
  &\lesssim N \| P_{N_1} v_1 \|_{L^2_t L^4_x} \| P_{N_2} v_2 \|_{L^3_t L^4_x} \| P_{N_3} F_3 \|_{L^6_t L^\infty_x} \\
  &\lesssim \biggl( \frac{N}{N_1} \biggr) \biggl( \frac{N_3}{N_2} \biggr)^{\frac{2}{3}} N_1 \| P_{N_1} v_1 \|_{L^2_t L^4_x} N_2 \| P_{N_2} v_2 \|_{L^3_t L^3_x} \| P_{N_3} F_3 \|_{L^6_t L^6_x} \\
  &\lesssim \biggl( \frac{N}{N_1} \biggr) \biggl( \frac{N_3}{N_2} \biggr)^{\frac{2}{3}} \| P_{N_1} v_1 \|_{X_{N_1}} \| P_{N_2} v_2 \|_{X_{N_2}} \| P_{N_3} F_3 \|_{Y_{N_3}} 
 \end{align*}
 
 \medskip 
 
 \noindent {\it \underline{Proof of \eqref{equ:vFF_estimate}: $v_1 F_2 F_3$ case.}} 
 \begin{align*}
  &N \bigl\| P_N \bigl( P_{N_1} v_1 \, P_{N_2} F_2 \, P_{N_3} F_3 \bigr) \bigr\|_{L^1_t L^2_x} \\
  &\lesssim N \| P_{N_1} v_1 \|_{L^2_t L^4_x} \| P_{N_2} F_2 \|_{L^3_t L^6_x} \| P_{N_3} F_3 \|_{L^6_t L^{12}_x} \\
  &\lesssim \biggl( \frac{N}{N_1} \biggr) \biggl( \frac{N_3}{N_2} \biggr)^{\frac{1}{3}} N_1 \| P_{N_1} v_1 \|_{L^2_t L^4_x} N_2^{\frac{1}{3}} \| P_{N_2} F_2 \|_{L^3_t L^6_x} \| P_{N_3} F_3 \|_{L^6_t L^6_x} \\
  &\lesssim \biggl( \frac{N}{N_1} \biggr) \biggl( \frac{N_3}{N_2} \biggr)^{\frac{1}{3}} \| P_{N_1} v_1 \|_{X_{N_1}} \| P_{N_2} F_2 \|_{Y_{N_2}} \| P_{N_3} F_3 \|_{Y_{N_3}} 
 \end{align*}
 
 \medskip 
 
 We now turn to the proofs of the more delicate trilinear estimates \eqref{equ:FFF_estimate}--\eqref{equ:FvF_estimate} where a (random, low-regularity) forcing term $F$ appears at highest frequency. 
 
 \medskip 
 
 \noindent {\it \underline{Proof of \eqref{equ:FFF_estimate}: $F_1 F_2 F_3$ case.}} We first use H\"older's inequality to place the $P_{N_3} F_3$ piece into $L^{\frac{4}{2-\ve}, \frac{4}{\ve}}_{\be}$,
 \begin{equation} \label{equ:FFF_case_first_step}
  N^{\frac{1}{2}+\ve} \bigl\| P_N \bigl( P_{N_1} F_1 \, P_{N_2} F_2 \, P_{N_3} F_3 \bigr) \bigr\|_{L^{\frac{4}{4-\ve}, \frac{4}{2+\ve}}_{\be}} \lesssim N^{\frac{1}{2}+\ve} \bigl\| P_{N_1} F_1 \, P_{N_2} F_2 \bigr\|_{L^{2,2}_{\be}} \| P_{N_3} F_3 \|_{L^{\frac{4}{2-\ve},\frac{4}{\ve}}_{\be}}. 
 \end{equation}
 Relying on the identity \eqref{equ:choice_directional_projections} we further decompose the highest frequency piece $P_{N_1} F_1$ into
 \begin{align*}
  P_{N_1} F_1 &= P_{N_1, \be_1} P_{N_1} F_1 + P_{N_1, \be_2} (1-P_{N_1,\be_1}) P_{N_1} F_1 + P_{N_1,\be_3} (1-P_{N_1, \be_2}) (1-P_{N_1, \be_1}) P_{N_1} F_1 \\
  &\quad\quad  + P_{N_1, \be_4} (1-P_{N_1, \be_3}) (1-P_{N_1, \be_2}) (1-P_{N_1, \be_1}) P_{N_1} F_1.
 \end{align*}
 We note that the operators $(1-P_{N_1,\be_\ell}) \widetilde{P}_{N_1}$ are disposable since their kernels are uniformly bounded in $L^1_x$. Using this disposability and the fact that $L^{2,2}_{\be} = L^{2,2}_{\be_\ell}$ for $\ell = 1, \ldots, 4$ by Fubini's theorem, we may now use H\"older's inequality to bound \eqref{equ:FFF_case_first_step} by
 \begin{align*}
  &N^{\frac{1}{2}+\ve} \bigl\| P_{N_1} F_1 \, P_{N_2} F_2 \bigr\|_{L^{2,2}_{\be}} \| P_{N_3} F_3 \|_{L^{\frac{4}{2-\ve},\frac{4}{\ve}}_{\be}} \\
  &\lesssim N^{\frac{1}{2}+\varepsilon} \sum_{\ell=1}^4 \| P_{N_1, \be_\ell} P_{N_1} F_1 \|_{L^{\frac{4}{\ve}, \frac{4}{2-\ve}}_{\be_\ell}} \| P_{N_2} F_2 \|_{L^{\frac{4}{2-\ve}, \frac{4}{\ve}}_{\be_\ell}} \| P_{N_3} F_3 \|_{L^{\frac{4}{2-\ve},\frac{4}{\ve}}_{\be}} \\
  &\simeq \biggl( \frac{N}{N_1} \biggr)^{\frac{1}{2}+\ve} \biggl( \frac{N_2}{N_1} \biggr)^{\frac{1}{6}} \biggl( \frac{N_3}{N_1} \biggr)^{\frac{1}{6}} \sum_{\ell=1}^4 N_1^{\frac{5}{6}+\ve} \| P_{N_1, \be_\ell} P_{N_1} F_1 \|_{L^{\frac{4}{\ve}, \frac{4}{2-\ve}}_{\be_\ell}} N_2^{-\frac{1}{6}} \| P_{N_2} F_2 \|_{L^{\frac{4}{2-\ve}, \frac{4}{\ve}}_{\be_\ell}} N_3^{-\frac{1}{6}} \| P_{N_3} F_3 \|_{L^{\frac{4}{2-\ve},\frac{4}{\ve}}_{\be}} \\
  &\lesssim \biggl( \frac{N}{N_1} \biggr)^{\frac{1}{2}+\ve} \biggl( \frac{N_3}{N_1} \biggr)^{\frac{1}{6}} \| P_{N_1} F_1 \|_{Y_{N_1}} \| P_{N_2} F_2 \|_{Y_{N_2}} \| P_{N_3} F_3 \|_{Y_{N_3}}.
 \end{align*}
 
 \medskip 
 
 \noindent {\it \underline{Proof of \eqref{equ:Fvv_estimate}: $F_1 v_2 v_3$ case.}} We begin by placing the $P_{N_3} v_3$ piece into $L^{\frac{4}{2-\ve}, \frac{4}{\ve}}_{\be}$,
 \begin{equation} \label{equ:Fvv_case_first_step}
  N^{\frac{1}{2}+\ve} \bigl\| P_N \bigl( P_{N_1} F_1 \, P_{N_2} v_2 \, P_{N_3} v_3 \bigr) \bigr\|_{L^{\frac{4}{4-\ve}, \frac{4}{2+\ve}}_{\be}} \lesssim N^{\frac{1}{2}+\ve} \bigl\| P_{N_1} F_1 \, P_{N_2} v_2 \bigr\|_{L^{2,2}_{\be}} \| P_{N_3} v_3 \|_{L^{\frac{4}{2-\ve}, \frac{4}{\ve}}_{\be}}.
 \end{equation}
 On the one hand we may proceed as in the proof of~\eqref{equ:FFF_estimate} to estimate the term $\bigl\| P_{N_1} F_1 \, P_{N_2} v_2 \bigr\|_{L^{2,2}_{\be}}$ by
 \begin{equation*}
  \bigl\| P_{N_1} F_1 \, P_{N_2} v_2 \bigr\|_{L^{2,2}_{\be}} \lesssim \biggl( \sum_{\ell=1}^4 \| P_{N_1, \be_\ell} P_{N_1} F_1 \|_{L^{\frac{4}{\ve}, \frac{4}{2-\ve}}_{\be_\ell}} \biggr) \biggl( \sum_{\ell=1}^4 \| P_{N_2} v_2 \|_{L^{\frac{4}{2-\ve}, \frac{4}{\ve}}_{\be_\ell}} \biggr),
 \end{equation*}
 while on the other hand we may use that $L^{2,2}_\be = L^2_t L^2_x$ to bound the term by
 \begin{equation*}
  \bigl\| P_{N_1} F_1 \, P_{N_2} v_2 \bigr\|_{L^{2,2}_{\be}} \lesssim \| P_{N_1} F_1 \|_{L^6_t L^6_x} \| P_{N_2} v_2 \|_{L^3_t L^3_x}.
 \end{equation*}
 Interpolating between the two cases we now estimate~\eqref{equ:Fvv_case_first_step} by 
 \begin{align*}
  &N^{\frac{1}{2}+\ve} \bigl\| P_{N_1} F_1 \, P_{N_2} v_2 \bigr\|_{L^{2,2}_{\be}} \| P_{N_3} v_3 \|_{L^{\frac{4}{2-\ve}, \frac{4}{\ve}}_{\be}} \\
  &\lesssim N^{\frac{1}{2}+\ve} \biggl( \sum_{\ell=1}^4 \| P_{N_1, \be_\ell} P_{N_1} F_1 \|_{L^{\frac{4}{\ve}, \frac{4}{2-\ve}}_{\be_\ell}} \biggr)^{\frac{1}{3} + \frac{8 \ve}{9-6\ve}} \biggl( \| P_{N_1} F_1 \|_{L^6_t L^6_x} \biggr)^{\frac{2}{3} - \frac{8 \ve}{9-6\ve}}  \times \\
  &\qquad \qquad \times \biggl( \sum_{\ell=1}^4 \| P_{N_2} v_2 \|_{L^{\frac{4}{2-\ve}, \frac{4}{\ve}}_{\be_\ell}} \biggr)^{\frac{1}{3} + \frac{8 \ve}{9-6\ve}} \biggl( \| P_{N_2} v_2 \|_{L^3_t L^3_x} \biggr)^{\frac{2}{3} - \frac{8 \ve}{9-6\ve}} \| P_{N_3} v_3 \|_{L^{\frac{4}{2-\ve}, \frac{4}{\ve}}_{\be}} \\
  &\simeq \biggl( \frac{N}{N_1} \biggr)^{\frac{1}{2}+\ve} \biggl( \frac{N_3}{N_2} \biggr)^{\frac{1}{2}-\ve} \times \\
  &\qquad \qquad \times \biggl( \sum_{\ell=1}^4 N_1^{\frac{5}{6} + (1-\frac{4}{9-6\ve}) \ve} \| P_{N_1, \be_\ell} P_{N_1} F_1 \|_{L^{\frac{4}{\ve}, \frac{4}{2-\ve}}_{\be_\ell}} \biggr)^{\frac{1}{3} + \frac{8 \ve}{9-6\ve}} \biggl( N_1^{\frac{1}{3} + (1-\frac{4}{9-6\ve}) \ve} \| P_{N_1} F_1 \|_{L^6_t L^6_x} \biggr)^{\frac{2}{3} - \frac{8 \ve}{9-6\ve}}  \times \\
  &\qquad \qquad \times \biggl( \sum_{\ell=1}^4 N_2^{-\frac{1}{2}+\ve} \| P_{N_2} v_2 \|_{L^{\frac{4}{2-\ve}, \frac{4}{\ve}}_{\be_\ell}} \biggr)^{\frac{1}{3} + \frac{8 \ve}{9-6\ve}} \biggl( N_2 \| P_{N_2} v_2 \|_{L^3_t L^3_x} \biggr)^{\frac{2}{3} - \frac{8 \ve}{9-6\ve}}  N_3^{-\frac{1}{2}+\ve} \| P_{N_3} v_3 \|_{L^{\frac{4}{2-\ve}, \frac{4}{\ve}}_{\be}} \\
  &\lesssim \biggl( \frac{N}{N_1} \biggr)^{\frac{1}{2}+\ve} \biggl( \frac{N_3}{N_2} \biggr)^{\frac{1}{2}-\ve} \| P_{N_1} F_1 \|_{Y_{N_1}} \| P_{N_2} v_2 \|_{X_{N_2}} \| P_{N_3} v_3 \|_{X_{N_3}}.
 \end{align*}

 \medskip 
 
 \noindent {\it \underline{Proof of \eqref{equ:FFv_estimate}: $F_1 F_2 v_3$ case.}} Here we first place the $P_{N_2} F_2$ piece into $L^{\frac{4}{2-\ve}, \frac{4}{\ve}}_{\be}$,
 \begin{equation*}
  N^{\frac{1}{2}+\ve} \bigl\| P_N \bigl( P_{N_1} F_1 \, P_{N_2} F_2 \, P_{N_3} v_3 \bigr) \bigr\|_{L^{\frac{4}{4-\ve}, \frac{4}{2+\ve}}_{\be}} \lesssim N^{\frac{1}{2}+\ve} \bigl\| P_{N_1} F_1 \, P_{N_3} v_3 \bigr\|_{L^{2,2}_{\be}} \| P_{N_2} F_2 \|_{L^{\frac{4}{2-\ve}, \frac{4}{\ve}}_{\be}}.
 \end{equation*}
 Then interpolating as in the proof of \eqref{equ:Fvv_estimate}, we bound this by
 \begin{align*}
  &N^{\frac{1}{2}+\ve} \bigl\| P_{N_1} F_1 \, P_{N_3} v_3 \bigr\|_{L^{2,2}_{\be}} \| P_{N_2} F_2 \|_{L^{\frac{4}{2-\ve}, \frac{4}{\ve}}_{\be}} \\
  &\lesssim N^{\frac{1}{2}+\ve} \biggl( \sum_{\ell=1}^4 \| P_{N_1, \be_\ell} P_{N_1} F_1 \|_{L^{\frac{4}{\ve}, \frac{4}{2-\ve}}_{\be_\ell}} \biggr)^{\frac{2}{3}+\ve} \biggl( \| P_{N_1} F_1 \|_{L^6_t L^6_x} \biggr)^{\frac{1}{3}-\ve} \times \\
  &\qquad \qquad \times  \| P_{N_2} F_2 \|_{L^{\frac{4}{2-\ve}, \frac{4}{\ve}}_{\be}} \biggl( \sum_{\ell=1}^4 \|P_{N_3} v_3\|_{L^{\frac{4}{2-\ve},\frac{4}{\ve}}_{\be_\ell}} \biggr)^{\frac{2}{3}+\ve} \biggl( \| P_{N_3} v_3 \|_{L^3_t L^3_x} \biggr)^{\frac{1}{3}-\ve} \\
  &\simeq \biggl( \frac{N}{N_1} \biggr)^{\frac{1}{2}+\ve} \biggl( \frac{N_2}{N_1} \biggr)^{\frac{1}{6}} \biggl( \frac{N_3}{N_1} \biggr)^{(\frac{5}{6}-\ve) \ve} \times \\
  &\qquad \qquad \times \biggl( \sum_{\ell=1}^4 N_1^{\frac{5}{6} + (\frac{4}{3}-\ve)\ve} \| P_{N_1, \be_\ell} P_{N_1} F_1 \|_{L^{\frac{4}{\ve}, \frac{4}{2-\ve}}_{\be_\ell}} \biggr)^{\frac{2}{3}+\ve} \biggl( N_1^{\frac{1}{3} + (\frac{4}{3}-\ve)\ve} \| P_{N_1} F_1 \|_{L^6_t L^6_x} \biggr)^{\frac{1}{3}-\ve} \times \\
  &\qquad \qquad \times N_2^{-\frac{1}{6}} \| P_{N_2} F_2 \|_{L^{\frac{4}{2-\ve}, \frac{4}{\ve}}_{\be}} \biggl( \sum_{\ell=1}^4 N_3^{-\frac{1}{2}+\ve} \|P_{N_3} v_3\|_{L^{\frac{4}{2-\ve},\frac{4}{\ve}}_{\be_\ell}} \biggr)^{\frac{2}{3}+\ve} \biggl( N_3 \| P_{N_3} v_3 \|_{L^3_t L^3_x} \biggr)^{\frac{1}{3}-\ve} \\
  &\lesssim \biggl( \frac{N}{N_1} \biggr)^{\frac{1}{2}+\ve} \biggl( \frac{N_3}{N_1} \biggr)^{(\frac{5}{6}-\ve) \ve} \| P_{N_1} F_1 \|_{Y_{N_1}} \| P_{N_2} F_2 \|_{Y_{N_2}} \| P_{N_3} v_3 \|_{X_{N_3}}.
 \end{align*}

 \medskip 
 
 \noindent {\it \underline{Proof of \eqref{equ:FvF_estimate}: $F_1 v_2 F_3$ case.}} As usual we first place the random piece $P_{N_3} F_3$ into $L^{\frac{4}{2-\ve},\frac{4}{\ve}}_{\be}$,
 \begin{equation*}
  N^{\frac{1}{2}+\ve} \bigl\| P_N \bigl( P_{N_1} F_1 \, P_{N_2} v_2 \, P_{N_3} F_3 \bigr) \bigr\|_{L^{\frac{4}{4-\ve}, \frac{4}{2+\ve}}_{\be}} \lesssim N^{\frac{1}{2}+\ve} \bigl\| P_N \bigl( P_{N_1} F_1 \, P_{N_2} v_2 \bigr\|_{L^{2,2}_\be} \| P_{N_3} F_3 \|_{L^{\frac{4}{2-\ve},\frac{4}{\ve}}_{\be}}.
 \end{equation*}
 Then, analogously to the previous two cases, we interpolate to obtain 
 \begin{align*}
  &N^{\frac{1}{2}+\ve} \bigl\| P_N \bigl( P_{N_1} F_1 \, P_{N_2} v_2 \bigr\|_{L^{2,2}_\be} \| P_{N_3} F_3 \|_{L^{\frac{4}{2-\ve},\frac{4}{\ve}}_{\be}} \\
  &\lesssim N^{\frac{1}{2}+\ve} \biggl( \sum_{\ell=1}^4 \| P_{N_1, \be_\ell} P_{N_1} F_1 \|_{L^{\frac{4}{\ve}, \frac{4}{2-\ve}}_{\be_\ell}} \biggr)^{\frac{2}{3}} \biggl( \| P_{N_1} F_1 \|_{L^6_t L^6_x} \biggr)^{\frac{1}{3}}  \times \\
  &\qquad \qquad \times \biggl( \sum_{\ell=1}^4 \| P_{N_2} v_2 \|_{L^{\frac{4}{2-\ve}, \frac{4}{\ve}}_{\be_\ell}} \biggr)^{\frac{2}{3}} \biggl( \| P_{N_2} v_2 \|_{L^3_t L^3_x} \biggr)^{\frac{1}{3}} \| P_{N_3} F_3 \|_{L^{\frac{4}{2-\ve}, \frac{4}{\ve}}_{\be}} \\
  &\simeq \biggl( \frac{N}{N_1} \biggr)^{\frac{1}{2}+\ve} \biggl( \frac{N_3}{N_2} \biggr)^{\frac{2}{3} \ve} \biggl( \frac{N_3}{N_1} \biggr)^{\frac{1}{6}-\frac{2}{3}\ve} \times \\
  &\qquad \qquad \times \biggl( \sum_{\ell=1}^4 N_1^{\frac{5}{6}+\frac{1}{3}\ve} \| P_{N_1, \be_\ell} P_{N_1} F_1 \|_{L^{\frac{4}{\ve}, \frac{4}{2-\ve}}_{\be_\ell}} \biggr)^{\frac{2}{3}} \biggl( N_1^{\frac{1}{3}+\frac{1}{3} \ve} \| P_{N_1} F_1 \|_{L^6_t L^6_x} \biggr)^{\frac{1}{3}}  \times \\
  &\qquad \qquad \times \biggl( \sum_{\ell=1}^4 N_2^{-\frac{1}{2}+\ve} \| P_{N_2} v_2 \|_{L^{\frac{4}{2-\ve}, \frac{4}{\ve}}_{\be_\ell}} \biggr)^{\frac{2}{3}} \biggl( N_2 \| P_{N_2} v_2 \|_{L^3_t L^3_x} \biggr)^{\frac{1}{3}} N_3^{-\frac{1}{6}} \| P_{N_3} F_3 \|_{L^{\frac{4}{2-\ve}, \frac{4}{\ve}}_{\be}} \\
  &\lesssim \biggl( \frac{N}{N_1} \biggr)^{\frac{1}{2}+\ve} \biggl( \frac{N_3}{N_1} \biggr)^{\frac{1}{6}-\frac{2}{3}\ve} \|P_{N_1} F_1\|_{Y_{N_1}} \|P_{N_2} v_2\|_{X_{N_2}} \|P_{N_3} F_3\|_{Y_{N_3}}. \qedhere
 \end{align*}
\end{proof}

The frequency localized, trilinear estimates~\eqref{equ:vvv_estimate}--\eqref{equ:FvF_estimate} imply an important set of nonlinear estimates that we will need for the proofs of the almost sure local well-posedness result of Theorem~\ref{thm:as_local_wellposedness} and the conditional scattering result of Theorem~\ref{thm:scattering_conditional}. More precisely, given any time interval $I$, any forcing term $F \in Y(I)$, and any $v, v_1, v_2, u, w \in X(I)$, it is an easy consequence of the exponential gains in the frequency differences in the trilinear estimates~\eqref{equ:vvv_estimate}--\eqref{equ:FvF_estimate} to conclude that
\begin{equation} \label{equ:forced_cubic_nonlinearity_in_G}
 \begin{aligned}
  \bigl\| |F+v|^2 (F+v) \bigr\|_{G(I)} &\lesssim \| v \|_{X(I)}^3 + \| v \|_{X(I)}^2 \| F \|_{Y(I)} + \| v \|_{X(I)} \| F \|_{Y(I)}^2 + \| F \|_{Y(I)}^3 \\
  &\lesssim \| F \|_{Y(I)}^3 + \| v \|_{X(I)}^3
 \end{aligned}
\end{equation}
as well as
\begin{equation} \label{equ:difference_forced_cubic_nonlinearity_in_G}
 \begin{aligned}
  &\bigl\| |F+v_1|^2 (F+v_1) - |F+v_2|^2 (F+v_2) \bigr\|_{G(I)} \\
  &\quad \quad \lesssim \| v_1 - v_2 \|_{X(I)} \bigl( \| F \|_{Y(I)}^2 + \| v_1 \|_{X(I)}^2 + \| v_2 \|_{X(I)}^3 \bigr).
 \end{aligned}
\end{equation}
Moreover, using that 
\begin{align*}
  |F+u+w|^2(F+u+w) - |u|^2 u &= |F|^2 F + |w|^2 w + \mathcal{O}\bigl( F^2 u \bigr) + \mathcal{O}\bigl( F^2 w \bigr) + \mathcal{O}\bigl( F u^2 \bigr) \\ 
  &\quad \quad + \mathcal{O}\bigl( F u w \bigr) + \mathcal{O}\bigl( F w^2 \bigr) + \mathcal{O}\bigl( u^2 w \bigr) + \mathcal{O}\bigl( u w^2 \bigr),
\end{align*}
we may also infer that 
\begin{equation} \label{equ:difference_equation_nonlinearity_in_G}
 \begin{aligned}
  &\bigl\| |F+u+w|^2(F+u+w) - |u|^2 u \bigr\|_{G(I)} \\
  &\quad \quad \lesssim \|F\|_{Y(I)}^3 + \|w\|_{X(I)}^3 + \|F\|_{Y(I)} \|u\|_{X(I)}^2 + \|u\|_{X(I)}^2 \|w\|_{X(I)}.
 \end{aligned}
\end{equation}

\section{Almost sure bounds for the free evolution} \label{sec:as_bounds_free_evolution}

In this section we establish various almost sure bounds for the free evolution of the random data. In Subsection~\ref{subsec:prob_preliminaries} we recall some probabilistic preliminaries. Then Subsection~\ref{subsec:as_bounds_Y} is dedicated to the proof that the $Y(\bR)$ norm of $e^{it\Delta} f^\omega$ is almost surely finite for any $f \in H^s_x(\bR^4)$ with $\frac{1}{3} < s < 1$, while in Subsection~\ref{subsec:as_bounds_radial} we establish further almost sure bounds that enter the proof of Theorem~\ref{thm:scattering_radial} and mostly crucially rely on a radial symmetry assumption.

\subsection{Probabilistic preliminaries} \label{subsec:prob_preliminaries}

We first recall the following large deviation estimate.
\begin{lemma}[\protect{\cite[Lemma 3.1]{BT1}}] \label{lem:large_deviation_estimate}
 Let $\{g_n\}_{n=1}^{\infty}$ be a sequence of real-valued, independent, zero-mean random variables with associated distributions $\{\mu_n\}_{n=1}^{\infty}$ on a probability space $(\Omega, {\mathcal A}, \bP)$. Assume that the distributions satisfy the property that there exists $c > 0$ such that
 \begin{equation*}
  \biggl| \int_{-\infty}^{+\infty} e^{\gamma x} d\mu_n(x) \biggr| \leq e^{c \gamma^2} \text{ for  all } \gamma \in \bR \text{ and for all } n \in \mathbb{N}.
 \end{equation*}
 Then there exists $\alpha > 0$ such that for every $\lambda > 0$ and every sequence $\{c_n\}_{n=1}^{\infty} \in \ell^2(\bN;\bC)$ of complex numbers, 
 \begin{equation*}
  \bP \Bigl( \bigl\{ \omega : \bigl| \sum_{n=1}^{\infty} c_n g_n(\omega) \bigr| > \lambda \bigr\} \Bigr) \leq 2 \exp \biggl(- \alpha \frac{\lambda^2}{\sum_n |c_n|^2} \biggr). 
 \end{equation*}
 As a consequence there exists $C > 0$ such that for every $2 \leq p < \infty$ and every $\{c_n\}_{n=1}^{\infty} \in \ell^2(\bN; \bC)$,
 \begin{equation*}
  \Bigl\| \sum_{n=1}^{\infty} c_n g_n(\omega) \Bigr\|_{L^p_\omega(\Omega)} \leq C \sqrt{p} \Bigl( \sum_{n=1}^{\infty} |c_n|^2 \Bigr)^{1/2}.
 \end{equation*}
\end{lemma}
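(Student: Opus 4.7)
The plan is to derive this as a standard Bernstein-type concentration inequality for sums of independent sub-Gaussian random variables, followed by extracting the $L^p_\omega$ moment bound from the tail estimate.

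First I would reduce to the case of real coefficients. Writing $c_n = a_n + i b_n$ with $a_n, b_n \in \bR$, we have
\[
 \Bigl|\sum_n c_n g_n(\omega)\Bigr|^2 = \Bigl(\sum_n a_n g_n(\omega)\Bigr)^2 + \Bigl(\sum_n b_n g_n(\omega)\Bigr)^2,
\]
so the event $\{|\sum_n c_n g_n| > \lambda\}$ is contained in the union of $\{|\sum_n a_n g_n| > \lambda/\sqrt{2}\}$ and $\{|\sum_n b_n g_n| > \lambda/\sqrt{2}\}$. A union bound then reduces matters to proving, for any real sequence $\{a_n\} \in \ell^2$, a bound of the form $\bP(|\sum_n a_n g_n| > \mu) \leq 2 \exp(-\alpha \mu^2 / \sum_n a_n^2)$.

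For this real-valued statement I would apply the exponential Chebyshev (Markov) inequality: for every $\gamma > 0$,
\[
 \bP\Bigl(\sum_n a_n g_n > \mu\Bigr) \leq e^{-\gamma \mu} \, \bE\Bigl[\exp\Bigl(\gamma \sum_n a_n g_n\Bigr)\Bigr] = e^{-\gamma \mu} \prod_n \int_{-\infty}^{+\infty} e^{\gamma a_n x} \, d\mu_n(x),
\]
where the factorization uses independence of the $g_n$. The hypothesis \eqref{equ:rvassumption} applied with parameter $\gamma a_n$ in place of $\gamma$ controls each factor by $e^{c \gamma^2 a_n^2}$, so the product is at most $\exp(c \gamma^2 \sum_n a_n^2)$. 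Optimizing the resulting bound $\exp(-\gamma \mu + c\gamma^2 \sum_n a_n^2)$ over $\gamma > 0$ at $\gamma_\ast = \mu / (2c \sum_n a_n^2)$ produces the claimed sub-Gaussian decay with $\alpha = 1/(4c)$, and a symmetric argument for $\sum_n a_n g_n < -\mu$ yields the factor of $2$. Tracking constants through the real/imaginary reduction gives the same form of bound for complex coefficients, possibly with a modified $\alpha$.

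For the $L^p_\omega$ moment bound, I would integrate the tail estimate via the layer-cake identity
\[
 \Bigl\| \sum_n c_n g_n \Bigr\|_{L^p_\omega}^p = p \int_0^\infty \lambda^{p-1} \, \bP\Bigl(\Bigl|\sum_n c_n g_n\Bigr| > \lambda\Bigr) \, d\lambda \leq 2p \int_0^\infty \lambda^{p-1} \exp\Bigl(-\tfrac{\alpha \lambda^2}{\sum_n |c_n|^2}\Bigr) \, d\lambda.
\]
Substituting $t = \alpha \lambda^2 / \sum_n |c_n|^2$ turns the right-hand side into a Gamma function, yielding $\|\sum_n c_n g_n\|_{L^p_\omega}^p \leq C^p p^{p/2} (\sum_n |c_n|^2)^{p/2}$ after using $\Gamma(p/2+1) \lesssim (p/2)^{p/2}$, and the $p$-th root produces the asserted $\sqrt{p}$ scaling. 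There is no real obstacle here beyond bookkeeping of constants; the only point that requires a little care is the passage from real to complex coefficients and the verification that the hypothesis \eqref{equ:rvassumption} continues to apply with rescaled parameter $\gamma a_n$, which is immediate since it is assumed to hold for all $\gamma \in \bR$.
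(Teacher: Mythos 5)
Your argument is correct and is the standard proof of this large deviation estimate: reduce to real coefficients by a union bound, apply the exponential Markov inequality together with independence to factorize the moment generating function, invoke the sub-Gaussian hypothesis with parameter $\gamma a_n$, optimize over $\gamma$, and then obtain the $L^p_\omega$ bound by a layer-cake integration against the tail. Note that the paper itself does not supply a proof of this lemma---it is cited verbatim as Lemma~3.1 of Burq--Tzvetkov \cite{BT1}---and the proof you have written is precisely the one given there, so there is no genuine divergence to report; the one loose end you flagged (the factor of $4$ from the union bound versus the stated factor of $2$) is indeed harmless, as it can be absorbed by replacing $\alpha$ with $\alpha/2$.
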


We also present a lemma that will be used to estimate the probability of certain events. Its proof is an adaptation of the proof of Lemma~4.5 in \cite{Tz10}.

\begin{lemma} \label{lem:probability_estimate}
Let $F$ be a real-valued measurable function on a probability space $(\Omega, {\mathcal A}, \bP)$. Suppose that there exist $C_0 > 0$, $K > 0$ and $p_0 \geq 1$ such that for every $p \geq p_0$ we have
\begin{align*}
 \| F \|_{L^p_{\omega}(\Omega)} \leq \sqrt{p} \, C_0 K. 
\end{align*}
Then there exist $c > 0$ and $C_1 > 0$, depending on $C_0$ and $p_0$ but independent of $K$, such that for every $\lambda > 0$,
\begin{align*}
  \bP \bigl( \bigl\{ \omega \in \Omega : |F(\omega)| > \lambda \bigr\} \bigr) \leq C_1 e^{- c \lambda^2/K^2}.
\end{align*}
In particular, it follows that 
\[
 \bP \bigl( \bigl\{ \omega \in \Omega : |F(\omega)| < \infty \bigr\} \bigr) = 1.
\]
\end{lemma}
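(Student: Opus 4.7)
The plan is to combine Markov's inequality with an optimization in $p$, in the standard spirit of converting Gaussian-type moment bounds into Gaussian-type tail bounds. First, for any $p \geq p_0$ and any $\lambda > 0$, Markov's inequality together with the hypothesis yields
\[
 \bP \bigl( \bigl\{ \omega \in \Omega : |F(\omega)| > \lambda \bigr\} \bigr) \leq \frac{\|F\|_{L^p_\omega(\Omega)}^p}{\lambda^p} \leq \biggl( \frac{\sqrt{p}\, C_0 K}{\lambda} \biggr)^p.
\]

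Next, I would choose $p$ as a function of $\lambda$ so that the factor in parentheses becomes a small absolute constant less than $1$. Specifically, set $\alpha := (e C_0)^{-2}$ and let $p := \alpha \lambda^2 / K^2$. With this choice, $\sqrt{p}\, C_0 K / \lambda = 1/e$, and the right-hand side of the Markov estimate equals $e^{-p} = e^{-\alpha \lambda^2/K^2}$. This is valid precisely when $p \geq p_0$, i.e., when $\lambda \geq \lambda_0 := K \sqrt{p_0/\alpha}$.

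For the remaining range $0 < \lambda < \lambda_0$, I simply use the trivial bound $\bP(|F|>\lambda) \leq 1$ and observe that $1 \leq e^{p_0} e^{-\alpha \lambda^2/K^2}$ whenever $\lambda < \lambda_0$. Combining the two regimes with $c := \alpha$ and $C_1 := e^{p_0}$ (both depending only on $C_0$ and $p_0$) yields the desired bound for every $\lambda > 0$.

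Finally, the almost sure finiteness of $|F|$ is immediate: the tail estimate gives $\bP(|F| > n) \leq C_1 e^{-cn^2/K^2}$, which is summable in $n$, so the Borel-Cantelli lemma implies $\bP(\{|F| = \infty\}) = \lim_{n \to \infty} \bP(\{|F|>n\}) = 0$. Since the argument is a direct optimization, I do not expect any genuine obstacle; the only subtle point is to carefully handle the constraint $p \geq p_0$ in the Markov step, but this is resolved by the trivial bound in the small-$\lambda$ regime as described above.
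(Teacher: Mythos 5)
Your proof is correct and follows the standard argument (Markov's inequality followed by optimizing the exponent $p$, with the trivial bound covering the regime where the optimal $p$ falls below $p_0$), which is exactly the approach the paper alludes to when it cites the analogous lemma of Tzvetkov. The only cosmetic point is that the final claim follows more directly from continuity from above of $\bP$ along the decreasing events $\{|F|>n\}$ than from Borel--Cantelli, but both are valid.
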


\subsection{Almost sure bounds for the $Y(\bR)$ norm} \label{subsec:as_bounds_Y}

The purpose of this subsection is to establish the following almost sure bound for the $Y(\bR)$ norm of the free evolution of the random data.
\begin{proposition} \label{prop:as_bound_Y_norm}
 Let $\frac{1}{3} < s < 1$ and let $0 < \varepsilon < \frac{1}{3} (s-\frac{1}{3})$. Let $f \in H^s_x(\bR^4)$ and denote by $f^\omega$ the randomization of $f$ as defined in~\eqref{equ:randomization}. Then there exist absolute constants $C > 0$ and $c > 0$ such that for any $\lambda > 0$ it holds that
 \begin{equation} 
  \bP \Bigl( \Bigl\{ \omega \in \Omega : \bigl\| e^{i t \Delta} f^\omega \bigr\|_{Y(\bR)} > \lambda  \Bigr\} \Bigr) \leq C \exp \Bigl( - c \lambda^2 \| f \|_{H^s_x(\bR^4)}^{-2} \Bigr).
 \end{equation}
 In particular, we have for almost every $\omega \in \Omega$ that
 \begin{equation}
  \bigl\| e^{i t \Delta} f^\omega \bigr\|_{Y(\bR)} < \infty.
 \end{equation} 
\end{proposition}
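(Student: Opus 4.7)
The strategy is to apply Lemma \ref{lem:probability_estimate} to the real-valued measurable function $F(\omega) := \| e^{it\Delta} f^\omega \|_{Y(\bR)}$. This reduces the almost sure bound to establishing an $L^p_\omega$ moment estimate of the form
\[
 \bigl\| \| e^{it\Delta} f^\omega \|_{Y(\bR)} \bigr\|_{L^p_\omega(\Omega)} \leq C \sqrt{p} \, \| f \|_{H^s_x(\bR^4)}
\]
for every $p$ larger than some fixed $p_0 \geq 2$, with $C$ independent of $p$. Since the $Y(\bR)$ norm is an $\ell^2$-sum over dyadic frequencies $N \in 2^\bZ$, Minkowski's inequality (valid for $p \geq 2$) reduces the task further to the frequency-localized bound
\[
 \bigl\| \| P_N e^{it\Delta} f^\omega \|_{Y_N(\bR)} \bigr\|_{L^p_\omega(\Omega)} \lesssim \sqrt{p} \, A_N
\]
with constants $A_N \geq 0$ satisfying $\sum_N A_N^2 \lesssim \| f \|^2_{H^s_x(\bR^4)}$.

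I would treat each of the four components of the $Y_N$ norm separately. Taking $p$ larger than the largest exponent appearing in the space-time norm (namely $4/\ve$), Minkowski moves the $L^p_\omega$ norm inside the space-time norm. The large deviation estimate of Lemma \ref{lem:large_deviation_estimate}, applied to the Gaussian series $\sum_{k} g_k(\omega)\, e^{it\Delta} P_k f$ pointwise in $(t,x)$, gains a factor of $\sqrt{p}$ and produces the square function $\bigl( \sum_k | e^{it\Delta} P_k f |^2 \bigr)^{1/2}$. A second Minkowski step, justified because every space-time exponent is at least $2$, pulls the $\ell^2_k$ outside. Since $P_N$ is nonzero only on frequencies of size $\sim N$ (or, for $N \lesssim 1$, on a finite set of indices), the resulting sum effectively runs over the $\mathcal{O}(\langle N \rangle^3)$ unit-scale indices $k$ with $|k| \sim \langle N \rangle$.

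For the two Strichartz-type components in $Y_N$, I would combine an admissible Strichartz estimate from Proposition \ref{prop:strichartz} (using the pair $(3,3)$ or $(6, 12/5)$) with the unit-scale Bernstein inequality \eqref{equ:unit_scale_bernstein} to reach the non-admissible exponents $L^3_t L^6_x$ and $L^6_t L^6_x$ at zero cost per unit-scale piece. The local smoothing type component $\langle N \rangle^{1/3 + 3\ve} N^{1/2 - \ve} \| P_{N, \be_\ell} P_N \cdot \|_{L^{4/\ve, 4/(2-\ve)}_{\be_\ell}}$ is handled by the lateral local smoothing estimate \eqref{equ:lateral_spaces_pgeqq}, which contributes the sharp factor $N^{\ve - 1/2}$ that exactly cancels the explicit $N^{1/2 - \ve}$ weight. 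In all three cases, the argument yields an acceptable bound of the form
\[
 A_N \lesssim \langle N \rangle^{1/3 + 3\ve} \Bigl( \sum_{|k| \sim \langle N \rangle} \| P_k f \|^2_{L^2_x(\bR^4)} \Bigr)^{1/2}.
\]

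The main obstacle is the maximal function component $N^{-1/6} \| P_N \cdot \|_{L^{4/(2-\ve), 4/\ve}_{\be_\ell}}$. A direct application of the lateral maximal function estimate \eqref{equ:lateral_spaces_pleqq} would produce a loss of $N^{3/2 - \ve}$ per unit-scale piece, which is not compensated by the $N^{-1/6}$ prefactor. To overcome this, I would invoke the improved maximal function estimate for unit-scale frequency localized data of Lemma \ref{lem:maximal_function_unit_scale}, which exploits the unit-scale localization of each $P_k f$ to shrink this loss to a size that is absorbed by the $N^{-1/6}$ prefactor and remains compatible with the eventual $\ell^2_N$ summability. Summing the resulting four contributions over $N \in 2^\bZ$ and using almost orthogonality of the unit-scale decomposition gives
\[
 \sum_N A_N^2 \lesssim \sum_{k \in \bZ^4} \langle k \rangle^{2/3 + 6\ve} \| P_k f \|^2_{L^2_x(\bR^4)} \lesssim \| f \|^2_{H^s_x(\bR^4)},
\]
where the final inequality uses the hypothesis $\tfrac{1}{3} + 3\ve < s$. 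Feeding this into Lemma \ref{lem:probability_estimate} with $K = \| f \|_{H^s_x(\bR^4)}$ yields the stated exponential tail estimate and, in particular, the almost sure finiteness of $\| e^{it\Delta} f^\omega \|_{Y(\bR)}$.
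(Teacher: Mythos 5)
Your proposal follows essentially the same route as the paper: reduce via Lemma~\ref{lem:probability_estimate} to an $L^p_\omega$ moment bound, use Minkowski and Lemma~\ref{lem:large_deviation_estimate} to commute $L^p_\omega$ past the dyadic $\ell^2_N$ sum and the space-time norms, treat each $Y_N$ component separately via unit-scale Bernstein, Strichartz, the lateral local smoothing estimate~\eqref{equ:lateral_spaces_pgeqq}, and --- crucially --- the improved unit-scale maximal function estimate of Lemma~\ref{lem:maximal_function_unit_scale}. You correctly identify this last lemma as the key ingredient that rescues the maximal function component.

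Two technical points are glossed over in your treatment of the maximal function component, both of which the paper addresses explicitly. First, Lemma~\ref{lem:maximal_function_unit_scale} controls only the endpoint $L^{2,\infty}_{\be_\ell}$ norm, whereas the $Y_N$ space uses the interior pair $L^{\frac{4}{2-\ve},\frac{4}{\ve}}_{\be_\ell}$; you must interpolate the unit-scale bound $\|e^{it\Delta}P_k f\|_{L^{2,\infty}_{\be_\ell}}\lesssim |k|^{\frac{1}{2}}\|P_k f\|_{L^2_x}$ against a zero-cost unit-scale $L^{4,4}$ bound (obtained from unit-scale Bernstein and the admissible Strichartz pair $(4,\frac{8}{3})$) to reach the correct exponents; this still yields an $N^{\frac{1}{2}}$ cost, and $N^{-\frac{1}{6}}\cdot N^{\frac{1}{2}}=N^{\frac{1}{3}}\leq\langle N\rangle^{\frac{1}{3}+3\ve}$ as required. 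Second, Lemma~\ref{lem:maximal_function_unit_scale} is stated only for $|k|\geq 10$; for $N\lesssim 1$ the improved estimate is unavailable, and one instead falls back on the standard maximal function estimate~\eqref{equ:lateral_spaces_pleqq}, whose loss of $N^{\frac{3}{2}-\ve}$ is harmless in that frequency regime. Neither omission changes the overall argument, but both must be filled in for the proof to close.
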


Before we turn to the proof of Proposition~\ref{prop:as_bound_Y_norm}, we first present a key improvement of the maximal function estimate for unit-scale frequency localized data. Its proof is an adaptation of the proof of Lemma~4.1 in Ionescu-Kenig~\cite{Ionescu-KenigII}.

\begin{lemma}[Unit-scale maximal function estimate] \label{lem:maximal_function_unit_scale}
There exists an absolute constant $C \geq 1$ such that for all $k \in \bZ^4$ with $|k| \geq 10$ and for each $\ell = 1, \ldots, 4$, it holds that
 \begin{equation} \label{equ:maximal_function_unit_scale_support}
  \bigl\| e^{it\Delta} P_k f \bigr\|_{L^{2,\infty}_{\be_\ell}(\bR\times\bR^4)} \leq C |k|^{\frac{1}{2}} \|P_k f\|_{L^2_x(\bR^4)}.
 \end{equation}
\end{lemma}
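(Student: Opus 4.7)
The plan is to adapt the strategy of Lemma~4.1 in Ionescu--Kenig \cite{Ionescu-KenigII}, exploiting Galilean covariance to reduce the four-dimensional maximal function estimate to a one-dimensional one. First, I will use the Galilean identity for the Schr\"odinger group to write
\[
 |e^{it\Delta} P_k f(x)| = |(e^{it\Delta} \tilde f)(x - 2tk)|, \qquad \tilde f := \cF^{-1}\bigl( \psi(\cdot) \hat f(k + \cdot) \bigr),
\]
so that $e^{it\Delta} P_k f$ is realized as a wave packet traveling with group velocity $2k$, obtained from the Schr\"odinger evolution of the unit-frequency function $\tilde f$ (whose Fourier transform is supported in $B(0,1)$ and which satisfies $\|\tilde f\|_{L^2_x} = \|P_k f\|_{L^2_x}$) by a space-time translation.

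Second, I will straighten this slanted transport via the change of variables $(x_\ell, t, x') \mapsto (t, y_\ell, y')$ with $y_\ell = x_\ell - 2tk_\ell$ and $y' = x' - 2tk'$, which is measure preserving. For each fixed $x_\ell$, the supremum over $(t, x') \in \bR \times \bR^3$ then reduces to a supremum over $y' \in \bR^3$ and over the slanted line $\{(t, y_\ell) : y_\ell = x_\ell - 2tk_\ell\}$ of $|(e^{it\Delta}\tilde f)(y_\ell, y')|$. Since $e^{it\Delta}\tilde f$ has Fourier support in the unit ball, Bernstein's inequality in the three transverse variables $y'$ will allow me to replace $\sup_{y' \in \bR^3}|\cdot|$ by $\|\cdot\|_{L^2_{y'}}$, and Plancherel in $y'$ will rewrite the resulting quantity as an integral over $\eta' \in \bR^3$ of squared moduli of one-dimensional Schr\"odinger evolutions in $y_\ell$, acting on the slices $h(\cdot, \eta')$ of $h(\eta) := \psi(\eta)\hat f(k + \eta)$.

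Third, a one-dimensional Galilean shift in the variable $y_\ell$ will convert the evaluation along the slanted line $y_\ell = x_\ell - 2tk_\ell$ into a fixed-point evaluation at $y_\ell = x_\ell$ of the 1D Schr\"odinger evolution of a function whose Fourier transform is supported in the unit interval $[k_\ell - 1, k_\ell + 1]$. The key remaining ingredient is then a one-dimensional maximal function estimate of Kenig--Ponce--Vega / Dahlberg--Kenig type which, when combined with the frequency localization at scale $|k_\ell|$, yields
\[
 \bigl\| e^{it\partial^2_{y_\ell}} G \bigr\|_{L^2_{y_\ell} L^\infty_t(\bR \times \bR)}^2 \lesssim |k_\ell|^{1/2} \|G\|_{L^2_{y_\ell}}^2
\]
for $G$ with Fourier support in $[k_\ell - 1, k_\ell + 1]$. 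Combining this with Fubini and Plancherel in $\eta'$ then produces the claimed four-dimensional bound with the factor $|k_\ell|^{1/2} \leq |k|^{1/2}$, and the hypothesis $|k| \geq 10$ ensures that the 1D Galilean shift is well-defined and that the one-dimensional frequency support stays comfortably away from the origin.

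The most delicate step I anticipate is the justification of the one-dimensional maximal function estimate at the correct power $|k_\ell|^{1/2}$, uniformly in $\eta'$ and globally in time $t \in \bR$: the standard Dahlberg--Kenig estimate is naturally in $L^4_{y_\ell}L^\infty_t$ rather than $L^2_{y_\ell}L^\infty_t$, so the argument must carefully combine it with the Bernstein gain arising from the unit Fourier support in $\xi_\ell$ and track the Jacobian produced by the slanted change of variables. For directions $\ell$ where $|k_\ell|$ is much smaller than $|k|$, the same argument still yields the bound $|k_\ell|^{1/2}$, which is majorized by $|k|^{1/2}$, so that the estimate holds uniformly in $\ell = 1, \ldots, 4$.
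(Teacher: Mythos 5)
Your Galilean reduction is standard and the $TT^\ast$ viewpoint matches the paper's, but the heart of the plan---Bernstein in $y'$ followed by Plancherel---fails for a structural reason. After Plancherel in $y'$ the transverse phase $e^{-it|\eta'|^2}$ is unimodular and cancels, so $\|e^{it\Delta}\tilde f(y_1,\cdot)\|_{L^2_{y'}} = \|(e^{it\partial^2_{y_1}}\tilde f)(y_1,\cdot)\|_{L^2_{y'}}$; once you replace $L^\infty_{y'}$ by $L^2_{y'}$, the quantity you are estimating retains no trace of the transverse dispersion. What remains is a (vector-valued) one-dimensional, \emph{global-in-time} maximal estimate of the form $\int_{\bR}\sup_{t\in\bR}|e^{it\partial^2}G(x)|^2\,dx \lesssim |k_\ell|\,\|G\|_{L^2_x(\bR)}^2$ for $G$ with Fourier support on an interval of unit length. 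That estimate is false: taking $\hat G(\xi)=\chi(\xi-N)$ for a smooth bump $\chi$ with $\chi(0)\neq 0$, the choice $t=x/(2N)$ together with stationary phase gives $\sup_t|e^{it\partial^2}G(x)|\gtrsim (N/|x|)^{1/2}$ for $|x|\geq N$, so the $L^2_x$-integral diverges logarithmically. The divergence survives both the Galilean shift of the Fourier support and the interchange of the supremum in $t$ with the $\eta'$-integral (take $\tilde f(y_1,y')=a(y_1)b(y')$ to see this). The local-in-time Kenig--Ponce--Vega/Dahlberg--Kenig bounds you cite do not globalize in one dimension; that globalization is exactly what breaks.

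The four-dimensional estimate is true precisely because of the decay your reduction discards. In the paper's $TT^\ast$ argument one bounds $\sup_{t,x'}|K(t,x_1,x')|$ and integrates in $x_1$. The stationary-phase contribution occurs at $|t|\sim|x_1|/|k|$, and the oscillatory integral in the three transverse frequency variables contributes $\min(1,|t|^{-3/2})$; multiplied by the $\min(1,|t|^{-1/2})$ from $\xi_1$, this yields $(|k|/|x_1|)^2$ for $|x_1|\gtrsim|k|$, which is integrable in $x_1$ with total $\sim|k|$. Your reduction keeps only $|t|^{-1/2}$, leaving the nonintegrable $|k|/|x_1|$. Any viable fix must retain at least part of the $|t|^{-3/2}$ transverse decay before passing to an $L^2_{y'}$ norm; simply trading $L^\infty_{y'}$ for $L^2_{y'}$ cannot do so. (A minor secondary issue: even granting a one-dimensional estimate, the squared bound would need $|k|$, not $|k_\ell|^{1/2}$, on the right-hand side to recover the lemma's $|k|^{1/2}$ after taking a square root.)
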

\begin{proof}
It suffices to consider the case $\ell=1$. Let $k \in \bZ^4$ with $|k| \geq 10$ be fixed. We use the notation $x' = (x_2, x_3, x_4)$ and we denote by $\xi'$ the associated Fourier variable coordinates. Now let $\varphi_k \in C_c^\infty(\bR)$ and $\eta_k \in C_c^\infty(\bR^3)$ be bump functions with unit-sized support and uniformly bounded derivatives such that 
\[
 \psi(\xi - k) = \psi(\xi - k) \varphi_k(\xi_1) \eta_k(\xi') \quad \text{ for all } \xi = (\xi_1, \xi') \in \bR^4.
\]
In particular, we have that $| \text{supp}(\varphi_k) | \lesssim 1$ and $| \text{supp}(\eta_k) | \lesssim 1$. 

By a $TT^\ast$--argument the proof of~\eqref{equ:maximal_function_unit_scale_support} reduces to establishing the estimate
\begin{equation} \label{equ:max_est_goal}
 \biggl\| \int_{\bR_{\xi_1}} \int_{\bR_{\xi'}^{3}} e^{i x_1 \xi_1} e^{i x' \cdot \xi'} e^{- i t (\xi_1^2 + |\xi'|^2)} \varphi_k(\xi_1) \eta_k(\xi') \, d\xi' \, d\xi_1 \biggr\|_{L^1_{x_1} L^\infty_{t, x'}} \lesssim |k|.
\end{equation}
Since $\eta_k(\cdot)$ has unit-sized support, a stationary phase argument yields
\begin{equation} \label{equ:max_est_bound1}
 \sup_{x' \in \bR^{3}, \, t \in \bR} \, \biggl| \int_{\bR_{\xi'}^3} e^{i x' \cdot \xi'} e^{- i t |\xi'|^2} \eta_k(\xi') \, d\xi' \biggr| \lesssim \min \bigl\{ 1, |t|^{-\frac{3}{2}} \bigr\}.
\end{equation}
Analogously, by the unit-sized support of $\varphi_k(\cdot)$ and by stationary phase we have
\begin{equation} \label{equ:max_est_bound2}
 \sup_{x_1 \in \bR, \,t \in \bR} \, \biggl| \int_{\bR_{\xi_1}} e^{i x_1 \xi_1} e^{- i t \xi_1^2} \varphi_k(\xi_1) \, d\xi_1 \biggr| \lesssim \min \bigl\{ 1, |t|^{-\frac{1}{2}} \bigr\}.
\end{equation}
Moreover, if $|x_1| \geq 1000 |k| |t|$, then integrating by parts twice, we find that
\begin{equation} \label{equ:max_est_bound3}
 \biggl| \int_{\bR_{\xi_1}} e^{i x_1 \xi_1} e^{- i t \xi_1^2} \varphi_k(\xi_1) \, d\xi_1 \biggr| \lesssim \frac{1}{1 +  |x_1|^2},
\end{equation}
using that in the regime $|x_1| \geq 1000 |k| |t|$, we have 
\[
 \biggl| \frac{1}{x_1 - 2 t \xi_1} \biggr| \lesssim \frac{1}{|x_1|} \quad \text{ and } \quad \biggl| \frac{\partial^j}{\partial \xi_1^j} \biggl( \frac{1}{x_1 - 2 t \xi_1} \biggr) \biggr| \lesssim \frac{1}{|x_1|} \quad \text{ for } j = 1,2.
\]
The latter bounds follow from the observation that if $|x_1| \geq 1000 |k| |t|$, then we have $|x_1 - 2 t \xi_1| \geq c |x_1|$ for some small absolute constant $0 < c \ll 1$ since also $|\xi_1| \leq 10 |k|$. Additionally, note that in this regime we may also bound 
\[
 |t| \lesssim \frac{|x_1|}{|k|} \lesssim |x_1|.
\]
 
Thus, using \eqref{equ:max_est_bound1}, \eqref{equ:max_est_bound2} and \eqref{equ:max_est_bound3}, we obtain uniformly for all $x_1 \in \bR$ that
\begin{equation} \label{equ:max_est_bound4}
 \begin{aligned}
  &\biggl\| \int_{\bR_{\xi_1}} \int_{\bR_{\xi'}^{3}} e^{i x_1 \xi_1} e^{i x' \cdot \xi'} e^{- i t (\xi_1^2 + |\xi'|^2)} \varphi_k(\xi_1) \eta_k(\xi') \, d\xi' \, d\xi_1 \biggr\|_{L^\infty_{t, x'}} \\
  &\qquad \lesssim \chi_{[0, |k|]}(|x_1|) + \chi_{[|k|, \infty)}(|x_1|) \biggl( \frac{|k|}{|x_1|} \biggr)^2 +  \frac{1}{1 +  |x_1|^2},
 \end{aligned}
\end{equation}
where $\chi_{[0, |k|]}(\cdot)$ and $\chi_{[|k|, \infty)}(\cdot)$ are sharp cut-off functions to the intervals $[0,|k|]$ and $[|k|,\infty)$, respectively. Integrating in $x_1$ over \eqref{equ:max_est_bound4}, we obtain the desired bound \eqref{equ:max_est_goal}. 
 \end{proof}

\begin{remark}
 We emphasize that the proof of Lemma~\ref{lem:maximal_function_unit_scale} generalizes to all space dimensions $d \geq 3$ and the same half derivative cost $|k|^{\frac{1}{2}}$ occurs in all space dimensions $d\geq 3$. The cost of only half a derivative in~\eqref{equ:maximal_function_unit_scale_support} should be compared with the three halves derivative cost of the usual maximal function estimate~\eqref{equ:lateral_spaces_pleqq} for dyadically frequency localized data.
\end{remark}

We are now prepared to prove Proposition~\ref{prop:as_bound_Y_norm}.
\begin{proof}[Proof of Proposition \ref{prop:as_bound_Y_norm}]
For any $p \geq \frac{4}{\varepsilon} $ we have by Minkowski's inequality that
\begin{align*}
 \Bigl\| \bigl\| e^{i t \Delta} f^\omega \bigr\|_{Y(\bR)} \Bigr\|_{L^p_\omega} \leq \biggl( \sum_{N \in 2^{\bZ}} \,  \Bigl\| \bigl\| P_N e^{i t \Delta} f^\omega \bigr\|_{Y_N(\mathbb{R})} \Bigr\|_{L^p_\omega}^2 \biggr)^{\frac{1}{2}}.
\end{align*}
We consider the components of the $Y_N(\bR)$ norm separately. In the sequel, we will repeatedly use that the free evolution and the frequency projections commute. We first treat the component coming from the Strichartz spaces. By Lemma~\ref{lem:large_deviation_estimate}, the unit-scale Bernstein estimate~\eqref{equ:unit_scale_bernstein}, and the Strichartz estimates~\eqref{equ:strichartz_estimate}, we have
\begin{align*}
 &\langle N\rangle^{\frac{1}{3} + 3 \ve} \bigl\| \| P_N e^{i t \Delta} f^\omega \|_{L^3_tL^6_x \cap L^6_t L^6_x(\bR\times\bR^4)} \bigr\|_{L^p_\omega} \\
 &\leq \langle N\rangle^{\frac{1}{3} + 3 \ve} \biggl\| \Bigl\| \sum_{|k| \sim N}  g_{k}(\omega) e^{i t \Delta} P_k f \Bigr\|_{L^p_\omega} \biggr\|_{L^3_t L^6_x \cap L^6_t L^6_x(\bR\times\bR^4)} \\
 &\lesssim \sqrt{p}\, \langle N\rangle^{\frac{1}{3} + 3 \ve}  \biggl( \sum_{|k| \sim N} \bigl\| e^{i t \Delta} P_k f \bigr\|_{L^3_tL^6_x \cap L^6_t L^6_x(\bR\times\bR^4)}^2 \biggr)^{\frac{1}{2}}\\
 &\lesssim\sqrt{p}\,  \langle N\rangle^{\frac{1}{3} + 3 \ve}  \biggl( \sum_{|k| \sim N} \bigl\| e^{i t \Delta} P_k f \bigr\|_{L^3_tL^3_x \cap L^6_t L^{\frac{12}{5}}_x(\bR\times\bR^4)}^2 \biggr)^{\frac{1}{2}}\\
 &\lesssim\sqrt{p}  \,\langle N\rangle^{\frac{1}{3} + 3 \ve}  \biggl( \sum_{|k|\sim N} \| P_k f \|_{L^2_x(\bR^4)}^2 \biggr)^{\frac{1}{2}} \\
 &\simeq \sqrt{p} \, \langle N\rangle^{\frac{1}{3} + 3 \ve}  \| P_N f \|_{L^2_x(\bR^4)}.
\end{align*}
Next, we estimate the local smoothing type component of the $Y_N(\bR)$ component. Here we first apply the local smoothing type estimate~\eqref{equ:lateral_spaces_pgeqq} for the lateral spaces and then use the large deviation estimate from Lemma~\ref{lem:large_deviation_estimate} to obtain that
\begin{align*}
 \biggl\| \, \sum_{\ell=1}^4 \langle N \rangle^{\frac{1}{3}+3\ve} N^{\frac{1}{2}-\ve} \bigl\| e^{it\Delta} P_{N, \be_\ell} P_N f^\omega \bigr\|_{L^{\frac{4}{\ve}, \frac{4}{2-\ve}}_{\be_\ell}(\bR\times\bR^4)} \biggr\|_{L^p_\omega} &\lesssim \biggl\| \langle N \rangle^{\frac{1}{3}+3\ve} \| P_N f^\omega \|_{L^2_x(\bR^4)} \biggr\|_{L^p_\omega} \\
 &\lesssim \sqrt{p} \, \langle N \rangle^{\frac{1}{3}+3\ve} \biggl( \sum_{|k| \sim N} \| P_k f \|_{L^2_x(\bR^4)}^2 \biggr)^{\frac{1}{2}} \\
 &\lesssim \sqrt{p} \, \langle N \rangle^{\frac{1}{3}+3\ve} \| P_N f\|_{L^2_x(\bR^4)}.
\end{align*}
Finally, we turn to the maximal function type component of the $Y_N(\bR)$ norm, where we distinguish the large frequency regime $N \gtrsim 1$ and the small frequency regime $N \lesssim 1$. For large frequencies $N \gtrsim 1$ we first use the large deviation estimate from Lemma~\ref{lem:large_deviation_estimate} and then interpolate between the improved maximal function estimate~\eqref{equ:maximal_function_unit_scale_support} for unit-scale frequency localized data and an estimate of the $L^4_t L^4_x(\bR\times\bR^4)$ norm of the free evolution of unit-scale frequency localized data (based on the unit-scale Bernstein estimate~\eqref{equ:unit_scale_bernstein} and Strichartz estimates~\eqref{equ:strichartz_estimate}) to conclude that
\begin{align*}
 \biggl\| \, \sum_{\ell=1}^4 N^{-\frac{1}{6}} \bigl\| e^{it\Delta} P_N f^\omega \bigr\|_{L^{\frac{4}{2-\ve}, \frac{4}{\ve}}_{\be_\ell}(\bR\times\bR^4)} \biggr\|_{L^p_\omega} &\lesssim \sqrt{p} \sum_{\ell=1}^4 N^{-\frac{1}{6}} \biggl( \sum_{|k| \sim N} \bigl\| e^{it\Delta} P_k f \bigr\|_{L^{\frac{4}{2-\ve}, \frac{4}{\ve}}_{\be_\ell}(\bR\times\bR^4)}^2 \biggr)^{\frac{1}{2}} \\
 &\lesssim \sqrt{p} \, N^{-\frac{1}{6}} N^{\frac{1}{2}} \biggl( \sum_{|k| \sim N} \| P_k f \|_{L^2_x(\bR^4)}^2 \biggr)^{\frac{1}{2}} \\
 &\lesssim \sqrt{p} \, \langle N \rangle^{\frac{1}{3}} \| P_N f \|_{L^2_x(\bR^4)}.
\end{align*}
For small frequencies $N \lesssim 1$, we directly apply the usual maximal function type estimate~\eqref{equ:lateral_spaces_pleqq}, trivially bounding the resulting frequency factors, and then use the large deviation estimate to infer that in this case
\begin{align*}
 \biggl\| \, \sum_{\ell=1}^4 N^{-\frac{1}{6}} \bigl\| e^{it\Delta} P_N f^\omega \bigr\|_{L^{\frac{4}{2-\ve}, \frac{4}{\ve}}_{\be_\ell}(\bR\times\bR^4)} \biggr\|_{L^p_\omega} &\lesssim N^{-\frac{1}{6}} N^{\frac{3}{2}-\ve} \Bigl\| \bigl\| P_N f^\omega \bigr\|_{L^2_x(\bR^4)} \Bigr\|_{L^p_\omega} \lesssim \sqrt{p} \, \| P_N f \|_{L^2_x(\bR^4)}.
\end{align*}

Putting all of the above estimates together, we find that 
\begin{align*}
 \Bigl\| \bigl\| e^{i t \Delta} f^\omega \bigr\|_{Y(\bR)} \Bigr\|_{L^p_\omega} \lesssim \sqrt{p} \biggl( \sum_{N \in 2^{\bZ}} \,  \bigl( \langle N \rangle^{\frac{1}{3} + 3 \ve} \| P_N f \|_{L^2_x(\bR^4)} \bigr)^2 \biggr)^{\frac{1}{2}} \lesssim \sqrt{p} \, \|f\|_{H^s_x(\bR^4)},
\end{align*}
from which the assertion follows by Lemma~\ref{lem:probability_estimate}.
\end{proof}

\subsection{Almost sure bounds for the proof of Theorem~\ref{thm:scattering_radial}} \label{subsec:as_bounds_radial}

We first collect several harmonic analysis estimates that will play an important role in establishing the additional almost sure bounds for the free evolution of randomized radial data for the proof of Theorem~\ref{thm:scattering_radial}. We will crucially rely on the following local smoothing estimate for the Schr\"odinger evolution.
\begin{proposition}(Local smoothing estimate; \cite{Constantin_Saut, Sjoelin, Vega}) \label{prop:local_smoothing}
 For any $\delta > 0$ it holds that
 \begin{equation} \label{equ:local_smoothing_weight}
  \bigl\| \langle x \rangle^{-\frac{1}{2}-\delta} e^{i t \Delta} f \bigr\|_{L^2_t L^2_x(\bR\times\bR^d)} \lesssim_{\delta} \bigl\| |\nabla|^{-\frac{1}{2}} f \bigr\|_{L^2_x(\bR^d)}.
 \end{equation}
 Furthermore, by scaling, we have 
 \begin{equation} \label{equ:local_smoothing_ball}
  \sup_{R > 0} \, R^{-\frac{1}{2}} \bigl\| e^{ i t \Delta} f \bigr\|_{L^2_t L^2_x(\bR \times \{ |x| \leq R \})} \lesssim \bigl\| |\nabla|^{-\frac{1}{2}} f \bigr\|_{L^2_x(\bR^d)}.
 \end{equation}
\end{proposition}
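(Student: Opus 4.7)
The proposition is the classical local smoothing estimate attributed in the statement to Constantin-Saut, Sj\"olin, and Vega, so I will only sketch the main ideas. The plan is to establish~\eqref{equ:local_smoothing_weight} by Plancherel in time combined with a weighted Fourier extension estimate on spheres, and then to deduce~\eqref{equ:local_smoothing_ball} by rescaling.

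For~\eqref{equ:local_smoothing_weight}, set $u(t,x) := e^{it\Delta}f(x)$ and take the Fourier transform in $t$. Using the representation $u(t,x) = (2\pi)^{-d}\int e^{i(x\cdot\xi - t|\xi|^2)}\hat f(\xi)\,d\xi$, the time-Fourier transform $\tilde u(\tau,x)$ is supported in $\{\tau<0\}$ and, after collapsing the delta distribution along the paraboloid, takes the form $c\,|\tau|^{(d-2)/2}\int_{S^{d-1}}\hat f(\sqrt{|\tau|}\omega)e^{i\sqrt{|\tau|}x\cdot\omega}\,d\sigma(\omega)$. Applying Plancherel in $t$ and the change of variables $r = \sqrt{|\tau|}$ reduces the square of the left-hand side of~\eqref{equ:local_smoothing_weight}, up to an absolute constant, to
\begin{equation*}
 \int_0^\infty r^{2d-3}\int_{\bR^d}\langle x\rangle^{-1-2\delta}\bigl|\mathcal{E}_r\hat f(x)\bigr|^2\,dx\,dr, \qquad \mathcal{E}_r h(x) := \int_{S^{d-1}}h(r\omega)e^{irx\cdot\omega}\,d\sigma(\omega).
\end{equation*}

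The crux is then the uniform-in-$r>0$ weighted Fourier extension estimate
\begin{equation*}
 \int_{\bR^d}\langle x\rangle^{-1-2\delta}|\mathcal{E}_r\hat f(x)|^2\,dx \lesssim_\delta r^{-(d-1)}\int_{S^{d-1}}|\hat f(r\omega)|^2\,d\sigma(\omega),
\end{equation*}
which follows from the Agmon-H\"ormander local-energy estimate for Fourier extensions from the unit sphere, via dyadic decomposition in $|x|$ and separate analyses in the regimes $r\leq 1$ and $r\geq 1$ exploiting the appropriate decay of the rescaled weight $\langle y/r\rangle^{-1-2\delta}$. Inserting this bound and recognizing $\int_0^\infty r^{d-2}\|\hat f(r\cdot)\|_{L^2(S^{d-1})}^2\,dr \sim \||\nabla|^{-1/2}f\|_{L^2}^2$ completes the proof of~\eqref{equ:local_smoothing_weight}. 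I expect this uniform weighted extension estimate on spheres of arbitrary radius to be the main technical obstacle; it is a classical ingredient from harmonic analysis, equivalent to the limiting absorption principle for $-\Delta$ with Agmon weights, and closely related to Kato's smoothing theory.

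For~\eqref{equ:local_smoothing_ball}, I would use the spatial rescaling $\tilde f(y) := f(Ry)$. The changes of variables $x=Ry$ and $t=R^2 s$ give
\begin{equation*}
 \|e^{it\Delta}f\|_{L^2_t L^2_x(\bR\times\{|x|\leq R\})} = R^{(d+2)/2}\|e^{is\Delta}\tilde f\|_{L^2_s L^2_y(\bR\times\{|y|\leq 1\})}, \qquad \||\nabla|^{-1/2}f\|_{L^2_x} = R^{(d+1)/2}\||\nabla|^{-1/2}\tilde f\|_{L^2_y},
\end{equation*}
so after including the factor $R^{-1/2}$ on the left, both sides of~\eqref{equ:local_smoothing_ball} scale by the common power $R^{(d+1)/2}$. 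It therefore suffices to verify~\eqref{equ:local_smoothing_ball} at $R=1$, which is immediate from~\eqref{equ:local_smoothing_weight} since $\chi_{\{|x|\leq 1\}}(x)\lesssim_\delta \langle x\rangle^{-1-2\delta}$.
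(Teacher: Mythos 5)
The paper does not provide a proof of this proposition; it simply cites the classical references for the local smoothing estimate, so there is no paper argument to compare against. Your sketch correctly reconstructs the standard proof: collapsing the time Fourier transform of $e^{it\Delta}f$ onto the paraboloid and applying Plancherel in $t$ reduces \eqref{equ:local_smoothing_weight} to a uniform-in-$r>0$ weighted $L^2_x$ estimate for Fourier extensions from spheres of radius $r$, which follows from the Agmon--H\"ormander trace lemma via a dyadic decomposition in $|x|$; and \eqref{equ:local_smoothing_ball} follows from \eqref{equ:local_smoothing_weight} by the parabolic rescaling you carry out. The exponent bookkeeping --- the Jacobian $r^{2d-3}$, the extension bound $r^{-(d-1)}$, and the common scaling power $R^{(d+1)/2}$ on both sides --- is correct, so the sketch is a valid (if necessarily terse) rendition of the argument the cited references make precise.
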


Moreover, we recall the following ``radialish'' Sobolev type estimate for the square-function associated with the unit-scale projections of a radial function that was established by the authors~\cite{DLuM}. It will be a key ingredient in the proofs of several almost sure bounds in this subsection.

\begin{lemma}[\protect{``Radialish'' Sobolev estimate; \cite[Lemma 2.2]{DLuM}}] \label{lem:radialish_sobolev}
 For any $\delta > 0$ there exists $C_\delta > 0$ such that for all radially symmetric $f \colon \bR^4 \to \bC$ it holds that
 \begin{equation} \label{equ:radialish_sobolev}
  \biggl\| |x|^{\frac{3}{2}} \Bigl( \sum_{k \in \bZ^4} \bigl| P_k f \bigr|^2 \Bigr)^{\frac{1}{2}} \biggr\|_{L^\infty_x(\bR^4)} \leq C_\delta \| f \|_{H^\delta_x(\bR^4)}.
 \end{equation}
\end{lemma}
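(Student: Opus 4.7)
The plan is to establish the sharper pointwise bound
\[
 \Big( \sum_{k \in \bZ^4} |P_k f(x)|^2 \Big)^{1/2} \lesssim (1+|x|)^{-3/2} \|f\|_{L^2_x(\bR^4)},
\]
which since $\|f\|_{L^2_x} \leq \|f\|_{H^\delta_x}$ strictly implies the stated lemma. The regime $|x| \leq 1$ is handled immediately: the unit-scale Bernstein estimate~\eqref{equ:unit_scale_bernstein} gives $|P_k f(x)| \lesssim \|P_k f\|_{L^2_x}$, and Plancherel's theorem yields $\sum_k \|P_k f\|_{L^2_x}^2 \sim \|f\|_{L^2_x}^2$. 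The substantive case is $r := |x| \geq 1$.

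Writing $\hat f(\xi) = F(|\xi|)$ and passing to polar coordinates $\xi = \rho \omega$, I would use the integral representation
\[
 P_k f(x) = \int_0^\infty \rho^3 F(\rho)\, \Phi_k(\rho, x)\, d\rho, \qquad \Phi_k(\rho, x) := \int_{S^3} \psi(\rho \omega - k)\, e^{i \rho x \cdot \omega}\, d\sigma(\omega).
\]
The support of $\psi(\rho \omega - k)$ forces $|\rho - |k|| \lesssim 1$ and, in $\omega$, localizes to a spherical cap on $S^3$ of angular radius $\sim 1/|k|$ about $k/|k|$. On this cap, the tangential gradient of the phase $\rho x \cdot \omega$ has magnitude $\sim \rho |x_\perp^k|$, where $x_\perp^k$ denotes the component of $x$ perpendicular to $k$, and is essentially constant. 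Iteratively integrating by parts in the tangential direction---each derivative of the amplitude $\psi(\rho \omega - k)$ costs a factor of $\rho \sim |k|$ that cancels against the factor $|\nabla \phi|^{-1} \cdot |k| \sim 1/|x_\perp^k|$ per integration---I would obtain the non-stationary phase estimate
\[
 |\Phi_k(\rho, x)| \lesssim_M |k|^{-3} (1 + |x_\perp^k|)^{-M} \quad \text{for any } M \geq 0,
\]
uniformly in $|\rho - |k|| \lesssim 1$. Inserting this into the radial integral and using Cauchy--Schwarz in $\rho$ (together with $\rho \sim |k|$ on the support) gives the pointwise bound
\[
 |P_k f(x)| \lesssim_M (1 + |x_\perp^k|)^{-M} \|P_k f\|_{L^2_x}.
\]

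The sum over $k$ is then organized by decomposing $\bZ^4$ dyadically in $|k| \sim N$ and further by the angular distance $\theta_k$ between $k$ and $x$. For radial $f$, the norms $\|P_k f\|_{L^2_x}^2$ are angularly equidistributed at each radius $|k|$, so with $\sim N^4 2^{-3j}$ lattice points in $\{|k| \sim N,\, \theta_k \sim 2^{-j}\}$ one has $\sum_{|k| \sim N,\, \theta_k \sim 2^{-j}} \|P_k f\|_{L^2_x}^2 \sim 2^{-3j} \|P_N f\|_{L^2_x}^2$. Since $|x_\perp^k| \sim r 2^{-j}$ on this set, the contribution to the square function is bounded by $(1 + r 2^{-j})^{-2M} 2^{-3j} \|P_N f\|_{L^2_x}^2$. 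Splitting the $j$-sum at $2^{-j} \sim 1/r$ and choosing $M > 3/2$ gives $\lesssim r^{-3} \|P_N f\|_{L^2_x}^2$ in each piece, and summing in $N$ via $\sum_N \|P_N f\|_{L^2_x}^2 \sim \|f\|_{L^2_x}^2$ closes the estimate.

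The main obstacle is the non-stationary phase bound for $\Phi_k$. It requires introducing tangential coordinates adapted to the small spherical cap, carefully tracking how the $|k|$-dependent derivatives of $\psi(\rho\omega - k)$ balance against the tangential phase gradient $\rho |x_\perp^k|$, and maintaining uniformity in $\rho$ across the interval $|\rho - |k|| \lesssim 1$. The remaining steps---the angular partition, the equidistribution of $\|P_k f\|_{L^2_x}^2$ for radial $f$, and the geometric sum over $j$---are bookkeeping. The very low-frequency contribution, where $|k|$ is bounded and the cap degenerates, is handled by standard stationary phase on $S^3$, giving the usual $(1+\rho|x|)^{-3/2}$ Bessel-type decay that integrates against $F$ to produce the same $r^{-3/2}$ weight on the finite sum of remaining dyadic pieces.
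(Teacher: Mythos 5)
The key estimate your argument rests on,
\[
|\Phi_k(\rho,x)| \lesssim_M |k|^{-3}\bigl(1+|x_\perp^k|\bigr)^{-M} \quad\text{uniformly in } |\rho-|k||\lesssim 1,
\]
is false when $|x_\perp^k| \lesssim |x|/|k|$ and $|x| > |k|$, and this is exactly the regime your final summation relies on. Parametrizing the cap by $\eta \in \bR^3$ with $|\eta|\lesssim 1/|k|$ via $\omega = (\sqrt{1-|\eta|^2},\eta)$ in a frame where $k/|k| = e_1$, the tangential phase gradient is $\rho\bigl(x_\perp^k - x_1\eta + O(|x||\eta|^2)\bigr)$, which vanishes at $\eta_* = x_\perp^k/x_1$. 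Since $|\eta_*| \sim |x_\perp^k|/|x|$, the critical point lies \emph{inside} the support of $\psi(\rho\omega-k)$ precisely when $|x_\perp^k| \lesssim |x|/|k|$; there is no non-stationary phase gain to be had. In that regime the integral is governed by stationary phase: completing the square reduces $\Phi_k$ to a Fresnel integral with Hessian $\sim \rho|x|$, and for $|x| \gg |k|$ one gets $|\Phi_k| \sim (|k||x|)^{-3/2}\,\psi\bigl(\rho\omega(\eta_*)-k\bigr)$. Concretely, take $k = Ne_1$, $\rho = N$, $x = re_1 + \tfrac{r}{2N}e_2$ with $r \gg N$, so $|x_\perp^k| = \tfrac{r}{2N} \gg 1$: then $|\Phi_k| \sim (Nr)^{-3/2}$, whereas your claimed bound reads $N^{-3}(r/N)^{-M} = N^{M-3}r^{-M}$. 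The ratio of the true value to the claimed bound is $(r/N)^{M-3/2}$, which diverges for every $M > 3/2$—and $M > 3/2$ is what your $j$-sum requires.

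This breaks the bookkeeping: the indices $j$ with $\log_2 N \leq j \leq \log_2 r$ (i.e., $\theta_k \lesssim 1/N$, $|x_\perp^k| \lesssim r/N$) are exactly where the claimed decay $(1+r2^{-j})^{-2M}$ is not available. Falling back to Bernstein there gives $\sum_{j \geq \log_2 N} 2^{-3j}\|P_N f\|_{L^2_x}^2 \sim N^{-3}\|P_N f\|_{L^2_x}^2$, and $\sum_N N^{-3}\|P_N f\|_{L^2_x}^2$ carries no decay whatsoever in $|x|$, so the estimate cannot close. What \emph{does} close is the stationary-phase input: for aligned $k$ with $|k| \sim N < r$, $|P_k f(x)| \lesssim (N/r)^{3/2}\|P_k f\|_{L^2_x}$ (after Cauchy–Schwarz in $\rho$), and summing the $O(1)$ such $k$ gives the needed $r^{-3}\|P_N f\|_{L^2_x}^2$. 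So the approach is salvageable, but the repair requires a stationary-phase analysis for the $k$'s within one cap-width of the direction of $x$ at \emph{every} scale $|k| \lesssim |x|$, not only for the bounded-$|k|$ pieces you set aside in your final paragraph. As written, the proof misidentifies a stationary-phase regime as non-stationary and the central lemma you want to prove—which is strictly stronger than the stated one since it carries $\|f\|_{L^2_x}$ rather than $\|f\|_{H^\delta_x}$—is not established.
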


We also state a simple corollary of the above ``radialish'' Sobolev estimate.

\begin{lemma}
 Let $2 \leq r \leq \infty$ and $\delta > 0$. Then there exists $C_\delta > 0$ such that we have for all radially symmetric functions $f \colon \bR^4 \to \bC$ and for all dyadic integers $N \geq 1$ that
 \begin{equation} \label{equ:radialish_sobolev_interpolated}
   \biggl\| |x|^{\frac{3}{2} (1-\frac{2}{r}) } \Bigl( \sum_{k \in \bZ^4} \bigl| P_k f_N \bigr|^2 \Bigr)^{\frac{1}{2}} \biggr\|_{L^r_x(\bR^4)} \leq C_\delta N^\delta \bigl\| f_N \bigr\|_{L^2_x(\bR^4)}.
 \end{equation}
\end{lemma}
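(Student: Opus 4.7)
The plan is to interpolate pointwise between two endpoint estimates: an unweighted $L^2$ bound at $r=2$ and the weighted $L^\infty$ bound from Lemma~\ref{lem:radialish_sobolev} at $r=\infty$, and then combine them via H\"older's inequality.

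First I would set $G(x) := \bigl( \sum_{k \in \bZ^4} |P_k f_N(x)|^2 \bigr)^{1/2}$ and establish two endpoint estimates. At $r=2$, by Fubini and Plancherel together with the finite overlap of the Fourier supports of the unit-scale projections $\{P_k\}_{k \in \bZ^4}$, one has
\[
 \| G \|_{L^2_x(\bR^4)}^2 = \sum_{k \in \bZ^4} \| P_k f_N \|_{L^2_x(\bR^4)}^2 \lesssim \| f_N \|_{L^2_x(\bR^4)}^2.
\]
At $r=\infty$, applying Lemma~\ref{lem:radialish_sobolev} to the radially symmetric function $f_N$ and using the Bernstein estimate $\|f_N\|_{H^\delta_x(\bR^4)} \lesssim N^\delta \|f_N\|_{L^2_x(\bR^4)}$ (valid since $N \geq 1$) gives
\[
 \bigl\| |x|^{3/2} G \bigr\|_{L^\infty_x(\bR^4)} \leq C_\delta N^\delta \| f_N \|_{L^2_x(\bR^4)}.
\]

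Next I would interpolate pointwise by writing, for each $x \in \bR^4$,
\[
 |x|^{\frac{3}{2}(1-\frac{2}{r})} G(x) = \bigl( |x|^{\frac{3}{2}} G(x) \bigr)^{1-\frac{2}{r}} G(x)^{\frac{2}{r}},
\]
and then applying H\"older's inequality with exponents $\infty$ and $r$ (so that $\tfrac{1}{\infty} + \tfrac{1}{r} = \tfrac{1}{r}$):
\[
 \bigl\| |x|^{\frac{3}{2}(1-\frac{2}{r})} G \bigr\|_{L^r_x(\bR^4)} \leq \bigl\| |x|^{\frac{3}{2}} G \bigr\|_{L^\infty_x(\bR^4)}^{1-\frac{2}{r}} \| G \|_{L^2_x(\bR^4)}^{\frac{2}{r}}.
\]
Substituting the two endpoint bounds yields the claim with constant $\lesssim C_\delta^{1-2/r} \lesssim C_\delta$.

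There is really no obstacle here; the only slightly delicate point is the choice of H\"older exponents, which forces the weight exponent on the left-hand side to be exactly $\tfrac{3}{2}(1-\tfrac{2}{r})$ — precisely the exponent appearing in the statement — and makes the $N^\delta$ loss arise only from the $L^\infty$ endpoint with weight $\tfrac{1}{r'} = 1-\tfrac{2}{r}$ worth of exponent, giving $(N^\delta)^{1-2/r} \leq N^\delta$ since $1 \leq r \leq \infty$. Note also that the endpoint $r=\infty$ already requires the hypothesis $N \geq 1$ in order to absorb the $H^\delta$ norm into the $L^2$ norm with an $N^\delta$ loss; for small frequencies this Bernstein-type step would not be available.
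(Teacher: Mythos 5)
Your proposal is correct and follows exactly the paper's argument: interpolate the trivial $L^2$ square-function bound against the weighted $L^\infty$ estimate of Lemma~\ref{lem:radialish_sobolev} (applied to $f_N$, with the Bernstein step $\|f_N\|_{H^\delta_x} \lesssim N^\delta \|f_N\|_{L^2_x}$ for $N \geq 1$ supplying the $N^\delta$ loss). The only blemishes are cosmetic: the exponent $1-\tfrac{2}{r}$ is not $\tfrac{1}{r'}$, and the final inequality $(N^\delta)^{1-2/r} \leq N^\delta$ requires $r \geq 2$ (as in the hypothesis), not $r \geq 1$ as written.
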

\begin{proof}
 The estimate follows by interpolation between the radialish Sobolev estimate from Lemma~\ref{lem:radialish_sobolev} and the trivial estimate 
 \[
  \biggl\| \Bigl( \sum_{k \in \bZ^4} \bigl| P_k f_N \bigr|^2 \Bigr)^{\frac{1}{2}} \biggr\|_{L^2_x(\bR^4)} \lesssim \bigl\| f_N \bigr\|_{L^2_x(\bR^4)}. \qedhere
 \]
\end{proof}

We borrow the following useful technical lemma from~\cite[Lemma 2.3]{KMV}.

\begin{lemma} \label{lem:weighted_bound_into_freq_pieces}
 For $0 \leq \alpha \leq 1$ and $4 < r < \infty$, we have 
 \begin{equation*}
  \bigl\| \langle x \rangle^\alpha u \bigr\|_{L^\infty_x(\bR^4)} \lesssim \bigl\| \langle x \rangle^{\alpha} P_{\leq 1} u \bigr\|_{L^r_x(\bR^4)} + \sum_{N \geq 2} N^{\frac{4}{r}} \bigl\| \langle x \rangle^{\alpha} P_N u \bigr\|_{L^r_x(\bR^4)}.
 \end{equation*}
\end{lemma}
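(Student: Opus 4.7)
The natural strategy is to perform a Littlewood–Paley decomposition of $u$ and then, for each frequency piece, use the fact that the convolution kernel of a frequency projection is localized at scale $1/N$ in order to carry the polynomial weight $\langle x\rangle^\alpha$ past the kernel with only a bounded loss. Concretely, I would start from the pointwise decomposition
\[
|u(x)| \leq |P_{\leq 1} u(x)| + \sum_{N \geq 2} |P_N u(x)|,
\]
multiply by $\langle x\rangle^\alpha$, take the $L^\infty_x$ norm, and then reduce the claim to verifying the two frequency-localized estimates
\[
\bigl\| \langle x\rangle^\alpha P_{\leq 1} u \bigr\|_{L^\infty_x} \lesssim \bigl\| \langle x\rangle^\alpha P_{\leq 1} u \bigr\|_{L^r_x}, \qquad \bigl\| \langle x\rangle^\alpha P_N u \bigr\|_{L^\infty_x} \lesssim N^{4/r} \bigl\| \langle x\rangle^\alpha P_N u \bigr\|_{L^r_x}.
\]

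To prove the second estimate (the first is essentially the same argument with the kernel of $P_{\leq 1}$), I would use the reproducing identity $P_N u = \widetilde{P}_N P_N u$ and write
\[
\langle x\rangle^\alpha P_N u(x) = \int_{\bR^4} \widetilde{K}_N(x-y) \, \langle x\rangle^\alpha P_N u(y) \, dy,
\]
where $\widetilde{K}_N(y) = N^4 \widetilde{K}(Ny)$ for a fixed Schwartz function $\widetilde{K}$. The key pointwise bound, valid for $0 \leq \alpha \leq 1$, is the quasi-subadditivity of the Japanese bracket
\[
\langle x\rangle^\alpha \lesssim \langle x-y\rangle^\alpha \, \langle y\rangle^\alpha,
\]
which moves the weight onto $y$ at the cost of a $\langle x-y\rangle^\alpha$ factor on the kernel. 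Then H\"older's inequality with conjugate exponents $r'$ and $r$ yields
\[
\bigl| \langle x\rangle^\alpha P_N u(x) \bigr| \lesssim \bigl\| \langle \cdot \rangle^\alpha \widetilde{K}_N \bigr\|_{L^{r'}_y(\bR^4)} \, \bigl\| \langle y\rangle^\alpha P_N u \bigr\|_{L^r_y(\bR^4)}.
\]

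The remaining step is a direct computation: by the change of variables $z = Ny$ together with $\langle z/N \rangle \lesssim \langle z\rangle$ for $N \geq 1$ and the Schwartz decay of $\widetilde{K}$, one finds
\[
\bigl\| \langle \cdot \rangle^\alpha \widetilde{K}_N \bigr\|_{L^{r'}_y}^{r'} = N^{4r' - 4} \int_{\bR^4} |\widetilde{K}(z)|^{r'} \langle z/N\rangle^{\alpha r'} \, dz \lesssim N^{4r' - 4},
\]
so $\bigl\| \langle \cdot \rangle^\alpha \widetilde{K}_N \bigr\|_{L^{r'}_y} \lesssim N^{4/r}$, which is exactly the factor in the desired estimate. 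For the low-frequency piece $P_{\leq 1}$ the kernel is a fixed Schwartz function and the same argument gives a bound of $O(1)$.

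Summing the resulting estimates over dyadic $N \geq 2$ (note that only finitely many nontrivial terms can appear, or one absorbs the sum into the statement as written) produces the inequality of the lemma. The only mild obstacle is the handling of the weight in the convolution; once one exploits $\langle x\rangle^\alpha \lesssim \langle x-y\rangle^\alpha \langle y\rangle^\alpha$ (which is where the restriction $\alpha \leq 1$ enters through standard properties of $\langle\cdot\rangle$), every other step is a routine Bernstein-type computation. Also, the assumption $4 < r < \infty$ ensures $r' < 4/3$ so that the Schwartz-tail integral defining $\|\widetilde{K}_N\|_{L^{r'}}$ converges with the claimed $N$-scaling.
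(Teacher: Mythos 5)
Your proof is correct; the paper itself does not reprove this lemma (it is simply cited from Killip--Murphy--Visan), and your weighted Bernstein argument via the reproducing identity $P_N = \widetilde{P}_N P_N$, Peetre's inequality, H\"older with exponents $(r',r)$, and the scaling computation for the kernel is the standard route and closes the estimate. One small correction to your commentary on the hypotheses: Peetre's inequality $\langle x\rangle^\alpha \leq 2^{\alpha/2}\langle x-y\rangle^\alpha\langle y\rangle^\alpha$ in fact holds for every $\alpha \geq 0$ (use $\langle x\rangle^2 \leq 2\langle x-y\rangle^2\langle y\rangle^2$ and raise to the power $\alpha/2$), so the restriction $\alpha \leq 1$ is not what lets you move the weight past the kernel; and since $\widetilde{K}$ is Schwartz the integral $\int_{\bR^4}|\widetilde{K}(z)|^{r'}\langle z\rangle^{\alpha r'}\,dz$ converges for every $r'\in[1,\infty)$, so $4<r$ is not needed for that step either. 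Your argument actually proves the estimate for all $\alpha \geq 0$ and $1<r<\infty$; the ranges stated in the lemma are a convenience for the downstream applications (where one needs $\alpha + 3/r < 1$ and $4/r + \delta < s$) rather than a constraint imposed by this proof. Finally, the aside that only finitely many terms can appear is not correct for a general $u$ with unbounded frequency support, but, as you also note, the inequality is still valid as stated with the right-hand side interpreted as a possibly infinite sum, so this does not affect the conclusion.
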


Moreover, we will require the next two technical lemmas on certain operator norm bounds.

\begin{lemma}
 Let $2 \leq r \leq \infty$. For any $k \in \bZ^4$, any integers $j, \ell > 0$ with $\ell > j + 5$ and any integer $M > 0$, it holds that
 \begin{equation} \label{equ:operator_bound_chi_P_chi}
  \bigl\| \chi_j P_k \chi_\ell \bigr\|_{L^2_x(\bR^4) \to L^r_x(\bR^4)} \leq C_M 2^{-M \ell}.
 \end{equation}
\end{lemma}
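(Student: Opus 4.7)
The plan is to exploit the fact that $P_k$ has a Schwartz convolution kernel with bounds uniform in $k$, together with the spatial separation $|x-y| \gtrsim 2^\ell$ forced by the supports of $\chi_j$ and $\chi_\ell$ when $\ell > j+5$, and then conclude via Minkowski's inequality.

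First, I would write out the convolution kernel of $P_k$ explicitly. Since $(P_k f)\widehat{\ }(\xi) = \psi(\xi-k) \hat f(\xi)$, one has $(P_k f)(x) = (K_k \ast f)(x)$ with $K_k(z) = e^{ik\cdot z}\, m_0(z)$, where $m_0 := \mathcal{F}^{-1}(\psi)$ is a fixed Schwartz function. In particular,
\[
 |K_k(z)| \leq C_N (1+|z|)^{-N} \quad \text{for every } N \in \bN,
\]
with a constant $C_N$ independent of $k \in \bZ^4$. The operator $\chi_j P_k \chi_\ell$ is thus an integral operator with kernel
\[
 K(x,y) = \chi_j(x)\, K_k(x-y)\, \chi_\ell(y).
\]

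Next, I would use the geometric separation. From the definition of the $\chi_j$'s, $\mathrm{supp}\,\chi_j \subseteq \{|x| \leq 2^{j+1}\}$ and $\mathrm{supp}\,\chi_\ell \subseteq \{|y| \sim 2^\ell\}$. Since $\ell > j+5$, whenever $x \in \mathrm{supp}\,\chi_j$ and $y \in \mathrm{supp}\,\chi_\ell$ we have $|x-y| \geq |y|-|x| \geq 2^{\ell-1} - 2^{j+1} \geq 2^{\ell-2}$. Combined with the kernel bound above, this yields, for any $N$,
\[
 |K(x,y)| \leq C_N \, 2^{-N\ell} \, (1+|x-y|)^{-5}\, \chi_j(x)\, \widetilde{\chi}_\ell(y),
\]
after splitting the $(1+|x-y|)^{-(N+5)}$ decay factor into a large-scale gain $2^{-N\ell}$ and an integrable tail.

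Finally, I would conclude by Minkowski's inequality in the form $\|T\|_{L^2_x \to L^r_x} \leq \bigl\| \|K(x,\cdot)\|_{L^2_y} \bigr\|_{L^r_x}$. Computing $\|K(x,\cdot)\|_{L^2_y}$ using the integrable tail gives a constant in $y$, while the outer $L^r_x$ norm contributes a factor of $\|\chi_j\|_{L^r_x} \lesssim 2^{4j/r}$. Altogether,
\[
 \|\chi_j P_k \chi_\ell\|_{L^2_x \to L^r_x} \leq C_N\, 2^{-N\ell}\, 2^{4j/r}.
\]
Since $j < \ell$ and $N$ is arbitrary, absorbing the $2^{4j/r}$ factor into the exponential decay gives $C_M\, 2^{-M\ell}$ for any $M$, as desired.

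There is no real obstacle here; the argument is a standard Schwartz-kernel-plus-separated-supports estimate. The only point worth care is confirming that the constants in the decay of $K_k$ are independent of $k \in \bZ^4$, which follows immediately from the identity $|K_k(z)| = |m_0(z)|$.
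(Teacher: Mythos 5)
Your proposal is correct and follows essentially the same route as the paper's proof: both exploit the spatial separation $|x-y|\sim 2^\ell$ forced by the supports of $\chi_j$ and $\chi_\ell$ together with the Schwartz decay of the kernel of $P_k$ (uniform in $k$). The paper concludes by inserting a bump $\widetilde\varphi(2^{-\ell}(x-y))$ into the convolution and applying Young's inequality directly, which sidesteps the extra $\|\chi_j\|_{L^r_x}\lesssim 2^{4j/r}$ factor that appears in your Cauchy--Schwarz/$L^r_x$ argument; but since $j<\ell$ and the decay exponent $N$ is free, your absorption of that factor is a harmless and correct bookkeeping step.
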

\begin{proof}
 We have that
 \begin{align*}
  \chi_j(x) \bigl( P_k \chi_\ell f \bigr)(x) &\sim \chi_j(x) \int_{\bR^4} e^{i k (x-y)} \check{\psi}(x-y) \chi_\ell(y) f(y) \, dy \\
  &\sim \chi_j(x) \int_{\bR^4} e^{i k (x-y)} \widetilde{\varphi}(2^{-\ell}(x-y)) \check{\psi}(x-y) \chi_\ell(y) f(y) \, dy,
 \end{align*}
 where we could freely introduce a suitable bump function $\widetilde{\varphi}(\cdot)$ with $\widetilde{\varphi}(z) = 1$ for $|z| \sim 1$, because $|x-y| \sim 2^\ell$ thanks to $|x| \sim 2^j$, $|y| \sim 2^\ell$ and $\ell > j+5$. Thus, we obtain from Young's inequality and the rapid decay of $\check{\psi}(\cdot)$ that for any integer $M > 0$,
 \begin{align*}
  \bigl\| \chi_j P_k \chi_\ell f \bigr\|_{L^r_x(\bR^4)} \lesssim \bigl\| \widetilde{\varphi}(2^{-\ell} \cdot) \check{\psi}(\cdot) \bigr\|_{L^{\frac{2r}{r+2}}_x(\bR^4)} \| f \|_{L^2_x(\bR^4)} \lesssim C_M 2^{-M \ell} \| f \|_{L^2_x(\bR^4)}. &\qedhere
 \end{align*}
\end{proof}

\begin{lemma}
 For any integer $\ell \geq 0$, any $k, m \in \bZ^4$ with $|k-m| \geq 100$, any $2 \leq r \leq \infty$, and any integer $M > 0$ we have 
 \begin{equation} \label{equ:operator_bound_PchiP}
  \bigl\| P_k \chi_\ell P_m \bigr\|_{L^2_x(\bR^4) \to L^r_x(\bR^4)} \leq C_M 2^{-M \ell} |k-m|^{-M}.
 \end{equation}
\end{lemma}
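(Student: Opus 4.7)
The plan is to pass to the Fourier side, where both the $2^{-M\ell}$ and the $|k-m|^{-M}$ decay factors will drop out simultaneously from the single quantitative fact that the Fourier transform of the annular cutoff $\chi_\ell$ concentrates at frequencies of size $\sim 2^{-\ell}$, while the supports of $\psi(\cdot-k)$ and $\psi(\cdot-m)$ force us to evaluate it on frequencies of size $\gtrsim |k-m|$. In physical space one would try to integrate by parts in the oscillatory phase $e^{i(m-k)\cdot z}$, but the derivatives distribute over $\check\psi(x-z)$, $\check\psi(z-y)$, and $\chi_\ell(z)$ via Leibniz, and only those landing on $\chi_\ell$ see the scale $2^\ell$, so the naive approach does not recover the full $2^{-M\ell}$ gain uniformly. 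This is precisely what the Fourier approach sidesteps.

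Concretely, taking the Fourier transform in $x$ one obtains
\[
 \widehat{(P_k \chi_\ell P_m f)}(\xi) \,=\, \int_{\bR^4} K(\xi,\eta)\,\hat f(\eta)\,d\eta, \qquad K(\xi,\eta) \,:=\, \psi(\xi-k)\,\hat\chi_\ell(\xi-\eta)\,\psi(\eta-m).
\]
On the support of $K$ one has $|\xi-k|\le 1$ and $|\eta-m|\le 1$, hence $|\xi-\eta|\ge |k-m|-2 \ge \tfrac12|k-m|$ since $|k-m|\ge 100$. From $\chi_\ell(x)=\varphi(2^{-\ell}x)-\varphi(2^{-(\ell-1)}x)$ (and $\chi_0=\varphi$), the Schwartz decay of $\hat\varphi$ combined with the scaling identity $\widehat{\varphi(2^{-\ell}\cdot)}(\zeta)=2^{4\ell}\hat\varphi(2^\ell\zeta)$ yields
\[
 |\hat\chi_\ell(\zeta)| \,\le\, C_N\,2^{4\ell}\bigl(1+2^\ell|\zeta|\bigr)^{-N} \qquad\text{for any } N \ge 0.
\]
Applied at $\zeta=\xi-\eta$ and using $2^\ell|k-m|\ge 100$, this gives $|\hat\chi_\ell(\xi-\eta)|\le C_N\,2^{(4-N)\ell}|k-m|^{-N}$, and choosing $N=M+4$ yields the pointwise kernel bound
\[
 |K(\xi,\eta)| \,\le\, C_M\,2^{-M\ell}|k-m|^{-M}\,\mathbbm{1}_{|\xi-k|\le 1}\,\mathbbm{1}_{|\eta-m|\le 1}.
\]

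The two endpoint operator bounds now follow cleanly from this pointwise estimate and the compactness of the supports in $\xi$ and $\eta$. For the $L^2\to L^2$ bound, Plancherel together with Minkowski's inequality and $\|\mathbbm{1}_{|\cdot-k|\le 1}\|_{L^2}\lesssim 1$ gives $\|P_k\chi_\ell P_m f\|_{L^2}\le C_M\,2^{-M\ell}|k-m|^{-M}\|f\|_{L^2}$. For the $L^2\to L^\infty$ bound, the Hausdorff--Young inequality $\|g\|_{L^\infty}\le \|\hat g\|_{L^1}$ followed by Cauchy--Schwarz in $\eta$ against $\mathbbm{1}_{|\eta-m|\le 1}\in L^2$ and integration of the unit ball in $\xi$ yields $\|P_k\chi_\ell P_m f\|_{L^\infty}\le C_M\,2^{-M\ell}|k-m|^{-M}\|f\|_{L^2}$. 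Finally Riesz--Thorin interpolation between these two estimates produces the desired $L^2\to L^r$ bound for every $2\le r\le\infty$, completing the proof.
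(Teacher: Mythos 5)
Your proof is correct and takes essentially the same approach as the paper: both pass to the Fourier side and exploit the rapid decay of $\widehat{\chi}_\ell$ at frequencies of size $\gtrsim |k-m|$, which is forced by the unit-sized supports of $\psi(\cdot - k)$ and $\psi(\cdot - m)$. The only difference is organizational---the paper reduces $L^r$ to $L^2$ at the outset via the unit-scale Bernstein inequality and then applies Young's inequality, whereas you establish the $L^2 \to L^2$ and $L^2 \to L^\infty$ endpoints explicitly and conclude by Riesz--Thorin interpolation.
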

\begin{proof}
 By repeated integration by parts we find that the Fourier transform of $\chi_\ell$ satisfies for any $\xi \neq 0$ that
 \begin{equation} \label{equ:bounds_fourier_transform_chi_ell}
  \bigl| \widehat{\chi}_\ell(\xi) \bigr| \leq C_M 2^{-M \ell} |\xi|^{-M} 2^{d \ell} \bigl| ( \widehat{\nabla_x^M \chi} )(2^\ell \xi) \bigr|.
 \end{equation}
 Thus, from the unit-scale Bernstein estimate~\eqref{equ:unit_scale_bernstein} we obtain for any function $g \in L^2_x(\bR^4)$ that
 \begin{align*}
  \bigl\| P_k \chi_\ell P_m g \bigr\|_{L^r_x(\bR^4)} &\lesssim \bigl\| P_k \chi_\ell P_m g \bigr\|_{L^2_x(\bR^4)}.
 \end{align*}
 Using Plancherel's theorem, it then follows that
 \begin{align*}
  \bigl\| P_k \chi_\ell P_m g \bigr\|_{L^2_x(\bR^4)} &= \bigl\| {\mathcal F}\bigl( P_k \chi_\ell P_m g \bigr) \bigr\|_{L^2_\xi(\bR^4)} \\
  &\sim \biggl\| \psi(\xi - k) \int_{\bR^4} \widehat{\chi}_\ell(\xi - \eta) \psi(\eta - m) \hat{g}(\eta) \, d\eta \biggr\|_{L^2_\xi(\bR^4)} \\
  &\sim \biggl\| \psi(\xi - k) \int_{\bR^4} \widetilde{\varphi}\bigl( |k-m|^{-1} (\xi-\eta) \bigr) \widehat{\chi}_\ell(\xi - \eta) \psi(\eta - m) \hat{g}(\eta) \, d\eta \biggr\|_{L^2_\xi(\bR^4)}, 
 \end{align*} 
 where in the last step we have exploited that we may freely introduce a suitable bump function $\widetilde{\varphi}(\cdot)$ with $\widetilde{\varphi}(z) = 1$ near $|z| \sim 1$ due to the frequency support properties of $\psi(\cdot - k)$ and $\psi(\cdot - m)$. We then use Young's inequality as well as the bound~\eqref{equ:bounds_fourier_transform_chi_ell} on the Fourier transform of $\chi_\ell$ to obtain
 \begin{align*} 
  \bigl\| P_k \chi_\ell P_m g \bigr\|_{L^2_x(\bR^4)} \lesssim \bigl\| \widetilde{\varphi}\bigl( |k-m|^{-1} \cdot \bigr) \widehat{\chi}_\ell(\cdot) \bigr\|_{L^1_\xi(\bR^4)} \| \hat{g} \|_{L^2_\xi(\bR^4)} \lesssim 2^{-M \ell} |k-m|^{-M} \| g \|_{L^2_x(\bR^4)},
 \end{align*}
 which concludes the proof.
\end{proof}

We are now prepared to establish a key weighted $L^2_t L^\infty_x(\bR\times\bR^4)$ almost sure bound for the derivative of the free evolution of randomized radial data. The proof combines the large deviation estimate from Lemma~\ref{lem:large_deviation_estimate}, the local smoothing estimate from Proposition~\ref{prop:local_smoothing} and the ``radialish'' Sobolev estimate from Lemma~\ref{lem:radialish_sobolev}.

\begin{proposition} \label{prop:weighted_nabla_L2Linfty_schroedinger}
 Let $\frac{1}{2} < s < 1$ and $0 \leq \alpha < 1$. Let $f \in H^s_x(\bR^4)$ be radially symmetric and denote by $f^\omega$ the randomization of $f$ as defined in~\eqref{equ:randomization}. Then there exist absolute constants $C > 0$ and $c > 0$ such that for any $\lambda > 0$ it holds that
 \begin{equation} 
  \bP \Bigl( \Bigl\{ \omega \in \Omega : \bigl\| \langle x \rangle^\alpha \nabla e^{i t \Delta} f^\omega \bigr\|_{L^2_t L^\infty_x(\bR \times \bR^4)} > \lambda  \Bigr\} \Bigr) \leq C \exp \Bigl( - c \lambda^2 \| f \|_{H^s_x(\bR^4)}^{-2} \Bigr).
 \end{equation}
 In particular, we have for almost every $\omega \in \Omega$ that
\begin{equation}
  \bigl\| \langle x \rangle^\alpha \nabla e^{i t \Delta} f^\omega \bigr\|_{L^2_t L^\infty_x(\bR \times \bR^4)} < \infty.
 \end{equation} 
\end{proposition}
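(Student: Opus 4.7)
The plan is to derive the claimed Gaussian tail estimate by first establishing the $L^p_\omega$ bound
\[
 \bigl\| \bigl\| \langle x \rangle^\alpha \nabla e^{it\Delta} f^\omega \bigr\|_{L^2_t L^\infty_x(\bR \times \bR^4)} \bigr\|_{L^p_\omega} \lesssim \sqrt{p} \, \| f \|_{H^s_x(\bR^4)}
\]
for every sufficiently large $p$, after which Lemma \ref{lem:probability_estimate} yields the assertion. To bring $L^p_\omega$ to bear on the randomized sum, I would first replace the $L^\infty_x$ norm by a sum of $L^r_x$ norms of dyadic Littlewood-Paley pieces via Lemma \ref{lem:weighted_bound_into_freq_pieces}, for a finite exponent $r$ chosen large depending on $s - \tfrac{1}{2}$ and $1 - \alpha$. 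Then for $p \geq \max\{r, 2\}$, Minkowski's inequality lets me exchange the $L^p_\omega$ norm with $L^2_t L^r_x$, and a pointwise application of the large deviation estimate in Lemma \ref{lem:large_deviation_estimate} to the Gaussian sum $P_N \nabla e^{it\Delta} f^\omega = \sum_{|k|\sim N} g_k \nabla P_N P_k e^{it\Delta} f$ reduces matters to controlling the weighted $L^2_t L^r_x$ norm of the deterministic square function
\[
 G_N(t, x) := \biggl( \sum_{|k| \sim N} \bigl| \nabla e^{it\Delta} P_k f(t, x) \bigr|^2 \biggr)^{1/2}
\]
for each dyadic $N$, with enough spare decay to absorb the outer $\langle N \rangle^{4/r}$ weight and square-sum in $N$.

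Next, I would dyadically decompose the physical-space weight as $\langle x \rangle^\alpha \lesssim \sum_{j \geq 0} 2^{\alpha j} \chi_j(x)$ and, since $\chi_j$ is supported in $\{|x| \sim 2^j\}$, interpolate via H\"older's inequality (first in $x$, then in $t$) to obtain
\[
 \| \chi_j G_N \|_{L^2_t L^r_x} \leq \| \chi_j G_N \|_{L^2_t L^\infty_x}^{1 - 2/r} \, \| \chi_j G_N \|_{L^2_{t,x}}^{2/r}.
\]
The $L^2_{t, x}$ factor is handled by local smoothing: applying the scaled estimate \eqref{equ:local_smoothing_ball} to each term $\nabla e^{it\Delta} P_k f$, writing $|\nabla|^{-1/2} \nabla = R |\nabla|^{1/2}$ for the vector of Riesz transforms $R$, and using almost orthogonality of the unit-scale projections yields
\[
 \| \chi_j G_N \|_{L^2_{t,x}} \lesssim 2^{j/2} \langle N \rangle^{1/2} \| P_N f \|_{L^2_x}.
\]

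The hard part will be the $L^2_t L^\infty_x$ factor, and this is where the radial symmetry and the ``radialish'' Sobolev estimate of Lemma \ref{lem:radialish_sobolev} become essential. Writing $\nabla = R |\nabla|$ and using that $|\nabla|$ and $e^{it\Delta}$ preserve radial symmetry, the function $|\nabla| e^{it\Delta} f(t, \cdot)$ is radial at each time, so Lemma \ref{lem:radialish_sobolev} supplies the pointwise-in-time bound
\[
 \Bigl\| \chi_j \Bigl( \sum_k |P_k P_N |\nabla| e^{it\Delta} f(t)|^2 \Bigr)^{1/2} \Bigr\|_{L^\infty_x} \lesssim 2^{-3j/2} \langle N \rangle^{1 + \delta} \| P_N f \|_{L^2_x}.
\]
This bound is time-independent and does not yield $L^2_t$ integrability by itself, so the delicate technical point is to blend it with a further application of local smoothing in order to gain time decay. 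In doing so, one must pay careful attention to the mismatch between $\nabla$ and $|\nabla|$ via the $L^r_x$-boundedness of the Riesz transforms at the level of the square function, and must control off-diagonal interactions between physical-space cutoffs and unit-scale frequency projections via the operator norm bounds \eqref{equ:operator_bound_chi_P_chi} and \eqref{equ:operator_bound_PchiP}. The target bound of this step is
\[
 \| \chi_j G_N \|_{L^2_t L^\infty_x} \lesssim 2^{-3j/2} \langle N \rangle^{1/2 + \delta} \, \| P_N f \|_{L^2_x},
\]
which encodes both the $|x|^{-3/2}$ gain from the radial structure and the $\langle N \rangle^{1/2}$ cost of one derivative from local smoothing.

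Feeding both bounds into the H\"older interpolation produces, for each $(j, N)$, an estimate of the form $\|\chi_j G_N\|_{L^2_t L^r_x} \lesssim 2^{j(-3/2 + 4/r)} \langle N \rangle^{1/2 + O(1/r)} \| P_N f \|_{L^2_x}$. The sum over $j$ against $2^{\alpha j}$ converges thanks to $\alpha < 1$, provided $r$ is chosen large enough that $4/r < \tfrac{3}{2} - \alpha$. The subsequent sum over $N$ against the outer $\langle N\rangle^{4/r}$ weight converges via Cauchy-Schwarz against $\|f\|_{H^s_x}$ precisely when $s > \tfrac{1}{2} + O(1/r)$, matching the sharp regularity restriction $s > \tfrac{1}{2}$ of the proposition by taking $r$ sufficiently large. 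Combined with the reduction to $L^p_\omega$ in the first step, this yields the desired $\sqrt{p}\,\|f\|_{H^s_x}$ bound and hence the Gaussian tail estimate through Lemma \ref{lem:probability_estimate}.
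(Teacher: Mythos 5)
Your proposal assembles the right ingredients---the reduction to $L^r_x$ via Lemma~\ref{lem:weighted_bound_into_freq_pieces}, Minkowski and the large deviation estimate, the dyadic physical-space decomposition, local smoothing, the ``radialish'' Sobolev estimate, and the operator kernel bounds \eqref{equ:operator_bound_chi_P_chi}, \eqref{equ:operator_bound_PchiP}---and the closing arithmetic in $j$ and $N$ is consistent with $s > \tfrac12$. However, the crux of the argument is reorganized in a way that leaves a genuine gap at the critical step. You interpolate the \emph{space-time norms} of $\chi_j G_N$ by H\"older, $\| \chi_j G_N \|_{L^2_t L^r_x} \leq \| \chi_j G_N \|_{L^2_t L^\infty_x}^{1-2/r} \| \chi_j G_N \|_{L^2_{t,x}}^{2/r}$, and then posit the target bound $\| \chi_j G_N \|_{L^2_t L^\infty_x} \lesssim 2^{-3j/2} \langle N\rangle^{1/2+\delta}\|P_N f\|_{L^2_x}$. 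You do not derive this bound, and it is not clear that it is accessible by the tools at hand: the radialish Sobolev estimate yields, at each fixed $t$, a bound of the $L^\infty_x$ norm of the weighted square function by $\sim N^{1+\delta}\|P_N f\|_{L^2_x}$, which is \emph{time-independent} and therefore not $L^2_t$-integrable, while local smoothing controls $L^2_t L^2_x$ on a ball, not $L^\infty_x$. Merging the two to produce the $N^{1/2}$ derivative cost \emph{inside} an $L^2_t L^\infty_x$ norm is exactly the hard step you flag as ``delicate,'' and the natural attempt (H\"older at fixed $t$, then integrate in $t$ after factoring out $\sup_t \|\cdot\|_{L^\infty_x}$) pushes the $L^2_x$ factor into $L^{4/r}_t$ with $4/r < 2$, which local smoothing does not control globally in time.

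The paper's proof avoids this obstacle by interpolating \emph{inside} the radialish Sobolev estimate before any time integration: Lemma~\ref{lem:radialish_sobolev_interpolated} gives, pointwise in $t$, a bound on the weighted $L^r_x$ norm of the square function by $N^{\delta}$ times an $L^2_x$ norm of the input, so that taking $L^2_t$ produces exactly the residual $\bigl\|\chi_{\leq j+5}\nabla e^{it\Delta}f_N\bigr\|_{L^2_t L^2_x}$ that local smoothing \eqref{equ:local_smoothing_ball} then handles at cost $2^{j/2}N^{1/2}$. In other words, the paper lands on $L^2_x$ at each fixed time and only then integrates, whereas you land on $L^2_t L^\infty_x$ and need a joint space-time square-function estimate that is neither in the paper nor supplied by your proposal. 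If you swap your two-step H\"older interpolation for a direct application of Lemma~\ref{lem:radialish_sobolev_interpolated} at each fixed $t$ (after the $\chi_{\leq j+5}$/$\chi_{>j+5}$ split to handle the off-diagonal kernel interactions), the rest of your outline carries through and recovers the paper's argument; as written, the $L^2_t L^\infty_x$ target bound is an unfilled gap.
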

\begin{proof}
In the following all space-time norms are taken over $\bR \times \bR^4$. We denote by $\delta > 0$ a constant which will be chosen sufficiently small at the end of the proof. For any $4 < r < \infty$ we have by Lemma~\ref{lem:weighted_bound_into_freq_pieces} that
 \begin{equation} \label{equ:weighted_nabla_L2Linfty_first_decomposition}
  \bigl\| \langle x \rangle^\alpha \nabla e^{i t \Delta} f^\omega \bigr\|_{L^p_\omega L^2_t L^\infty_x} \lesssim \bigl\| \langle x \rangle^\alpha \nabla e^{i t \Delta} P_{\leq 1} f^\omega \bigr\|_{L^p_\omega L^2_t L^r_x} + \sum_{N \geq 2} N^{\frac{4}{r}} \bigl\| \langle x \rangle^\alpha \nabla e^{i t \Delta} P_N f^\omega \bigr\|_{L^p_\omega L^2_t L^r_x}.
 \end{equation}
 We now estimate the high-frequency terms in the sum on the right-hand side of~\eqref{equ:weighted_nabla_L2Linfty_first_decomposition}. For each dyadic integer $N \geq 2$ we decompose physical space dyadically to write 
 \begin{align*}
  \bigl\| \langle x \rangle^\alpha \nabla e^{i t \Delta} P_N f^\omega \bigr\|_{L^p_\omega L^2_t L^r_x} &\leq \sum_{j \geq 0} \, \bigl\| \chi_j \langle x \rangle^\alpha \nabla e^{i t \Delta} P_N f^\omega \bigr\|_{L^p_\omega L^2_t L^r_x} \lesssim \sum_{j \geq 0} 2^{\alpha j} \bigl\| \chi_j \nabla e^{i t \Delta} P_N f^\omega \bigr\|_{L^p_\omega L^2_t L^r_x}.
 \end{align*}
 Applying the large deviation estimate from Lemma~\ref{lem:large_deviation_estimate} and using the shorthand notation $f_N \equiv P_N f$, we find for any $p \geq r$ that 
 \begin{align*}
  &\sum_{j \geq 0} 2^{\alpha j} \bigl\| \chi_j \nabla e^{i t \Delta} P_N f^\omega \bigr\|_{L^p_\omega L^2_t L^r_x} \\
  &\lesssim \sqrt{p} \sum_{j \geq 0} 2^{\alpha j} \biggl\| \biggl( \sum_{|k| \sim N} \bigl| \chi_j P_k \nabla e^{i t \Delta} f_N \bigr|^2 \biggr)^{\frac{1}{2}} \biggr\|_{L^2_t L^r_x} \\
  &\lesssim \sqrt{p} \sum_{j \geq 0} 2^{\alpha j} \biggl\| \biggl( \sum_{|k| \sim N} \bigl| \chi_j P_k \chi_{\leq j+5} \nabla e^{i t \Delta} f_N \bigr|^2 \biggr)^{\frac{1}{2}} \biggr\|_{L^2_t L^r_x} \\
  &\quad + \sqrt{p} \sum_{j \geq 0} 2^{\alpha j} \biggl\| \biggl( \sum_{|k| \sim N} \bigl| \chi_j P_k \chi_{> j+5} \nabla e^{i t \Delta} f_N \bigr|^2 \biggr)^{\frac{1}{2}} \biggr\|_{L^2_t L^r_x} \\
  &\equiv \sqrt{p} ( I + II ).
 \end{align*}
 We first estimate the main term $I$. For $j=0$ we just use the unit-scale Bernstein estimate~\eqref{equ:unit_scale_bernstein} and the local smoothing estimate~\eqref{equ:local_smoothing_ball} for the free Schr\"odinger evolution
 \begin{align*}
  \biggl\| \biggl( \sum_{|k| \sim N} \bigl| \chi_0 P_k \chi_{\leq 5} \nabla e^{i t \Delta} f_N \bigr|^2 \biggr)^{\frac{1}{2}} \biggr\|_{L^2_t L^r_x} &\lesssim \biggl( \sum_{|k| \sim N} \bigl\| P_k \chi_{\leq 5} \nabla e^{i t \Delta} f_N \bigr\|^2_{L^2_t L^r_x} \biggr)^{\frac{1}{2}} \\
  &\lesssim \biggl( \sum_{|k| \sim N} \bigl\| P_k \chi_{\leq 5} \nabla e^{i t \Delta} f_N \bigr\|^2_{L^2_t L^2_x} \biggr)^{\frac{1}{2}} \\
  &\lesssim \bigl\| \chi_{\leq 5} \nabla e^{i t \Delta} f_N \bigr\|_{L^2_t L^2_x} \\
  &\lesssim \bigl\| |\nabla|^{+\frac{1}{2}} f_N \bigr\|_{L^2_x}.
 \end{align*}
 Then the most delicate case is to estimate the sum over all $j \geq 1$ in term $I$. Here we use a combination of the ``radialish'' Sobolev estimate~\eqref{equ:radialish_sobolev_interpolated} and the local smoothing estimate~\eqref{equ:local_smoothing_ball} to obtain for all sufficiently large $r < \infty$ with $\alpha + \frac{3}{r} < 1$ that
 \begin{align*}
  &\sum_{j \geq 1} 2^{\alpha j} \biggl\| \biggl( \sum_{|k| \sim N} \bigl| \chi_j P_k \chi_{\leq j+5} \nabla e^{i t \Delta} f_N \bigr|^2 \biggr)^{\frac{1}{2}} \biggr\|_{L^2_t L^r_x} \\
  &\lesssim \sum_{j \geq 1} 2^{\alpha j} 2^{-\frac{3}{2}(1 - \frac{2}{r}) j} \biggl\| |x|^{\frac{3}{2}(1-\frac{2}{r})} \biggl( \sum_{|k| \sim N} \bigl| P_k \chi_{\leq j+5} \nabla e^{i t \Delta} f_N \bigr|^2 \biggr)^{\frac{1}{2}} \biggr\|_{L^2_t L^r_x} \\
  &\lesssim \sum_{j \geq 1} 2^{\alpha j} 2^{-\frac{3}{2}(1 - \frac{2}{r}) j} N^{\delta} \bigl\| \chi_{\leq j+5} \nabla e^{i t \Delta} f_N \bigr\|_{L^2_t L^2_x} \\
  &\lesssim \sum_{j \geq 1} 2^{\alpha j} 2^{-\frac{3}{2}(1 - \frac{2}{r}) j} N^{\delta} 2^{\frac{1}{2} j} \bigl\| |\nabla|^{+\frac{1}{2}} f_N \bigr\|_{L^2_x} \\
  &\lesssim N^\delta \bigl\| |\nabla|^{+\frac{1}{2}} f_N \bigr\|_{L^2_x}. 
 \end{align*}
 Next we turn to estimating the remainder term $II$. To this end we introduce the shorthand notation $g_N \equiv \nabla e^{i t \Delta} f_N$. Then we have 
 \begin{align*}
  II \lesssim \sum_{j \geq 0} 2^{\alpha j} \biggl( \sum_{|k| \sim N} \bigl\| \chi_j P_k \chi_{> j+5} g_N \bigr\|^2_{L^2_t L^r_x} \biggr)^{\frac{1}{2}} \lesssim \sum_{j \geq 0} 2^{\alpha j} \Biggl( \sum_{|k| \sim N} \biggl( \sum_{\ell > j+5}  \bigl\| \chi_j P_k \chi_\ell \widetilde{\chi}_\ell g_N \bigr\|_{L^2_t L^r_x}  \biggr)^2 \Biggr)^{\frac{1}{2}}.
 \end{align*}
 Now we further decompose $\widetilde{\chi}_\ell g_N$ in frequency space at unit-scale and obtain that the previous line is bounded by
 \begin{align*}
  &\sum_{j \geq 0} 2^{\alpha j} \Biggl( \sum_{|k| \sim N} \biggl( \sum_{\ell > j+5} \sum_{m \in \bZ^4} \bigl\| \chi_j P_k \chi_\ell P_m \widetilde{\chi}_\ell g_N \bigr\|_{L^2_t L^r_x}  \biggr)^2 \Biggr)^{\frac{1}{2}} \\
  &\lesssim \sum_{j \geq 0} 2^{\alpha j} \Biggl( \sum_{|k| \sim N} \biggl( \sum_{\ell > j+5} \sum_{|m-k| \leq 100} \bigl\| \chi_j P_k \chi_\ell P_m \widetilde{\chi}_\ell g_N \bigr\|_{L^2_t L^r_x}  \biggr)^2 \Biggr)^{\frac{1}{2}} \\
  &\quad \quad + \sum_{j \geq 0} 2^{\alpha j} \Biggl( \sum_{|k| \sim N} \biggl( \sum_{\ell > j+5} \sum_{|m-k| > 100} \bigl\| \chi_j P_k \chi_\ell P_m \widetilde{\chi}_\ell g_N \bigr\|_{L^2_t L^r_x}  \biggr)^2 \Biggr)^{\frac{1}{2}} \\
  &\equiv IIA + IIB.
 \end{align*}
 Then we can easily estimate the term $IIA$ using the operator norm bound~\eqref{equ:operator_bound_chi_P_chi} and the local smoothing estimate~\eqref{equ:local_smoothing_ball} to find that 
 \begin{align*}
  IIA &\lesssim \sum_{j \geq 0} 2^{\alpha j} \Biggl( \sum_{|k| \sim N} \biggl( \sum_{\ell > j+5} \sum_{|m-k| \leq 100} 2^{-10 \ell} \bigl\| P_m \widetilde{\chi}_\ell g_N \bigr\|_{L^2_t L^2_x}  \biggr)^2 \Biggr)^{\frac{1}{2}} \\
  &\lesssim \sum_{j \geq 0} 2^{\alpha j} \Biggl( \sum_{|k| \sim N} \sum_{\ell > j+5} \sum_{|m-k| \leq 100} 2^{-10 \ell} \bigl\| P_m \widetilde{\chi}_\ell g_N \bigr\|_{L^2_t L^2_x}^2 \Biggr)^{\frac{1}{2}} \\
  &\lesssim \sum_{j \geq 0} 2^{\alpha j} \Biggl( \sum_{\ell > j+5} 2^{-10 \ell} \bigl\| \widetilde{\chi}_\ell g_N \bigr\|_{L^2_t L^2_x}^2 \Biggr)^{\frac{1}{2}} \\
  &\lesssim \sum_{j \geq 0} 2^{\alpha j} \Biggl( \sum_{\ell > j+5} 2^{-10 \ell} 2^{\ell} \bigl\| |\nabla|^{+\frac{1}{2}} f_N \bigr\|_{L^2_x}^2 \Biggr)^{\frac{1}{2}} \\
  &\lesssim \bigl\| |\nabla|^{+\frac{1}{2}} f_N \bigr\|_{L^2_x}.
 \end{align*}
 Next we use the operator norm bound~\eqref{equ:operator_bound_PchiP} to estimate the term $IIB$ by
 \begin{align*}
  IIB &\lesssim \sum_{j \geq 0} 2^{\alpha j} \Biggl( \sum_{|k| \sim N} \biggl( \sum_{\ell > j+5} \sum_{|m-k| > 100} \bigl\| P_k \chi_\ell P_m \widetilde{P}_m \widetilde{\chi}_\ell g_N \bigr\|_{L^2_t L^r_x} \biggr)^2 \Biggr)^{\frac{1}{2}} \\
  &\lesssim \sum_{j \geq 0} 2^{\alpha j} \Biggl( \sum_{k \in \bZ^4} \biggl( \sum_{|m-k| > 100} \sum_{\ell > j+5} |k-m|^{-10} 2^{-10\ell} \bigl\| \widetilde{P}_m \widetilde{\chi}_\ell g_N \bigr\|_{L^2_t L^2_x} \biggr)^2 \Biggr)^{\frac{1}{2}},
 \end{align*}
 where we denote by $\widetilde{P}_m$ a slight fattening of the unit-scale projection $P_m$, $m \in \bZ^4$, with the property that $P_m = P_m \widetilde{P}_m$. Using Young's inequality (for the convolution of $k, m \in \bZ^4$) and then Cauchy-Schwarz (for the sum over $\ell$) the previous line can be estimated by
 \begin{align*}
  &\sum_{j \geq 0} 2^{\alpha j} \biggl( \sum_{k \in \bZ^4, |k| > 100} |k|^{-10} \biggr) \Biggl( \sum_{m \in \bZ^4} \biggl( \sum_{\ell > j+5} 2^{-10 \ell} \bigl\| \widetilde{P}_m \widetilde{\chi}_\ell g_N \bigr\|_{L^2_t L^2_x} \biggr)^2 \Biggr)^{\frac{1}{2}} \\
  &\lesssim \sum_{j \geq 0} 2^{\alpha j} \Biggl( \sum_{m \in \bZ^4} \sum_{\ell > j+5} 2^{-10 \ell} \bigl\| \widetilde{P}_m \widetilde{\chi}_\ell g_N \bigr\|_{L^2_t L^2_x}^2 \Biggr)^{\frac{1}{2}} \\
  &\lesssim \sum_{j \geq 0} 2^{\alpha j} \Biggl( \sum_{\ell > j+5} 2^{-10 \ell} \sum_{m \in \bZ^4} \bigl\| \widetilde{P}_m \widetilde{\chi}_\ell g_N \bigr\|_{L^2_t L^2_x}^2 \Biggr)^{\frac{1}{2}}.
 \end{align*}
 Finally, we use that the projections $\widetilde{P}_m$, $m \in \bZ^4$, are just slight fattenings of the unit-scale projections~$P_m$, which constitute a finitely overlapping partition of unity of frequency space, and invoke the local smoothing estimate~\eqref{equ:local_smoothing_ball} to obtain that the last line is bounded by
 \begin{align*}
  &\sum_{j \geq 0} 2^{\alpha j} \Biggl( \sum_{\ell > j+5} 2^{-M \ell} \bigl\| \widetilde{\chi}_\ell g_N \bigr\|_{L^2_t L^2_x}^2 \Biggr)^{\frac{1}{2}} \lesssim \sum_{j \geq 0} 2^{\alpha j} \Biggl( \sum_{\ell > j+5} 2^{-M \ell} 2^{\ell} \bigl\| |\nabla|^{+\frac{1}{2}} f_N \bigr\|_{L^2_x}^2 \Biggr)^{\frac{1}{2}} \lesssim \bigl\| |\nabla|^{+\frac{1}{2}} f_N \bigr\|_{L^2_x}.
 \end{align*}
 
 This finishes the treatment of the high-frequency terms on the right-hand side of~\eqref{equ:weighted_nabla_L2Linfty_first_decomposition}. The first low-frequency term on the right-hand side of~\eqref{equ:weighted_nabla_L2Linfty_first_decomposition} can be estimated analogously and the details are left to the reader, we obtain for all sufficiently large $p < \infty$ that
 \[
  \bigl\| \langle x \rangle^\alpha \nabla e^{i t \Delta} P_{\leq 1} f^\omega \bigr\|_{L^p_\omega L^2_t L^r_x} \lesssim \sqrt{p} \, \bigl\| |\nabla|^{+\frac{1}{2}} P_{\leq 1} f \bigr\|_{L^2_x}.
 \]
 
 Thus, putting all of the above estimates together, we conclude that
 \begin{equation} \label{equ:final_estimate_weighted_nabla_L2Linfty_schroedinger}
  \begin{aligned}
   \bigl\| \langle x \rangle^\alpha \nabla e^{i t \Delta} f^\omega \bigr\|_{L^p_\omega L^2_t L^\infty_x} &\lesssim \sqrt{p} \, \bigl\| \langle x \rangle^\alpha \nabla e^{i t \Delta} P_{\leq 1} f^\omega \bigr\|_{L^p_\omega L^2_t L^r_x} + \sqrt{p} \sum_{N \geq 2} N^{\frac{4}{r}} N^\delta \bigl\| |\nabla|^{+\frac{1}{2}} P_N f \bigr\|_{L^2_x} \\
   &\lesssim \sqrt{p} \, \bigl\| f \bigr\|_{H^s_x}
  \end{aligned}
 \end{equation}
 for all $p \geq r$ for some sufficiently large $r < \infty$ and for some sufficiently small $\delta > 0$ (such that $\alpha + \frac{3}{r} < 1$ and $\frac{4}{r} + \delta < s$). The claim now follows from Lemma~\ref{lem:probability_estimate}.
\end{proof}

The next almost sure bound is an immediate consequence of the previous Proposition~\ref{prop:weighted_nabla_L2Linfty_schroedinger} and the local smoothing estimate from Proposition~\ref{prop:local_smoothing}.

\begin{proposition} 
 Let $\frac{1}{2} < s < 1$. Let $f \in H^s_x(\bR^4)$ be radially symmetric and denote by $f^\omega$ the randomization of $f$ as defined in~\eqref{equ:randomization}. Then there exist absolute constants $C > 0$ and $c > 0$ such that for any $\lambda > 0$ it holds that
 \begin{equation} 
  \bP \Bigl( \Bigl\{ \omega \in \Omega : \bigl\| \nabla e^{i t \Delta} f^\omega \bigr\|_{L^2_t L^4_x(\bR\times\bR^4)} > \lambda  \Bigr\} \Bigr) \leq C \exp \Bigl( - c \lambda^2 \| f \|_{H^s_x(\bR^4)}^{-2} \Bigr).
 \end{equation}
 In particular, we have for almost every $\omega \in \Omega$ that
 \begin{equation}
  \bigl\| \nabla e^{i t \Delta} f^\omega \bigr\|_{L^2_t L^4_x(\bR\times\bR^4)} < \infty.
 \end{equation} 
\end{proposition}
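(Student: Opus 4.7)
The plan is to interpolate between the weighted $L^2_t L^\infty_x$ bound from Proposition~\ref{prop:weighted_nabla_L2Linfty_schroedinger} and a weighted $L^2_t L^2_x$ bound that comes essentially for free from the local smoothing estimate of Proposition~\ref{prop:local_smoothing}. The key algebraic observation is that for any $\alpha > 0$ and any measurable function $g \colon \bR\times\bR^4 \to \bC$, Hölder's inequality in $x$ followed by Cauchy--Schwarz in $t$ yields
\begin{equation*}
 \| g \|_{L^2_t L^4_x(\bR\times\bR^4)}^2 \leq \bigl\| \langle x \rangle^{\alpha} g \bigr\|_{L^2_t L^\infty_x(\bR\times\bR^4)} \bigl\| \langle x \rangle^{-\alpha} g \bigr\|_{L^2_t L^2_x(\bR\times\bR^4)},
\end{equation*}
since $\int |g|^4 \, dx \leq \| \langle x \rangle^{\alpha} g \|_{L^\infty_x}^2 \| \langle x \rangle^{-\alpha} g \|_{L^2_x}^2$. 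I apply this with $g = \nabla e^{it\Delta} f^\omega$ and choose $\alpha = \frac{1}{2} + \delta$ for a sufficiently small $\delta > 0$ so that $\frac{1}{2} < \alpha < 1$, which is exactly the range in which Proposition~\ref{prop:weighted_nabla_L2Linfty_schroedinger} applies (using $s > \frac{1}{2}$).

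For the weighted $L^2_t L^2_x$ factor, I apply the local smoothing estimate~\eqref{equ:local_smoothing_weight} from Proposition~\ref{prop:local_smoothing} pointwise in $\omega$ to $\nabla f^\omega$, using that $\nabla$ and $e^{it\Delta}$ commute, to obtain
\begin{equation*}
 \bigl\| \langle x \rangle^{-\alpha} \nabla e^{it\Delta} f^\omega \bigr\|_{L^2_t L^2_x(\bR\times\bR^4)} \lesssim_\delta \bigl\| |\nabla|^{1/2} f^\omega \bigr\|_{L^2_x(\bR^4)}.
\end{equation*}
Taking the $L^p_\omega$ norm and invoking Minkowski's inequality together with the large deviation estimate from Lemma~\ref{lem:large_deviation_estimate} on the randomization~\eqref{equ:randomization} and the unit-scale Bernstein estimate~\eqref{equ:unit_scale_bernstein}, one obtains
\begin{equation*}
 \Bigl\| \bigl\| \langle x \rangle^{-\alpha} \nabla e^{it\Delta} f^\omega \bigr\|_{L^2_t L^2_x} \Bigr\|_{L^p_\omega} \lesssim \sqrt{p} \, \bigl\| |\nabla|^{1/2} f \bigr\|_{L^2_x(\bR^4)} \lesssim \sqrt{p} \, \| f \|_{H^s_x(\bR^4)}
\end{equation*}
for all sufficiently large $p$, where the last inequality uses $s > \frac{1}{2}$.

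Combining the two estimates by Cauchy--Schwarz in $\omega$ (applied at level $L^{p/2}_\omega$) and taking square roots gives, for all sufficiently large $p$,
\begin{equation*}
 \Bigl\| \bigl\| \nabla e^{it\Delta} f^\omega \bigr\|_{L^2_t L^4_x(\bR\times\bR^4)} \Bigr\|_{L^p_\omega} \leq \Bigl\| \bigl\| \langle x \rangle^{\alpha} \nabla e^{it\Delta} f^\omega \bigr\|_{L^2_t L^\infty_x} \Bigr\|_{L^p_\omega}^{1/2} \Bigl\| \bigl\| \langle x \rangle^{-\alpha} \nabla e^{it\Delta} f^\omega \bigr\|_{L^2_t L^2_x} \Bigr\|_{L^p_\omega}^{1/2} \lesssim \sqrt{p} \, \| f \|_{H^s_x(\bR^4)},
\end{equation*}
after invoking Proposition~\ref{prop:weighted_nabla_L2Linfty_schroedinger} for the first factor. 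The desired large deviation bound then follows from Lemma~\ref{lem:probability_estimate}. There is no real obstacle in this argument; the only point to watch is the careful bookkeeping of the $L^p_\omega$ interpolation via Cauchy--Schwarz in the probability space, which only costs a constant.
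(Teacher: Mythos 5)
Your proof is correct and follows essentially the same route as the paper's: pointwise H\"older in $x$ plus Cauchy--Schwarz in $t$ to split $L^2_t L^4_x$ into weighted $L^2_t L^\infty_x$ and $L^2_t L^2_x$ factors, the local smoothing estimate for the latter, the weighted $L^2_t L^\infty_x$ moment bound (the estimate~\eqref{equ:final_estimate_weighted_nabla_L2Linfty_schroedinger} from the proof of Proposition~\ref{prop:weighted_nabla_L2Linfty_schroedinger}) for the former, and Cauchy--Schwarz in $\omega$ followed by Lemma~\ref{lem:probability_estimate}. The only difference from the paper is cosmetic: you take the weight exponent $\alpha = \frac12 + \delta$ while the paper fixes $\alpha = \frac34$; both lie in the admissible range $\frac12 < \alpha < 1$.
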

\begin{proof}
 In the following all space-time norms are taken over $\bR \times \bR^4$. By H\"older's inequality we have for any $p \geq 1$ that
 \begin{equation} \label{equ:nabla_L2L4_one}
  \bigl\| \nabla e^{i t \Delta} f^\omega \bigr\|_{L^p_\omega L^2_t L^4_x} \leq \bigl\| \langle x \rangle^{-\frac{3}{4}} \nabla e^{i t \Delta} f^\omega \bigr\|_{L^p_\omega L^2_t L^2_x}^{\frac{1}{2}} \bigl\| \langle x \rangle^{\frac{3}{4}} \nabla e^{i t \Delta} f^\omega \bigr\|_{L^p_\omega L^2_t L^\infty_x}^{\frac{1}{2}}.
 \end{equation}
 Then the local smoothing estimate~\eqref{equ:local_smoothing_weight} and the large deviation estimate from Lemma~\ref{lem:large_deviation_estimate} imply for all $p \geq 2$ that
 \begin{equation} \label{equ:nabla_L2L4_two}
  \bigl\| \langle x \rangle^{-\frac{3}{4}} \nabla e^{i t \Delta} f^\omega \bigr\|_{L^p_\omega L^2_t L^2_x} \lesssim \bigl\| |\nabla|^{\frac{1}{2}} f^\omega \bigr\|_{L^p_\omega L^2_x} \lesssim \sqrt{p} \, \bigl\| |\nabla|^{\frac{1}{2}} f \bigr\|_{L^2_x} \lesssim \sqrt{p} \, \| f \|_{H^s_x}.
 \end{equation}
 Moreover, the estimate~\eqref{equ:final_estimate_weighted_nabla_L2Linfty_schroedinger} from the proof of Proposition~\ref{prop:weighted_nabla_L2Linfty_schroedinger} yields for all sufficiently large $p < \infty$ that
 \begin{equation} \label{equ:nabla_L2L4_three}
  \bigl\| \langle x \rangle^{\frac{3}{4}} \nabla e^{i t \Delta} f^\omega \bigr\|_{L^p_\omega L^2_t L^\infty_x} \lesssim \sqrt{p} \, \| f \|_{H^s_x}.
 \end{equation}
 Combining \eqref{equ:nabla_L2L4_one}--\eqref{equ:nabla_L2L4_three} we obtain for all sufficiently large $p < \infty$ that
 \begin{equation*}
  \bigl\| \nabla e^{i t \Delta} f^\omega \bigr\|_{L^p_\omega L^2_t L^4_x} \lesssim \sqrt{p} \, \|f\|_{H^s_x}
 \end{equation*}
 and the assertion follows from Lemma~\ref{lem:probability_estimate}.
\end{proof}

We will also need almost sure bounds on weighted $L^2_t L^\infty_x(\bR\times\bR^4)$ norms of the free evolution of randomized radial data. The proof combines Strichartz estimates~\eqref{equ:strichartz_estimate} and the ``radialish'' Sobolev estimate from Lemma~\ref{lem:radialish_sobolev}.

\begin{proposition} 
 Let $0 < s < 1$ and let $0 \leq \alpha < \frac{3}{4}$. Let $f \in H^s_x(\bR^4)$ be radially symmetric and denote by $f^\omega$ the randomization of $f$ as defined in~\eqref{equ:randomization}. Then there exist absolute constants $C > 0$ and $c > 0$ such that for any $\lambda > 0$ it holds that
 \begin{equation} 
  \bP \Bigl( \Bigl\{ \omega \in \Omega : \bigl\| \langle x \rangle^\alpha e^{i t \Delta} f^\omega \bigr\|_{L^2_t L^\infty_x(\bR\times\bR^4)} > \lambda  \Bigr\} \Bigr) \leq C \exp \Bigl( - c \lambda^2 \| f \|_{H^s_x(\bR^4)}^{-2} \Bigr).
 \end{equation}
 In particular, we have for almost every $\omega \in \Omega$ that
 \begin{equation}
  \bigl\| \langle x \rangle^\alpha e^{i t \Delta} f^\omega \bigr\|_{L^2_t L^\infty_x(\bR\times\bR^4)} < \infty.
 \end{equation}
\end{proposition}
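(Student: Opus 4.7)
The plan is to follow the scheme of the proof of Proposition~\ref{prop:weighted_nabla_L2Linfty_schroedinger}, with one crucial modification: since no derivative acts on the free evolution here, applying the local smoothing estimate~\eqref{equ:local_smoothing_ball} to $P_N f$ yields a factor of $N^{-\frac{1}{2}}$ instead of $N^{+\frac{1}{2}}$, which produces enough additional $N$-decay to cover the entire range $0 < s < 1$. First, choose $r < \infty$ sufficiently large so that both $\alpha < 1 - \frac{3}{r}$ and $\frac{4}{r} < \frac{1}{2} + s$ hold, and a parameter $\delta > 0$ sufficiently small so that $\frac{4}{r} + \delta < \frac{1}{2} + s$ as well; this is possible since $\alpha < \frac{3}{4}$ and $s > 0$.

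Then invoke Lemma~\ref{lem:weighted_bound_into_freq_pieces} to obtain
\[
 \bigl\| \langle x \rangle^\alpha e^{it\Delta} f^\omega \bigr\|_{L^\infty_x(\bR^4)} \lesssim \bigl\| \langle x \rangle^\alpha P_{\leq 1} e^{it\Delta} f^\omega \bigr\|_{L^r_x(\bR^4)} + \sum_{N \geq 2} N^{\frac{4}{r}} \bigl\| \langle x \rangle^\alpha P_N e^{it\Delta} f^\omega \bigr\|_{L^r_x(\bR^4)},
\]
pass to $L^p_\omega L^2_t$, and decompose physical space dyadically via the cutoffs $\chi_j$, picking up a factor $2^{\alpha j}$ per scale. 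For each dyadic frequency block at spatial scale $2^j$, apply the large deviation estimate of Lemma~\ref{lem:large_deviation_estimate} to replace the randomized sum by $\sqrt{p}$ times the square-function $\chi_j \bigl( \sum_{|k|\sim N} |P_k e^{it\Delta} f_N|^2 \bigr)^{1/2}$, and split $1 = \chi_{\leq j+5} + \chi_{>j+5}$ inside the free evolution exactly as in the proof of Proposition~\ref{prop:weighted_nabla_L2Linfty_schroedinger}.

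For the main contribution coming from $\chi_{\leq j+5}$, the ``radialish'' Sobolev estimate~\eqref{equ:radialish_sobolev_interpolated} provides pointwise in $t$ a factor $2^{-\frac{3}{2}(1-\frac{2}{r})j} N^\delta$, and then the local smoothing estimate~\eqref{equ:local_smoothing_ball} (without derivative this time) yields $2^{j/2} N^{-1/2} \| P_N f \|_{L^2_x}$ once the $L^2_t$ norm is taken. Summing over $j \geq 0$ converges thanks to the condition $\alpha - \frac{3}{2}(1 - \frac{2}{r}) + \frac{1}{2} < 0$, producing the bound $\lesssim \sqrt{p}\, N^{\delta - 1/2} \| P_N f \|_{L^2_x}$. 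The tail piece with $\chi_{>j+5}$ is estimated exactly as the terms $IIA$ and $IIB$ there, via the operator-norm bounds~\eqref{equ:operator_bound_chi_P_chi} and~\eqref{equ:operator_bound_PchiP} combined with local smoothing, and gives an analogous (indeed strictly better) summable contribution.

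Summing over the dyadic frequencies then yields $\sum_{N \geq 2} N^{\frac{4}{r} + \delta - \frac{1}{2}} \| P_N f \|_{L^2_x}$, which is controlled by $\| f \|_{H^s_x(\bR^4)}$ via Cauchy--Schwarz thanks to the choice of $r$ and $\delta$. The low-frequency term $\| \langle x\rangle^\alpha P_{\leq 1} e^{it\Delta} f^\omega \|_{L^p_\omega L^2_t L^r_x}$ is handled by the same scheme and is strictly easier. Combining everything produces $\| \langle x\rangle^\alpha e^{it\Delta} f^\omega \|_{L^p_\omega L^2_t L^\infty_x} \lesssim \sqrt{p}\, \| f \|_{H^s_x(\bR^4)}$ for all sufficiently large $p < \infty$, and the desired Gaussian tail bound then follows from Lemma~\ref{lem:probability_estimate}. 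The delicate point is the three-way balance between the spatial-weight growth $2^{\alpha j}$, the radialish spatial-decay $2^{-\frac{3}{2}(1 - 2/r)j}$, and the ball-size growth $2^{j/2}$ from local smoothing: making the net $j$-exponent negative forces $r > \frac{3}{1-\alpha}$, which is precisely the origin of the upper bound $\alpha < \frac{3}{4}$ in the statement.
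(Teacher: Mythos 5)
Your proposal follows a genuinely different route from the paper. The paper does \emph{not} decompose physical space dyadically or invoke local smoothing here; it applies the large deviation estimate, then interpolates the ``radialish'' Sobolev estimate~\eqref{equ:radialish_sobolev} with a trivial $L^{\frac{(3-2\alpha)r}{3}}_x$ bound to transfer the weight $|x|^\alpha$ directly onto the square function at the cost of lowering the Lebesgue exponent to $\frac{3r}{3+\alpha r}$, and then closes via unit-scale Bernstein and the $L^2_t L^4_x$ Strichartz estimate. The constraint $\alpha<\tfrac34$ in the paper comes precisely from demanding $\frac{3r}{3+\alpha r}\geq 4$ for some finite $r$, so that the Bernstein step $L^4_x \to L^{\frac{3r}{3+\alpha r}}_x$ goes in the right direction; this is possible if and only if $3-4\alpha>0$. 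Your route, which transposes the machinery of Proposition~\ref{prop:weighted_nabla_L2Linfty_schroedinger} and replaces the Strichartz step by the local smoothing estimate~\eqref{equ:local_smoothing_ball}, is more involved but is a legitimate alternative for the high frequencies, and the observation that the missing $\nabla$ turns $N^{+\frac12}$ into $N^{-\frac12}$ is correct.

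There are, however, two issues. First, your closing remark misidentifies the origin of the constraint $\alpha<\tfrac34$: the condition $r>\frac{3}{1-\alpha}$ coming from your $j$-sum is satisfiable for every $\alpha<1$ by taking $r$ large, so if your scheme were complete it would in fact give the \emph{wider} range $0\leq\alpha<1$. The $\tfrac34$ threshold is intrinsic to the paper's interpolation/Strichartz argument, not to yours. Second, and more seriously, the low-frequency piece is not ``handled by the same scheme and strictly easier''. Applying the local smoothing estimate~\eqref{equ:local_smoothing_ball} to $e^{it\Delta}P_{\leq 1}f$ produces the quantity $\bigl\| |\nabla|^{-\frac12} P_{\leq 1} f\bigr\|_{L^2_x(\bR^4)}$ on the right-hand side, and this is \emph{not} controlled by $\|f\|_{H^s_x(\bR^4)}$: take, for instance, $\hat f(\xi)=|\xi|^{-\beta}$ near the origin with $\tfrac32<\beta<2$, which lies in every $H^s_x(\bR^4)$ (with suitable decay at high frequency) but for which $\int_{|\xi|\leq 1}|\xi|^{-1-2\beta}\,d\xi=\infty$. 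So the scheme genuinely breaks at $P_{\leq 1}$. The fix is straightforward — handle the low-frequency block by unit-scale Bernstein and Strichartz exactly as the paper does, which avoids any negative power of $|\nabla|$ — but this needs to be spelled out rather than waved away.
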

\begin{proof}
 As usual, in the following all space-time norms are taken over $\bR\times\bR^4$. For any $4 < r < \infty$ we have by Lemma~\ref{lem:weighted_bound_into_freq_pieces} and the elementary estimate $\langle x \rangle^\alpha \lesssim_\alpha 1 + |x|^\alpha$ that
 \begin{equation} \label{equ:weighted_L2Linfty_first_decomposition}
  \begin{aligned}
   \bigl\| \langle x \rangle^\alpha e^{it\Delta} f^\omega \bigr\|_{L^p_\omega L^2_t L^\infty_x} &\lesssim \bigl\| e^{it\Delta} P_{\leq 1} f^\omega \bigr\|_{L^p_\omega L^2_t L^r_x} + \sum_{N \geq 2} N^{\frac{4}{r}} \bigl\| e^{it\Delta} P_N f^\omega \bigr\|_{L^p_\omega L^2_t L^r_x} \\
   &\quad + \bigl\| |x|^\alpha e^{it\Delta} P_{\leq 1} f^\omega \bigr\|_{L^p_\omega L^2_t L^r_x} + \sum_{N \geq 2} N^{\frac{4}{r}} \bigl\| |x|^\alpha e^{it\Delta} P_N f^\omega \bigr\|_{L^p_\omega L^2_t L^r_x}.
  \end{aligned}
 \end{equation}
 In what follows we only estimate the weighted high-frequency terms on the second line of the right-hand side of~\eqref{equ:weighted_L2Linfty_first_decomposition}. The weighted low-frequency term on the second line of the right-hand side of~\eqref{equ:weighted_L2Linfty_first_decomposition} can be treated analogously and the large deviation estimates for the space-time norms in the first line of the right-hand side of~\eqref{equ:weighted_L2Linfty_first_decomposition} are standard and left to the reader. 
  
 For any dyadic integer $N \geq 2$ we have by the large deviation estimate from Lemma~\ref{lem:large_deviation_estimate} for any $r \leq p < \infty$ that 
 \begin{equation}
  \bigl\| |x|^\alpha e^{it\Delta} P_N f^\omega \bigr\|_{L^p_\omega L^2_t L^r_x} \lesssim \sqrt{p} \, \biggl\| |x|^\alpha \biggl( \sum_{k\in\bZ^4} \bigl| e^{it\Delta} P_k P_N f \bigr|^2 \biggr)^{\frac{1}{2}} \biggr\|_{L^2_t L^r_x}.
 \end{equation}
 Now let $0 < \delta \ll 1$ be an absolute constant whose size will be fixed sufficiently small further below. Interpolating the ``radialish'' Sobolev estimate~\eqref{equ:radialish_sobolev}
 \begin{equation*}
  \biggl\| |x|^{\frac{3}{2}} \biggl( \sum_{k\in\bZ^4} \bigl| e^{it\Delta} P_k P_N f \bigr|^2 \biggr)^{\frac{1}{2}} \biggr\|_{L^\infty_x} \lesssim_{\alpha, \delta} N^{\frac{3 \delta}{2\alpha}} \biggl( \sum_{k\in\bZ^4} \bigl\| e^{it\Delta} P_k P_N f \bigr\|_{L^2_x}^2 \biggr)^{\frac{1}{2}}
 \end{equation*}
 with the trivial bound 
 \begin{equation*}
  \biggl\| \biggl( \sum_{k\in\bZ^4} \bigl| e^{it\Delta} P_k P_N f \bigr|^2 \biggr)^{\frac{1}{2}} \biggr\|_{L^{\frac{(3-2\alpha)r}{3}}_x} \lesssim \biggl( \sum_{k\in\bZ^4} \bigl\| e^{it\Delta} P_k P_N f \bigr\|_{L^{\frac{(3-2\alpha)r}{3}}_x}^2 \biggr)^{\frac{1}{2}}
 \end{equation*}
 yields that 
 \begin{equation} \label{equ:weighted_L2Linfty_interpolated}
  \biggl\| |x|^{\alpha} \biggl( \sum_{k\in\bZ^4} \bigl| e^{it\Delta} P_k P_N f \bigr|^2 \biggr)^{\frac{1}{2}} \biggr\|_{L^2_t L^r_x} \lesssim N^{\delta} \biggl( \sum_{k\in\bZ^4} \bigl\| e^{it\Delta} P_k P_N f \bigr\|_{L^2_t L^{\frac{3r}{3 + \alpha r}}_x}^2 \biggr)^{\frac{1}{2}}.
 \end{equation}
 Since by assumption $0 \leq \alpha < \frac{3}{4}$, we have for all sufficiently large $r < \infty$ that $\frac{3r}{3+\alpha r} \geq 4$. Hence, we may combine the unit-scale Bernstein estimate~\eqref{equ:unit_scale_bernstein} and the Strichartz estimates~\eqref{equ:strichartz_estimate} to bound the right-hand side of~\eqref{equ:weighted_L2Linfty_interpolated} by
 \begin{align*}
  N^\delta \biggl( \sum_{k\in\bZ^4} \bigl\| e^{it\Delta} P_k P_N f \bigr\|_{L^2_t L^4_x}^2 \biggr)^{\frac{1}{2}} \lesssim N^\delta \biggl( \sum_{k\in\bZ^4} \bigl\| P_k P_N f \bigr\|_{L^2_x}^2 \biggr)^{\frac{1}{2}} \lesssim N^\delta \| P_N f \|_{L^2_x}.
 \end{align*}
 Thus, choosing $1 \ll r < \infty$ sufficiently large and $0 < \delta \ll 1$ sufficiently small so that $\frac{4}{r} + \delta < s$, we conclude that the second line of the right-hand side of~\eqref{equ:weighted_L2Linfty_first_decomposition} is bounded by
 \[
  \sqrt{p} \, \| P_{\leq 1} f \|_{L^2_x} + \sqrt{p} \sum_{N \geq 2} N^{\frac{4}{r}} N^\delta \| P_N f \|_{L^2_x} \lesssim \sqrt{p} \, \|f\|_{H^s_x}.
 \]
 The assertion then follows from Lemma~\ref{lem:probability_estimate}.
\end{proof}

Finally, we will require several almost sure bounds on space-time norms of the free evolution whose proofs are standard.
\begin{lemma}
 Let $s > 0$ and let $f \in H^s_x(\bR^4)$. Denote by $f^\omega$ the randomization of $f$ as defined in~\eqref{equ:randomization}. Then we have for almost every $\omega \in \Omega$ that
 \begin{equation}
  \bigl\| e^{i t \Delta} f^\omega \bigr\|_{L^\infty_t L^4_x(\bR\times\bR^4)} + \bigl\| e^{i t \Delta} f^\omega \bigr\|_{L^\infty_t L^2_x(\bR\times\bR^4)} + \bigl\| e^{i t \Delta} f^\omega \bigr\|_{L^3_t L^6_x(\bR\times\bR^4)}< \infty.
 \end{equation}
\end{lemma}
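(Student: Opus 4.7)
The plan is to prove an $L^p_\omega$-moment bound of the form $\bigl\| \| e^{it\Delta} f^\omega \|_\bullet \bigr\|_{L^p_\omega(\Omega)} \lesssim \sqrt{p}\,\|f\|_{H^s_x(\bR^4)}$ for each of the three norms in question, for all sufficiently large $p$, and then to deduce almost sure finiteness via Lemma~\ref{lem:probability_estimate}. The argument mirrors the template used in the proof of Proposition~\ref{prop:as_bound_Y_norm}, combining Minkowski's inequality with the Gaussian large deviation estimate of Lemma~\ref{lem:large_deviation_estimate}.

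The $L^\infty_t L^2_x$ bound is immediate: by the $L^2$-unitarity of the Schr\"odinger propagator one has $\| e^{it\Delta} f^\omega \|_{L^\infty_t L^2_x} = \| f^\omega \|_{L^2_x}$, and Lemma~\ref{lem:large_deviation_estimate} applied to the random Fourier series $f^\omega = \sum_k g_k P_k f$ yields $\| f^\omega \|_{L^p_\omega L^2_x} \lesssim \sqrt{p} \bigl( \sum_k \| P_k f \|_{L^2_x}^2 \bigr)^{\frac{1}{2}} \lesssim \sqrt{p}\,\| f \|_{H^s_x}$. For the $L^3_t L^6_x$ bound I would apply Minkowski's inequality to bring $L^p_\omega$ inside $L^3_t L^6_x$ (valid for $p \geq 6$), invoke Lemma~\ref{lem:large_deviation_estimate} pointwise in $(t,x)$ to produce a square-function majorant, and apply Minkowski once more to interchange the $\ell^2_k$-norm with $L^3_t L^6_x$, obtaining a bound by $\sqrt{p}\bigl(\sum_k \| e^{it\Delta} P_k f \|_{L^3_t L^6_x}^2\bigr)^{\frac{1}{2}}$. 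The unit-scale Bernstein estimate $\| P_k u \|_{L^6_x} \lesssim \| P_k u \|_{L^3_x}$ combined with the Strichartz estimate for the admissible pair $(3,3)$ on $\bR\times\bR^4$ then controls each summand by $\| P_k f \|_{L^2_x}$, and Plancherel-type summation in $k$ produces the desired bound $\sqrt{p}\,\| f \|_{L^2_x}$.

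The most delicate estimate is the $L^\infty_t L^4_x$ bound, for which neither can Minkowski move $L^p_\omega$ inside $L^\infty_t$, nor does the Kahane--Khintchine inequality for Banach-valued Gaussian series apply (the space $L^\infty_t L^4_x$ fails to be of Rademacher type $2$). My plan would be to dyadically decompose $f^\omega = \sum_N P_N f^\omega$ and treat each $P_N f^\omega$ as a sum over its unit-scale Fourier constituents $P_k f$ with $|k|\sim N$; the unit-scale Bernstein estimate $\| P_k u \|_{L^4_x} \lesssim \| P_k u \|_{L^2_x}$ together with $L^2$-unitarity yields $\| e^{it\Delta} P_k f \|_{L^\infty_t L^4_x} \lesssim \| P_k f \|_{L^2_x}$ uniformly in $k$. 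To pass from pointwise-in-$t$ control to the $L^\infty_t$ supremum, one would invoke a Sobolev embedding in time, exploiting the identity $\partial_t e^{it\Delta} = i \Delta e^{it\Delta}$ to convert temporal regularity into spatial regularity, and combine this with the improved integrability of the Gaussian random series to obtain a dyadic estimate of the form $\| P_N e^{it\Delta} f^\omega \|_{L^p_\omega L^\infty_t L^4_x} \lesssim \sqrt{p}\,\langle N \rangle^{\delta}\| P_N f \|_{L^2_x}$ for any $\delta>0$. Summing in $N$ with $\delta < s$ then yields the stated bound. The main obstacle is balancing the temporal Sobolev embedding against the Strichartz loss so that summability in $N$ is controlled by the $H^s$ regularity, a task which relies crucially on the improved integrability afforded by the Gaussian randomization.
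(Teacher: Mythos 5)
Your proposal is correct and follows essentially the same strategy as the paper: for the delicate $L^\infty_t L^4_x$ bound, both use a temporal Sobolev embedding to replace $L^\infty_t$ by $L^q_t$ (for finite but large $q$), convert the cost $\langle\partial_t\rangle^\delta$ into $\langle\nabla\rangle^{2\delta}$ via $\partial_t e^{it\Delta}=i\Delta e^{it\Delta}$, and then run the standard Minkowski/large-deviation/unit-scale-Bernstein/Strichartz chain. One small point to keep in mind when executing your dyadic version: the naive summation $\sum_{N\in 2^{\bZ}}\langle N\rangle^{\delta}\|P_N f\|_{L^2_x}$ diverges over low frequencies, so one should either keep the $\ell^2_N$ square-function structure (as the paper implicitly does by applying the large-deviation estimate directly to the unit-scale decomposition) or group all $N\lesssim 1$ into a single $P_{\leq 1}$ block.
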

\begin{proof}
 In order to establish the $L^\infty_t L^4_x(\bR\times\bR^4)$ almost sure bound on the free evolution, we first apply Sobolev embedding in time. Let $0 < \delta \ll 1$ with $0 < \delta < \frac{s}{2}$ and let $2 \leq q < \infty$ sufficiently large such that $\delta > \frac{1}{q}$, then we have 
 \[
  \bigl\| e^{it\Delta} f^\omega \bigr\|_{L^\infty_t L^4_x(\bR\times\bR^4)} \lesssim \bigl\| \langle \partial_t \rangle^\delta e^{it\Delta} f^\omega \bigr\|_{L^q_t L^4_x(\bR\times\bR^4)} \lesssim \bigl\| \langle \nabla \rangle^{2\delta} e^{it\Delta} f^\omega \bigr\|_{L^q_t L^4_x(\bR\times\bR^4)}.
 \]
 It is now standard to use the large deviation estimate from Lemma~\ref{lem:large_deviation_estimate}, the unit-scale Bernstein estimate~\eqref{equ:unit_scale_bernstein} and the Strichartz estimate~\eqref{equ:strichartz_estimate} to infer for all $p \geq q$ that
 \[
  \bigl\| \|e^{it\Delta} f^\omega\|_{L^\infty_t L^4_x(\bR\times\bR^4)} \bigr\|_{L^p_\omega} \lesssim \sqrt{p} \, \|f\|_{H^s_x(\bR^4)},
 \]
 which implies the desired almost sure bound by Lemma~\ref{lem:probability_estimate}. The proofs of the $L^\infty_t L^2_x(\bR\times\bR^4)$ and of the $L^3_t L^6_x(\bR\times\bR^4)$ almost sure bounds are left to the reader.
\end{proof}

\section{Almost sure local well-posedness for the cubic NLS on $\bR^4$} \label{sec:as_lwp}

In this section we establish local well-posedness for the forced cubic NLS
\begin{equation} \label{equ:forced_cubic_nls_lwp}
 \left\{ \begin{aligned}
          (i \partial_t + \Delta) v &= \pm |F+v|^2 (F+v) \\
          v(t_0) &= v_0 \in \dot{H}^1_x(\bR^4)
         \end{aligned} 
 \right.
\end{equation}
for forcing terms $F \colon \bR \times \bR^4 \to \bC$ satisfying $\| F \|_{Y(\bR)} < \infty$. Recall that by Proposition~\ref{prop:as_bound_Y_norm} we have $\| e^{it\Delta} f^\omega \|_{Y(\bR)} < \infty$ almost surely for any $f \in H^s_x(\bR^4)$ with $\frac{1}{3} < s < 1$. The proof of the almost sure local well-posedness result of Theorem~\ref{thm:as_local_wellposedness} for the cubic NLS \eqref{equ:cubic_nls_as_lwp_theorem} is then an immediate consequence of the following local well-posedness result for the forced cubic NLS~\eqref{equ:forced_cubic_nls_lwp}.

\begin{proposition} \label{prop:lwp_forced_cubic_nls}
 Let $t_0 \in \bR$ and let $I$ be an open time interval containing $t_0$. Let $F \in Y(\bR)$ and let $v_0 \in \dot{H}^1_x(\bR^4)$. There exists $0 < \delta \ll 1$ such that if
 \begin{equation} \label{equ:smallness_lwp}
  \| e^{i(t-t_0)\Delta} v_0 \|_{X(I)} + \| F \|_{Y(I)} \leq \delta,
 \end{equation}
 then there exists a unique solution 
 \[
  v \in C \bigl( I ; \dot{H}^1_x(\bR^4) \bigr) \cap X(I)
 \]
 to \eqref{equ:forced_cubic_nls_lwp} on $I \times \bR^4$. Moreover, the solution extends to a unique solution $v \colon I_\ast \times \bR^4 \to \bC$ to the Cauchy problem~\eqref{equ:forced_cubic_nls_lwp} with maximal time interval of existence $I_\ast \ni t_0$, and we have the finite time blowup criterion
 \[
  \sup I_\ast < \infty \quad \Rightarrow \quad \|v\|_{X([t_0, \sup I_\ast))} = +\infty
 \]
 with an analogous statement in the negative time direction. Finally, a global solution $v(t)$ to \eqref{equ:forced_cubic_nls_lwp} satisfying $\|v\|_{X(\bR)} < \infty$ scatters as $t \to \pm \infty$ in the sense that there exist states $v^\pm \in \dot{H}^1_x(\bR^4)$ such that 
 \[
  \lim_{t\to\pm\infty} \, \bigl\| v(t) - e^{it\Delta} v^{\pm} \bigr\|_{\dot{H}^1_x(\bR^4)} = 0.
 \]
\end{proposition}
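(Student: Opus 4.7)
The plan is to carry out a contraction mapping argument in a small ball of $X(I)$ using Duhamel's formula. Define
\[
 \Phi(v)(t) := e^{i(t-t_0)\Delta} v_0 \mp i \int_{t_0}^t e^{i(t-s)\Delta} \bigl( |F+v|^2 (F+v) \bigr)(s) \, ds,
\]
and consider the closed ball $B_\eta := \{ v \in X(I) : \|v\|_{X(I)} \leq \eta \}$ for some $\eta \sim \delta$ to be chosen. By Proposition~\ref{prop:main_linear_estimate} and the nonlinear estimate~\eqref{equ:forced_cubic_nonlinearity_in_G} (together with Proposition~\ref{prop:strichartz} to handle the free evolution of $v_0$, which is controlled by $\|e^{i(t-t_0)\Delta}v_0\|_{X(I)}$), we obtain
\[
 \|\Phi(v)\|_{X(I)} \lesssim \|e^{i(t-t_0)\Delta} v_0\|_{X(I)} + \|F\|_{Y(I)}^3 + \|v\|_{X(I)}^3 \lesssim \delta + \eta^3,
\]
so that $\Phi$ maps $B_\eta$ into itself once $\eta$ is chosen of order $\delta$ with $\delta$ small. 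The contraction estimate
\[
 \|\Phi(v_1) - \Phi(v_2)\|_{X(I)} \lesssim \|v_1 - v_2\|_{X(I)} \bigl( \|F\|_{Y(I)}^2 + \|v_1\|_{X(I)}^2 + \|v_2\|_{X(I)}^2 \bigr) \lesssim (\delta^2 + \eta^2) \|v_1 - v_2\|_{X(I)}
\]
follows from Proposition~\ref{prop:main_linear_estimate} together with the difference estimate~\eqref{equ:difference_forced_cubic_nonlinearity_in_G}. For $\delta$ small enough this yields a unique fixed point $v \in B_\eta$, and the main linear estimate also gives the continuity $v \in C(I;\dot H^1_x)$.

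For the maximal existence and the blow-up criterion, I would proceed by the standard argument. Using part (i) of Lemma~\ref{lem:properties_XY}, the quantity $t \mapsto \|e^{i(t-t_0)\Delta} v_0\|_{X([t_0,t])} + \|F\|_{Y([t_0,t])}$ is continuous and vanishes at $t = t_0$, so the smallness hypothesis~\eqref{equ:smallness_lwp} holds on some subinterval about $t_0$, giving a solution there. Iteratively reopening the Cauchy problem at the endpoints of successive intervals on which the fixed point has been obtained yields a maximal interval of existence $I_\ast$. To prove the blow-up criterion, suppose $T := \sup I_\ast < \infty$ and, for contradiction, that $\|v\|_{X([t_0,T))} < \infty$. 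Using the time-divisibility property~\eqref{equ:time_divisibility} of the $Y$-norm and continuity of $\|F\|_{Y([\inf I,t])}$, one can find $t_1 < T$ so close to $T$ that $\|F\|_{Y([t_1,T+\sigma])}$ is arbitrarily small for some small $\sigma > 0$, after enlarging the ambient interval if needed. Moreover, Proposition~\ref{prop:main_linear_estimate} applied with initial data $v(t_1)$, combined with~\eqref{equ:forced_cubic_nonlinearity_in_G}, bounds $\|e^{i(t-t_1)\Delta} v(t_1)\|_{X(\bR)}$ by the (finite) $X$-norm of $v$ on $[t_0,T)$ plus the forcing, and again continuity lets us pick $t_1$ close enough to $T$ that $\|e^{i(t-t_1)\Delta} v(t_1)\|_{X([t_1,T+\sigma])}$ is small. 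Hypothesis~\eqref{equ:smallness_lwp} then gives a solution past $T$, contradicting maximality.

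Finally, for scattering, assume $v$ is global with $\|v\|_{X(\bR)} < \infty$ and $\|F\|_{Y(\bR)} < \infty$. Writing $v(t) = e^{it\Delta}\bigl( e^{-it_0\Delta} v_0 \bigr) \mp i\int_{t_0}^t e^{i(t-s)\Delta} |F+v|^2(F+v)(s)\,ds$, the candidate scattering state is
\[
 v^+ := e^{-it_0\Delta} v_0 \mp i \int_{t_0}^{+\infty} e^{-is\Delta} |F+v|^2(F+v)(s)\,ds,
\]
and the difference $v(t) - e^{it\Delta} v^+$ is the tail Duhamel integral from $t$ to $+\infty$. By Proposition~\ref{prop:main_linear_estimate} and~\eqref{equ:forced_cubic_nonlinearity_in_G} applied on $[t,\infty)$, this is controlled in $L^\infty_t\dot H^1_x$ by $\|F\|_{Y([t,\infty))}^3 + \|v\|_{X([t,\infty))}^3$, which by part (ii) of Lemma~\ref{lem:properties_XY} together with dominated convergence tends to zero as $t \to +\infty$; the case $t \to -\infty$ is symmetric. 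The main obstacle in the whole argument is really bookkeeping around the extension and blow-up criterion, where one must juggle the continuity and time-divisibility properties of $X$ and $Y$ in Lemma~\ref{lem:properties_XY} with the fact that smallness of $e^{i(t-t_1)\Delta} v(t_1)$ in $X$ is not automatic even when $v$ is bounded in $\dot H^1_x$; the nonlinear estimate~\eqref{equ:forced_cubic_nonlinearity_in_G} combined with the main linear estimate is precisely what allows us to close this point.
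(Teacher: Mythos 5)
Your proof follows essentially the same route as the paper: a contraction mapping argument in a ball of $X(I)$ using the main linear estimate (Proposition~\ref{prop:main_linear_estimate}) together with the nonlinear estimates~\eqref{equ:forced_cubic_nonlinearity_in_G}--\eqref{equ:difference_forced_cubic_nonlinearity_in_G}, maximal extension by iteration, a blow-up criterion via contradiction, and scattering from the tail of the Duhamel integral. You correctly identify that the nontrivial point in the blow-up criterion is that smallness of $e^{i(t-t_1)\Delta}v(t_1)$ in $X$ does not follow from boundedness of $v$ in $\dot H^1_x$, and that one must instead use the backward Duhamel formula together with the main linear estimate and~\eqref{equ:forced_cubic_nonlinearity_in_G}.

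One imprecision in your blow-up argument is worth fixing: you state that Proposition~\ref{prop:main_linear_estimate} with initial data $v(t_1)$ bounds $\|e^{i(t-t_1)\Delta}v(t_1)\|_{X(\bR)}$ by ``the (finite) $X$-norm of $v$ on $[t_0,T)$ plus the forcing,'' which would be a $t_1$-independent upper bound and cannot on its own produce smallness as $t_1 \nearrow T$. What the backward Duhamel formula $e^{i(t-t_1)\Delta}v(t_1) = v(t) \pm i\int_{t_1}^t e^{i(t-s)\Delta}|F+v|^2(F+v)(s)\,ds$, combined with the linear and nonlinear estimates, actually gives is a bound on $\|e^{i(t-t_1)\Delta}v(t_1)\|_{X([t_1,T))}$ in terms of $\|v\|_{X([t_1,T))}$ and $\|F\|_{Y([t_1,T))}$, which \emph{do} tend to zero as $t_1 \nearrow T$ by Lemma~\ref{lem:properties_XY}(i). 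The finiteness of $\|e^{i(t-t_1)\Delta}v(t_1)\|_{X([t_1,\infty))}$ is then a separate, elementary consequence of Strichartz estimates and $v(t_1) \in \dot H^1_x$; it is what allows the continuity argument of Lemma~\ref{lem:properties_XY}(i) to push the smallness past $T$ by some $\eta>0$. Your closing paragraph makes clear that you understand this mechanism, so the issue is purely one of exposition; but as written the sentence about $\|e^{i(t-t_1)\Delta}v(t_1)\|_{X(\bR)}$ describes a bound that does not close the argument.
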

\begin{proof}
 Without loss of generality we may assume that $t_0 = 0$. Let $I \ni 0$ be an open time interval for which \eqref{equ:smallness_lwp} holds. Note that the existence of such an interval follows from Lemma~\ref{lem:properties_XY}(i) and the assumption that $\|F\|_{Y(\bR)} < \infty$. We construct the desired local solution via a standard contraction mapping argument. Let $\delta > 0$ be an absolute constant whose size will be chosen sufficiently small further below. We define the ball
 \[
  {\mathcal B} := \bigl\{ v \in X(I) : \|v\|_{X(I)} \leq 2 \delta \bigr\}
 \]
 and the map
 \[
  \Phi(v)(t) := e^{it\Delta} v_0 \mp i \int_0^t e^{i(t-s)\Delta} |F+v|^2 (F+v)(s) \, ds. 
 \]
 From our main linear estimate~\eqref{equ:main_linear_estimate} and the nonlinear estimates \eqref{equ:forced_cubic_nonlinearity_in_G}--\eqref{equ:difference_forced_cubic_nonlinearity_in_G}, upon choosing $\delta := (18C)^{-\frac{1}{2}}$, we obtain for any $v, v_1, v_2 \in {\mathcal B}$ that
 \begin{align*}
  \| \Phi(v) \|_{X(I)} &\leq \| e^{it\Delta} v_0 \|_{X(I)} + \biggl\| \int_0^t e^{i(t-s)\Delta} |F+v|^2(F+v)(s) \, ds \biggr\|_{X(I)} \\
  &\leq \| e^{it\Delta} v_0 \|_{X(I)} + C \bigl\| |F+v|^2(F+v) \bigr\|_{G(I)} \\
  &\leq \| e^{it\Delta} v_0 \|_{X(T)} + C \bigl( \|v\|_{X(I)}^3 + \|F\|_{Y(I)}^3 \bigr) \\
  &\leq 2 \delta
 \end{align*}
 and 
 \begin{align*}
  \| \Phi(v_1) - \Phi(v_2) \|_{X(I)} &\leq C \bigl\| |F+v_1|^2 (F+v_1) - |F+v_2|^2 (F+v_2) \bigr\|_{G(I)} \\
  &\leq C \|v_1 - v_2\|_{X(I)} \bigl( \|v_1\|_{X(I)}^2 + \|v_2\|_{X(I)}^2 + \|F\|_{Y(I)}^2 \bigr) \\
  &\leq \frac{1}{2} \|v_1 - v_2\|_{X(I)}.
 \end{align*}
 It follows that the map $\Phi \colon {\mathcal B} \to {\mathcal B}$ is a contraction with respect to the $X(I)$ norm and we infer the existence of a unique solution $v \in C(I; \dot{H}^1_x(\bR^4)) \cap X(I)$ to \eqref{equ:forced_cubic_nls_lwp}.
 
 By iterating this local well-posedness argument we conclude that the solution extends to a unique solution $v \colon I_\ast \times \bR^4 \to \bC$ to \eqref{equ:forced_cubic_nls_lwp} with maximal time interval of existence $I_\ast \ni 0$. We now prove the finite time blowup criterion via a contradiction argument. Let $T_+ := \sup I_\ast < \infty$ and suppose that $\|v\|_{X([0,T_+))} < \infty$. We want to find a time $0 < t_1 < T_+$ such that
 \begin{equation} \label{equ:finite_time_blowup_criterion_proof_smallness}
  \bigl\| e^{i(t-t_1)\Delta} v(t_1) \bigr\|_{X([t_1,T_+))} + \|F\|_{Y([t_1,T_+))} \leq \frac{\delta}{2}.
 \end{equation}
 Since $\|F\|_{Y(\bR)} < \infty$ and $\| e^{i(t-t_1)\Delta} v(t_1) \|_{X([t_1,\infty))} < \infty$, Lemma~\ref{lem:properties_XY}(i) then implies that there exists $\eta > 0$ such that
 \[
  \bigl\| e^{i(t-t_1)\Delta} v(t_1) \bigr\|_{X([t_1,T_+ + \eta))} + \|F\|_{Y([t_1,T_+ + \eta))} \leq \delta.
 \]
 But then the above local well-posedness result implies that the solution $v(t)$ extends beyond time $T_+ = \sup I_\ast$, which is a contradiction. Now to prove~\eqref{equ:finite_time_blowup_criterion_proof_smallness} we use the Duhamel formula for the solution $v(t)$ to write
 \[
  e^{i(t-t_1)\Delta} v(t_1) = v(t) \pm i \int_{t_1}^t e^{i(t-s)\Delta} |F+v|^2(F+v)(s) \, ds.
 \]
 Then our main linear estimate~\eqref{equ:main_linear_estimate} together with \eqref{equ:forced_cubic_nonlinearity_in_G} imply that
 \begin{align*}
  \bigl\| e^{i(t-t_1)\Delta} v(t_1) \bigr\|_{X([t_1,T_+)} &\leq \|v\|_{X([t_1,T_+))} + \biggl\| \int_{t_1}^t e^{i(t-s)\Delta} |F+v|^2(F+v)(s) \, ds \biggr\|_{X([t_1,T_+))} \\
  &\leq \|v\|_{X([t_1,T_+))} + C \bigl( \|F\|_{Y([t_1,T_+))}^3 + \|v\|_{X([t_1,T_+))}^3 \bigr).
 \end{align*}
 Using Lemma~\ref{lem:properties_XY}(i) as well as the assumptions $\|v\|_{X([0,T_+))} < \infty$ and $\|F\|_{Y(\bR)} < \infty$, we may conclude that $\|v\|_{X([t_1,T_+))} \to 0$ and $\|F\|_{Y([t_1, T_+))} \to 0$ as $t_1 \nearrow T_+$, which yields \eqref{equ:finite_time_blowup_criterion_proof_smallness}.
 
 Finally we turn to the proof of the scattering statement for a global solution $v(t)$ to \eqref{equ:forced_cubic_nls_lwp} satisfying $\|v\|_{X(\bR)} < \infty$. By similar arguments as above we infer that the scattering state in the positive time direction
 \[
  v^+ := v_0 \mp i \int_0^\infty e^{-is\Delta} |F+v|^2 (F+v)(s) \, ds
 \]
 belongs to $\dot{H}^1_x(\bR^4)$ and satisfies $\| v(t) - e^{it\Delta} v^+ \|_{\dot{H}^1_x(\bR^4)} \to 0$ as $t \to \infty$. An analogous argument holds for the negative time direction.
\end{proof}
The proof of Theorem~\ref{thm:as_local_wellposedness} is now an immediate consequence of the local well-posedness result from Proposition~\ref{prop:lwp_forced_cubic_nls} for the forced cubic nonlinear Schr\"odinger equation~\eqref{equ:forced_cubic_nls_lwp} and the almost sure bounds on the $Y(\bR)$ norm of the free evolution $e^{it\Delta} f^\omega$ of the random data established in Proposition~\ref{prop:as_bound_Y_norm}.
\begin{proof}[Proof of Theorem~\ref{thm:as_local_wellposedness}]
 We seek a solution to the cubic NLS~\eqref{equ:cubic_nls_as_lwp_theorem} of the form 
 \[
  u(t) = e^{it\Delta} f^\omega + v(t).
 \]
 To this end the nonlinear component $v(t)$ must be a solution to the following forced cubic NLS
 \begin{equation} \label{equ:proof_of_as_lwp_theorem_forced_cubic}
  (i\partial_t + \Delta) v = \pm |e^{it\Delta} f^\omega + v|^2 (e^{it\Delta} f^\omega + v)
 \end{equation}
 with zero initial data $v(0) = 0$. By Proposition~\ref{prop:as_bound_Y_norm} we have $\| e^{it\Delta} f^\omega \|_{Y(\bR)} < \infty$ for almost every $\omega \in \Omega$. Thus, by Lemma~\ref{lem:properties_XY}(i), for almost every $\omega \in \Omega$ there exists an interval $I^\omega$ such that $\|e^{it\Delta} f^\omega\|_{Y(I^\omega)} \leq \delta$, where $0 < \delta \ll 1$ is the small absolute constant from the statement of Proposition~\ref{prop:lwp_forced_cubic_nls}. Consequently, by Proposition~\ref{prop:lwp_forced_cubic_nls} there exists a unique solution $v \in C(I^\omega; \dot{H}^1_x(\bR^4)) \cap X(I^\omega)$ to \eqref{equ:proof_of_as_lwp_theorem_forced_cubic} for almost every $\omega \in \Omega$. This concludes the proof of Theorem~\ref{thm:as_local_wellposedness}.
\end{proof}

\section{Conditional scattering for the forced defocusing cubic NLS on $\bR^4$} \label{sec:conditional_scattering}

In this section we prove the conditional scattering result of Theorem~\ref{thm:scattering_conditional} for the forced defocusing cubic NLS~\eqref{equ:forced_cubic_nls_scattering}. The proof relies on a suitable perturbation theory to compare solutions to the forced defocusing cubic NLS
\begin{equation} \label{equ:forced_cubic_nls_scattering_section}
 \left\{ \begin{aligned}
  (i \partial_t + \Delta) v &= |F+v|^2 (F+v) \\
  v(t_0) &= v_0 \in \dot{H}^1_x(\bR^4).
 \end{aligned} \right.
\end{equation}
with solutions to the ``usual'' defocusing cubic NLS 
\begin{equation} \label{equ:cubic_nls_scattering_section}
 \left\{ \begin{aligned}
  (i \partial_t + \Delta) u &= |u|^2 u \\
  u(t_0) &= u_0 \in \dot{H}^1_x(\bR^4).
 \end{aligned} \right.
\end{equation}
We begin with an a priori estimate on the $X(\bR)$ norm of global solutions to the defocusing cubic NLS~\eqref{equ:cubic_nls_scattering_section}.
\begin{lemma} \label{lem:standard_cubic}
 There exists a non-decreasing function $K \colon [0,\infty) \to [0,\infty)$ with the following property. Let $u_0 \in \dot{H}^1_x(\bR^4)$ and $t_0 \in \bR$. Then there exists a unique global solution $u \in C\bigl(\bR; \dot{H}^1_x(\bR^4)\bigr)$ to the defocusing cubic NLS~\eqref{equ:cubic_nls_scattering_section} satisfying the a priori bound 
 \begin{equation*}
  \|u\|_{X(\bR)} \leq K(E(u_0)),
 \end{equation*}
 where 
 \[
  E(u_0) := \int_{\bR^4} \frac{1}{2} |\nabla u_0|^2 + \frac{1}{4} |u_0|^4 \, dx.
 \]
\end{lemma}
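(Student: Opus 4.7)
The plan is to derive the stated a priori bound as a consequence of the classical global well-posedness and scattering theory for the energy-critical defocusing cubic NLS on $\bR^4$ due to Ryckman-Visan~\cite{Ryckman_Visan} and Visan~\cite{Visan}, combined with our main linear estimate (Proposition~\ref{prop:main_linear_estimate}) and the trilinear estimate~\eqref{equ:vvv_estimate}.

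The first step is to invoke the results of~\cite{Ryckman_Visan, Visan}, which provide for every $u_0 \in \dot{H}^1_x(\bR^4)$ a unique global solution $u \in C(\bR; \dot{H}^1_x(\bR^4))$ together with a scattering norm bound of the form $\|u\|_{L^6_{t,x}(\bR \times \bR^4)} \leq \tilde{K}(E(u_0))$. A standard bootstrap iterating the Strichartz estimates from Proposition~\ref{prop:strichartz} on a partition of $\bR$ into finitely many intervals on each of which the scattering norm is small then upgrades this to bounds on $\|\nabla u\|_{L^q_t L^r_x(\bR \times \bR^4)}$ for every admissible Strichartz pair $(q,r)$, controlled by a non-decreasing function of $E(u_0)$. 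Combined with Littlewood-Paley square-function estimates, this controls the purely Strichartz part of the $X(\bR)$ norm,
\[
 \biggl( \sum_{N \in 2^{\bZ}} \bigl( N \|P_N u\|_{L^2_t L^4_x} + N \|P_N u\|_{L^3_t L^3_x} + N \|P_N u\|_{L^6_t L^{12/5}_x} \bigr)^2 \biggr)^{1/2} \leq K_1(E(u_0)).
\]

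The second step is to estimate the cubic nonlinearity $|u|^2 u$ in the space $G(\bR)$ via the trilinear estimate~\eqref{equ:vvv_estimate} with $v_1 = v_2 = v_3 = u$. The crucial observation is that in the proof of~\eqref{equ:vvv_estimate}, only the three Strichartz components of $\|P_{N_j} v_j\|_{X_{N_j}}$ appear on the right-hand side, never the maximal function type component. Summing the resulting frequency-localized bound over $N, N_1, N_2, N_3$ by exploiting the exponential gains $N/N_1$ (for $N_1 \gtrsim N$) and $(N_3/N_2)^{2/3}$ (for $N_3 \leq N_2$), together with discrete Young's and Cauchy-Schwarz inequalities respecting the $\ell^2_N$ structure of the $X$ and $G$ norms, would yield
\[
 \bigl\| |u|^2 u \bigr\|_{G(\bR)} \lesssim K_1(E(u_0))^3.
\]

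Finally, applying the main linear estimate~\eqref{equ:main_linear_estimate} to the equation $(i\partial_t + \Delta) u = |u|^2 u$ with data $u(t_0) = u_0$, and using the bound $\|u_0\|_{\dot{H}^1_x(\bR^4)} \leq \sqrt{2 E(u_0)}$, would give
\[
 \|u\|_{X(\bR)} \lesssim \|u_0\|_{\dot{H}^1_x(\bR^4)} + \bigl\| |u|^2 u \bigr\|_{G(\bR)} \lesssim E(u_0)^{1/2} + K_1(E(u_0))^3 =: K(E(u_0)),
\]
for some non-decreasing function $K \colon [0,\infty) \to [0,\infty)$, as desired. The main point requiring care is the frequency summation in the second step, but this is routine given the two exponential decays produced by the trilinear estimate and the $\ell^2$ structure of the norms involved; no further smallness or perturbative subdivision of time is needed.
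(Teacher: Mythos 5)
Your overall scheme (invoke Ryckman--Visan/Visan, then close the $X(\bR)$ bound via the linear estimate~\eqref{equ:main_linear_estimate}) matches the paper's, but the paper's execution is both simpler and avoids a gap present in your first step. The paper starts from the Lebesgue--Strichartz bound $\|\nabla u\|_{L^3_t L^3_x(\bR\times\bR^4)} \leq L(E(u_0))$ and never needs any $\ell^2_N$-frequency-localized (Besov--Strichartz) bound on $u$ itself. It feeds $u$ into the linear estimate, chooses the trivial decomposition in the definition of $G_N$ so that $\|h\|_{G(\bR)} \leq \bigl(\sum_N N^2 \|P_N h\|^2_{L^1_t L^2_x}\bigr)^{1/2}$, and then — because the exponents $(1,2)$ are both $\leq 2$ — Minkowski's inequality runs in the \emph{favorable} direction, giving $\bigl(\sum_N N^2 \|P_N h\|^2_{L^1_t L^2_x}\bigr)^{1/2} \lesssim \|\nabla h\|_{L^1_t L^2_x}$. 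After that, $\|\nabla(|u|^2 u)\|_{L^1_t L^2_x} \lesssim \|\nabla u\|_{L^3_t L^3_x} \|u\|^2_{L^3_t L^{12}_x} \lesssim \|\nabla u\|^3_{L^3_t L^3_x}$ via H\"older and the Sobolev embedding $\dot{W}^{1,3}(\bR^4) \hookrightarrow L^{12}(\bR^4)$. That's the whole proof.

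The gap in your proposal is the claim that control of $\|\nabla u\|_{L^q_t L^r_x}$ for admissible pairs, ``combined with Littlewood-Paley square-function estimates,'' controls the Strichartz part of $\|u\|_{X(\bR)}$, i.e.\ $\bigl(\sum_N N^2 \|P_N u\|^2_{L^q_t L^r_x}\bigr)^{1/2}$. The square-function theorem gives $\|\nabla u\|_{L^q_t L^r_x} \sim \bigl\|(\sum_N N^2 |P_N u|^2)^{1/2}\bigr\|_{L^q_t L^r_x}$, but passing from this to the $\ell^2_N$-outside norm requires Minkowski with \emph{both} $q \leq 2$ and $r \leq 2$; for the $X$-space pairs $(2,4)$, $(3,3)$, $(6,\tfrac{12}{5})$ at least one exponent exceeds $2$, so Minkowski runs in the wrong direction and the implication fails. (Equivalently, $\dot W^{1,r} \not\hookrightarrow \dot B^1_{r,2}$ for $r > 2$.) The Besov--Strichartz bound you want is true, but obtaining it requires running the Duhamel bootstrap at the level of the dyadic pieces $P_N u$ (precisely what the frequency-localized linear estimate~\eqref{equ:main_linear_estimate_freq_localized} is designed for), which is a non-trivial additional step and is the reason it is worth setting things up so that one does not need it. Your steps two and three (the trilinear estimate~\eqref{equ:vvv_estimate} followed by frequency summation, then the linear estimate) are sound and would close the argument once step one is repaired, but at that point you have rederived the $\ell^2_N$ structure of $\|u\|_X$ essentially from scratch; the paper's route gets the same conclusion with the linear estimate applied exactly once and no bootstrap.
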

\begin{proof}
 It follows from the work of Ryckman-Visan~\cite{Ryckman_Visan} and Visan~\cite{Visan} that there exists a non-decreasing function $L \colon [0,\infty) \to [0,\infty)$ such that for any $u_0 \in \dot{H}^1_x(\bR^4)$, there exists a unique global solution $u \in C\bigl(\bR; \dot{H}^1_x(\bR^4)\bigr)$ to the defocusing cubic NLS~\eqref{equ:cubic_nls_scattering_section} with initial data $u(t_0) = u_0$ satisfying the a priori bound 
 \begin{equation*}
  \|\nabla u\|_{L^3_t L^3_x(\bR\times\bR^4)} \leq L(E(u_0)).
 \end{equation*}
 Using the linear estimate~\eqref{equ:main_linear_estimate} we then find that 
 \begin{align*}
  \|u\|_{X(\bR)} &\lesssim \|u_0\|_{\dot{H}^1_x(\bR^4)} + \bigl\| |u|^2 u \bigr\|_{G(\bR)} \\
  &\lesssim E(u_0)^{\frac{1}{2}} + \biggl( \sum_{N \in 2^{\bZ}} N^2 \bigl\| P_N \bigl( |u|^2 u \bigr) \bigr\|^2_{L^1_t L^2_x(\bR\times\bR^4)} \biggr)^{\frac{1}{2}} \\
  &\lesssim E(u_0)^{\frac{1}{2}} + \bigl\| \nabla \bigl( |u|^2 u \bigr) \bigr\|_{L^1_t L^2_x(\bR\times\bR^4)} \\
  &\lesssim E(u_0)^{\frac{1}{2}} + \|\nabla u\|_{L^3_t L^3_x(\bR\times\bR^4)} \|u\|_{L^3_t L^{12}_x(\bR\times\bR^4)}^2 \\
  &\lesssim E(u_0)^{\frac{1}{2}} + \|\nabla u\|_{L^3_t L^3_x(\bR\times\bR^4)}^3 \\
  &\lesssim E(u_0)^{\frac{1}{2}} + L(E(u_0))^3,
 \end{align*}
 which implies the assertion.
\end{proof}

Next we develop a suitable perturbation theory to compare solutions to the forced defocusing cubic NLS~\eqref{equ:forced_cubic_nls_scattering_section} with solutions to the defocusing cubic NLS~\eqref{equ:cubic_nls_scattering_section}. The proof proceeds along pre-existing lines using the linear estimate~\eqref{equ:main_linear_estimate}, the nonlinear estimate~\eqref{equ:difference_equation_nonlinearity_in_G} and the time divisibility properties~\eqref{equ:time_divisibility} of the $X(I)$ and $Y(I)$ spaces as key ingredients.
\begin{lemma}(Short-time perturbations) \label{lem:short_time_perturbations}
 Let $I \subset \bR$ be a time interval with $t_0 \in I$ and let $v_0, u_0 \in \dot{H}^1_x(\bR^4)$. There exist small absolute constants $0 < \delta \ll 1$ and $0 < \eta_0 \ll 1$ with the following properties. Let $u \colon I \times \bR^4 \to \bC$ be the solution to~\eqref{equ:cubic_nls_scattering_section} with initial data $u(t_0) = u_0$ satisfying 
 \begin{equation}
  \|u\|_{X(I)} \leq \delta
 \end{equation}
 and let $F \colon I \times \bR^4 \to \bC$ be a forcing term such that 
 \begin{equation}
  \|F\|_{Y(I)} \leq \eta 
 \end{equation}
 for some $0 < \eta \leq \eta_0$. Suppose also that 
 \begin{equation}
  \|v_0 - u_0\|_{\dot{H}^1_x(\bR^4)} \leq \eta_0.
 \end{equation}
 Then there exists a unique solution $v \colon I \times \bR^4 \to \bC$ to~\eqref{equ:forced_cubic_nls_scattering_section} with initial data $v(t_0) = v_0$ and we have 
 \begin{equation} \label{equ:short_time_perturbation_difference_bound} 
  \|v-u\|_{L^\infty_t \dot{H}^1_x(I\times\bR^4)} + \|v-u\|_{X(I)} \leq C_0 \bigl( \|v_0 - u_0\|_{\dot{H}^1_x(\bR^4)} + \eta \bigr)
 \end{equation}
 for some absolute constant $C_0 \geq 1$.
\end{lemma}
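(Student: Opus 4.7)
The plan is to construct $v$ as a perturbation of $u$ via a contraction mapping argument applied to the difference $w := v - u$. Writing out the equation satisfied by $w$, we find
\begin{equation*}
 \left\{ \begin{aligned}
  (i\partial_t + \Delta) w &= |F+u+w|^2(F+u+w) - |u|^2 u \text{ on } I \times \bR^4, \\
  w(t_0) &= v_0 - u_0,
 \end{aligned} \right.
\end{equation*}
so by Duhamel's formula it suffices to find a fixed point of the map
\begin{equation*}
 \Phi(w)(t) := e^{i(t-t_0)\Delta}(v_0 - u_0) - i \int_{t_0}^t e^{i(t-s)\Delta} \Bigl[ |F+u+w|^2(F+u+w) - |u|^2 u \Bigr](s) \, ds
\end{equation*}
on a suitable ball inside $X(I)$.

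The key analytic inputs are already in place: the main linear estimate~\eqref{equ:main_linear_estimate} from Proposition~\ref{prop:main_linear_estimate}, the nonlinear estimate~\eqref{equ:difference_equation_nonlinearity_in_G}, and, for the contraction property, the difference estimate~\eqref{equ:difference_forced_cubic_nonlinearity_in_G} applied to the pair $(F+u,w_1)$ and $(F+u,w_2)$ (or equivalently a direct cubic expansion). Setting $R := 2C_1(\|v_0-u_0\|_{\dot{H}^1_x} + \eta)$ for a suitable absolute constant $C_1 \geq 1$ coming from the linear and nonlinear estimates, I would work on
\[
 \mathcal{B} := \bigl\{ w \in X(I) : \|w\|_{X(I)} \leq R \bigr\}.
\]
Combining \eqref{equ:main_linear_estimate} with \eqref{equ:difference_equation_nonlinearity_in_G}, for any $w \in \mathcal{B}$ we obtain
\begin{align*}
 \|\Phi(w)\|_{X(I)} &\leq C_1 \|v_0 - u_0\|_{\dot{H}^1_x} + C_1 \bigl( \|F\|_{Y(I)}^3 + \|w\|_{X(I)}^3 + \|F\|_{Y(I)} \|u\|_{X(I)}^2 + \|u\|_{X(I)}^2 \|w\|_{X(I)} \bigr) \\
 &\leq C_1 \|v_0 - u_0\|_{\dot{H}^1_x} + C_1 \bigl( \eta^3 + R^3 + \eta \delta^2 + \delta^2 R \bigr),
\end{align*}
and similarly, writing $|F+u+w_1|^2(F+u+w_1) - |F+u+w_2|^2(F+u+w_2)$ cubically and applying the trilinear estimates of Proposition~\ref{prop:main_trilinear_estimates}, we bound
\begin{equation*}
 \|\Phi(w_1) - \Phi(w_2)\|_{X(I)} \leq C_1 \|w_1 - w_2\|_{X(I)} \bigl( \|F\|_{Y(I)}^2 + \|u\|_{X(I)}^2 + \|w_1\|_{X(I)}^2 + \|w_2\|_{X(I)}^2 \bigr).
\end{equation*}
Choosing $\delta$ and $\eta_0$ small enough so that $C_1(\eta_0^2 + \delta^2 + R^2) \leq \tfrac{1}{2}$ (which is possible since $R \lesssim \eta_0$) yields both stability of $\mathcal{B}$ under $\Phi$ and the contraction property. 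The Banach fixed point theorem then produces a unique $w \in \mathcal{B}$ and hence a unique $v = u + w$ solving~\eqref{equ:forced_cubic_nls_scattering_section}, with $\|v-u\|_{X(I)} = \|w\|_{X(I)} \leq R$, giving the $X(I)$ half of~\eqref{equ:short_time_perturbation_difference_bound}. The $L^\infty_t \dot{H}^1_x$ piece then follows by reapplying \eqref{equ:main_linear_estimate} to the Duhamel equation for $w$.

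This argument is essentially routine once the linear and nonlinear estimates of Sections~\ref{sec:functional_framework}--\ref{sec:trilinear_estimates} are in hand; I do not foresee a genuine obstacle. The only subtle point is the bookkeeping in the contraction radius: $R$ must be chosen large enough to absorb both the initial data perturbation $\|v_0-u_0\|_{\dot{H}^1_x}$ and the forcing contribution $\eta$ (hence the form $R \sim \|v_0-u_0\|_{\dot{H}^1_x} + \eta$), while the universal smallness of $\delta$ and $\eta_0$ must be fixed \emph{before} choosing $R$ so as to ensure the quadratic-in-$R$ and quadratic-in-$\delta$ terms can be absorbed into a factor of $\tfrac{1}{2}$. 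No time partitioning is needed here because the smallness hypothesis is imposed directly on $\|u\|_{X(I)}$ and $\|F\|_{Y(I)}$; the time-divisibility properties from Lemma~\ref{lem:properties_XY}(ii) will instead be used in the subsequent long-time perturbation lemma to reduce to the present short-time statement.
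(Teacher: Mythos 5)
Your argument is correct and uses exactly the paper's ingredients: the difference equation for $w = v - u$, the main linear estimate~\eqref{equ:main_linear_estimate}, and the nonlinear estimate~\eqref{equ:difference_equation_nonlinearity_in_G} (together with the trilinear estimates for the contraction bound). The only cosmetic difference is that the paper invokes Proposition~\ref{prop:lwp_forced_cubic_nls} for existence and then proves~\eqref{equ:short_time_perturbation_difference_bound} as an a priori estimate via a standard continuity argument, whereas you carry out a self-contained contraction mapping argument for $w$; these are equivalent packagings of the same estimates, and your bookkeeping on $R$, $\delta$, $\eta_0$ is sound.
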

\begin{proof}
 In view of the local existence theory from Proposition~\ref{prop:lwp_forced_cubic_nls} it suffices to establish~\eqref{equ:short_time_perturbation_difference_bound} as an a priori estimate. We define $w := v-u$ and observe that $w$ is a solution to the difference equation 
 \begin{equation*}
  \left\{ \begin{aligned}
   (i \partial_t + \Delta) w &= |F+u+w|^2(F+u+w) - |u|^2 u \text{ on } I \times \bR^4, \\
   w(t_0) &= v_0 - u_0. 
  \end{aligned} \right.
 \end{equation*}
 By the linear estimate~\eqref{equ:main_linear_estimate} and the nonlinear estimate~\eqref{equ:difference_equation_nonlinearity_in_G}, we find that
 \begin{align*}
  &\|w\|_{L^\infty_t \dot{H}^1_x(I\times\bR^4)} + \|w\|_{X(I)} \\
  &\quad \lesssim \|v_0-u_0\|_{\dot{H}^1_x(\bR^4)} + \|F\|_{Y(I)}^3 + \|w\|_{X(I)}^3 + \|u\|_{X(I)}^2 \|F\|_{Y(I)} + \|u\|_{X(I)}^2 \|w\|_{X(I)} \\
  &\quad \lesssim \|v_0-u_0\|_{\dot{H}^1_x(\bR^4)} + \eta^3 + \|w\|_{X(I)}^3 + \delta^2 \eta + \delta^2 \|w\|_{X(I)}.
 \end{align*}
 The assertion now follows from a standard continuity argument.
\end{proof}

\begin{lemma}(Long-time perturbations) \label{lem:long_time_perturbations}
 Let $I \subset \bR$ be a time interval with $t_0 \in I$ and let $v_0 \in \dot{H}^1_x(\bR^4)$. Let $u \colon I \times \bR^4 \to \bC$ be the solution to~\eqref{equ:cubic_nls_scattering_section} with initial data $u(t_0) = v_0$ satisfying 
 \begin{equation}
  \|u\|_{X(I)} \leq K.
 \end{equation}
 Then there exists $0 < \eta_1(K) \ll 1$ such that for any forcing term $F \colon I \times \bR^4 \to \bC$ satisfying 
 \begin{equation}
  \|F\|_{Y(I)} \leq \eta 
 \end{equation}
 for some $0 < \eta \leq \eta_1(K)$, there exists a unique solution $v \colon I \times \bR^4 \to \bC$ to~\eqref{equ:forced_cubic_nls_scattering_section} with initial data $v(t_0) = v_0$ and it holds that 
 \begin{equation} \label{equ:long_time_perturbation_difference_bound}
  \|v-u\|_{L^\infty_t \dot{H}^1_x(I\times\bR^4)} + \|v-u\|_{X(I)} \lesssim \exp \bigl( C_1 K^{\frac{4}{\ve}} \bigr) \eta 
 \end{equation}
 for some absolute constant $C_1 \gg 1$. In particular, it holds that 
 \begin{equation} \label{equ:long_time_perturbation_difference_bound_nonsharp}
  \|v-u\|_{L^\infty_t \dot{H}^1_x(I\times\bR^4)} + \|v-u\|_{X(I)} \lesssim 1.
 \end{equation}
\end{lemma}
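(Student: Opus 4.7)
The plan is to establish the a priori bound~\eqref{equ:long_time_perturbation_difference_bound} by partitioning $I$ into finitely many subintervals on which $\|u\|_{X}$ is small enough to apply the short-time perturbation of Lemma~\ref{lem:short_time_perturbations}, and then iterating that lemma. Once the a priori control is in hand, existence and uniqueness of $v$ on $I$ follow by combining the local well-posedness of Proposition~\ref{prop:lwp_forced_cubic_nls} with the standard continuity argument, so I will focus solely on deriving the difference estimate.

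First I would invoke the time-divisibility property of the $X$-norm from Lemma~\ref{lem:properties_XY}(ii) to partition $I$ into $J$ consecutive closed subintervals $I_1, \ldots, I_J$ with pairwise disjoint interiors, $I_j = [t_{j-1}, t_j]$, such that $\|u\|_{X(I_j)} \leq \delta$ for every $j$, where $\delta$ is the small absolute constant from Lemma~\ref{lem:short_time_perturbations}. The $\ell^{4/\ve}$ divisibility bound~\eqref{equ:time_divisibility} combined with the hypothesis $\|u\|_{X(I)} \leq K$ forces
\begin{equation*}
J \leq C_\ve (K/\delta)^{4/\ve}
\end{equation*}
for some constant $C_\ve > 0$. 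Monotonicity of the $Y$-norm in the time variable gives $\|F\|_{Y(I_j)} \leq \|F\|_{Y(I)} \leq \eta$ on each piece, so the smallness hypotheses of Lemma~\ref{lem:short_time_perturbations} on $F$ are automatic on every $I_j$.

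I would then iterate Lemma~\ref{lem:short_time_perturbations} by induction on $j$, working forward in time from $t_0$. Set $w := v - u$, $a_j := \|w(t_j)\|_{\dot{H}^1_x(\bR^4)}$, and let $C_0 \geq 1$, $\eta_0 > 0$ denote the constants from Lemma~\ref{lem:short_time_perturbations}. Choose
\begin{equation*}
\eta_1(K) := \eta_0 (4 C_0)^{-J},
\end{equation*}
and assume inductively that $v$ has been constructed on $[t_0, t_j]$ and that $a_j \leq 2 C_0^j \eta$. Since $2 C_0^J \eta_1(K) \leq \eta_0$, the smallness condition $\|v(t_j) - u(t_j)\|_{\dot{H}^1_x} \leq \eta_0$ remains valid at each step. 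Then Lemma~\ref{lem:short_time_perturbations} applied on $I_{j+1}$ with reference solution $u$ and perturbed data $v(t_j)$ produces a unique $v$ on $I_{j+1}$ satisfying
\begin{equation*}
\|w\|_{L^\infty_t \dot{H}^1_x(I_{j+1} \times \bR^4)} + \|w\|_{X(I_{j+1})} \leq C_0 (a_j + \eta),
\end{equation*}
so that $a_{j+1} \leq C_0 a_j + C_0 \eta \leq 2 C_0^{j+1} \eta$, closing the induction.

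To conclude, note that $\|w\|_{X(I_j)} \leq C_0(a_{j-1} + \eta) \leq 3 C_0^j \eta$ for each $j$. Applying the triangle inequality in the dyadic sum defining the $X$-norm and in the time-partition sum yields
\begin{equation*}
\|w\|_{X(I)} \leq \sum_{j=1}^J \|w\|_{X(I_j)} \leq 3 J C_0^J \eta \lesssim \exp(C_1 K^{4/\ve}) \eta,
\end{equation*}
and similarly $\|w\|_{L^\infty_t \dot{H}^1_x(I\times\bR^4)} \leq \max_j a_j \leq 2 C_0^J \eta \lesssim \exp(C_1 K^{4/\ve})\eta$ for a suitable absolute constant $C_1 \gg 1$, establishing~\eqref{equ:long_time_perturbation_difference_bound}. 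The non-sharp bound~\eqref{equ:long_time_perturbation_difference_bound_nonsharp} then follows from $\eta \leq \eta_1(K)$. The main bookkeeping point is to ensure that the number of subintervals $J$ grows only polynomially in $K$, which is guaranteed precisely by the exponent $4/\ve$ in the time-divisibility~\eqref{equ:time_divisibility}; no new analytic input beyond Lemma~\ref{lem:short_time_perturbations} and Lemma~\ref{lem:properties_XY} is required.
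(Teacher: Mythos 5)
Your approach is the same as the paper's: partition $I$ into $J \lesssim (K/\delta)^{4/\ve}$ subintervals on which $\|u\|_{X(I_j)} \leq \delta$ via the time-divisibility from Lemma~\ref{lem:properties_XY}(ii), iterate Lemma~\ref{lem:short_time_perturbations} forward in time, and sum. However, the inductive bound you state does not close as written. You assume $a_j \leq 2\,C_0^{j}\eta$ and then claim $a_{j+1} \leq C_0 a_j + C_0\eta \leq 2\,C_0^{j+1}\eta$; substituting gives $C_0 a_j + C_0\eta \leq 2\,C_0^{j+1}\eta + C_0\eta$, and the extra $C_0\eta$ term cannot be absorbed into $2\,C_0^{j+1}\eta$ (for $C_0 = 1$, for example, your claim reduces to $a_j \leq 2\eta$, whereas the recursion $a_{j+1} = a_j + \eta$, $a_0 = 0$, already gives $a_j = j\eta$). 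The correct inductive hypothesis is $a_j \leq (2C_0)^{j}\eta$, which is what the paper uses: then $C_0 a_j + C_0\eta \leq C_0\bigl((2C_0)^{j}\eta + \eta\bigr) \leq 2C_0\,(2C_0)^{j}\eta = (2C_0)^{j+1}\eta$, using $(2C_0)^{j} \geq 1$. With this correction everything else you wrote goes through unchanged: your choice $\eta_1(K) := \eta_0(4C_0)^{-J}$ still guarantees $(2C_0)^{J}\eta_1(K) \leq \eta_0$, and the summed bound $\sum_j \|w\|_{X(I_j)} \lesssim J(2C_0)^{J}\eta \lesssim \exp(C_1 K^{4/\ve})\eta$ remains valid.
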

\begin{proof}
 We may assume without loss of generality that $t_0 = \inf I$. Moreover, it again suffices to establish~\eqref{equ:long_time_perturbation_difference_bound} as an a priori estimate in view of the local existence theory from Proposition~\ref{prop:lwp_forced_cubic_nls}. We first use the time divisibility property of the $X(I)$ norm, see Lemma~\ref{lem:properties_XY}, to partition the interval~$I$ into $J \equiv J(K)$ consecutive intervals $I_j$, $j = 1, \ldots, J$, with disjoint interiors such that
 \begin{equation*}
  \|u\|_{X(I_j)} \leq \delta  
 \end{equation*}
 for $j = 1, \ldots, J$, where $0 < \delta \ll 1$ is the absolute constant from the statement of Lemma~\ref{lem:short_time_perturbations}. Note that by~\eqref{equ:time_divisibility} we have that 
 \begin{equation} \label{equ:estimate_on_J_longtime}
  J \sim \frac{\|u\|_{X(I)}^{\frac{4}{\ve}}}{\delta^{\frac{4}{\ve}}} \lesssim K^{\frac{4}{\ve}}.
 \end{equation}
 In the following we denote $t_{j-1} := \inf I_j$ for $j = 1, \ldots, J$. We would like to apply Lemma~\ref{lem:short_time_perturbations} on each interval $I_j$ to infer bounds on the $X(I_j)$ norm of $v-u$. To this end we have to make sure that for $j = 1, \ldots, J$ it holds that
 \begin{equation} \label{equ:smallness_requirement_F_longtime}
  \|F\|_{Y(I_j)} \leq \eta_0
 \end{equation}
 and 
 \begin{equation} \label{equ:smallness_requirement_H1_difference_longtime}
  \| v(t_{j-1}) - u(t_{j-1}) \|_{\dot{H}^1_x(\bR^4)} \leq \eta_0,
 \end{equation} 
 where $0 < \eta_0 \ll 1$ is the absolute constant from the statement of Lemma~\ref{lem:short_time_perturbations}. Below we will in particular choose $0 < \eta_1(K) \leq \eta_0$ which takes care of \eqref{equ:smallness_requirement_F_longtime}. To ensure~\eqref{equ:smallness_requirement_H1_difference_longtime} we now prove by induction that we have 
 \begin{equation} \label{equ:inductive_difference_bounds_longtime}
  \|v-u\|_{L^\infty_t \dot{H}^1_x(I_j\times\bR^4)} + \|v-u\|_{X(I_j)} \leq (2 C_0)^j \eta
 \end{equation}
 for $j = 1, \ldots, J$, if we choose $0 < \eta_1(K) \ll 1$ sufficiently small depending on the size of $K$. Note that since~\eqref{equ:smallness_requirement_H1_difference_longtime} trivially holds for $j = 1$, we obtain~\eqref{equ:inductive_difference_bounds_longtime} for the case $j=1$ from an application of~\eqref{equ:short_time_perturbation_difference_bound}. Now suppose that~\eqref{equ:inductive_difference_bounds_longtime} holds for all $1 \leq i \leq j-1$ and suppose that
 \begin{equation} \label{equ:inductive_smallness_longtime}
  (2 C_0)^{j-1} \eta \leq \eta_0,
 \end{equation}
 then we can prove that \eqref{equ:inductive_difference_bounds_longtime} also holds for $j$. By the inductive hypothesis we can apply~\eqref{equ:short_time_perturbation_difference_bound} on the interval $I_j$ and obtain that
 \begin{align*}
  \|v-u\|_{L^\infty_t \dot{H}^1_x(I_j\times\bR^4)} + \|v-u\|_{X(I_j)} &\leq C_0 \bigl( \|v(t_{j-1}) - u(t_{j-1})\|_{\dot{H}^1_x(\bR^4)} + \eta \bigr) \\
  &\leq C_0 \bigl( (2 C_0)^{j-1} \eta + \eta \bigr) \\
  &\leq (2 C_0)^j \eta,
 \end{align*}
 which yields~\eqref{equ:inductive_difference_bounds_longtime} for $j$. In order to complete the induction, we observe that in view of~\eqref{equ:estimate_on_J_longtime} it suffices to fix 
 \begin{equation*}
  \eta_1(K) := \exp \bigl( - C_1 K^{\frac{4}{\ve}} \bigr) \eta_0
 \end{equation*}
 for some large absolute constant $C_1 \gg 1$ to guarantee that \eqref{equ:inductive_smallness_longtime} holds for $j = 1, \ldots, J$. 
 
 Finally, we sum up the bounds~\eqref{equ:inductive_difference_bounds_longtime} to infer that 
 \begin{align*}
  \|v-u\|_{L^\infty_t \dot{H}^1_x(I\times\bR^4)} + \|v-u\|_{X(I)} &\leq \sum_{j=1}^J \|v-u\|_{L^\infty_t \dot{H}^1_x(I_j\times\bR^4)} + \|v-u\|_{X(I_j)} \\
  &\leq \sum_{j=1}^J (2 C_0)^j \eta \\
  &\lesssim (2 C_0)^J \eta \\
  &\lesssim \exp \bigl( C_1 K^{\frac{4}{\ve}} \bigr) \eta.
 \end{align*}
 This establishes~\eqref{equ:long_time_perturbation_difference_bound} and then \eqref{equ:long_time_perturbation_difference_bound_nonsharp} follows from the choice of $\eta_1(K)$. 
\end{proof}

We are now in a position to give the proof of Theorem~\ref{thm:scattering_conditional}.
\begin{proof}[Proof of Theorem~\ref{thm:scattering_conditional}]
 Let $v(t)$ be the unique solution to the forced cubic NLS~\eqref{equ:forced_cubic_nls_scattering} defined on its maximal time interval of existence $I_\ast = (T_-, T_+)$ satisfying the a priori energy bound~\eqref{equ:energy_hypothesis}, namely 
 \[
  M := \sup_{t \in I_\ast} \, E(v(t)) < \infty.
 \]
 By Proposition~\ref{prop:lwp_forced_cubic_nls} it suffices to prove that $\|v\|_{X(I_\ast)} < \infty$ in order to infer that $v(t)$ exists globally in time and scatters as $t \to \pm \infty$. Moreover, by time reversal symmetry it is enough to argue forward in time. 
 
 To this end we partition the maximal forward interval of existence $[0, T_+)$ into $J \equiv J(M, \|F\|_{Y(\bR)})$ consecutive intervals $I_j$ with disjoint interiors such that 
 \begin{equation*}
  \|F\|_{Y(I_j)} = \eta_1(K(M)),
 \end{equation*}
 where $\eta_1(K(M))$ is the small constant from the statement of Lemma~\ref{lem:long_time_perturbations} and $K(\cdot)$ is the non-decreasing function from the statement of Lemma~\ref{lem:standard_cubic}. Note that by Lemma~\ref{lem:properties_XY}(ii), the necessary number $J$ of such intervals $I_j$ is bounded from above by
 \[
  J \lesssim \frac{\|F\|_{Y(\bR)}^{\frac{4}{\ve}}}{\eta_1(K(M))^{\frac{4}{\ve}}}. 
 \]
 In the following we use the notation $t_{j-1} := \inf I_j$ for $j = 1, \ldots, J$. On each interval $I_j$ we compare the solution $v(t)$ to the forced cubic NLS~\eqref{equ:forced_cubic_nls_scattering} with the solution $u(t)$ to the usual defocusing cubic NLS~\eqref{equ:cubic_nls_scattering_section} with initial data $u(t_{j-1}) = v(t_{j-1})$. By Lemma~\ref{lem:standard_cubic} and the a priori energy hypothesis~\eqref{equ:energy_hypothesis}, $u(t)$ in fact exists globally in time and satisfies
 \[
  \|u\|_{X(I_j)} \leq \|u\|_{X(\bR)} \leq K(M).
 \]
 By the above choice of $\eta_1(K(M))$ we are thus in the position to apply the long-time perturbation estimate~\eqref{equ:long_time_perturbation_difference_bound_nonsharp} from Lemma~\ref{lem:long_time_perturbations} to infer that 
 \begin{equation*}
  \|v\|_{X(I_j)} \leq \|u\|_{X(I_j)} + \|v-u\|_{X(I_j)} \lesssim K(M) + 1.
 \end{equation*}
 Summing up these estimates we obtain the desired bound 
 \begin{align*}
  \|v\|_{X([0,T_+))} \leq \sum_{j=1}^J \|v\|_{X(I_j)} \lesssim J \cdot (K(M) + 1) \leq C\bigl(M, \|F\|_{Y(\bR)}\bigr).
 \end{align*}
 This finishes the proof of Theorem~\ref{thm:scattering_conditional}.
\end{proof}

\section{Almost sure scattering for the defocusing cubic NLS on $\bR^4$ for radial data} \label{sec:radial_scattering}

The main result of this section is the following uniform-in-time energy bound for solutions to the forced defocusing cubic NLS
\begin{equation} \label{equ:forced_cubic_nls}
 \left\{ \begin{aligned} 
  (i \partial_t + \Delta) v &= |F+v|^2 (F+v), \\
  v(0) &= v_0 \in H^1_x(\bR^4)
 \end{aligned} \right.
\end{equation}
with a forcing term $F$ that is a solution to the linear Schr\"odinger equation $(i \partial_t + \Delta) F = 0$ and satisfies a collection of suitable space-time estimates.
\begin{proposition} \label{prop:energy_growth_bound}
 Let $v_0 \in H^1_x(\bR^4)$. Assume that $F$ is a solution to the linear Schr\"odinger equation $(i \partial_t + \Delta) F = 0$ and satisfies
 \begin{equation} \label{equ:assumptions_uniform_energy_bound}
  \begin{aligned}
   &F \in Y(\bR),  &&F \in \bigl( L^\infty_t L^2_x \cap L^\infty_t L^4_x \cap L^3_t L^6_x \bigr)(\bR \times \bR^4), \\
   &\langle x \rangle^{\frac{1}{2}} \langle \nabla \rangle F \in L^2_t L^\infty_x(\bR \times \bR^4),  &&\nabla F \in L^2_t L^4_x(\bR\times\bR^4). 
  \end{aligned}
 \end{equation}
 Let $v(t)$ be the solution to the forced defocusing cubic NLS~\eqref{equ:forced_cubic_nls} with maximal time interval of existence $I_\ast$. Then we have 
 \begin{align*}
  \sup_{t \in I_\ast} \, E(v(t)) &\leq C \exp \Bigl( C \bigl( \| F \|_{L^3_t L^6_x(\bR\times\bR^4)}^3 + \bigl\| \langle x \rangle^{\frac{1}{2}} \langle \nabla \rangle F \bigr\|_{L^2_t L^\infty_x(\bR\times\bR^4)}^2 + \| \nabla F \|_{L^2_t L^4_x(\bR\times\bR^4)}^2 \bigr) \Bigr) \times\\
  &\qquad \qquad \qquad \quad \times \bigl( E(v_0) + 1 + \|v_0\|_{L^2_x(\bR^4)}^2 + \| F \|_{L^\infty_t L^2_x(\bR\times\bR^4)}^2 + \| F \|_{L^\infty_t L^4_x(\bR\times\bR^4)}^4 \bigr)
 \end{align*}
 for some absolute constant $C \geq 1$, where 
 \begin{equation*}
  E(v(t)) := \int_{\bR^4} \frac{1}{2} |\nabla v(t)|^2 + \frac{1}{4} |v(t)|^4 \, dx.
 \end{equation*}
\end{proposition}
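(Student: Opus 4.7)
The plan is an energy-method argument combined with Gronwall's inequality, preceded by an a priori $L^2_x$-mass bound obtained via a hidden conservation law. As a preliminary observation, $u := F + v$ formally solves the unforced cubic NLS $(i\partial_t + \Delta) u = |u|^2 u$, so that $\|F + v\|_{L^2_x(\bR^4)}^2$ is conserved in time, and since $\|F(t)\|_{L^2_x(\bR^4)}^2$ is also conserved under the linear Schr\"odinger flow, Cauchy--Schwarz and AM--GM immediately yield the uniform-in-time bound
\[
 \|v(t)\|_{L^2_x(\bR^4)}^2 \lesssim \|v_0\|_{L^2_x(\bR^4)}^2 + \|F\|_{L^\infty_t L^2_x(\bR\times\bR^4)}^2,
\]
capturing two of the contributions in the additive base of the claim.

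Next, I would differentiate $E(v(t))$ using $i\partial_t v = -\Delta v + |F+v|^2(F+v)$ and observe that the pure $|v|^2 v$-contributions cancel exactly as in the conservation identity for the unforced cubic NLS. Writing the error nonlinearity as
\[
 N(v, F) := |F+v|^2 (F+v) - |v|^2 v = |F|^2 F + 2|F|^2 v + F^2 \overline{v} + 2F|v|^2 + v^2 \overline{F},
\]
so that every summand contains at least one factor of $F$, one obtains
\[
 \frac{d}{dt} E(v(t)) = -\,\mathrm{Im} \int_{\bR^4} \overline{\bigl(-\Delta v + |v|^2 v\bigr)} \, N(v, F) \, dx,
\]
and the goal becomes to bound this by $A(t) E(v(t)) + B(t)$ with $A, B \in L^1_t(\bR)$ whose $L^1_t$-norms are controlled by the quantities in the exponential and in the additive base, respectively, so that Gronwall's inequality closes the estimate.

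The bulk of the work is a term-by-term H\"older-type estimate, integrating by parts once on each $-\Delta \overline{v}$-contribution. The key tool is the weighted estimate $\|F\|_{L^\infty_x} \leq \|\langle x\rangle^{1/2} F\|_{L^\infty_x}$ paired against $\langle x\rangle^{-1} \leq 1$ and Hardy's inequality on $\bR^4$, which disposes of kinetic-coupling terms via, for instance,
\[
 \int_{\bR^4} |\nabla v|^2 |F|^2 \, dx \leq \|\langle x\rangle^{1/2} F\|_{L^\infty_x}^2 \, \|\nabla v\|_{L^2_x}^2,
\]
whose time-integral is bounded by $\|\langle x\rangle^{1/2} F\|_{L^2_t L^\infty_x}^2 \cdot \sup_t E(v(t))$. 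The purely $F$-dependent derivative term $\mathrm{Im}\int \nabla \overline{v} \cdot \nabla(|F|^2 F) \, dx$ is split via $\nabla(|F|^2 F) = F^2 \nabla \overline{F} + 2|F|^2 \nabla F$ and bounded by Cauchy--Schwarz and Young's inequality using $\|F\|_{L^\infty_t L^4_x}$, $\|\langle x\rangle^{1/2} F\|_{L^2_t L^\infty_x}$, and $\|\nabla F\|_{L^2_t L^4_x}$. The higher-order-in-$v$ contributions from $|v|^2 \overline{v} \cdot N(v, F)$, of the form $\int |v|^j |F|^{6-j} \, dx$ for $j = 3, 4$, are controlled using Sobolev embedding $\dot{H}^1_x(\bR^4) \hookrightarrow L^4_x(\bR^4)$ together with $\|F\|_{L^3_t L^6_x}^3$ and $\|\langle x\rangle^{1/2} F\|_{L^2_t L^\infty_x}^2$.

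The main obstacle will be the mixed cross terms $\int |\nabla v|\, |v|^2\, |\nabla F| \, dx$ arising after integration by parts in the $F|v|^2$- and $v^2\overline{F}$-contributions, since $\dot{H}^1_x(\bR^4) \cap L^2_x(\bR^4)$ does not embed into the $L^8_x(\bR^4)$ that the naive H\"older bound would demand. I would handle these by carefully combining the two contributions and exploiting the real-valuedness of $|\nabla v|^2$ to extract algebraic cancellations, leaving only a residual quintic expression $\int |v|^4 \, \mathrm{Im}(F\overline{v}) \, dx$ that can be controlled by the uniform $L^2_x$-mass bound from the first step combined with Gagliardo--Nirenberg interpolation in the range $\dot{H}^1_x \cap L^2_x \hookrightarrow L^p_x$ for $p \in [2, 4]$. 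After summing all contributions to produce the announced differential inequality and applying Gronwall's lemma on the maximal interval $I_\ast$, one recovers the claimed exponential bound on $\sup_{t \in I_\ast} E(v(t))$.
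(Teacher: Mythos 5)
Your first step and the subsequent differentiation of $E(v(t))$ match the paper's opening moves, but the rest of the argument has a gap that the paper specifically engineers around. When you differentiate $E$ directly and pair $N(v,F)$ with $\overline{-\Delta v + |v|^2 v}$, integration by parts in the $\Delta$-term produces $\mathrm{Im}\int \nabla N\cdot\nabla\bar v\,dx$, and the ``real-valuedness of $|\nabla v|^2$'' cancellation only kills the $|\nabla v|^2\,\mathrm{Im}(F\bar v)$-type contributions. It does not cancel the terms $\mathrm{Im}\int (F^2+2Fv)(\nabla\bar v)^2\,dx$ — two derivatives on $v$, which cannot be controlled by $\dot H^1_x\cap L^2_x(\bR^4)$ and the stated $F$-hypotheses — nor does it touch the $\nabla F\,|v|^2\,\nabla\bar v$ terms, which are precisely the paper's most delicate term (where $\nabla$ falls on $F$ in $N$, so no imaginary-part cancellation is available). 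Furthermore, your residual quintic $\int|v|^4\mathrm{Im}(F\bar v)\,dx \sim \int|v|^5|F|\,dx$ cannot be closed by the mass bound and Gagliardo--Nirenberg: even using $F\in L^4_x$, Hölder demands $v\in L^{20/3}_x(\bR^4)$, and with $F\in L^\infty_x$ it demands $v\in L^5_x(\bR^4)$; both exponents lie strictly outside the range $[2,4]$ accessible from $L^2_x\cap\dot H^1_x(\bR^4)$.

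Two ideas present in the paper's proof but absent from your proposal are needed. First, the paper does \emph{not} estimate $\partial_t E(v)$ directly. It reallocates the time derivative, writing $\partial_t E(v(t)) = -\tfrac14\partial_t\int(|F+v|^4-|F|^4-|v|^4)\,dx + \mathrm{Re}\int(|F+v|^2(F+v)-|F|^2 F)\,\partial_t\bar F\,dx$, then uses that $F$ solves the linear Schr\"odinger equation, $\partial_t F = i\Delta F$, and integrates by parts to convert $\partial_t\bar F$ into $\nabla\bar F$. This places at most one derivative on $v$ in every nonlinear term (avoiding your $(\nabla v)^2$ obstruction) and converts the quintic contributions into a boundary-in-time term, which is absorbed into $E(v(T))$ via Young's inequality. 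Second, even after this reallocation, the term $|v|^2\nabla v\cdot\nabla\bar F$ cannot be closed by energy and mass alone: the paper introduces the Morawetz quantity $B(T)=\int_0^T\int_{\bR^4}|v|^4/|x|\,dx\,dt$, proves an approximate Morawetz estimate for the forced equation, and pairs $\||x|^{-1/2}v^2\|_{L^2_tL^2_x}\sim B(T)^{1/2}$ against the weighted $L^2_tL^\infty_x$ bounds on $F$. Finally, because the resulting system of inequalities for $A(T)$ and $B(T)$ does not close globally, the paper does not use a Gronwall argument; it instead runs a continuity argument on subintervals where a divisible $F$-norm is small, iterating finitely many times. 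None of these three ingredients appears in your sketch, and without them the estimate does not close.
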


Before we turn to the proof of Proposition~\ref{prop:energy_growth_bound}, we first note that by combining this uniform-in-time energy bound with the conditional scattering result from Theorem~\ref{thm:scattering_conditional} and with the almost sure bounds for the free Schr\"odinger evolution from Section~\ref{sec:as_bounds_free_evolution}, we immediately obtain a proof of Theorem~\ref{thm:scattering_radial}.
\begin{proof}[Proof of Theorem~\ref{thm:scattering_radial}]
 We seek a global scattering solution $u(t)$ to \eqref{equ:cubic_nls_radial_scattering_thm} of the form 
 \[ 
  u(t) = e^{it\Delta} f^\omega + v(t).
 \]
 To this end the nonlinear component $v(t)$ must be a solution to the following forced cubic NLS 
 \begin{equation} \label{equ:scattering_proof_forced_cubic_nls}
  (i\partial_t + \Delta) v = |e^{it\Delta} f^\omega + v|^2 (e^{it\Delta} f^\omega + v)
 \end{equation}
 with zero initial data $v(0) = 0$. By the assumptions on the function $f \in H^s_x(\bR^4)$ and the almost sure bounds from Section~\ref{sec:as_bounds_free_evolution}, the forcing term $e^{it\Delta} f^\omega$ in~\eqref{equ:scattering_proof_forced_cubic_nls} satisfies the space-time bounds~\eqref{equ:assumptions_uniform_energy_bound} for almost every $\omega \in \Omega$. Thus, by the local existence result from Proposition~\ref{prop:lwp_forced_cubic_nls} and by Proposition~\ref{prop:energy_growth_bound}, for almost every $\omega \in \Omega$ the forced cubic NLS~\eqref{equ:scattering_proof_forced_cubic_nls} has a solution whose energy is uniformly bounded on its maximal interval of existence and hence, by Theorem~\ref{thm:scattering_conditional}, exists globally in time and scatters. This finishes the proof of Theorem~\ref{thm:scattering_radial}.
\end{proof}

An important ingredient for the proof of the uniform-in-time energy estimate from Proposition~\ref{prop:energy_growth_bound} is the following approximate Morawetz estimate for the forced defocusing cubic NLS~\eqref{equ:forced_cubic_nls}.
\begin{proposition}
 Let $v \colon I \times \bR^4 \to \bC$ be a solution to the forced defocusing cubic NLS~\eqref{equ:forced_cubic_nls} on a time interval~$I$. Then we have 
 \begin{equation} \label{equ:nls_morawetz}
  \begin{aligned}
   \int_I \int_{\bR^4} \frac{|v|^4}{|x|} \, dx \, dt &\lesssim \| v \|_{L^\infty_t \dot{H}^1_x(I\times\bR^4)} \| v \|_{L^\infty_t L^2_x(I\times\bR^4)} \\
   &\quad \quad + \| v \|_{L^\infty_t \dot{H}^1_x(I\times\bR^4)} \bigl\| |F+v|^2 (F+v) - |v|^2 v \bigr\|_{L^1_t L^2_x(I\times\bR^4)}.
  \end{aligned}
 \end{equation}
\end{proposition}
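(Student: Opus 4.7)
The plan is to follow the Lin--Strauss Morawetz scheme with the convex weight $a(x) := |x|$, extracting the contribution of the forcing term as an error. Writing $\mathcal{E} := |F+v|^2(F+v) - |v|^2 v$ so that $(i\partial_t + \Delta) v = |v|^2 v + \mathcal{E}$, I would define the Morawetz action
\[
M(t) := 2\,\mathrm{Im}\int_{\bR^4}\bar v(t,x)\,\nabla a(x)\cdot\nabla v(t,x)\,dx.
\]
Since $|\nabla a| \equiv 1$, Cauchy--Schwarz immediately yields the a priori bound
\[
\sup_{t\in I}\,|M(t)| \le 2\,\|v\|_{L^\infty_t L^2_x(I\times\bR^4)}\|v\|_{L^\infty_t\dot H^1_x(I\times\bR^4)},
\]
which will absorb the first right-hand side of \eqref{equ:nls_morawetz}.

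Next I would differentiate $M$ in time using the equation and integrate by parts in the standard way to obtain the Morawetz identity
\[
\partial_t M(t) = 4\int_{\bR^4} a_{jk}\,\mathrm{Re}(\partial_j\bar v\,\partial_k v)\,dx - \int_{\bR^4}\Delta^2 a\,|v|^2\,dx + \int_{\bR^4}\Delta a\,|v|^4\,dx + R(t),
\]
where the two nonlinear contributions coming from $|v|^2 v$ combine after an integration by parts into $+\int\Delta a\,|v|^4\,dx$, and
\[
R(t) := 4\,\mathrm{Re}\int_{\bR^4}\mathcal{E}\,(\nabla a\cdot\nabla\bar v)\,dx + 2\int_{\bR^4}\Delta a\,\mathrm{Re}(\bar v\,\mathcal{E})\,dx
\]
carries the forcing. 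For $a(x)=|x|$ in four space dimensions, the Hessian $a_{jk}=|x|^{-1}(\delta_{jk}-\hat x_j\hat x_k)$ with $\hat x=x/|x|$ is positive semi-definite, $\Delta a = 3/|x|\ge 0$, and a distributional computation gives $\Delta^2 a = -3/|x|^3$ in $\bR^4$ without a delta function contribution at the origin (since both $1/|x|$ and $1/|x|^3$ are locally integrable in $\bR^4$, in contrast to $\bR^3$). Hence all three structural terms in the identity have the right sign, and integrating over $I$ while discarding the two nonnegative ones produces
\[
3\int_I\int_{\bR^4}\frac{|v|^4}{|x|}\,dx\,dt \le M(T)-M(0) - \int_I R(t)\,dt.
\]

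For the forcing error, the bound $|\nabla a|\le 1$ controls the first piece of $R(t)$ by $\|\mathcal{E}(t)\|_{L^2_x}\|\nabla v(t)\|_{L^2_x}$, while the second piece has a singular weight $\Delta a = 3/|x|$ which I would handle via Hardy's inequality $\|v/|x|\|_{L^2_x(\bR^4)} \lesssim \|\nabla v\|_{L^2_x(\bR^4)}$, giving the same bound. Integrating in time,
\[
\int_I |R(t)|\,dt \lesssim \|\mathcal{E}\|_{L^1_t L^2_x(I\times\bR^4)}\|v\|_{L^\infty_t\dot H^1_x(I\times\bR^4)},
\]
which combined with the a priori bound on $M$ yields exactly \eqref{equ:nls_morawetz} after unpacking the definition of $\mathcal{E}$.

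The main technical obstacle is the rigorous justification of the Morawetz identity for the non-smooth weight $a(x)=|x|$. The standard remedy is to first carry out the computation with the smoothed weight $a_\varepsilon(x) := \sqrt{|x|^2+\varepsilon^2}$, for which all pointwise integrations by parts are unambiguous, and then pass to the limit $\varepsilon\to 0$ via dominated convergence, verifying that $\Delta a_\varepsilon \to 3/|x|$ and $\Delta^2 a_\varepsilon \to -3/|x|^3$ with no atomic part in $\bR^4$. This argument, together with a standard approximation of the $\dot H^1_x$ solution by Schwartz-class solutions to ensure integrability of all the quantities appearing above, completes the proof.
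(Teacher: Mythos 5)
Your proof is correct and follows essentially the same Lin--Strauss Morawetz argument as the paper: same action, same weight $a(x)=|x|$, same separation of the forced nonlinearity into $|v|^2 v + \mathcal{E}$, same use of Cauchy--Schwarz for the boundary terms and Hardy for the $\Delta a\,\mathrm{Re}(\bar v\,\mathcal E)$ error. Two small remarks: your constant $\Delta^2|x|=-3/|x|^3$ in $\bR^4$ is the right one (the paper's $-1/|x|^3$ appears to be a typo, harmless since only its sign matters), and your explicit regularization $a_\varepsilon=\sqrt{|x|^2+\varepsilon^2}$ together with the observation that no atomic part arises in $d=4$ is a careful addition that the paper leaves implicit.
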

\begin{proof}
 We write the forced defocusing cubic NLS~\eqref{equ:forced_cubic_nls} as 
 \[
  (i \partial_t + \Delta) v = |v|^2 v + H, \quad \text{where } H := |F+v|^2 (F+v) - |v|^2 v.
 \]
 Given a weight $a = a(x)$, the Morawetz action 
 \[
  m(t) := 2 \, \im \int_{\bR^4} \partial_k a(x) \partial^k v(t,x) \bar{v}(t,x) \, dx
 \]
 satisfies
 \begin{align*}
  \partial_t m(t) = \int_{\bR^4} \bigl\{ - \Delta \Delta a |v|^2 + 4 \re \, \partial_j \partial_k a \partial^j \bar{v} \partial^k v + \Delta a |v|^4 + 4 \partial_k a \, \re \, \bigl( \overline{H} \partial^k v \bigr) + 2 \Delta a \, \re \, \bigl( \bar{v} H \bigr) \bigr\} \, dx,
 \end{align*}
 where we are tacitly summing over repeated indices. Using the standard Lin-Strauss Morawetz weight $a(x) := |x|$ with
 \[
  \partial_k a = \frac{x_k}{|x|}, \quad \partial_j \partial_k a = \frac{\delta_{jk}}{|x|} - \frac{x_j x_k}{|x|^3}, \quad \Delta a = \frac{3}{|x|}, \quad \Delta \Delta a = -\frac{1}{|x|^3},
 \]
 the Morawetz estimate~\eqref{equ:nls_morawetz} follows from applying the fundamental theorem of calculus, the Cauchy-Schwarz inequality and Hardy's inequality.
\end{proof}

We are now in a position to establish the proof of Proposition~\ref{prop:energy_growth_bound}.

\begin{proof}[Proof of Proposition~\ref{prop:energy_growth_bound}]
 Let $v(t)$ be the solution to the forced defocusing cubic NLS~\eqref{equ:forced_cubic_nls} with maximal time interval of existence $I_\ast$ provided by Proposition~\ref{prop:lwp_forced_cubic_nls}. Noting that the initial data $v_0 \in H^1_x(\bR^4)$ is also assumed to have finite mass, we first infer a uniform-in-time bound on the mass of $v(t)$. Since the forcing term $F$ is a solution to the linear Schr\"odinger equation, $(F+v)$ is a solution to the standard cubic NLS. We therefore have conservation of mass for $(F+v)$ and thus find for any time interval~$I$ that
 \begin{equation} \label{equ:mass_bound}
  \begin{aligned}
   \|v\|_{L^\infty_t L^2_x(I\times\bR^4)} &\leq \| F + v \|_{L^\infty_t L^2_x(I\times\bR^4)} + \|F\|_{L^\infty_t L^2_x(I\times\bR^4)} \\
   &= \|F(0) + v(0)\|_{L^2_x(\bR^4)} + \|F\|_{L^\infty_t L^2_x(I\times\bR^4)} \\
   &\lesssim \|F\|_{L^\infty_t L^2_x(I\times\bR^4)} + \|v_0\|_{L^2_x(\bR^4)}.
  \end{aligned}
 \end{equation}
  Now we define for $T > 0$ with $T \in I_\ast$ the quantities
 \begin{align*}
  A(T) &:= \sup_{t \in [0,T]} E(v(t)), \\
  B(T) &:= \int_0^T \int_{\bR^4} \frac{|v(t,x)|^4}{|x|} \, dx \, dt,
 \end{align*}
 and compute that
 \begin{align*}
  \partial_t E(v(t)) = \re \int_{\bR^4} \partial_t \overline{v} \bigl( - \Delta v + |v|^2 v \bigr) \, dx = - \re \int_{\bR^4} \partial_t \overline{v} \bigl( |F+v|^2 (F+v) - |v|^2 v \bigr) \, dx.
 \end{align*}
 Reallocating the time derivative we find
 \begin{align*}
  \partial_t E(v(t)) &= - \frac{1}{4} \partial_t \int_{\bR^4} \bigl( |F+v|^4 - |F|^4 - |v|^4 \bigr) \, dx + \re \int_{\bR^4} \bigl( |F+v|^2 (F+v) - |F|^2 F \bigr) \partial_t \overline{F} \, dx.
 \end{align*}
 Since $F$ is a solution to the linear Schr\"odinger equation, we may insert $\partial_t F = i \Delta F$ in the last term on the right-hand side of the previous line and obtain upon integrating by parts that
 \begin{align*}
  \partial_t E(v(t)) &= - \frac{1}{4} \partial_t \int_{\bR^4} \bigl( |F+v|^4 - |F|^4 - |v|^4 \bigr) \, dx - \im \int_{\bR^4} \nabla \bigl( |F+v|^2 (F+v) - |F|^2 F \bigr) \cdot \nabla \overline{F} \, dx.
 \end{align*}
 By the fundamental theorem of calculus we then conclude that
 \begin{equation} \label{equ:bound_on_integrated_energy_derivative}
  \begin{aligned}
   \int_0^T | \partial_t E(v(t)) | \, dt &\lesssim \bigl\| |F+v|^4 - |F|^4 - |v|^4 \bigr\|_{L^\infty_t L^1_x([0,T]\times\bR^4)} \\
   &\quad \quad + \bigl\| \nabla \bigl( |F+v|^2 (F+v) - |F|^2 F \bigr) \cdot \nabla \overline{F} \bigr\|_{L^1_t L^1_x([0,T]\times\bR^4)}.
  \end{aligned}
 \end{equation}
 By Young's inequality, for any $\delta > 0$ the first term on the right-hand side of~\eqref{equ:bound_on_integrated_energy_derivative} can be estimated by
 \begin{equation} \label{equ:bound_energy_term1}
  \begin{aligned}
   \bigl\| |F+v|^4 - |F|^4 - |v|^4 \bigr\|_{L^\infty_t L^1_x([0,T]\times\bR^4)} &\leq \delta \|v\|_{L^\infty_t L^4_x([0,T]\times\bR^4)}^4 + C_\delta \|F\|_{L^\infty_t L^4_x([0,T]\times\bR^4)}^4 \\
   &\leq \delta A(T) + C_\delta \|F\|_{L^\infty_t L^4_x([0,T]\times\bR^4)}^4,
  \end{aligned}
 \end{equation}
 while by H\"older's inequality the second term on the right-hand side of~\eqref{equ:bound_on_integrated_energy_derivative} is bounded by the sum of the following five schematic terms (where the space-time norms are taken over $[0,T] \times \bR^4$)
 \begin{equation} \label{equ:bound_energy_term2}
  \begin{aligned}
   \bigl\| (\nabla v) F^2 (\nabla F) \bigr\|_{L^1_t L^1_x} &\leq \| \nabla v \|_{L^\infty_t L^2_x} \| F \|_{L^\infty_t L^4_x} \| F \|_{L^2_t L^\infty_x} \| \nabla F \|_{L^2_t L^4_x} \\
   &\lesssim A(T)^{\frac{1}{2}} \| F \|_{L^\infty_t L^4_x} \| F \|_{L^2_t L^\infty_x} \| \nabla F \|_{L^2_t L^4_x}, \\
   \bigl\| v F (\nabla F)^2 \bigr\|_{L^1_t L^1_x} &\leq \| v \|_{L^\infty_t L^4_x} \| F \|_{L^\infty_t L^4_x} \| \nabla F \|_{L^2_t L^4_x}^2 \\
   &\lesssim A(T)^{\frac{1}{4}} \| F \|_{L^\infty_t L^4_x} \| \nabla F \|_{L^2_t L^4_x}^2, \\
   \bigl\| (\nabla v) v F (\nabla F) \bigr\|_{L^1_t L^1_x} &\leq \| \nabla v \|_{L^\infty_t L^2_x} \| v \|_{L^\infty_t L^4_x} \| F \|_{L^2_t L^\infty_x} \| \nabla F \|_{L^2_t L^4_x} \\
   &\lesssim A(T)^{\frac{3}{4}} \| F \|_{L^2_t L^\infty_x} \| \nabla F \|_{L^2_t L^4_x}, \\
   \bigl\| v^2 (\nabla F)^2 \bigr\|_{L^1_t L^1_x} &\leq \| v \|_{L^\infty_t L^4_x}^2 \| \nabla F \|_{L^2_t L^4_x}^2 \\
   &\lesssim A(T)^{\frac{1}{2}} \| \nabla F \|_{L^2_t L^4_x}, \\
   \bigl\| v^2 \nabla v \nabla F \bigr\|_{L^1_t L^1_x} &\leq \bigl\| |x|^{-\frac{1}{2}} v^2 \bigr\|_{L^2_t L^2_x} \| \nabla v \|_{L^\infty_t L^2_x} \bigl\| |x|^{+\frac{1}{2}} \nabla F \bigr\|_{L^2_t L^\infty_x} \\
   &\lesssim B(T)^{\frac{1}{2}} A(T)^{\frac{1}{2}} \bigl\| |x|^{+\frac{1}{2}} \nabla F \bigr\|_{L^2_t L^\infty_x}.
  \end{aligned}
 \end{equation}
 Moreover, from the Morawetz estimate~\eqref{equ:nls_morawetz} and the mass bound~\eqref{equ:mass_bound} we obtain that
 \begin{equation} \label{equ:bound_morawetz_quantity}
  \begin{aligned}
   B(T) &\lesssim \| v \|_{L^\infty_t \dot{H}^1_x([0,T]\times\bR^4)} \| v \|_{L^\infty_t L^2_x([0,T]\times\bR^4)} \\
   &\quad + \| v \|_{L^\infty_t \dot{H}^1_x([0,T]\times\bR^4)} \bigl\| |F|^3 + |v|^2 |F| \bigr\|_{L^1_t L^2_x([0,T]\times\bR^4)}  \\
   &\lesssim A(T) + \|v\|_{L^\infty_t L^2_x([0,T]\times\bR^4)}^2 + A(T)^{\frac{1}{2}} \|F\|_{L^3_t L^6_x([0,T]\times\bR^4)}^3 \\ 
   &\quad + A(T)^{\frac{1}{2}} \bigl\| |x|^{-\frac{1}{2}} |v|^2 \bigr\|_{L^2_t L^2_x([0,T]\times\bR^4)} \bigl\| |x|^{\frac{1}{2}} F \bigr\|_{L^2_t L^\infty_x([0,T]\times\bR^4)} \\
   &\lesssim A(T) + \|v_0\|_{L^2_x(\bR^4)}^2 + \|F\|_{L^\infty_t L^2_x(\bR\times\bR^4)}^2 + A(T)^{\frac{1}{2}} \|F\|_{L^3_t L^6_x([0,T]\times\bR^4)}^3 \\
   &\quad + A(T)^{\frac{1}{2}} B(T)^{\frac{1}{2}} \bigl\| |x|^{\frac{1}{2}} F \bigr\|_{L^2_t L^\infty_x([0,T]\times\bR^4)}.
  \end{aligned}
 \end{equation}
 We collect all divisible space-time norms of the forcing term $F$ that have appeared in the previous estimates in the following norm 
 \[
  \|F\|_{Z([0,T])} := \|F\|_{L^3_t L^6_x([0,T]\times\bR^4)} + \bigl\| \langle x \rangle^{\frac{1}{2}} \langle \nabla \rangle F \bigr\|_{L^2_t L^\infty_x([0,T]\times\bR^4)} + \| \nabla F \|_{L^2_t L^4_x([0,T]\times\bR^4)}.
 \]
 By the fundamental theorem of calculus we have that
 \[
  A(T) \leq E(v(0)) + \int_0^T |\partial_t E(v(t))| \, dt, 
 \]
 hence we may infer from \eqref{equ:bound_on_integrated_energy_derivative}--\eqref{equ:bound_energy_term2} that for sufficiently small $\delta > 0$,
 \begin{equation*}
  \begin{aligned}
   A(T) &\lesssim E(v(0)) + \|F\|_{L^\infty_t L^4_x(\bR\times\bR^4)}^4 + A(T)^{\frac{1}{2}} \| F \|_{L^\infty_t L^4_x(\bR\times\bR^4)} \| F \|_{Z([0,T])}^2 \\
   &\quad + A(T)^{\frac{1}{4}} \| F \|_{L^\infty_t L^4_x(\bR\times\bR^4)} \|F\|_{Z([0,T])}^2 + A(T)^{\frac{3}{4}} \| F \|_{Z([0,T])}^2 \\
   &\quad + A(T)^{\frac{1}{2}} \|F\|_{Z([0,T])} + A(T)^{\frac{1}{2}} B(T)^{\frac{1}{2}} \|F\|_{Z([0,T])}.
  \end{aligned}
 \end{equation*}
 Moreover, from the estimate~\eqref{equ:bound_morawetz_quantity} we have 
 \begin{equation*}
  \begin{aligned}
   B(T) &\lesssim A(T) + \|v_0\|_{L^2_x(\bR^4)}^2 + \|F\|_{L^\infty_t L^2_x(\bR\times\bR^4)}^2 + A(T)^{\frac{1}{2}} \|F\|_{Z([0,T])}^3 + A(T)^{\frac{1}{2}} B(T)^{\frac{1}{2}} \|F\|_{Z([0,T])}.
  \end{aligned}
 \end{equation*}
 Thus, by a continuity argument we may now conclude that there exists a sufficiently small absolute constant $0 < \eta \ll 1$ such that if
 \[
  \|F\|_{Z([0,T])} \leq \eta,
 \]
 then it holds that 
 \[
  A(T) \lesssim E(v(0)) + 1 + \|v_0\|_{L^2_x(\bR^4)}^2 + \|F\|_{L^\infty_t L^2_x(\bR\times\bR^4)}^2 + \|F\|_{L^\infty_t L^4_x(\bR\times\bR^4)}^4.
 \]
 By divisibility of the $Z(\bR)$ norm and by time-reversibility, we iterate this argument finitely many times to conclude the desired uniform-in-time energy bound
 \begin{align*}
  \sup_{t \in I_\ast} \, E(v(t)) &\leq C \exp \Bigl( C \bigl( \| F \|_{L^3_t L^6_x(\bR\times\bR^4)}^3 + \bigl\| \langle x \rangle^{\frac{1}{2}} \langle \nabla \rangle F \bigr\|_{L^2_t L^\infty_x(\bR\times\bR^4)}^2 + \| \nabla F \|_{L^2_t L^4_x(\bR\times\bR^4)}^2 \bigr) \Bigr) \times\\
  &\qquad \qquad \qquad \quad \times \bigl( E(v_0) + 1 + \|v_0\|_{L^2_x(\bR^4)}^2 + \| F \|_{L^\infty_t L^2_x(\bR\times\bR^4)}^2 + \| F \|_{L^\infty_t L^4_x(\bR\times\bR^4)}^4 \bigr)
 \end{align*}
 for some absolute constant $C \geq 1$. 
\end{proof}

\appendix 

\section{Proof of Theorem~\protect{\ref{thm:scattering_nlw_radial}}}

 Here we sketch the proof of Theorem~\ref{thm:scattering_nlw_radial} for the defocusing energy-critical nonlinear wave equation in four space dimensions
 \begin{equation}  \label{equ:cubic_nlw}
  \left\{ \begin{aligned}
   -\partial_t^2 u + \Delta u &= u^3 \text{ on } \bR \times \bR^4, \\
   (u, \partial_t u)|_{t=0} &= (f_0^\omega, f_1^\omega) \in H^s_x(\bR^4) \times H^{s-1}_x(\bR^4).
  \end{aligned} \right.
 \end{equation}
 We seek a global, scattering solution to \eqref{equ:cubic_nlw} of the form 
 \[
  u(t) = S(t)(f_0^\omega, f_1^\omega) + v(t).
 \]
 To this end we pass to the study of the more general forced cubic wave equation for the nonlinear component~$v(t)$, namely
 \begin{equation}  \label{equ:forced_cubic_nlw}
  \left\{ \begin{aligned}
   -\partial_t^2 v + \Delta v &= (F+v)^3 \text{ on } \bR \times \bR^4, \\
   (v, \partial_t v)|_{t=0} &= (v_0, v_1) \in \dot{H}^1_x(\bR^4) \times L^2_x(\bR^4)
  \end{aligned} \right.
 \end{equation}
 for forcing terms $F \colon \bR \times \bR^4 \to \bR$ satisfying suitable space-time integrability properties. The proof of Theorem \ref{thm:scattering_nlw_radial} will follow from new, improved almost sure bounds for the free wave evolution of randomized radially symmetric data. Indeed, we recall statements of two of the main theorems from the authors' previous work~\cite{DLuM}.

 \begin{theorem}[Theorem 1.1, \cite{DLuM}]  \label{thm:conditional_scattering_nlw}
  There exists a non-decreasing function $K \colon [0,\infty) \to [0, \infty)$ with the following property. Let $(v_0, v_1) \in \dot{H}^1_x(\bR^4) \times L^2_x(\bR^4)$ and $F \in L^3_t L^{6}_x(\bR \times \bR^4)$. Let $v(t)$ be a solution to \eqref{equ:forced_cubic_nlw} defined on its maximal time interval of existence $I_\ast$. Suppose in addition that
  \begin{equation} 
   M := \sup_{t \in I_\ast} \, E(v(t)) < \infty,
  \end{equation}
  where
  \[
   E(v(t)) = \int_{\bR^4} \frac{1}{2} |\nabla_x v(t)|^2 + \frac{1}{2} |\partial_t v(t)|^2 + \frac{1}{4} |v(t)|^4 \, dx.
  \]
  Then $I_\ast = \bR$, that is $v(t)$ is globally defined, and it holds that
  \begin{equation} 
   \|v\|_{L^3_t L^{6}_x(\bR\times\bR^4)} \leq C \|F\|_{L^3_t L^{6}_x(\bR\times\bR^4)} \bigl( K(M) + 1 \bigr) \exp \bigl( C \,K(M)^3 \bigr)
  \end{equation}
  for some absolute constant $C > 0$. In particular, the solution $v(t)$ scatters to free waves as $t \to \pm \infty$ in the sense that there exist states $(v_0^{\pm}, v_1^{\pm}) \in \dot{H}^1_x(\bR^4) \times L^2_x(\bR^4)$ such that
  \[
   \lim_{t \to \pm \infty} \, \bigl\| \nabla_{t,x} \bigl( v(t) - S(t)(v_0^\pm, v_1^\pm) \bigr) \bigr\|_{L^2_x(\bR^4)} = 0.
  \]
  \end{theorem}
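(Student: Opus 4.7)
My plan is to prove Theorem~\ref{thm:conditional_scattering_nlw} via a perturbation theory argument, comparing the solution $v$ of the forced equation~\eqref{equ:forced_cubic_nlw} to a solution $u$ of the unforced defocusing cubic nonlinear wave equation $-\partial_t^2 u + \Delta u = u^3$ on $\bR \times \bR^4$. The key black-box inputs are: (i) the global well-posedness and scattering result of Ryckman-Visan~\cite{Ryckman_Visan} and Visan~\cite{Visan} (suitably adapted to the 4D wave setting), which provides a non-decreasing function $K\colon[0,\infty)\to[0,\infty)$ such that any finite-energy solution $u$ with $E(u)\le M$ satisfies $\|u\|_{L^3_t L^6_x(\bR\times\bR^4)}\le K(M)$; and (ii) the Strichartz estimates for the wave equation in four space dimensions, which give control of $\|v\|_{L^3_tL^6_x}$ and of the energy norm $\|\nabla_{t,x}v\|_{L^\infty_t L^2_x}$ in terms of $L^1_tL^2_x$ of the forcing nonlinearity.

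First I would establish a short-time perturbation lemma: fix a small absolute constant $\delta>0$; on any time interval $I\subset\bR$ where a solution $u$ of the unforced cubic NLW satisfies $\|u\|_{L^3_t L^6_x(I\times\bR^4)}\le\delta$ and where $\|F\|_{L^3_tL^6_x(I\times\bR^4)}\le\eta$ with $\eta\ll\delta$, the difference $w=v-u$ solves $-\partial_t^2 w + \Delta w = (F+u+w)^3 - u^3$, and expanding the cubic and applying H\"older and Strichartz yields, after a standard contraction argument,
\begin{equation*}
 \|w\|_{L^3_tL^6_x(I\times\bR^4)} + \|\nabla_{t,x}w\|_{L^\infty_tL^2_x(I\times\bR^4)} \lesssim \|F\|_{L^3_tL^6_x(I\times\bR^4)}\bigl(1+\|u\|_{L^3_tL^6_x(I\times\bR^4)}^2\bigr)
\end{equation*}
provided the $\dot H^1\times L^2$ norm of $(v-u)|_{\inf I}$ is small enough. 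This step also relies on the hypothesized uniform energy bound $\sup_{t\in I_\ast}E(v(t))\le M$ to keep the nonlinear estimates closed a priori.

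Next I would pass from short-time to long-time perturbation. Partition the maximal interval $I_\ast$ into $J\sim(K(M)/\delta)^3$ consecutive subintervals $I_j$ on each of which $\|u\|_{L^3_tL^6_x(I_j\times\bR^4)}\le\delta$, where $u$ is the global solution of the unforced cubic NLW with data $u(0)=(v_0,v_1)$ (which exists by the Ryckman-Visan/Visan result combined with the hypothesis $E(v_0)\le M$). On each $I_j$, compare $v$ instead to a solution $u_j$ of the unforced equation with data $u_j(\inf I_j)=v(\inf I_j)$, whose $L^3_tL^6_x$ norm is again bounded by $K(M)$, hence can be subdivided into $\sim K(M)^3$ pieces of small norm. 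Iterating the short-time lemma across these pieces, the error $\|w\|_{L^3_tL^6_x}$ at the end of each sub-piece is at most a constant multiple of its value at the start plus a contribution from $F$; summing the resulting geometric series produces the exponential factor $\exp(CK(M)^3)$ in the final bound. This iteration is the main obstacle, because one must simultaneously propagate the $\dot H^1\times L^2$ smallness assumption of the short-time lemma and the $L^3_tL^6_x$ Strichartz control, while guaranteeing that $F$ on each sub-piece can be made small by the divisibility of the $L^3_tL^6_x(\bR\times\bR^4)$ norm.

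Finally, combining the long-time perturbation bound with the standard blow-up alternative for the forced wave equation (namely, that finiteness of $\|v\|_{L^3_tL^6_x}$ on a forward maximal interval prevents blow-up) forces $I_\ast=\bR$. The scattering statement then follows from a standard argument: writing the Duhamel formula for $v$, the a priori bound $\|v\|_{L^3_tL^6_x(\bR\times\bR^4)}<\infty$ together with Strichartz estimates shows that the Duhamel integral $\int_0^\infty S(-s)(0,(F+v)^3(s))\,ds$ converges in $\dot H^1_x\times L^2_x$, producing the asymptotic data $(v_0^+,v_1^+)$, and similarly for $t\to-\infty$.
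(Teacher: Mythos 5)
Your overall strategy coincides with the one behind this statement. Note first that the paper itself does not prove Theorem~\ref{thm:conditional_scattering_nlw} at all: it is imported verbatim from \cite{DLuM} and used as a black box in the appendix. However, the paper's proof of the exact NLS analogue, Theorem~\ref{thm:scattering_conditional} in Section~\ref{sec:conditional_scattering}, is precisely your scheme: an a priori spacetime bound $K(M)$ for the unforced energy-critical equation (Lemma~\ref{lem:standard_cubic}), a short-time perturbation lemma (Lemma~\ref{lem:short_time_perturbations}), a long-time perturbation lemma obtained by iterating over $\sim K(M)^{4/\ve}$ small pieces with an exponential loss (Lemma~\ref{lem:long_time_perturbations}), a partition of $I_\ast$ using divisibility of the norm of $F$, comparison on each piece with the unforced solution re-initialized at $v(t_{j-1})$ (admissible because the energy hypothesis gives $E(v(t_{j-1}))\leq M$), the blow-up criterion, and scattering via the Duhamel formula.

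Two caveats. First, the black-box input for the unforced problem is misattributed: Ryckman--Visan and Visan treat the cubic NLS on $\bR^4$; for the energy-critical defocusing wave equation the existence of a non-decreasing $K$ with $\|u\|_{L^3_tL^6_x(\bR\times\bR^4)}\leq K(E(u))$ comes from the classical theory (Grillakis, Shatah--Struwe, Bahouri--Shatah, together with a quantitative argument in the spirit of Bahouri--G\'erard to make the bound depend only on the energy). You flag this parenthetically, but it should be stated correctly. Second, your long-time step is organized redundantly and does not visibly produce the precise inequality in the statement: you first partition $I_\ast$ by smallness of the unforced solution with data $(v_0,v_1)$, but then re-initialize $u_j$ at each $\inf I_j$, after which the outer partition plays no role; moreover, since on each block $\|v\|\leq\|u_j\|+\|v-u_j\|$ with $\|u_j\|_{L^3_tL^6_x}\leq K(M)$ and the number of blocks is dictated by $\|F\|_{L^3_tL^6_x}$, summing yields a bound of the form $C\bigl(M,\|F\|_{L^3_tL^6_x}\bigr)$ (exactly as in the paper's Theorem~\ref{thm:scattering_conditional}) rather than the multiplicative form $C\|F\|_{L^3_tL^6_x}(K(M)+1)\exp\bigl(CK(M)^3\bigr)$; to make the comparison error sourced entirely by $F$ one should compare with the single unforced solution with the same data throughout, so that the difference has zero data at the initial time and the geometric iteration gives $\|v-u\|\lesssim\|F\|\exp\bigl(CK(M)^3\bigr)$. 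This discrepancy concerns only the explicit quantitative constant; the qualitative conclusions $I_\ast=\bR$, finiteness of $\|v\|_{L^3_tL^6_x}$, and scattering, which are all this paper actually uses, do follow from your argument.
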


 \begin{theorem}[Theorem 1.2, \cite{DLuM}] \label{thm:energy_bound_nlw}
  Let $(v_0, v_1) \in \dot{H}^1_x(\bR^4) \times L^2_x(\bR^4)$. Assume that
  \begin{align} 
   F \in L^3_t L^{6}_x(\bR \times \bR^4) \qquad \textup{and}  \qquad |x|^{\frac{1}{2}} F \in L^2_t L^{\infty}_x(\bR \times \bR^4).
  \end{align}
  Let $v(t)$ be a solution to \eqref{equ:forced_cubic_nlw} defined on its maximal time interval of existence $I_\ast$. Then we have 
  \[
   \sup_{t \in I_\ast} \, E(v(t)) \leq C \exp \Bigl( C \bigl( \|F\|_{L^3_t L^6_x(\bR\times\bR^4)}^3 + \bigl\| |x|^{\frac{1}{2}} F \bigr\|_{L^2_t L^\infty_x(\bR\times\bR^4)}^2 \bigr) \Bigr) ( E(v(0)) + 1 )
  \]
  for some absolute constant $C > 0$. It therefore holds that $I_\ast = \bR$, that is $v(t)$ exists globally in time, and the solution $v(t)$ scatters to free waves as $t \to \pm \infty$.
 \end{theorem}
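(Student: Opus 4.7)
The plan is to reduce Theorem~\ref{thm:scattering_nlw_radial} to an application of the two conditional black-box results from \cite{DLuM}, namely the conditional scattering Theorem~\ref{thm:conditional_scattering_nlw} and the uniform-in-time energy bound Theorem~\ref{thm:energy_bound_nlw}, both of which require control of $\|F\|_{L^3_t L^6_x}$ and $\||x|^{1/2} F\|_{L^2_t L^\infty_x}$. To do this I would first decompose the random initial data at the dyadic scale $4$, writing $(f_0^\omega, f_1^\omega) = (P_{\leq 4} f_0^\omega, P_{\leq 4} f_1^\omega) + (P_{>4} f_0^\omega, P_{>4} f_1^\omega)$. The low-frequency part is frequency-localized to $|\xi| \lesssim 8$, so by Bernstein it almost surely lies in $\dot{H}^1_x(\bR^4) \times L^2_x(\bR^4)$ (indeed, in every Sobolev space), while the high-frequency part only carries the rough regularity $H^s_x \times H^{s-1}_x$. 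Setting $F := S(t)(P_{>4} f_0^\omega, P_{>4} f_1^\omega)$ and $u = S(t)(P_{>4} f_0^\omega, P_{>4} f_1^\omega) + v$, the nonlinear component $v$ must solve the forced cubic NLW in the remark following the statement of Theorem~\ref{thm:scattering_nlw_radial}, with finite-energy initial data. The desired scattering with $v_0^\pm, v_1^\pm$ for $u$ then follows directly from the scattering of $v$ by the triangle inequality.

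The main technical task is to establish, for $0 < s < 1$, the two almost sure bounds
\[
 \|F\|_{L^3_t L^6_x(\bR\times\bR^4)} < \infty \quad \text{and} \quad \bigl\| |x|^{1/2} F \bigr\|_{L^2_t L^\infty_x(\bR\times\bR^4)} < \infty.
\]
The first is standard: using that $S(t)(P_{>4} f_0^\omega, P_{>4} f_1^\omega) = \sum_{k} g_k(\omega) S(t)(P_k P_{>4} f_0, \cdot) + \cdots$, the large deviation estimate of Lemma~\ref{lem:large_deviation_estimate} combined with Strichartz estimates for the wave equation, Bernstein for unit-scale projections, and the frequency support (where the factor $\langle k \rangle^{-s}$ from $P_{>4}$ paired with $H^s$ regularity exactly compensates the derivative loss from wave Strichartz at the endpoint), yields $\sqrt{p}\,\|f\|_{H^s_x \times H^{s-1}_x}$ control in $L^p_\omega$ for all large $p$, from which Lemma~\ref{lem:probability_estimate} concludes. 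The second bound is the new, critical ingredient; it should be proved in direct parallel to Proposition~\ref{prop:weighted_nabla_L2Linfty_schroedinger}, replacing the Schrödinger local smoothing of Proposition~\ref{prop:local_smoothing} by the classical local energy decay for the wave equation,
\[
 \sup_{R > 0} \, R^{-1/2} \bigl\| \nabla_{t,x} S(t)(g_0, g_1) \bigr\|_{L^2_t L^2_x(\bR \times \{|x| \leq R\})} \lesssim \|(g_0, g_1)\|_{\dot{H}^{1/2}_x \times \dot{H}^{-1/2}_x}.
\]
Specifically, I would apply Lemma~\ref{lem:weighted_bound_into_freq_pieces} to pass from $L^\infty_x$ to $L^r_x$ at each dyadic frequency $N \geq 2$, then dyadically decompose physical space via $\chi_j$, split each piece via $\chi_{\leq j+5}$ versus $\chi_{>j+5}$, and on the main (close) piece apply the radialish Sobolev estimate \eqref{equ:radialish_sobolev_interpolated} (which here is crucial and exploits radial symmetry) followed by local energy decay; the far pieces are controlled using the operator bounds \eqref{equ:operator_bound_chi_P_chi} and \eqref{equ:operator_bound_PchiP} exactly as in Section~\ref{subsec:as_bounds_radial}. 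Interpolating the $L^2$-integrability against a trivial $L^{(3-2\alpha)r/3}$ estimate produces a loss of only $N^\delta$, so summing in $N$ converges provided $s > 0$ (by choosing $r$ large and $\delta$ small, any positive $s$ suffices for the low regularity at $\alpha = 1/2$, since the wave local energy decay only needs $1/2$ derivative below the free data regularity).

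The main obstacle will be the sharp book-keeping of derivatives in the weighted estimate: for the wave equation, local energy decay costs $1/2$ derivative, so one starts with data in $H^s$ and can pay at most $s$ derivatives total; the radialish Sobolev factor $N^\delta$ must therefore be absorbed against the gain $N^{s-1/2 - \alpha \cdot \text{(something)}}$ that comes out of the interpolation, and one must verify that $\alpha = 1/2$ still fits under the constraint $\alpha + 3/r < 1$ used in the Schrödinger proof (indeed $\alpha = 1/2$ is strictly less than $3/4$, so this is fine). Once these almost sure bounds are established, the conclusion is mechanical: local well-posedness for the forced cubic NLW at energy regularity (available in \cite{DLuM}) produces a maximal-lifespan solution $v$, Theorem~\ref{thm:energy_bound_nlw} gives $\sup_{t \in I_\ast} E(v(t)) < \infty$ almost surely, and Theorem~\ref{thm:conditional_scattering_nlw} then yields $I_\ast = \bR$ together with scattering of $v$ in the energy space, completing the proof.
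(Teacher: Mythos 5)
Your proposal does not actually prove the statement at hand. The statement is Theorem~\ref{thm:energy_bound_nlw}: a \emph{deterministic, conditional} uniform-in-time energy bound for solutions of the forced cubic wave equation \eqref{equ:forced_cubic_nlw}, asserting that $\sup_{t\in I_\ast} E(v(t))$ is controlled by $C\exp\bigl(C(\|F\|_{L^3_tL^6_x}^3+\||x|^{1/2}F\|_{L^2_tL^\infty_x}^2)\bigr)(E(v(0))+1)$ whenever $F\in L^3_tL^6_x$ and $|x|^{1/2}F\in L^2_tL^\infty_x$. What you outline instead is the proof of Theorem~\ref{thm:scattering_nlw_radial} (the almost sure scattering result for randomized radial data), in which you \emph{invoke} Theorem~\ref{thm:energy_bound_nlw} and Theorem~\ref{thm:conditional_scattering_nlw} as black boxes and then concentrate on the probabilistic inputs \eqref{equ:prob_apriori}. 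As a proof of Theorem~\ref{thm:energy_bound_nlw} this is circular: the exponential energy bound is precisely the content that must be established, and no argument for it appears anywhere in your write-up. The randomization, the unit-scale projections, the radialish Sobolev estimate, and the local energy decay all belong to the verification of \eqref{equ:prob_apriori}, not to this theorem, whose hypotheses are purely deterministic space-time bounds on $F$.

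An actual proof runs along the lines of the NLS analogue carried out in this paper (Proposition~\ref{prop:energy_growth_bound}, Section~\ref{sec:radial_scattering}), adapted to the wave equation as in the authors' earlier work: one computes $\partial_t E(v(t))$, uses that $F$ solves the free wave equation to reallocate the time derivative onto $F$, and bounds the integrated energy increment by quantities of the form $A(T)^{\theta}$ times divisible norms of $F$; the dangerous term, schematically $\|v^2\,\partial v\, F\|_{L^1_tL^1_x}$, is handled by pairing $\||x|^{-1/2}v^2\|_{L^2_tL^2_x}=B(T)^{1/2}$ with $\||x|^{1/2}F\|_{L^2_tL^\infty_x}$, where $B(T)=\int_0^T\!\!\int |v|^4/|x|\,dx\,dt$ is controlled by an approximate Morawetz estimate with the Lin--Strauss weight $a(x)=|x|$ for the forced equation (the wave analogue of \eqref{equ:nls_morawetz}). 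A continuity argument on subintervals where the divisible norms of $F$ are below a small threshold, iterated finitely many times via time-divisibility, produces the exponential dependence on $\|F\|_{L^3_tL^6_x}^3+\||x|^{1/2}F\|_{L^2_tL^\infty_x}^2$; global existence and scattering then follow from the finite-time blowup criterion together with Theorem~\ref{thm:conditional_scattering_nlw}. None of these steps -- the energy flux identity, the forced Morawetz estimate, or the bootstrap with divisibility -- is present or even sketched in your proposal, so there is a genuine gap: you have proved (modulo the probabilistic lemmas) a different theorem that consumes this one as an input.
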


 Thus, in light of Theorem~\ref{thm:conditional_scattering_nlw} and Theorem~\ref{thm:energy_bound_nlw}, the proof of Theorem~\ref{thm:scattering_nlw_radial} reduces to proving that for any $0 < s < 1$ and any radially symmetric $f \in H^s_x(\bR^4)$ we have that almost surely,
 \begin{align} \label{equ:prob_apriori}
  \bigl\| e^{ \pm i t|\nabla|} f^\omega \bigr\|_{L^3_t L^6_x(\bR\times\bR^4)}^3 + \bigl\| |x|^{\frac{1}{2}} e^{ \pm i t|\nabla|} f^\omega \bigr\|_{L^2_t L^\infty_x(\bR\times\bR^4)} < \infty.
 \end{align}
 These two almost sure bounds are established in the next two propositions.

\begin{proposition} \label{prop:weighted_L2Linfty_wave}
 Let $0 < s < 1$ and $0 \leq \alpha < 1$. Let $f \in H^s_x(\bR^4)$ be radially symmetric and denote by $f^\omega$ the randomization of $f$ as defined in~\eqref{equ:wave_randomization}. 
 Then there exist absolute constants $C > 0$ and $c > 0$ such that for any $\lambda > 0$ it holds that
 \begin{equation}
  \bP \Bigl( \Bigl\{ \omega \in \Omega : \bigl\| \langle x \rangle^\alpha e^{\pm i t |\nabla|} f^\omega \bigr\|_{L^2_t L^\infty_x(\bR \times \bR^4)} > \lambda  \Bigr\} \Bigr) \leq C \exp \Bigl( - c \lambda^2 \|f\|_{H^s_x(\bR^4)}^{-2} \Bigr).
 \end{equation}
 In particular, we have for almost every $\omega \in \Omega$ that
 \begin{equation}
  \bigl\| \langle x \rangle^\alpha e^{\pm i t |\nabla|} f^\omega \bigr\|_{L^2_t L^\infty_x(\bR \times \bR^4)} < \infty.
 \end{equation}
\end{proposition}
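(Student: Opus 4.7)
The proof mirrors that of Proposition~\ref{prop:weighted_nabla_L2Linfty_schroedinger} for the Schr\"odinger evolution, replacing Schr\"odinger dispersive estimates with their wave equation counterparts. Since $\langle x \rangle^{0} \leq \langle x \rangle^{\alpha}$ for any $\alpha > 0$, it suffices to establish the bound for $0 < \alpha < 1$, and by time-reversal symmetry we may focus on $e^{+it|\nabla|}$.

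The plan is as follows. First apply Lemma~\ref{lem:weighted_bound_into_freq_pieces} to reduce the $L^\infty_x$ norm to $L^r_x$ norms of dyadic frequency pieces $P_N f^\omega$ for suitably large $r < \infty$, at the cost of factors $N^{4/r}$. After decomposing $\langle x \rangle^\alpha \lesssim 1 + |x|^\alpha$ and applying the large deviation estimate from Lemma~\ref{lem:large_deviation_estimate}, the weighted high-frequency piece reduces to a deterministic bound on the square function $\bigl( \sum_{k} |e^{it|\nabla|} P_k P_N f|^2 \bigr)^{1/2}$ in a weighted $L^2_t L^r_x$ norm. Using that $e^{it|\nabla|}$ is an isometry on each $H^\delta_x$, the ``radialish'' Sobolev estimate from Lemma~\ref{lem:radialish_sobolev} provides the pointwise-in-$t$ bound
\[
 \Bigl\| |x|^{3/2} \Bigl( \sum_k |e^{it|\nabla|} P_k P_N f|^2 \Bigr)^{1/2} \Bigr\|_{L^\infty_x(\bR^4)} \lesssim_\delta N^\delta \|P_N f\|_{L^2_x(\bR^4)}.
\]
I would interpolate this with the trivial Minkowski bound at an intermediate $L^{q_0}_x$ exponent with $q_0 \sim r(3-2\alpha)/3$, reducing the problem to estimating a wave-Strichartz norm $\|e^{it|\nabla|} P_k P_N f\|_{L^{p_0}_t L^{q_0}_x}$ of the unit-scale frequency pieces. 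The unweighted high-frequency contribution and the low-frequency terms are handled by similar, simpler arguments.

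The main obstacle is that the wave equation lacks an analog of the scale-invariant Schr\"odinger Strichartz pair $(2, 4)$ that incurs no derivative cost; the corresponding wave-admissible pair $(2, 6)$ loses a factor $N^{5/6}$, which is prohibitive for small $s$. To bypass this, I would introduce an additional dyadic decomposition of physical space, as in the proof of Proposition~\ref{prop:weighted_nabla_L2Linfty_schroedinger}, and invoke wave local energy decay (Morawetz / Keel--Smith--Sogge type) estimates of the form $\|\langle x \rangle^{-\frac{1}{2}-\ve} e^{it|\nabla|} f\|_{L^2_t L^2_x(\bR\times\bR^4)} \lesssim \|f\|_{L^2_x(\bR^4)}$ in place of the Schr\"odinger local smoothing estimate. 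Combined with the pointwise $|x|^{-3/2}$ decay of the square function from Lemma~\ref{lem:radialish_sobolev}, this balances against the growth factors in the dyadic sum and yields a bound of the form $\sqrt{p}\,\|f\|_{H^s_x(\bR^4)}$ on the $L^p_\omega$ norm for all sufficiently large $p$. The desired Gaussian tail bound then follows from Lemma~\ref{lem:probability_estimate}.
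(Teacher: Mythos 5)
Your proposal arrives at essentially the same argument the paper uses: as the paper notes, the proof of Proposition~\ref{prop:weighted_L2Linfty_wave} is a verbatim adaptation of the proof of Proposition~\ref{prop:weighted_nabla_L2Linfty_schroedinger}, with the Schr\"odinger local smoothing estimate replaced by the wave local energy decay estimate of Lemma~\ref{prop:local_energy_decay}. You correctly identify that the naive wave-Strichartz route fails (the scaling loss $N^{5/6}$ from the $(2,6)$ pair is prohibitive) and pivot to the physical-space dyadic decomposition combined with local energy decay and the ``radialish'' Sobolev estimate, which is precisely the paper's approach; the absence of a derivative on $f^\omega$ in the statement matches the absence of a $|\nabla|^{-1/2}$ gain in the wave local energy decay estimate, which is why the threshold improves to $s>0$.
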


The proof of Proposition~\ref{prop:weighted_L2Linfty_wave} is essentially a verbatim copy of the proof of the weighted global-in-time $L^2_t L^\infty_x$ almost sure bound for the derivative of the Schr\"odinger evolution of randomized radial data from Proposition~\ref{prop:weighted_nabla_L2Linfty_schroedinger}, the only difference being that we employ local energy decay estimates for the free wave evolution instead of the related local smoothing estimates for the Schr\"odinger evolution. We recall the precise local energy decay estimates that we use in the next lemma and correspondingly leave the details of the proof of Proposition~\ref{prop:weighted_L2Linfty_wave} to the reader.

\begin{lemma}[\protect{Local energy decay; \cite[(2.16)]{KPV_local_energy}, \cite{Smith_Sogge}, \cite{Keel_Smith_Sogge}, \cite[Appendix]{Sterbenz}}] \label{prop:local_energy_decay}
 Let $f \in L^2_x(\bR^4)$. Then it holds that
 \begin{equation} 
  \sup_{R > 0} \, R^{-\frac{1}{2}} \bigl\| e^{\pm i t |\nabla|} f \bigr\|_{L^2_t L^2_x(\bR \times \{ |x| \leq R \})} \lesssim \| f \|_{L^2_x(\bR^4)}.
 \end{equation}
\end{lemma}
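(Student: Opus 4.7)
The plan is to establish the local energy decay estimate via Plancherel's theorem in the space-time variables, thereby reducing the problem to a Fourier restriction type estimate on the light cone. By time reversal and a density argument we may assume the $+$ sign and $f \in \mathcal{S}(\bR^4)$. Set $u(t,x) = e^{it|\nabla|} f(x)$, and note that the full space-time Fourier transform of $u$ is supported on the upper light cone $\{\tau = |\xi|\}$, equal distributionally to $(2\pi)\delta(\tau - |\xi|) \hat{f}(\xi)$. Let $\chi_R$ denote a smooth spatial cutoff adapted to $\{|x| \leq R\}$. A direct computation using polar coordinates $\xi = \tau \omega$, $\tau > 0$, $\omega \in S^3$, yields
\begin{equation*}
 \mathcal{F}_{t,x}[\chi_R u](\tau, \xi) = c \, \tau^3 \int_{S^3} \hat{\chi}_R(\xi - \tau \omega) \, \hat{f}(\tau \omega) \, d\omega \quad \text{for } \tau > 0,
\end{equation*}
and Plancherel identifies $\|\chi_R u\|_{L^2_{t,x}(\bR\times\bR^4)}^2$ with (a constant times) the $L^2_{\tau, \xi}$-norm squared of the above.

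The core of the argument is to apply a Schur test for each $\tau > 0$ to the operator $A_\tau \colon L^2(S^3) \to L^2(\bR^4)$ with kernel $K(\xi, \omega) = \hat{\chi}_R(\xi - \tau\omega)$. One of the two required bounds is the trivial row sum $\sup_\omega \int_{\bR^4} |K(\xi, \omega)| \, d\xi = \|\hat{\chi}_R\|_{L^1(\bR^4)} \lesssim 1$, while the other, more delicate one, reads
\begin{equation*}
 \sup_\xi \int_{S^3} |\hat{\chi}_R(\xi - \tau\omega)| \, d\omega \lesssim R^4 \min\bigl\{1, (R\tau)^{-3}\bigr\}.
\end{equation*}
This combines the fact that $\hat{\chi}_R$ has peak size $R^4$ and decays rapidly off scale $1/R$ with the geometric observation that the preimage in $S^3$ of a ball of radius $1/R$ under $\omega \mapsto \tau\omega$ has $d\omega$-measure of order $\min\{1, (R\tau)^{-3}\}$ (either the full sphere, when $R\tau \leq 1$, or a spherical cap of radius $\sim 1/(R\tau)$, when $R\tau \geq 1$). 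Inserting these bounds together with the $\tau^6$ weight from squaring and separating the regimes $\tau \leq 1/R$ and $\tau \geq 1/R$ produces the pointwise comparison
\begin{equation*}
 \int_{\bR^4} |\mathcal{F}_{t,x}[\chi_R u](\tau, \xi)|^2 \, d\xi \lesssim \tau^6 R^4 \min\{1, (R\tau)^{-3}\} \int_{S^3} |\hat{f}(\tau\omega)|^2 \, d\omega \lesssim R \, \tau^3 \int_{S^3} |\hat{f}(\tau\omega)|^2 \, d\omega.
\end{equation*}
Integrating in $\tau$ and invoking the polar identity $\|f\|_{L^2_x(\bR^4)}^2 = c \int_0^\infty \tau^3 \int_{S^3} |\hat{f}(\tau\omega)|^2 \, d\omega \, d\tau$ yields $\|\chi_R u\|_{L^2_{t,x}(\bR\times\bR^4)}^2 \lesssim R \|f\|_{L^2_x(\bR^4)}^2$, which is the desired bound after dividing by $R$ and taking the supremum.

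The main delicate point is the column-sum estimate for $A_\tau$, where the scale $1/R$ of $\hat{\chi}_R$ must be compared against the radius $\tau$ of the integration sphere; the curvature gain $(R\tau)^{-3}$ in the high-frequency regime is precisely what promotes the estimate from the trivial bound $R$ to the sharp $R^{1/2}$. An equivalent and somewhat cleaner route, which is the one followed in the cited references, is to first normalize $R = 1$ using the scaling $(t,x) \mapsto (Rt, Rx)$ under which both sides of the estimate transform identically, reducing the column-sum computation to the $R$-independent bound $\sup_\xi \int_{S^3} |\hat{\chi}(\xi - \tau\omega)| \, d\omega \lesssim \min\{1, \tau^{-3}\}$ and removing the need to track $R$-dependent factors throughout.
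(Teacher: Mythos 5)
Your argument is correct, but note that the paper does not prove this lemma at all: it is quoted as a known local energy decay estimate with references to Keel--Smith--Sogge, Smith--Sogge, Kenig--Ponce--Vega and Sterbenz, so there is no in-paper proof to compare against. What you have written is a legitimate self-contained derivation by a standard route: space-time Plancherel reduces the bound to an $L^2(S^3)\to L^2_\xi(\bR^4)$ estimate for the operator with kernel $\hat\chi_R(\xi-\tau\omega)$, i.e.\ to the $L^2$ trace/adjoint-restriction estimate for spheres of radius $\tau$, and the Schur test with row sum $\|\hat\chi_R\|_{L^1}\lesssim 1$ and column sum $\lesssim R^4\min\{1,(R\tau)^{-3}\}$ does give $\tau^6R^4\min\{1,(R\tau)^{-3}\}\le R\tau^3$ in both regimes, after which polar Plancherel for $\hat f$ closes the estimate; the scaling reduction to $R=1$ is also consistent. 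Two small points to tighten in a careful write-up: (i) since square-integrability of $\chi_R u$ on $\bR\times\bR^4$ is part of what is being proven, phrase the Plancherel step as ``the distributional Fourier transform of $\chi_R u$ is exhibited as an $L^2_{\tau,\xi}$ function, hence $\chi_R u\in L^2_{t,x}$ with equal norm'' (legitimate for $f$ Schwartz, since one convolves the Schwartz function $\hat\chi_R$ with a finite measure on the cone), and (ii) the column-sum bound needs the dyadic decomposition $|\xi-\tau\omega|\sim 2^j/R$ together with the rapid decay of $\hat\chi$ and the cap-measure bound $\min\{1,(2^j/(R\tau))^3\}$ to control the tails, not just the peak scale $1/R$; with those remarks included the proof is complete.
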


\begin{proposition} 
 Let $0 < s < 1$. Let $f \in H^s_x(\bR^4)$ be radially symmetric and denote by $f^\omega$ the randomization of $f$ as defined in~\eqref{equ:wave_randomization}. 
 Then there exist absolute constants $C > 0$ and $c > 0$ such that for any $\lambda > 0$ it holds that
 \begin{equation}
  \bP \Bigl( \Bigl\{ \omega \in \Omega : \bigl\| e^{\pm i t |\nabla|} f^\omega \bigr\|_{L^3_t L^6_x(\bR \times \bR^4)} > \lambda  \Bigr\} \Bigr) \leq C \exp \Bigl( - c \lambda^2 \|f\|_{H^s_x(\bR^4)}^{-2} \Bigr).
 \end{equation}
 In particular, we have for almost every $\omega \in \Omega$ that
 \begin{equation}
  \bigl\| e^{\pm i t |\nabla|} f^\omega \bigr\|_{L^3_t L^6_x(\bR \times \bR^4)} < \infty.
 \end{equation}
\end{proposition}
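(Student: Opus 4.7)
The plan is to derive this $L^3_tL^6_x$ almost sure bound directly from Proposition~\ref{prop:weighted_L2Linfty_wave} taken at the unweighted endpoint $\alpha = 0$, combined with the trivial $L^2_x$ conservation property of the wave propagator. The key observation is the elementary pointwise-in-$t$ estimate $\|u(t)\|_{L^6_x}^6 \leq \|u(t)\|_{L^\infty_x}^4 \|u(t)\|_{L^2_x}^2$, which after taking sixth roots, cubing, and integrating in time yields the H\"older-type interpolation
\[
 \| u \|_{L^3_t L^6_x(\bR \times \bR^4)} \leq \| u \|_{L^\infty_t L^2_x(\bR \times \bR^4)}^{\frac{1}{3}} \| u \|_{L^2_t L^\infty_x(\bR \times \bR^4)}^{\frac{2}{3}}.
\]
Applied to $u = e^{\pm i t |\nabla|} f^\omega$, this reduces matters to two separate $L^p_\omega$ moment estimates for the factors on the right-hand side.

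For the first factor, since $e^{\pm i t |\nabla|}$ is an $L^2_x$-isometry at each time, one has $\| e^{\pm i t |\nabla|} f^\omega \|_{L^\infty_t L^2_x} = \| f^\omega \|_{L^2_x}$, and Minkowski's inequality combined with the large deviation estimate of Lemma~\ref{lem:large_deviation_estimate} (applied pointwise in $x$, then using Plancherel and the finite-overlap of the unit-scale projections) produces
\[
 \bigl\| \| f^\omega \|_{L^2_x} \bigr\|_{L^p_\omega} \lesssim \sqrt{p} \, \| f \|_{L^2_x} \lesssim \sqrt{p} \, \| f \|_{H^s_x}
\]
for every $p \geq 2$. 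For the second factor, I would simply invoke Proposition~\ref{prop:weighted_L2Linfty_wave} in the unweighted case $\alpha = 0$, which is permitted by its hypothesis $0 \leq \alpha < 1$, to obtain $\bigl\| \| e^{\pm i t |\nabla|} f^\omega \|_{L^2_t L^\infty_x} \bigr\|_{L^p_\omega} \lesssim \sqrt{p} \, \| f \|_{H^s_x}$ for all sufficiently large $p$.

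Combining the two factor bounds via H\"older's inequality in $\omega$ with exponents $3$ and $\tfrac{3}{2}$ applied to the powers $\tfrac{1}{3}$ and $\tfrac{2}{3}$ produces
\[
 \bigl\| \| e^{\pm i t |\nabla|} f^\omega \|_{L^3_t L^6_x(\bR \times \bR^4)} \bigr\|_{L^p_\omega} \lesssim \sqrt{p} \, \| f \|_{H^s_x}
\]
for all sufficiently large $p$, whereupon Lemma~\ref{lem:probability_estimate} immediately delivers the announced Gaussian tail bound and hence the almost sure finiteness. There is no genuine analytical obstacle at this stage: all of the nontrivial work has already been carried out in Proposition~\ref{prop:weighted_L2Linfty_wave}, which rests on the radialish Sobolev estimate of Lemma~\ref{lem:radialish_sobolev} and the local energy decay estimate for the wave equation. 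It is precisely this reduction that allows the $L^3_t L^6_x$ bound to extend to the full range $0 < s < 1$ required in Theorem~\ref{thm:scattering_nlw_radial}, whereas a direct attack via unit-scale Bernstein and wave Strichartz on each Khintchine-produced square function only reaches the stricter threshold $s > \tfrac{5}{9}$ and fails to exploit the radial symmetry.
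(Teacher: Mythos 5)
Your proposal is correct and matches the paper's own proof essentially step for step: the same $L^3_t L^6_x \leq (L^\infty_t L^2_x)^{1/3} (L^2_t L^\infty_x)^{2/3}$ interpolation, the same $L^2_x$-isometry observation, the same reliance on Lemma~\ref{lem:large_deviation_estimate} for the mass factor and on Proposition~\ref{prop:weighted_L2Linfty_wave} (at $\alpha = 0$) for the $L^2_t L^\infty_x$ factor, concluding with Lemma~\ref{lem:probability_estimate}. The only cosmetic difference is that you extract the $L^p_\omega$ moment bound for the $L^2_t L^\infty_x$ factor from the stated Gaussian tail bound of Proposition~\ref{prop:weighted_L2Linfty_wave} (which is indeed equivalent by standard integration), whereas the paper points directly to the intermediate moment estimate inside that proposition's proof.
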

\begin{proof}
 In what follows, all space-time norms are taken over $\bR \times \bR^4$. By H\"older's inequality we have
 \begin{align*}
  \bigl\| e^{\pm i t |\nabla|} f^\omega \bigr\|_{L^3_t L^6_x} \leq \bigl\| e^{\pm i t |\nabla|} f^\omega \bigr\|_{L^\infty_t L^2_x}^{\frac{1}{3}} \bigl\| e^{\pm i t |\nabla|} f^\omega \bigr\|_{L^2_t L^\infty_x}^{\frac{2}{3}} = \bigl\| f^\omega \bigr\|_{L^2_x}^{\frac{1}{3}} \bigl\| e^{\pm i t |\nabla|} f^\omega \bigr\|_{L^2_t L^\infty_x}^{\frac{2}{3}}.
 \end{align*}
 A simple application of Minkowski's inequality and of the large deviation estimate from Lemma~\ref{lem:large_deviation_estimate} yields for any $p \geq 2$ that
 \[
  \bigl\| e^{\pm i t |\nabla|} f^\omega \bigr\|_{L^p_\omega L^2_x} \lesssim \sqrt{p} \, \| f \|_{L^2_x},
 \]
 while in the proof of Proposition~\ref{prop:weighted_L2Linfty_wave} we establish for all sufficiently large $p < \infty$ that
 \[
  \bigl\| e^{\pm i t |\nabla|} f^\omega \bigr\|_{L^p_\omega L^2_t L^\infty_x} \lesssim \sqrt{p} \, \| f \|_{H^s_x}.
 \]
 Thus, we conclude for all sufficiently large $p < \infty$ that
 \[
  \bigl\| e^{\pm i t |\nabla|} f^\omega \bigr\|_{L^p_\omega L^3_t L^6_x} \lesssim \sqrt{p} \, \| f \|_{H^s_x}
 \]
 and the claim follows from Lemma~\ref{lem:probability_estimate}.
\end{proof}

\bibliographystyle{myamsplain}
\bibliography{references}

\end{document}